\date{November 20, 2025}
\definecolor{darkblue}{rgb}{0,0,0.6}
\newtheorem{theorem}{Theorem}[section]
\newtheorem{proposition}[theorem]{Proposition}
\newtheorem*{proposition*}{Proposition}
\newtheorem{lemma}[theorem]{Lemma}
\newtheorem{corollary}[theorem]{Corollary}
\newtheorem*{corollary*}{Corollary}
\newtheorem{question}[theorem]{Question}
\newtheorem*{question*}{Question}
\newtheorem*{rep@theorem}{\rep@title}
\newcommand{\newreptheorem}[2]{
	\newenvironment{rep#1}[1]{
		\def\rep@title{#2 \ref{##1}}
		\begin{rep@theorem}}
		{\end{rep@theorem}}}
\newtheorem{thmx}{Theorem}
\theoremstyle{definition}
\newtheorem{definition}[theorem]{Definition}
\newtheorem{example}[theorem]{Example}
\newtheorem*{theorem*}{Theorem}
\newtheorem*{acknowledgement*}{Acknowledgements}
\newtheorem{convention}[theorem]{Convention}
\theoremstyle{remark}
\newtheorem{remark}[theorem]{Remark}
\newtheorem*{remark*}{Remark}
\newenvironment{clist}[1]
{\begin{enumerate}[\normalfont #1]}
{\end{enumerate}}
\newcommand{\wh}{\widehat}
\newcommand{\wt}{\widetilde}
\renewcommand{\thesubsection}{\text{\thesection\alph{subsection}}}
\newcommand{\cy}[1]{\Z/{#1}}
\newcommand{\mymatrix}[4]{\left (\vcenter
{\xymatrix@C-2pc@R-2pc{#1&#2\\#3&#4} }
\right )}
\newcommand{\R}{\mathbb{R}}
\newcommand{\Z}{\mathbb{Z}}
\newcommand{\ol}{\overline}
\newcommand{\hh}{\string^}
\renewcommand{\mod}{\,\,\text{\normalfont mod}\,}
\DeclareMathOperator{\Aut}{Aut}
\DeclareMathOperator{\Hom}{Hom}
\DeclareMathOperator{\id}{id}
\DeclareMathOperator{\rank}{rank}
\DeclareMathOperator{\IM}{im}
\DeclareMathOperator{\Tors}{Tors}
\DeclareMathOperator{\hCob}{hCob}
\DeclareMathOperator{\Ab}{Ab}
\DeclareMathOperator{\Iso}{Iso}
\DeclareMathOperator{\Sesq}{Sesq}
\DeclareMathOperator{\Herm}{Herm}
\DeclareMathOperator{\Isom}{Isom}
\DeclareMathOperator{\DiagIsom}{Diag}
\DeclareMathOperator{\rest}{rest}
\DeclareMathOperator{\Br}{Br}
\DeclareMathOperator{\Alg}{Alg}
\DeclareMathOperator{\HT}{HT}
\DeclareMathOperator{\PHT}{PHT}
\DeclareMathOperator{\alg}{alg}
\DeclareMathOperator{\TriIsom}{Tri}
\renewcommand{\L}{L}
\renewcommand{\l}{\Lambda}
\newcommand{\dAlg}{d\text{\normalfont Alg}}
\newcommand{\dHT}{d\text{\normalfont HT}}
\newcommand{\sM}{\mathscr{M}}
\newcommand{\Malg}{\mathscr{M}^{\alg}}
\newcommand{\dalg}{\mathscr{D}^{\alg}}
\newcommand{\La}{\Lambda}
\newcommand{\wX}{\widetilde X}
\newcommand{\wK}{\widetilde K}
\newcommand{\wbar}[1]{\overline{#1} }
\newcommand{\bZ}{\mathbb Z}
\newcommand{\bQ}{\mathbb Q}
\newcommand{\ZG}{\bZ G}
\newcommand{\RG}{R G}
\newcommand{\wH}{\widehat H}
\DeclareMathOperator{\Met}{Met}
\newcommand{\ep}{\varepsilon}
\newcommand{\bd}{\partial}
\DeclareMathOperator{\coker}{coker}
\newcommand{\la}{\langle}
\newcommand{\ra}{\rangle}
\newcommand{\sx}{\hphantom{x}}
\newcommand{\SU}{SU^{\varepsilon}}
\newcommand{\RU}{RU^{\varepsilon}}
\newcommand{\U}{U^{\varepsilon}}
\newcommand{\EU}{EU^{\varepsilon}}
\DeclareMathOperator{\Sym}{Sym}
\DeclareMathOperator{\SymCat}{\mathfrak{S}}
\DeclareMathOperator{\ad}{ad}
\newcommand{\vv}{\,|\,}
\newcommand
{\eqncount}{\setcounter{equation}{\value{theorem}}
\addtocounter{theorem}{1}}
\begin{document}

\title[Four-manifolds, two-complexes and the quadratic bias invariant]{Four-manifolds, two-complexes and the \\ quadratic bias invariant}

\author{Ian Hambleton}
\address{Department of Mathematics \& Statistics, McMaster University
L8S 4K1, Ontario, Canada} 
\email{hambleton@mcmaster.ca}

\author{John Nicholson}
\address{School of Mathematics and Statistics, University of Glasgow, United Kingdom}
\email{john.nicholson@glasgow.ac.uk}

\subjclass[2020]{Primary 57K41, 57N65; Secondary 20F05, 55U15, 57R67}

\begin{abstract} 
Kreck and Schafer produced the first examples of stably diffeomorphic closed smooth $4$-manifolds which are not homotopy equivalent. They were constructed by applying the doubling construction to $2$-complexes over certain finite abelian groups of odd order.
By extending their methods, we formulate a new homotopy invariant on the class of $4$-manifolds arising as doubles of $2$-complexes with finite fundamental group.
As an application we show that, for any $k \ge 2$, there exist a family of $k$ closed smooth $4$-manifolds which are all stably diffeomorphic but are pairwise not homotopy equivalent.
\end{abstract}

\maketitle
\vspace{-5mm}

\section{Introduction} \label{s:intro}

Two closed smooth 4-manifolds $M$, $N$  are said to be \textit{stably diffeomorphic} if there exists $r \ge 0$ and a diffeomorphism $M \# r(S^2 \times S^2) \cong N \# r(S^2 \times S^2)$. Kreck's modified surgery \cite{Kr99} gives techniques to classify $4$-manifolds up to stable diffeomorphism, and these methods have been  applied to study manifolds over a range of fundamental groups \cite{Hambleton:1988,Hambleton:2009,Hambleton:2018,Hambleton:2019,Hambleton:2024,Hambleton:2023,KLPT17,KPT20}.

The existence of exotic smooth structures shows that simply-connected oriented $4$-manifolds which are stably diffeomorphic need not be diffeomorphic, 
but it follows from results of Donaldson \cite{Donaldson:1983a} and Wall  \cite{Wall:1964} that 
such $4$-manifolds are $h$-cobordant and hence homotopy equivalent. 

In contrast, Kreck-Schafer \cite{Kreck:1984} produced the first examples of closed smooth $4$-manifolds which are stably diffeomorphic, but not even homotopy equivalent. 
Their examples arose from the following  \emph{doubling construction}: for a finite $2$-complex $X$, let $M(X)$ be the boundary of a smooth regular neighbourhood of an embedding $X \hookrightarrow \R^5$ (see \cref{s:doubling-construction} for more details). 
The construction has the following properties (see \cite[Section 2]{Kreck:1984}): 
\begin{enumerate}
\item If $X$ and $Y$ are finite 2-complexes such that $X \simeq Y$ are homotopy equivalent, then $M(X)$ and $M(Y)$ are $h$-cobordant, hence homotopy equivalent.
\item If $\chi(X) = \chi(Y)$ and $\pi_1(X) \cong \pi_1(Y)$, then $M(X)$ and $M(Y)$ are stably diffeomorphic.
\end{enumerate}
Their main result was that there exist pairs of finite $2$-complexes $X$, $Y$ with $\chi(X) = \chi(Y)$ and $M(X) \not\simeq M(Y)$ such that $\pi_1(X) \cong \pi_1(Y)$ is elementary abelian of odd order. 
To achieve this, they defined a homotopy invariant (in $\cy 2$) for doubles $M(X)$ with $\pi_1(X)$  finite of odd order.

\smallskip
In this paper we will define and study the \textit{quadratic bias invariant}, which generalises the invariant of Kreck-Schafer.
Let $G$ be a finite group and $X$ a finite 2-complex with $\pi_1(X) \cong G$ which is \textit{minimal} in the sense that $\chi(X)$ is minimal over such complexes.
The \textit{bias invariant} was defined by Metzler \cite{Metzler:1976} to be a class $\beta(X)$ in an abelian group $B(G) := (\Z/m)^\times/\langle \pm D(G) \rangle$ where $m = m_G$ (\cref{def:modulus}) and $D(G)$ is the image of a certain map $\varphi\colon\Aut(G) \to (\Z/m)^\times/\{\pm 1\}$ (\cref{def:bias-original}).
The quadratic bias invariant will be a class $\beta_Q(M(X))$ in a quotient group $B_Q(G)$ of $B(G)$ 
(\cref{def:quadbias-high}).
Let $\sM_4(G)$ denote the set of homotopy types of $4$-manifolds $M(X)$ where $X$ is a minimal finite 2-complex with $\pi_1(X) \cong G$. We will show:

\begin{thmx} \label{thmx:main-general}
The quadratic bias invariant is a homotopy invariant. In particular, for a finite group $G$, the quadratic bias invariant defines a map
\[ \beta_Q\colon  
\sM_4(G) \to B_Q(G). \]
Furthermore, $\beta_Q(M(X)) = q(\beta(X))$ where $q \colon B(G) \twoheadrightarrow B_Q(G)$ is the natural surjection.	
\end{thmx}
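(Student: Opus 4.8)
The plan is to reduce the statement to two separate claims: (a) that the quantity $q(\beta(X)) \in B_Q(G)$ depends only on the homotopy type of the $4$-manifold $M(X)$, and (b) that this well-defined quantity agrees with the class $\beta_Q(M(X))$ as given by \cref{def:quadbias-high}. Since \cref{def:quadbias-high} presumably \emph{defines} $\beta_Q(M(X))$ intrinsically from the $4$-manifold (via its equivariant intersection form or quadratic refinement on $\pi_2$), the content of the theorem is the identification of this intrinsic invariant with the image of Metzler's bias $\beta(X)$. I would therefore open by recalling: (i) the structure of $\pi_2(M(X))$ as a $\ZG$-module, which by the doubling construction is controlled by the chain complex of the universal cover $\widetilde X$ — concretely $\pi_2(M(X)) \cong \pi_2(X) \oplus \overline{\pi_2(X)^*}$ or a stabilization thereof, with the intersection form being the hyperbolic-type pairing coming from Poincaré duality; and (ii) the precise recipe by which $\beta(X)$ is extracted from the second homotopy module of $X$ together with its $\ZG$-valued ``linking'' or self-intersection data. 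The bias invariant $\beta(X)$ records the discriminant-type unit in $(\Z/m)^\times$ attached to the minimal $2$-complex; passing to the double introduces a quadratic refinement (hence the quotient $B(G) \twoheadrightarrow B_Q(G)$ killing the ambiguity that becomes invisible once one remembers only the quadratic form on the $4$-manifold).

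The key steps, in order, are as follows. First, I would verify the \emph{homotopy invariance} half: given $M(X) \simeq M(Y)$, one gets an isometry of the equivariant quadratic intersection forms on $\pi_2$; I must show this forces $q(\beta(X)) = q(\beta(Y))$. The mechanism is that any such isometry, restricted appropriately, induces an automorphism-compatible change of the discriminant unit that lies precisely in the subgroup $\langle \pm D(G)\rangle$ modulo the extra relations defining $B_Q(G)$ — this is where the $\Aut(G)$-ambiguity (the map $\varphi$) and the quadratic ambiguity both get absorbed. Second, for the identification $\beta_Q(M(X)) = q(\beta(X))$: I would compute $\beta_Q(M(X))$ directly from the doubled chain complex, unwinding \cref{def:quadbias-high}, and compare term-by-term with Metzler's formula for $\beta(X)$ from \cref{def:bias-original}. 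The doubling should contribute only a hyperbolic summand, whose discriminant contribution is trivial (a square, hence killed in $B_Q(G)$), so the surviving class is exactly the image of $\beta(X)$. Third, I would check consistency of the definition: that $\beta_Q(M(X))$ does not depend on the choice of embedding $X \hookrightarrow \R^5$ or on stabilization by $S^2 \times S^2$-summands, which follows since stabilization adds a hyperbolic form.

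The main obstacle I expect is the first step — showing that a homotopy equivalence $M(X) \simeq M(Y)$ of the $4$-manifolds translates into exactly the right equivalence of algebraic data, and that no information beyond what survives in $B_Q(G)$ is actually needed. The subtlety is that a homotopy equivalence of $4$-manifolds gives an isometry of equivariant \emph{quadratic} forms (with their $\ZG$-valued self-intersections $\mu$ subject to the Wall relations), which is genuinely weaker than an isometry of the underlying $2$-type data of $X$; one must show the induced automorphism of the relevant $\Z/m$-module sits inside $\pm D(G)$ up to the quadratic indeterminacy. Controlling precisely this indeterminacy — identifying which units become squares or become identified with their negatives once one remembers only the quadratic refinement on the double — is the technical heart, and it is exactly what forces the passage from $B(G)$ to the quotient $B_Q(G)$. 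The parity hypotheses in Kreck–Schafer's original setting ($|G|$ odd) suggest that in the general case one must be careful about $2$-torsion phenomena in $(\Z/m)^\times$, and I would expect the definition of $B_Q(G)$ in \cref{def:quadbias-high} to be engineered precisely to make this step go through uniformly.
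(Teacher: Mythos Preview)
Your proposal has a genuine gap: you have not identified the mechanism by which homotopy invariance is actually established, and in places you conflate the content of \cref{thmx:main-general} (invariance) with that of \cref{thmx:main-B_Q(G)} (evaluation).

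The paper's argument does not proceed by showing that an isometry of equivariant intersection forms pins down a discriminant unit up to ``quadratic indeterminacy''. Instead, the obstruction group $PB_Q(G,n)$ is defined structurally as
\[
\frac{\DiagIsom(\wh e)}{[\IM(\Isom_\Psi(e^G)) \cdot \TriIsom(\wh e)] \cap \DiagIsom(\wh e)},
\]
a quotient of \emph{diagonal} isometries of the induced Tate form $\wh e$ by those arising from isometries of the fixed-point form $e^G$, modulo upper-triangular isometries. The technical heart is \cref{prop:chain-map}: any chain map $h\colon D_1 \to D_2$ between algebraic $2n$-doubles with $H_0(h)=H_{2n}(h)=\id$ and $H_n(h)\hh$ an isometry satisfies
\[
H_n(h)\hh = I(D_1,D_2) \circ \left(\begin{smallmatrix}\id & \alpha \\ 0 & \id\end{smallmatrix}\right)
\]
for a \emph{canonical} diagonal piece $I(D_1,D_2)$ and some triangular factor. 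This decomposition is what makes $\beta_Q$ well-defined: the triangular factor is killed in the quotient, and the ambiguity from different choices of chain map (equivalently, from a homotopy equivalence $M(X)\simeq M(Y)$ inducing an integral isometry of $e^G$) lands in $\IM(\Isom_\Psi(e^G))$. The passage from manifolds to this algebraic picture requires one further step (\cref{prop:Dalg-commutes}): the intersection form $S_{M(X)}$ is only \emph{metabolic} with $\phi^G=0$, not literally hyperbolic, but this suffices for an integral isometry $\simeq_{\Z}$ to the hyperbolic form, and \cref{prop:PB-an-invariant} shows $\beta_Q$ is already a $\simeq_{\Z}$-invariant.

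Your remarks about ``units becoming squares'' and ``discriminant contribution being trivial (a square)'' belong to the evaluation of $B_Q(G)$ under the special hypothesis $H_2(G)\cong(\Z/m)^d$ in \cref{thmx:main-B_Q(G)}; squares play no role in the proof of \cref{thmx:main-general}. The relation $\beta_Q(M(X))=q(\beta(X))$ is obtained (in \cref{prop:bias-vs-qbias}) by constructing the chain map $h=M(f,g)$ on the double from chain maps $f,g$ between the original complexes and observing that $H_n(h)\hh = ((H_n(f)^*)\hh)^{-1}\oplus H_n(f)\hh$ is diagonal with entry exactly $\sigma(C,C')$ from \cref{prop:bias-tate}. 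None of this is visible in your sketch; the diagonal/triangular decomposition of Tate isometries is the missing idea.
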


The extent to which the quadratic bias can be used to distinguish manifolds depends on choosing the quotient $B_Q(G)$ of $B(G)$ so that $\beta_Q(M(X))$ is both a homotopy invariant and can also be computed in non-trivial examples.
\cref{s:bias-manifolds,s:qbias_Lgroups} are directed towards this goal.
A description of the subgroup $N(G)= \ker(q\colon B(G) \to B_Q(G))$ can be found in \cref{remark:Aut(G)-is-easy} (ii).

If $H_2(G;\bZ)$ has a certain special form  we can explicitly compute the quadratic bias obstruction group $B_Q(G)$.
Recall that a finite group $G$ is \textit{efficient} if $\chi_{\min}(G) = 1 + d(H_2(G;\Z))$, where $\chi_{\min}(G)$ denotes the minimal Euler characteristic of a finite $2$-complex $X$ with $\pi_1(X) \cong G$, and 
$d(\cdot)$ denotes the minimal number of generators of a group \cite{Ha00}.

\begin{thmx} \label{thmx:main-B_Q(G)}
Let $G$ be a finite group such that $H_2(G;\Z) \cong (\Z/m)^d$ 
for some $m\geq 1$, $d \ge 3$.  If $G$ is efficient, then there is an isomorphism
\[ B_Q(G) \cong \frac{(\Z/m)^\times}{\pm (\cy m)^{\times 2} \cdot D(G)}\]
where $D(G) = \IM(\varphi_G\colon  \Aut(G) \to  (\Z/m)^\times/\{\pm 1\})$. If $G$ is not efficient, then $B_Q(G)=0$.
\end{thmx}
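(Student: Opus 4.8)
The plan is to reduce the computation of $B_Q(G)$ to the definitions of the bias group $B(G)$ and the modulus $m_G$, using the hypothesis $H_2(G;\Z) \cong (\Z/m)^d$ together with efficiency to pin down both $m_G$ and the kernel subgroup $N(G) = \ker(q\colon B(G) \to B_Q(G))$. First I would recall from \cref{def:modulus} that $m_G$ is built from the (deleted) order ideal of $H_2(G;\Z)$, so that under the hypothesis $H_2(G;\Z)\cong(\Z/m)^d$ one gets $m_G = m$ directly, and hence $B(G) = (\Z/m)^\times/\langle \pm D(G)\rangle$ with $D(G) = \IM(\varphi_G\colon\Aut(G)\to(\Z/m)^\times/\{\pm1\})$ as stated. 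The efficiency hypothesis enters to guarantee that a minimal $2$-complex $X$ with $\pi_1(X)\cong G$ actually exists with $\chi(X) = 1 + d$ and, more importantly, that $\pi_2(X)$ (equivalently the relevant $\Z G$-module governing the homotopy classification) has the "expected" rank, so that the stable form analysis behind $\beta_Q$ applies without correction terms. I expect the non-efficient case to be immediate from the definitions: if $G$ is not efficient, then the minimal Euler characteristic is strictly larger than $1 + d(H_2(G;\Z))$, and the construction of the quadratic bias (which compares quadratic refinements of a fixed linking-type form on a module of the expected rank) degenerates — there is extra stable freedom that kills the invariant, so $B_Q(G) = 0$.

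The substantive case is $G$ efficient with $d \ge 3$. Here the strategy is to identify $N(G)$ explicitly. By \cref{thmx:main-general} we know $\beta_Q = q\circ\beta$ and by \cref{remark:Aut(G)-is-easy}(ii) there is a description of $N(G)$; the claim to prove is that, under the stated hypotheses, $N(G)$ is exactly the image in $B(G) = (\Z/m)^\times/\langle\pm D(G)\rangle$ of the squares $(\Z/m)^{\times 2}$. Concretely one wants
\[ B_Q(G) \;=\; B(G)\big/ q\big((\cy m)^{\times 2}\big) \;\cong\; \frac{(\Z/m)^\times}{\pm\,(\cy m)^{\times 2}\cdot D(G)}. \]
The reason squares appear is that passing from the bias invariant $\beta(X)$ (a homotopy invariant of the $2$-complex, living in the units mod $\pm$-automorphisms) to the quadratic bias $\beta_Q(M(X))$ (a homotopy invariant of the double $M(X)$) involves the extra stabilization coming from connect-summing with $S^2 \times S^2$ and, at the level of the sesquilinear/quadratic form data, the freedom to rescale by a square (changing a generator of a cyclic summand by a unit $u$ rescales the relevant self-intersection form entry by $u^2$). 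So the key step is a form-theoretic lemma: over the ground ring $\Z/m$, the orbit of the standard quadratic form of rank $d\ge 3$ under the relevant automorphism group is exactly the set of forms differing by a square multiple (and by an automorphism of $G$). The hypothesis $d \ge 3$ should be what makes the quadratic form group large enough that no further invariants survive — this is a stable-range phenomenon, and $d \ge 3$ is the threshold where the orthogonal group acts transitively enough on discriminants-modulo-squares.

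The main obstacle, I expect, is precisely this form-theoretic transitivity statement over $\Z/m$ for non-prime, possibly non-squarefree $m$: one must handle $m = \prod p_i^{e_i}$ via CRT, then on each $\Z/p^e$-factor understand the action of the orthogonal (or unitary, in the $\U^\ep$ notation of the paper) group of a rank-$d$ quadratic form on the set of discriminants, showing the orbit is all of $(\Z/p^e)^\times / (\Z/p^e)^{\times 2}$ when $d \ge 3$ — including the delicate case $p = 2$, where squares in $(\Z/2^e)^\times$ have index $4$ (for $e\ge 3$) rather than $2$, and one must check the quadratic form automorphisms still realize everything. I would isolate this as a standalone lemma, prove it prime-by-prime using known structure of orthogonal groups over $\Z/p^e$ (Witt-type cancellation in rank $\ge 3$), and then assemble the global statement. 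Once that lemma is in hand, combining it with \cref{thmx:main-general} (to know $\beta_Q = q\circ\beta$ and that $\beta$ surjects onto $B(G)$ as $X$ ranges over minimal complexes, via the realization results for $2$-complexes over efficient groups) and with the computation $m_G = m$ gives the claimed isomorphism, and the non-efficient case is disposed of separately as above.
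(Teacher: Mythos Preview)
Your high-level outline is right in one respect: the efficient case reduces to showing that the kernel $N(G) = \ker(q\colon B(G) \to B_Q(G))$ is exactly the image of $(\Z/m)^{\times 2}$, and the non-efficient case is immediate (though the precise reason is that non-efficiency forces $m_G = 1$, hence already $B(G) = 0$; see \cref{remark:m=1}).

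However, your proposed mechanism for the efficient case is not the one the paper uses, and as stated it has a genuine gap. The paper does \emph{not} argue via orbits of quadratic forms under orthogonal groups over $\Z/p^e$, nor via Witt cancellation. Instead, it first shows (\cref{thm:PB-H_2-elementary}) that under the hypothesis $H_2(G)\cong(\Z/m)^d$ with $d = r_{(G,2)}$, the forms $e^G$ and $\wh e$ are scaled hyperbolic, so that after removing the triangular isometries (\cref{lem:tri}) one obtains
\[
PB_Q(G) \;\cong\; \frac{D_{2d}(\Z/m)}{\IM\bigl(U_{2d}(\Z)\bigr)\cap D_{2d}(\Z/m)}.
\]
The problem is thus a \emph{lifting} question for isometries: which diagonal isometries of the hyperbolic form over $\Z/m$ lift to unitary matrices over $\Z$? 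This is not the same as an orbit question for forms, and your ``orbit of the standard quadratic form'' framing does not capture it. The two directions are then handled separately. That squares lift (giving $(\Z/m)^{\times 2} \subseteq N(G)$) is \cref{lemma:lifting-squares}, proved by explicit matrix identities (commutator and Whitehead identities, \cref{lemma:identities}) requiring $d\ge 3$ --- this is where the stable-range hypothesis enters, and is closest in spirit to what you sketch. But the converse direction, that \emph{only} $\pm$squares occur in the image, is where your proposal has no real argument: the paper establishes this (\cref{prop:hom-to-Lgroups}) by mapping into the odd-dimensional $L$-group $L_1^s(\Z/m)$ via the Ranicki--Rothenberg sequence, using Wall's computation $L_1^s(\Z) = 0$ and the identification $L_1^s(\Z/m) \cong (\Z/m)^\times/\pm(\Z/m)^{\times 2}$ (see \cref{prop:Lgroups-main} and Appendix~\ref{ss:L-theory-computations}). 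Your heuristic about connect-summing with $S^2\times S^2$ introducing square rescalings is not the actual mechanism; the squares arise from the hyperbolic map in $L$-theory, not from geometric stabilisation.

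In short: you would need to replace the ``orbit/Witt cancellation'' plan by an argument that controls the image of $U_{2d}(\Z) \to U_{2d}(\Z/m)$ on diagonals, and for the upper bound on that image you need an invariant that detects non-squares --- the paper uses $L_1^s$ for this, and it is not clear your prime-by-prime orthogonal-group analysis supplies an equivalent.
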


It is known that finite abelian groups are efficient \cite[Proposition 5]{Sieradski:1979}, but Swan constructed a group of the form $G \cong (\cy 7)^3 \rtimes \cy 3$ which is not efficient \cite[p196]{Swan:1965}.

\begin{remark} We would expect the structure of $B_Q(G)$ to be much more complicated in general. Its definition involves new ideas related to the unitary isometries of multi-scaled hyperbolic forms arising from the decomposition of $H_2(G;\bZ)$ into cyclic factors (see \cref{prop:sevenfour}).
\end{remark}

Using Theorems \ref{thmx:main-general} and \ref{thmx:main-B_Q(G)}, we are now able to effectively compare closed smooth 4-manifolds of the form $M(X)$ up to homotopy equivalence. This allows us to establish the following, which answers a recent question of Kasprowski-Powell-Ray in the affirmative \cite[Question 1.5]{KPR22}.

\begin{thmx} \label{thmx:main-examples}
For each $k \geq 2$, there exist closed smooth $4$-manifolds $M_1, M_2, \cdots, M_k$ which are all stably diffeomorphic but not pairwise homotopy equivalent.
\end{thmx}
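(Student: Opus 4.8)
The plan is to realise the $M_i$ as doubles $M(X_i)$ of carefully chosen minimal finite $2$-complexes over a single finite group $G$, and to separate their homotopy types with the quadratic bias invariant. Since all the $X_i$ will be minimal with $\pi_1(X_i)\cong G$, they have the same Euler characteristic $\chi(X_i)=\chi_{\min}(G)$; hence property~(2) of the doubling construction (\cref{s:doubling-construction}) already makes $M(X_1),\dots,M(X_k)$ pairwise stably diffeomorphic, and so all stably diffeomorphic, as stable diffeomorphism is an equivalence relation. By \cref{thmx:main-general} the quadratic bias is a homotopy invariant with $\beta_Q(M(X_i))=q(\beta(X_i))\in B_Q(G)$. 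Therefore it suffices to find a finite group $G$ and minimal finite $2$-complexes $X_1,\dots,X_k$ with $\pi_1(X_i)\cong G$ whose biases push to $k$ distinct classes of $B_Q(G)$; this reduces \cref{thmx:main-examples} to (a) choosing $G$ with $|B_Q(G)|\ge k$, and (b) realising $k$ distinct classes of $B_Q(G)$ by biases of minimal $2$-complexes over $G$.

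For (a) I would take $G=(\Z/m)^3$. This group is abelian, hence efficient, and $H_2(G;\Z)\cong\wedge^2(\Z/m)^3\cong(\Z/m)^3$, so $d=3$ and \cref{thmx:main-B_Q(G)} applies:
\[
  B_Q(G)\;\cong\;\frac{(\Z/m)^\times}{\pm\,(\Z/m)^{\times 2}\cdot D(G)},\qquad
  D(G)=\IM\!\big(\varphi_G\colon\GL_3(\Z/m)\to(\Z/m)^\times/\{\pm1\}\big).
\]
The point is that $D(G)$ is controlled by the $\GL_3(\Z/m)$-action on $H_2(G;\Z)\cong\wedge^2(\Z/m)^3$, on which $A$ acts with determinant $\det(A)^2$; hence $D(G)$ is contained in the subgroup of squares, and $B_Q(G)\cong(\Z/m)^\times/\pm(\Z/m)^{\times2}$. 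Taking $m=p_1\cdots p_r$ a product of $r$ distinct primes with each $p_i\equiv 1\pmod 4$, so that $-1$ is a square mod $m$ and $(\Z/m)^\times/(\Z/m)^{\times2}\cong(\Z/2)^r$, we get $|B_Q(G)|=2^r$, which is $\ge k$ once $r\ge\log_2 k$.

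For (b) I would use the classical scaling construction for the bias over finite abelian groups: fixing an efficient presentation $X_1$ of $G=(\Z/m)^3$, one obtains for each unit $u\in(\Z/m)^\times$ a minimal $2$-complex $X_u$ with $\pi_1(X_u)\cong G$ and $\beta(X_u)=u\cdot\beta(X_1)$ in $B(G)$ (this is the mechanism behind Metzler's realisation results and is the one used by Kreck--Schafer). Since $|B_Q(G)|=2^r\ge k$, we may pick units $u_1,\dots,u_k\in(\Z/m)^\times$ in pairwise distinct cosets of $\pm(\Z/m)^{\times2}$, and set $M_i:=M(X_{u_i})$. Each $M_i$ is a closed smooth $4$-manifold, namely the boundary of a smooth regular neighbourhood of an embedding $X_{u_i}\hookrightarrow\R^5$; the $M_i$ are all stably diffeomorphic by the first paragraph; and $\beta_Q(M_i)=q(\beta(X_{u_i}))=\bar u_i\cdot q(\beta(X_1))$ are pairwise distinct in $B_Q(G)$, so by homotopy invariance of $\beta_Q$ no two of the $M_i$ are homotopy equivalent.

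The main obstacle lies in the interplay between (a) and (b): one must identify $\varphi_G$ precisely enough to be certain $D(G)$ does not exhaust $(\Z/m)^\times/\pm(\Z/m)^{\times2}$ (otherwise $B_Q(G)$ collapses), and simultaneously control the bias of the scaled complexes finely enough that the classes $q(\beta(X_{u_i}))$ remain distinct in the quotient $B_Q(G)$, not merely in $B(G)$. Thus essentially all the work is algebraic --- the explicit computation of $B_Q(G)$ in \cref{thmx:main-B_Q(G)} together with the structural analysis of the bias on families of minimal $2$-complexes in \cref{s:bias-manifolds,s:qbias_Lgroups} --- while the manifold input is limited to the two formal properties of the doubling construction and to \cref{thmx:main-general}.
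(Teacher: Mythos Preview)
Your proposal is correct and follows essentially the same route as the paper: choose $G=(\Z/m)^3$ (the paper allows the more general $(\Z/m)^d\times\Z/t$ but uses $d=3$, $t=1$ for $n=2$), compute $D(G)$ to be squares so that $B_Q(G)\cong(\Z/m)^\times/\pm(\Z/m)^{\times2}$, make this large by giving $m$ many prime factors, and realise distinct classes via the Metzler--Sieradski complexes $X^r_{G,2}$ together with the surjectivity of $\beta$ (\cref{thm:bias-surj-abelian}) and the factorisation $\beta_Q(M(X))=q(\beta(X))$ from \cref{prop:beta-to-betaQ}. Your direct argument for $D(G)\subseteq$ squares via the $\wedge^2$-action (determinant $\det(A)^{d-1}=\det(A)^2$ for $d=3$) is exactly the content of the paper's formula $e(3,2)=2$ in \cref{prop:compute-D(Gn)}, which the paper obtains by citing Browning and Linnell; your restriction to $p_i\equiv1\pmod4$ gives the marginally sharper bound $r\ge\log_2 k$ versus the paper's $1+\log_2 k$ for arbitrary primes.
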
 

These examples can be taken to be stably parallelisable and have finite abelian fundamental groups of the form $G = (\cy m)^d$, for any $d \ge 3$ odd, and any $m \geq 3$ with sufficiently many distinct prime factors. This generalises the case $k=2$ which was established by Kreck-Schafer \cite{Kreck:1984} over fundamental groups of the form $(\Z/p)^d$ for a prime 
 $p\equiv 1 \mod 4$. We also construct examples over non-elementary abelian groups $(\Z/m)^d \times \Z/t$ where $d \ge 4$ is even and certain values of $m$ and $t$ with $m \ne t$.

\begin{remark} 
The result of \cref{thmx:main-examples} is optimal for manifolds with finite fundamental group, since by \cite[Corollary 1.5]{Hambleton:1988} there are only finitely many homeomorphism types of closed $4$-manifolds with a given finite fundamental group and Euler characteristic.
It remains open whether there exists an infinite collection of such manifolds with arbitrary fundamental group.
\end{remark}

To obtain examples over other fundamental groups, we require a pair of finite $2$-complexes $X$, $Y$ with finite fundamental group $G$ and $\chi(X)=\chi(Y)$ but which are not homotopy equivalent. Such examples have previously only been known to exist when $G$ is either a finite abelian group \cite{Metzler:1976,Sieradski:1979} or a group with periodic cohomology \cite{Dy76,Ni20-II,Ni21}. 
However, if $G$ has periodic cohomology, then $H_2(G;\Z)=0$ \cite[Corollary 2]{Sw71} and so the bias invariant contains no information. 

In spite of this, we will establish the following (see \cref{thm:Q8xCp^3}). This serves to demonstrate that the quadratic bias invariant is computable for non-abelian fundamental groups.  
\begin{thmx} \label{thmx:Q8xCp^3}
Let $G = Q_8 \times (\Z/p)^3$ where $p$ is a prime such that $p \equiv 1 \mod 8$. Then:
\begin{clist}{(i)}
\item
There exist minimal finite $2$-complexes $X$, $Y$ with fundamental group $G$ which are homotopically distinct.
\item	
There exist closed smooth $4$-manifolds $M$, $N$ with fundamental group $G$ which are stably diffeomorphic but not homotopy equivalent. 
\end{clist}
\end{thmx}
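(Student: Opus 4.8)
The plan is to derive both statements from the general theory. Part (ii) follows from part (i) together with \cref{thmx:main-general}: if $X,Y$ are minimal finite $2$-complexes with $\pi_1(X)\cong\pi_1(Y)\cong G$, then $\chi(X)=\chi(Y)=\chi_{\min}(G)$, so the doubling construction produces stably diffeomorphic $4$-manifolds $M(X)$ and $M(Y)$, and \cref{thmx:main-general} gives $\beta_Q(M(X))=q(\beta(X))$ and $\beta_Q(M(Y))=q(\beta(Y))$. It therefore suffices to produce minimal $2$-complexes $X,Y$ over $G$ with $q(\beta(X))\neq q(\beta(Y))$ in $B_Q(G)$: since $\beta$ is a homotopy invariant this already forces $X\not\simeq Y$ (proving (i)), and it gives $\beta_Q(M(X))\neq\beta_Q(M(Y))$, hence $M(X)\not\simeq M(Y)$ (proving (ii)).

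First I would compute $B_Q(G)$. As $Q_8$ has periodic cohomology, $H_2(Q_8;\Z)=0$, and since $\gcd(|Q_8|,p)=1$ the Künneth formula annihilates the cross term $H_1(Q_8;\Z)\otimes H_1((\Z/p)^3;\Z)$, so $H_2(G;\Z)\cong H_2((\Z/p)^3;\Z)\cong\Lambda^2(\F_p^3)\cong(\Z/p)^3$; thus $H_2(G;\Z)\cong(\Z/m)^d$ with $m=p$, $d=3$. One then checks that $G$ is efficient: both $Q_8$ (which has a balanced presentation) and the abelian group $(\Z/p)^3$ are efficient, and one verifies (directly, or via efficiency of direct products of coprime-order groups) that $\chi_{\min}(G)=4=1+d(H_2(G;\Z))$. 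Hence \cref{thmx:main-B_Q(G)} applies and
\[ B_Q(G)\;\cong\;\frac{(\Z/p)^\times}{\pm\,(\Z/p)^{\times 2}\cdot D(G)},\qquad D(G)=\IM\bigl(\varphi_G\colon\Aut(G)\to(\Z/p)^\times/\{\pm 1\}\bigr). \]
Since $G$ is nilpotent its Sylow subgroups are characteristic, so $\Aut(G)\cong\Aut(Q_8)\times\GL_3(\F_p)$; the factor $\Aut(Q_8)$ acts trivially on the $p$-local homology of $G$ and contributes nothing to $D(G)$, while $\tau\in\GL_3(\F_p)=\Aut((\Z/p)^3)$ acts on the three-dimensional space $H_2(G;\Z)\cong\Lambda^2(\F_p^3)$ with determinant $\det(\tau)^2$. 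Therefore $D(G)=(\F_p^\times)^2/\{\pm 1\}$; as $p\equiv 1\mod 4$ we have $-1\in(\F_p^\times)^2$, so $\pm\,(\Z/p)^{\times 2}\cdot D(G)=(\F_p^\times)^2$ and $B_Q(G)\cong\F_p^\times/(\F_p^\times)^2\cong\Z/2$.

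It remains to realise the nontrivial class of $B_Q(G)$ by minimal $2$-complexes, and this is the main obstacle. Fix a standard minimal complex $X_0$ over $G$ — for instance the presentation complex built from a balanced presentation of $Q_8$ together with an efficient presentation of $(\Z/p)^3$ — normalised so that $\beta(X_0)$ is trivial. Using the realisation machinery for minimal $2$-complexes developed in the earlier sections, modifying $X_0$ by a unit $u$ of $\Z G$ (equivalently, by a suitable class of $K_1(\Z G)$) yields a minimal complex $X_u$ over $G$ with $\beta(X_u)=\rho(u)\cdot\beta(X_0)$ for the relevant evaluation map $\rho$ into $(\Z/p)^\times$, so everything reduces to finding a unit $u$ with $\rho(u)$ a non-square modulo $p$. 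Over $(\Z/p)^3$ alone such units are available exactly because $p\equiv 1\mod 4$ (this is the mechanism behind the Kreck--Schafer examples), but in $\Z[Q_8\times(\Z/p)^3]$ the relevant $p$-local factor involves the quaternion algebra $\mathbb{H}\otimes_\Q\Q(\zeta_p)$, and ensuring this algebra is sufficiently split — so that its reduced norm and $SK_1$ do not obstruct the construction and the required unit exists — is where the stronger congruence $p\equiv 1\mod 8$ enters. Taking $X=X_0$ and $Y=X_u$ then gives $q(\beta(X))\neq q(\beta(Y))$ in $B_Q(G)\cong\Z/2$, which by the first paragraph establishes both (i) and (ii).
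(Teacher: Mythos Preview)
Your overall strategy is right and matches the paper's: compute $H_2(G)\cong(\Z/p)^3$ via K\"unneth, verify efficiency, apply \cref{thmx:main-B_Q(G)}, compute $D(G)$ via $\Aut(G)\cong\Aut(Q_8)\times\GL_3(\F_p)$, and then realise both classes in $B_Q(G)\cong\Z/2$. But two steps contain genuine gaps.

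\textbf{The $\Aut(Q_8)$ contribution and the role of $p\equiv 1\bmod 8$.} You assert that $\Aut(Q_8)$ ``acts trivially on the $p$-local homology of $G$ and contributes nothing to $D(G)$''. But $\varphi_G(\theta)$ is \emph{not} the determinant of the action of $\theta$ on $H_2(G)$; it is the bias $\beta(C_*(\wt X),C_*(\wt X)_\theta)$, computed from a chain map inducing the identity on $H_0$ (see \cref{prop:bias-Aut(G)-action} and \cref{prop:bias-versions}). Trivial action on $H_2(G)$ does not obviously force this to vanish. The paper does not attempt to prove that $\varphi_G(\Aut(Q_8))$ is trivial; instead it argues abstractly: $\Aut(Q_8)\cong S_4$ has abelianisation $\Z/2$, so its image in $(\Z/p)^\times/\{\pm 1\}\cong\Z/4m$ (writing $p=1+8m$) is contained in the unique order-$2$ subgroup $\{0,2m\}$, and since $2m$ is even this lies in squares. \emph{This} is where $p\equiv 1\bmod 8$ enters---to force $\varphi_G(\Aut(Q_8))\le(\Z/p)^{\times 2}/\{\pm 1\}$---not in the realisation step.

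\textbf{Realisation of the bias classes.} Your final paragraph invokes ``realisation machinery'' that modifies a complex by a unit of $\Z G$, and places the $p\equiv 1\bmod 8$ hypothesis in a splitting condition for $\mathbb{H}\otimes\Q(\zeta_p)$. No such machinery is developed in the paper, and this argument is not carried out. The paper's approach is entirely explicit: it writes down a one-parameter family of efficient presentations
\[
\mathcal{P}_r=\langle A,B,C\mid A^{2p}B^{-2p},\,BAB^{-1}A^{2p-1},\,C^p,\,[A,B^{p-1}],\,[A,C^r],\,[B,C]\rangle
\]
for $G$ (with $A=xa$, $B=yb$, $C=c$), checks by hand that these present $G$ and are minimal, and then computes via Fox derivatives that $\beta(X_{\mathcal{P}_r})=[r]$ in $(\Z/p)^\times/\pm D(G)$ (see \cref{lemma:Q8-presentations} and \cref{lemma:Q8-bias-surj}). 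This direct construction is the actual new content for non-abelian $G$, simultaneously establishing efficiency of $G$ and surjectivity of $\beta$ onto $B(G)$; your proposed route does neither.
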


This also gives the first example of a non-abelian finite group $G$ which does not have periodic cohomology such that there exists homotopically distinct finite $2$-complexes $X$, $Y$ with fundamental group $G$ and $\chi(X)=\chi(Y)$.
Part (ii) follows from (i) by taking $M = M(X)$, $N = M(Y)$ and applying the conditions of \cref{thmx:main-B_Q(G)} to show that $\beta_Q(M) \neq \beta_Q(N)$.

A number of interesting questions remain concerning the doubles $M(X)$.
If $X \simeq Y$, then $M(X)$ and $M(Y)$ are $h$-cobordant.
More generally, we ask: 

\begin{question} \label{question:diffeo}
If $X\simeq Y$, then are $M(X)$ and $M(Y)$ diffeomorphic?
\end{question}

An important special case is when $X$  is a point and $Y$ is the presentation $2$-complex of a potential counterexample to the Andrews-Curtis conjecture \cite{AC65}. In this case we have $X \simeq Y$, $M(X) = S^4$, $M(Y) \simeq S^4$ and so \cref{question:diffeo} is equivalent to the question of whether $M(Y)$ is an exotic $4$-sphere.
Such examples were considered by Akbulut-Kirby in \cite{AK85} and, for one such example, \cref{question:diffeo} was shown by Gompf to have an affirmative answer \cite{Go91}.

It was recently shown by Freedman-Krushkal-Lidman that all Seiberg-Witten invariants vanish for doubles $M(X)$ \cite[Proposition 1.3]{Freedman:2024}.
We could also ask \cref{question:diffeo} for simple homotopy equivalent $2$-complexes,  since in that case $M(X)$ and $M(Y)$ are $s$-cobordant \cite[p15]{Kreck:1984}.

\subsection{Comparison with quadratic $2$-type}

In \cite{Hambleton:1988,Ba88}, it was shown that a closed topological 4-manifold $M$ with $\pi_1(M)$ finite of odd order is determined up to homotopy equivalence by its quadratic 2-type 
\[ Q(M) = [\pi_1(M), \pi_2(M), k_M, S_M] \]
where $k_M \in H^3(\pi_1(M);\pi_2(M))$ denotes the $k$-invariant and $S_M\colon  \pi_2(M) \times \pi_2(M) \to \Z[\pi_1(M)]$ denotes the equivariant intersection form. 
An isometry $\cong$ of two such quadruples is an isomorphism of pairs $\pi_1$, $\pi_2$ respecting the $k$-invariant and inducing an isometry on $S$.
For arbitrary finite fundamental groups, there is a possibly non-zero additional invariant which lies in $\Tors(\Gamma(\pi_2(M)) \otimes_{\Z G} \Z)$. 
However, we will show (see \cref{thm:main-Q2type-body}):

\begin{theorem} \label{thm:main-Q2type}
Let $G$ be a finite group and let $M_1, M_2 \in \sM_{4}(G)$.
If $Q(M_1) \cong Q(M_2)$ are isometric, then $\beta_Q(M_1) = \beta_Q(M_2)$.
\end{theorem}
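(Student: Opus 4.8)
The plan is to use \cref{thmx:main-general} to reduce the assertion to a statement about the bias invariants of the underlying $2$-complexes, and then to show that the quadratic $2$-type of a double $M(X)$ determines $\beta(X)$ modulo the subgroup $N(G) = \ker(q\colon B(G)\to B_Q(G))$. Write $M_i \simeq M(X_i)$ for minimal finite $2$-complexes $X_i$ with $\pi_1(X_i)\cong G$; since $\beta_Q$ is a homotopy invariant with $\beta_Q(M(X_i)) = q(\beta(X_i))$ by \cref{thmx:main-general}, it then suffices to prove $q(\beta(X_1)) = q(\beta(X_2))$. The first step is to recall the effect of the doubling construction on the relevant algebraic invariants: the equivariant intersection form $S_{M(X)}$ is the (multi-scaled) hyperbolic form built from the second homotopy module $\pi_2(X)$, so $\pi_2(M(X))$ carries a distinguished metabolic structure in which $\pi_2(X)$ appears as a Lagrangian, and the $k$-invariant $k_{M(X)}$ is induced from that of $X$; on the other hand $\beta(X)$ is, by definition, the class in $B(G)$ of the discriminant-type unit in $(\Z/m)^\times$, with $m = m_G$, recording how $\pi_2(X)$ deviates from the standard minimal module.

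The second step is a transport argument. An isometry $h\colon Q(M_1)\xrightarrow{\,\cong\,} Q(M_2)$ restricts to a $\Z G$-linear isometry $(\pi_2(M_1), S_{M_1})\to(\pi_2(M_2),S_{M_2})$ which respects the $k$-invariants. Pulling the distinguished Lagrangian $\pi_2(X_2)\subset\pi_2(M_2)$ back through $h$ yields a Lagrangian $L\subset\pi_2(M_1)$ of $S_{M_1}$. Since $L$ and the distinguished Lagrangian $\pi_2(X_1)$ belong to the same hyperbolic form, they are interchanged by an element of the unitary isometry group $\U$ of the multi-scaled hyperbolic form attached to the decomposition of $H_2(G;\Z)$ into cyclic summands (compare \cref{prop:sevenfour}). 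The compatibility with $k$-invariants is what guarantees that $L$ is a Lagrangian of the same type as one arising from a $2$-complex, and it also pins down the identification of the two copies of $\pi_1$ up to $\Aut(G)$.

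The third step reads off the effect on the bias. Moving the distinguished Lagrangian by an element of $\U$ multiplies the discriminant unit by a norm, hence by an element of $\pm(\cy m)^{\times 2}$, while re-identifying $\pi_1$ by an automorphism of $G$ multiplies it by an element of $D(G)$; both changes become trivial in $B_Q(G)$ by the description of $N(G)$ in \cref{remark:Aut(G)-is-easy}~(ii) (and, in the special case $H_2(G;\Z)\cong(\cy m)^d$ with $G$ efficient, directly by \cref{thmx:main-B_Q(G)}). Hence $q(\beta(X_1)) = q(\beta(X_2))$, and therefore $\beta_Q(M_1)=\beta_Q(M_2)$. Note that the additional invariant in $\Tors(\Gamma(\pi_2(M))\otimes_{\Z G}\Z)$, needed to pin down the homotopy type of $4$-manifolds over an arbitrary finite fundamental group, never enters this argument: $\beta(X)$ is built purely from the module-with-form and $k$-invariant data of $M(X)$, so it is automatically insensitive to that invariant --- which is exactly why the theorem holds for all finite $G$ and not only those of odd order.

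The step I expect to be the main obstacle is the second one: identifying the indeterminacy in recovering the Lagrangian $\pi_2(X)$ from the intersection form $S_{M(X)}$ with precisely the unitary isometries of multi-scaled hyperbolic forms, and then matching the resulting indeterminacy in $\beta(X)$ with $N(G)$. This relies on the analysis of the isometry groups of these forms carried out in \cref{s:bias-manifolds,s:qbias_Lgroups}, in particular on \cref{prop:sevenfour}; the bookkeeping with $k$-invariants required to remain within the class of Lagrangians realised by genuine $2$-complexes is the other delicate point.
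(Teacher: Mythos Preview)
Your approach is genuinely different from the paper's and, as written, has a real gap. The paper does not try to recover $\beta(X)$ from the Lagrangian $\pi_2(X)\subset\pi_2(M(X))$ and then control the indeterminacy. Instead it proves the stronger intermediate statement that $Q(M_1)\cong Q(M_2)$ forces a $B$-polarised chain homotopy equivalence $(C_*(\wt M_1),S_{M_1})\simeq (C_*(\wt M_2),S_{M_2})$; since $\beta_Q$ is by construction an invariant of this pair (this is \cref{prop:PB-an-invariant} together with \cref{prop:Dalg-commutes}), the conclusion is immediate. The chain-level step is obtained by writing $M_1\simeq K\cup_g D^4$, invoking the Kasprowski--Teichner result that $M_2$ differs from $M_1$ by the action of some $\alpha\in\Tors(\Gamma(\pi_2(B))\otimes_{\Z G}\Z)$ on the attaching map, and observing that this torsion element dies in $H_3(\wt K)$ and hence does not change $C_*(\wt M)$ at all.

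The gap in your argument is in the second and third steps. First, $\beta(X)$ is \emph{not} a function of the $\Z G$-module $\pi_2(X)$ alone: the paper exhibits (via \cref{thm:gamma'}) complexes with isomorphic $\pi_2$ but distinct bias, so ``recovering the Lagrangian $\pi_2(X)$'' is not enough to pin down $\beta(X)$ even up to $N(G)$. You gesture at the $k$-invariant fixing this, but never say how the $k$-invariant of $M(X)$ determines that of $X$, nor how that data feeds back into the chain-level computation of the bias. Second, your claim that moving Lagrangians by elements of the unitary group $\U$ changes the bias by an element of $\pm(\cy m)^{\times 2}$ is precisely the content of \cref{thm:BQ-main}, which is only proved under the hypothesis $H_2(G)\cong(\Z/m)^d$ with $d\ge 3$; for general $G$ the subgroup $N(G)$ is described abstractly in \cref{remark:Aut(G)-is-easy}(ii) as the image of $[\IM(\Isom_\Psi(e^G))\cdot\TriIsom(\wh e)]\cap\DiagIsom(\wh e)$, and you have not shown that an arbitrary $\Z G$-isometry of $S_{M(X)}$, restricted to Tate cohomology, lands in this set. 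What is actually needed is a \emph{chain map} realising the isometry, because $\beta_Q$ is defined via $H_n(h)\hh$ for chain maps $h$ (see \cref{prop:chain-map}); your Lagrangian-transport argument produces only a module isometry, and bridging that gap is exactly the content of \cref{prop:QtoC}.
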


By \cref{{thmx:main-examples}} and \cite{KPR24,KNR22,Hambleton:1988,Ba88}, there are examples of stably diffeomorphic homotopy distinct 4-manifolds $M_1, \cdots, M_k$ such that $\Tors(\Gamma(\pi_2(M_i)) \otimes_{\Z G} \Z) \ne 0$. It is open whether this additional invariant is needed in order to determine such manifolds up to homotopy equivalence.

Although $\beta_Q(M)$ is determined by $Q(M)$, it is not immediately clear how to compute $\beta_Q(M)$ from $Q(M)$. It is also not clear how to use $Q(M)$ directly in order to establish quantitative results such as the ones given in Theorems \ref{thmx:main-examples} and \ref{thmx:Q8xCp^3} (ii). The following is currently open:

\begin{question} \label{question:pi2}
Do there exist closed smooth $4$-manifolds $M$, $N$ with fundamental group $G$ which are stably diffeomorphic but such that $\pi_2(M)$ and $\pi_2(N)$ are not $\Aut(G)$-isomorphic?
\end{question}

Similarly it is not known whether such manifolds exist such that $S_M$ and $S_N$ are not isometric modulo the action of $\Aut(G)$.
See \cref{s:preliminaries} for the definition of $\Aut(G)$-isomorphic.

\subsection{Results in higher dimensions}

In \cref{def:quadbias-high}, the invariant $\beta_Q$ is generalised to the doubles of finite $(G,n)$-complexes (see \cref{s:preliminaries}).
We define the quadratic bias invariant  
$$\beta_Q \colon \sM_{2n}(G) \to B_Q(G,n)$$
for all $n \ge 2$ where $\sM_{2n}(G)$ is the set of homotopy types of doubles of minimal finite $(G,n)$-complexes.
We obtain an analogue of \cref{thmx:main-general} (see \cref{thm:main-high}) as well as an analogue of \cref{thmx:main-B_Q(G)} (see \cref{thm:BQ-main}) when $G$ is a finite group such that $H_n(G;\Z) \cong (\Z/m)^d$ for some $m \ge 1$, $d \ge 3$ and $(G,n)$ satisfies the \textit{strong minimality hypothesis} (see \cref{def:min-hyp}).
The result is similar to \cref{thmx:main-B_Q(G)} for $n$ even, but we show that $B_Q(G,n) = 0$ for $n$ odd.

We also obtain explicit examples in higher dimensions, complementing the results of Conway-Crowley-Powell-Sixt \cite{CCPS22,CCPS23} which for dimensions $4n >4$ were either simply connected with $H_2(M) \neq 0$, or had infinite fundamental group $\pi_1(M) \cong \Z$.
See Theorems \ref{thm:final-examples} and \ref{thm:final-examples-eqforms}.

\begin{theorem} \label{thm:main-higher-dim}
For all $n \geq 2$ even, and all $k \geq 2$, there exist closed smooth $2n$-manifolds $M_1, M_2, \dots, M_k$  with non-trivial finite fundamental group which are all stably diffeomorphic, but not pairwise homotopy equivalent.
Furthermore, for the case $k=2$, the manifolds can be taken to have isometric equivariant intersection forms. \end{theorem}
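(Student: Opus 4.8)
The plan is to deduce \cref{thm:main-higher-dim} from the higher-dimensional versions of Theorems~\ref{thmx:main-general} and~\ref{thmx:main-B_Q(G)}, namely \cref{thm:main-high} and \cref{thm:BQ-main}. Fix $n \ge 2$ even and $k \ge 2$. It suffices to produce a finite group $G$ together with $m \ge 3$ and $d \ge 3$ such that $H_n(G;\Z) \cong (\Z/m)^d$, such that $(G,n)$ satisfies the strong minimality hypothesis (\cref{def:min-hyp}), $G$ is efficient, $|B_Q(G,n)| \ge k$, and such that there exist minimal finite $(G,n)$-complexes $X_1,\dots,X_k$ whose bias invariants have pairwise distinct images $q(\beta(X_i))$ in $B_Q(G,n)$. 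Granting this, set $M_i := M(X_i)$. The doubling construction (\cref{s:doubling-construction}), whose properties (1) and (2) above extend to finite $(G,n)$-complexes, shows that the $M_i$ are all stably diffeomorphic, since $\pi_1(X_i) \cong G$ and all $\chi(X_i)$ equal the minimal value; and \cref{thm:main-high} gives $\beta_Q(M_i) = q(\beta(X_i))$, so these classes are pairwise distinct and $M_1,\dots,M_k$ are pairwise not homotopy equivalent.

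To build such a $G$, take $G = (\Z/m)^D$ with $m = p_1 \cdots p_r$ odd and squarefree. Every Künneth summand of $H_n(G;\Z)$ is a copy of $\Z/m$, since $\Z/m \otimes \Z/m \cong \mathrm{Tor}(\Z/m,\Z/m) \cong \Z/m$; thus $H_n(G;\Z) \cong (\Z/m)^{c(n,D)}$ for a combinatorial number $c(n,D)$, and we choose $D$ with $c(n,D) \ge 3$. Finite abelian groups are efficient and $(G,n)$ satisfies the strong minimality hypothesis. Since $\GL_D(\Z/m)^{\mathrm{ab}}$ is generated by the determinant and $\det\colon \GL_D(\Z/m)\to(\Z/m)^\times$ is onto, the map $\varphi_G\colon\Aut(G)=\GL_D(\Z/m)\to(\Z/m)^\times/\{\pm1\}$ satisfies $\varphi_G(A)=\pm\det(A)^{e}$ for a fixed exponent $e=e(n,D)$, the determinant character of the $\GL_D$-action on $H_n(G;\Z)$, so $D(G)=\langle\pm(\Z/m)^{\times e}\rangle/\{\pm1\}$, where $(\Z/m)^{\times e}$ denotes the $e$-th powers. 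Choosing $D$ so that $e(n,D)$ is even — for $n=2$ one has $e=D-1$, recovering the case $d$ odd of \cref{thmx:main-examples} — gives $(\Z/m)^{\times e}\subseteq(\Z/m)^{\times 2}$, and \cref{thm:BQ-main} together with the Chinese remainder theorem yields
\[ B_Q(G,n)\cong\frac{(\Z/m)^\times}{\pm(\Z/m)^{\times 2}}\cong\prod_{i=1}^{r}\frac{\F_{p_i}^\times}{\langle\pm(\F_{p_i}^\times)^2\rangle}, \]
whose $i$-th factor is $\Z/2$ exactly when $p_i\equiv1\bmod 4$. Hence $|B_Q(G,n)|=2^{\#\{i\,:\,p_i\equiv1\bmod 4\}}\ge k$ once $m$ has enough prime factors $\equiv1\bmod 4$, and the complexes $X_1,\dots,X_k$ are supplied by the higher-dimensional analogue of the Metzler--Sieradski realisation of bias classes by minimal complexes over finite abelian groups (cf.\ \cref{thm:final-examples}). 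Non-elementary abelian examples are obtained by instead taking $G=(\Z/m)^D\times\Z/t$ as in \cref{thmx:main-examples}.

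For the final assertion, the equivariant intersection form $S_{M(X)}$ on $\pi_n(M(X))$ is hyperbolic on a $\Z G$-module built from $\pi_n(X)$: since $M(X)$ is the boundary of a regular neighbourhood of $X\hookrightarrow\R^{2n+1}$, the module $\pi_n(M(X))$ is an extension of $\pi_n(X)$ by its $\Z G$-dual carrying the standard hyperbolic pairing, so $S_{M(X)}$ is determined up to isometry by the $\Z G$-isomorphism type of $\pi_n(X)$. It therefore suffices, for $k=2$, to find two minimal finite $(G,n)$-complexes $X_1,X_2$ with $\pi_n(X_1)\cong\pi_n(X_2)$ and $q(\beta(X_1))\ne q(\beta(X_2))$. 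Such complexes exist because, already among homotopically distinct complexes realising a single fixed minimal $\Z G$-module as $\pi_n$, the bias takes at least two values — Metzler's original phenomenon — and this persists in dimension $2n$ under the above hypotheses (see \cref{thm:final-examples-eqforms}). Then $M_1=M(X_1)$ and $M_2=M(X_2)$ are stably diffeomorphic with isometric equivariant intersection forms but $\beta_Q(M_1)\ne\beta_Q(M_2)$, hence are not homotopy equivalent.

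The main obstacle is twofold. First, one must check that the algebraic theory of \cref{s:bias-manifolds} and \cref{s:qbias_Lgroups} carries over to even $n\ge4$: that homocyclic abelian groups (and $(\Z/m)^D\times\Z/t$) satisfy the strong minimality hypothesis for such $n$, that \cref{thm:BQ-main} applies, and that bias classes — and, for the $k=2$ statement, bias classes with a prescribed $\pi_n$-module — are realised by minimal $(G,n)$-complexes. Second, one must determine the determinant character $e(n,D)$ of $\GL_D(\Z/m)$ acting on $H_n((\Z/m)^D;\Z)$, including the $\mathrm{Tor}$-contributions, and select $D$ with $e(n,D)$ even, so that $B_Q(G,n)$ collapses to $(\Z/m)^\times/(\pm(\Z/m)^{\times 2})$ and can be made arbitrarily large.
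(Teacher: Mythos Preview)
Your strategy coincides with the paper's: take $G=(\Z/m)^D$ (or $(\Z/m)^D\times\Z/t$), invoke \cref{thm:BQ-main} to compute $B_Q(G,n)$, realise bias classes by minimal $(G,n)$-complexes over finite abelian groups, and double. Two points need correction.

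First, a minor one: your formula $|B_Q(G,n)|=2^{\#\{i:p_i\equiv1\bmod4\}}$ is wrong, because $(\Z/m)^\times/\pm(\Z/m)^{\times2}$ does not split over the prime factors of $m$ --- one is quotienting by the single global element $-1$, not factorwise. For odd squarefree $m=p_1\cdots p_t$ the order is $2^t$ if every $p_i\equiv1\bmod4$ and $2^{t-1}$ otherwise; the paper just uses the bound $\tfrac12|(\Z/m)^\times/(\Z/m)^{\times2}|\ge2^{t-1}$. Your choice of primes still works, but the justification is off.

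Second, and this is the substantive gap: your argument for the ``isometric intersection forms'' clause repeats precisely the lacuna in Kreck--Schafer that the paper flags in \cref{rem:KSgap}. The form $S_{M(X)}$ is \emph{not} in general hyperbolic on $\pi_n(X)^*\oplus\pi_n(X)$: by \cref{prop:intdouble} it is only metabolic, $S_{M(X)}\cong\Met_\ep(\pi_n(X),\phi)$ for some $\phi$ with $\phi^G=0$, and two metabolic forms over $\Z G$ with the same underlying module need not be isometric. The paper's repair is \cref{prop:oddmetabolic}: because your $G$ has odd order, weakly even metabolic forms over $\Z G$ are hyperbolic, whence $S_{M(X_1)}\cong H_\ep(\pi_n(X_1))\cong H_\ep(\pi_n(X_2))\cong S_{M(X_2)}$ once $\pi_n(X_1)\cong\pi_n(X_2)$. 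You have chosen the right $G$, but this step is missing. Relatedly, the existence of $X_1,X_2$ with $\pi_n(X_1)\cong\pi_n(X_2)$ and distinct $q(\beta(X_i))$ is not ``Metzler's original phenomenon'' persisting automatically: it requires the second numerical function $s(d,n)$ via \cref{thm:gamma'}, and the parity calculation of \cref{prop:e-s-parity} producing $d\ge3$ with $e(d,n)$ even and $s(d,n)$ odd. Your forward references to \cref{thm:final-examples} and \cref{thm:final-examples-eqforms} are circular, since those are exactly the detailed statements that constitute the proof of \cref{thm:main-higher-dim}.
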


\begin{remark}\label{rem:KSgap}
This result also addresses two gaps in the paper of Kreck-Schafer \cite{Kreck:1984}, where this result was given in the case $k=2$. Firstly, the proof that examples exist in the case $n >2$ is incomplete since it relies on a formula from \cite[Proposition 8]{Sieradski:1979} which is incorrect, as pointed out in \cite[p305]{Linnell:1993} (see \cite[p36-38]{Kreck:1984}).

Secondly, the examples constructed by \cite{Kreck:1984} were claimed to have isometric equivariant intersection form (see \cite[p21]{Kreck:1984}). The equivariant intersection forms were shown to be hyperbolic, but the possibility that they were hyperbolic over non-isomorphic $\Z G$-modules was not considered. This also affects the remark after \cite[Theorem 4.1]{Hambleton:1993} and was mentioned in \cite[p2]{CCPS22}.
\end{remark}

 In the case $n=2$, since the examples in \cref{thm:main-higher-dim} are distinguished by the quadratic bias invariant (see \cref{s:examples-manifolds}), it follows from \cref{thm:main-Q2type} that their quadratic $2$-types are not isometric. Hence the closed smooth $4$-manifolds $M_1$ and $M_2$ with isometric equivariant intersection form that we construct are distinguished by their $k$-invariants. This gives the first examples of stably diffeomorphic closed smooth $4$-manifolds which are distinguished up to homotopy equivalence by their $k$-invariants.

\subsection*{Organisation of the paper} 

We begin by recalling the necessary background on the bias invariant. We give preliminaries on CW-complexes (\cref{s:preliminaries}), then define the bias invariant in the setting of finite $(G,n)$-complexes and establish its main properties (\cref{s:bias-complexes}).

We next introduce the quadratic bias invariant. We give preliminaries on hermitian forms (\cref{s:forms}) and the doubling construction (\cref{s:doubling-construction}).
In \cref{s:bias-manifolds}, we define the quadratic bias invariant and prove \cref{thmx:main-general} (see \cref{thm:main-high}) and \cref{thm:main-Q2type} (see \cref{thm:main-Q2type-body}).

Finally, we evaluate the quadratic bias invariant and apply it to examples. We establish \cref{thmx:main-B_Q(G)} (see \cref{thm:BQ-main}), give details concerning the bias invariant for complexes (\cref{s:examples-complexes}), and then prove Theorems \ref{thmx:main-examples} and \ref{thmx:Q8xCp^3} (\cref{s:examples-manifolds}).
Some technical calculations are contained in Appendices \ref{ss:L-odd}, \ref{ss:L-theory-computations} and \ref{s:funct}, which give information about surgery obstruction groups and numerical functions describing the action of $\Aut(G)$ on the polarised quadratic bias invariant.

\subsection*{Conventions}

Our rings $R$ have identity, and we work in the category of 
 finitely generated left $R$-modules and left $R$-module homomorphisms.
We will assume all CW-complexes are connected with basepoint, and 
maps between CW-complexes will be cellular and basepoint-preserving.
Manifolds will be assumed to be smooth, closed, oriented and connected.

\begin{acknowledgement*} 

IH was partially supported by an NSERC Discovery Grant.
JN was supported by the Heilbronn Institute for Mathematical Research and a Rankin-Sneddon Research Fellowship from the University of Glasgow.
We gratefully acknowledge that this research was partially supported by the Fields Institute for Research in Mathematical Sciences in April 2023, the Isaac Newton Institute in May 2024, the University of Glasgow in June 2024, and the Heilbronn Institute in November 2024. The authors would like to thank Diarmuid Crowley, Jens Harlander, Daniel Kasprowski and Mark Powell for helpful conversations.  We would also like to thank the referee for many useful suggestions which improved the exposition of this article.
\end{acknowledgement*} 

\setcounter{tocdepth}{1}
\tableofcontents

\vspace{-7mm}

\section{Preliminaries on CW-complexes}
\label{s:preliminaries}

We begin by establishing some conventions and definitions.
Let $G$ be a group. A \textit{$(G,n)$-complex} is an $n$-dimensional CW-complex $X$ such that $\pi_1(X) \cong G$ and $\wt X$ is $(n-1)$-connected. Equivalently, it is the $n$-skeleton of a $K(G,1)$-space.
Note that a $(G,2)$-complex is equivalently a $2$-complex $X$ with $\pi_1(X) \cong G$.
We say a group $G$ has \text{type $F_n$} if there exists a finite $(G,n)$-complex. In particular, a group has type $F_2$ if and only if it is finitely presented. If $G$ is a finite group, then $G$ has type $F_n$ for all $n \ge 1$.

For a group $G$, a $G$-polarised space is a pair $(X,\rho_X)$ where $X$ is a space and $\rho_X\colon \pi_1(X) \xrightarrow[]{\cong} G$ is a group isomorphism which we often refer to as a \textit{polarisation}. 
If $h\colon  X \to Y$ is a map, then we can view $\pi_1(h) \in \Aut(G)$ using the $G$-polarisations $\rho_X$ and $\rho_Y$; formally, we use $\pi_1(h)$ to denote $\rho_Y \circ \pi_1(h) \circ \rho_X^{-1}$.
Two $G$-polarised spaces $X$ and $Y$ are said to be \textit{polarised homotopy equivalent} if there exists a homotopy equivalence $h\colon X \to Y$ such that $\pi_1(h) = \id_G \in \Aut(G)$.
We will assume all $(G,n)$-complexes are $G$-polarised.

Let $\HT(G,n)$ denote the set of homotopy types of finite $(G,n)$-complexes and let $\PHT(G,n)$ the set of polarised homotopy types of finite $(G,n)$-complexes. 

There is an action of $\Aut(G)$ on $\PHT(G,n)$ where $\theta \in \Aut(G)$ maps $(X,\rho) \in \PHT(G,n)$ to $(X,\theta \circ \rho)$. It follows easily that
\[ \HT(G,n) \cong \PHT(G,n) / \Aut(G). \]

The following will be our algebraic model for finite $(G,n)$-complexes. We will view $\Z$ as a $\Z G$-module with a trivial $G$-action. 

\begin{definition}
Let $n \ge 2$ and let $G$ be a group. An \textit{algebraic $n$-complex} over $\Z G$ is a chain complex $C = (C_*,\bd_*)$ of (finitely generated) free $\Z G$-modules $C_*$ equipped with a choice of $\Z G$-module isomorphism $H_0(C_*) \cong \Z$ such that
\begin{clist}{(i)}
\item
$C_i = 0$ for $i < 0$ or $i > n$.
\item
$H_i(C_*)=0$ for $0 < i < n$.
\end{clist}

Let $\Alg(G,n)$ denote the set of algebraic $n$-complexes over $\Z G$ considered up to the equivalence relation where $C \simeq C'$ if there exists a chain map $f\colon  C \to C'$ such that $H_0(f) = \id_{\Z}$ and $H_n(f)$ is a $\Z G$-isomorphism. We refer to this equivalence relation as \textit{chain homotopy equivalence}.
\end{definition}

If $C = (C_*,\bd_*) \in \Alg(G,n)$, then define $\chi(C) := \sum_{i=0}^n (-1)^i \rank_{\Z G}(C_i)$
where $\rank_{\Z G}(C_i)$ denotes the rank of $C_i$ as a free $\Z G$-module. 
This is a chain homotopy invariant and so does not depend on the choice of representative in $\Alg(G,n)$.

Let $\theta \in \Aut(G)$. If $M$ is a $\Z G$-module, let $M_\theta$ denote the $\Z G$-module with the same underlying abelian group but with $G$-action given by $g \cdot m := \theta(g) \cdot m$ for $g \in G$ and $m \in M$. 
We say two $\Z G$-modules $M$ and $N$ are \textit{$\Aut(G)$-isomorphic}, written $M \cong_{\Aut(G)} N$, if there is an isomorphism $M \cong N_\theta$ for some $\theta \in \Aut(G)$.

The class of algebraic $n$-complexes over $\Z G$ admit an action by $\Aut(G)$ (see \cite[Section 6]{Ni20-I}). If $C = (C_*,\partial_*) \in \Alg(G,n)$, then define
\[ C_\theta  = \big ((C_n)_\theta \xrightarrow[]{\bd_n} (C_{n-1})_\theta \xrightarrow[]{\bd_{n-1}} \cdots \xrightarrow[]{\bd_2} (C_1)_\theta \xrightarrow[]{\bd_1} (C_0)_\theta \big ). \]
Since each $C_i$ is a free $\Z G$-module, we have that $(C_i)_\theta \cong C_i$ (see \cite[Lemma 6.1 (i)]{Ni20-I}) and so $C_\theta \in \Alg(G,n)$. This action is well-defined on chain homotopy types and so induces an action of $\Aut(G)$ on $\Alg(G,n)$ (see \cite[Section 6]{Ni20-I}).

If $X$ is a finite CW-complex, then $C_*(\wt X)$ is a chain complex over $\Z[\pi_1(X)]$. Given a $G$-polarisation $\rho\colon  \pi_1(X) \to G$, we can then convert this to a chain complex over $\Z G$. We will denote this by $C_*(\wt X, \rho)$ when we want to emphasise the choice of polarisation. 

The following two propositions are standard and show that, in order to study finite $(G,n)$-complexes up to homotopy equivalence, it suffices to study algebraic $n$-complexes over $\Z G$ up to chain homotopy equivalence, and the action of $\Aut(G)$ on this class. For a convenient reference, see \cite[Proposition 5.1 \& Lemma 6.2]{Ni20-I}.

\begin{proposition} \label{prop:he-vs-che}
Let $n \ge 2$ and let $G$ be a group of type $F_n$. Then:
\begin{clist}{(i)}
\item
If $X$ is a finite $(G,n)$-complex, and $\rho\colon  \pi_1(X) \to G$ is a polarisation, then $C_*(\wt X,\rho)$ is an algebraic $n$-complex over $\Z G$. Furthermore, $\chi(C_*(\wt X,\rho)) = \chi(X)$.
\item
If $X$, $Y$ are finite $(G,n)$-complexes and $\theta \in \Aut(G)$, then there exists a homotopy equivalence $f\colon X \to Y$ such that $\pi_1(f) = \theta$ if and only if there exists a $\Z G$-chain homotopy equivalence $h\colon C_*(\wt X) \to C_*(\wt Y)_\theta$.	
\end{clist}
\end{proposition}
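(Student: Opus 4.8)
\emph{Part (i).} The plan is simply to unwind the definitions. As $\wt X \to X$ is the universal cover, the cells of $X$ lift to free $\pi_1(X)$-orbits of cells in $\wt X$, so $C_*(\wt X)$ is a chain complex of free $\Z[\pi_1(X)]$-modules, finitely generated in each degree since $X$ has finitely many cells, and concentrated in degrees $0,\ldots,n$ since $\dim X = n$; transporting along the polarisation $\rho$ gives a chain complex of free $\Z G$-modules in which the number of free generators in degree $i$ equals the number of $i$-cells of $X$, whence $\chi(C_*(\wt X,\rho)) = \chi(X)$. For the homological conditions defining $\Alg(G,n)$: the complex $\wt X$ is connected, so $H_0(\wt X) \cong \Z$ with trivial $G$-action (take the identification given by the augmentation), and since $\wt X$ is $(n-1)$-connected and simply connected the Hurewicz theorem gives $H_i(\wt X) = \pi_i(\wt X) = 0$ for $0 < i < n$.

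\emph{Part (ii), the direction $\Rightarrow$.} Suppose $f\colon X \to Y$ is a homotopy equivalence with $\pi_1(f) = \theta$. After a cellular approximation we may take $f$ cellular and lift it to a cellular map $\wt f\colon \wt X \to \wt Y$ of universal covers; this $\wt f$ is equivariant with respect to the raw homomorphism $\pi_1(f)\colon \pi_1(X) \to \pi_1(Y)$, and a short computation with the polarisations — using $\rho_Y \circ \pi_1(f) \circ \rho_X^{-1} = \theta$ together with the definition of the twist $(-)_\theta$ — shows that the induced chain map $C_*(\wt f)$ is a morphism of $\Z G$-complexes $C_*(\wt X,\rho_X) \to C_*(\wt Y,\rho_Y)_\theta$. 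Since $f$ is a homotopy equivalence so is $\wt f$, hence $C_*(\wt f)$ induces an isomorphism on homology in every degree; in particular $H_0(C_*(\wt f)) = \id_\Z$ and $H_n(C_*(\wt f))$ is a $\Z G$-isomorphism, so $C_*(\wt f)$ is a chain homotopy equivalence in the sense of $\Alg(G,n)$.

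\emph{Part (ii), the direction $\Leftarrow$.} First I would reduce to the case $\theta = \id_G$: re-polarising $Y$ by $\theta^{-1}\rho_Y$ identifies $C_*(\wt Y,\rho_Y)_\theta$ with $C_*(\wt Y,\theta^{-1}\rho_Y)$ and turns ``homotopy equivalence $f$ with $\pi_1(f) = \theta$'' into ``polarised homotopy equivalence'', so it suffices to show that if $X$ and $Y$ are finite $(G,n)$-complexes admitting a $\Z G$-chain homotopy equivalence $h\colon C_*(\wt X) \to C_*(\wt Y)$, then they are polarised homotopy equivalent. The plan is to realise $h$, up to chain homotopy, by a cellular map $f\colon X \to Y$ with $\pi_1(f) = \id_G$; granting this, $C_*(\wt f) \simeq h$ is a chain homotopy equivalence, so $\wt f$ is a homology isomorphism of simply connected CW-complexes, hence a homotopy equivalence by the Whitehead theorem, and therefore $f$ is a polarised homotopy equivalence. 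To construct $f$ I would work skeleton by skeleton. Normalising $X$ and $Y$ to have a single $0$-cell and modifying $h$ by a chain homotopy so that $h_0 = \id$, one defines $f$ on the $1$-skeleton by choosing, for each $1$-cell of $X$, a word in the $1$-cells of $Y$ which maps to the prescribed element of $G$ (forcing $\pi_1(f) = \id_G$, and in particular carrying relators of $X$ into the normal closure of the relators of $Y$) and whose Fox-derivative expansion realises $h_1$; the chain-map identity $\partial_1 h_1 = h_0 \partial_1$ is exactly the condition needed for such a word to exist. One then extends $f$ over the cells of dimension $2,\ldots,n$ by obstruction theory — the $2$-cells of $X$ become null-homotopic in $Y$ by the previous sentence, and the higher obstructions vanish because $\wt Y$ is $(n-1)$-connected — correcting the extension at each stage, by a chain homotopy, so that the induced chain map agrees with $h$ through the relevant degree. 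This is the standard geometric realisation argument for algebraic $n$-complexes, and for the details I would follow \cite[Section 6]{Ni20-I}.

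\emph{The main obstacle.} The one genuinely non-formal step is the realisation of $h$ at the level of the $1$-skeleton: upgrading the chain-level datum $h_1$, which records only Fox derivatives reduced modulo $G$, to an honest word in the generators of $\pi_1(Y)$ having the correct image in $G$. This is where combinatorial group theory enters; the remaining ingredients — the Hurewicz and Whitehead theorems, and the obstruction-theoretic extension over cells of dimension $\ge 2$, which is unobstructed since $\wt Y$ is $(n-1)$-connected — are soft once the $1$-skeleton has been handled.
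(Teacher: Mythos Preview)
Your proposal is correct and in fact goes well beyond what the paper does: the paper gives no proof at all, simply recording the proposition as standard and pointing to \cite[Proposition~5.1 \& Lemma~6.2]{Ni20-I} for a convenient reference. Your sketch of the geometric realisation argument is sound (including the Fox-derivative step at the $1$-skeleton, which does go through as you describe), and you cite the same source for the details, so there is nothing to correct.
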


\begin{proposition} \label{prop:d2-problem}
Let $n \ge 2$ and let $G$ be a group of type $F_n$. Then the map
\[ \mathscr{C}\colon  \PHT(G,n) \to \Alg(G,n), \quad (X,\rho) \mapsto C_*(\wt X,\rho)  \]
is injective. Furthermore, it induces an injective map $\ol{\mathscr{C}}\colon  \HT(G,n) \to \Alg(G,n)/\Aut(G)$ where the action of $\Aut(G)$ on $\Alg(G,n)$ is as defined above. 
\end{proposition}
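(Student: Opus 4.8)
The plan is to deduce the statement as a formal consequence of \cref{prop:he-vs-he}. First I would check that $\mathscr{C}$ is well defined. By \cref{prop:he-vs-he}(i) the complex $C_*(\wt X,\rho)$ genuinely lies in $\Alg(G,n)$; and if $f\colon X\to Y$ is a polarised homotopy equivalence, then the induced chain map $C_*(f)\colon C_*(\wt X,\rho_X)\to C_*(\wt Y,\rho_Y)$ is $\Z G$-linear (because $\pi_1(f)=\id_G$ under the polarisations), induces the identity on $H_0=\Z$ and a $\Z G$-isomorphism on $H_n$, so $\mathscr{C}(X,\rho_X)\simeq\mathscr{C}(Y,\rho_Y)$ in $\Alg(G,n)$.

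For injectivity of $\mathscr{C}$, suppose $\mathscr{C}(X,\rho_X)\simeq\mathscr{C}(Y,\rho_Y)$, i.e.\ there is a chain map $h\colon C_*(\wt X,\rho_X)\to C_*(\wt Y,\rho_Y)$ with $H_0(h)=\id_\Z$ and $H_n(h)$ a $\Z G$-isomorphism. Since the homology of each complex vanishes in degrees $i$ with $0<i<n$, the map $h$ is a quasi-isomorphism; as both complexes are bounded and consist of free (hence projective) $\Z G$-modules, $h$ is therefore an honest $\Z G$-chain homotopy equivalence. Applying the ``if'' direction of \cref{prop:he-vs-he}(ii) with $\theta=\id_G$ then yields a homotopy equivalence $f\colon X\to Y$ with $\pi_1(f)=\id_G$, so $(X,\rho_X)$ and $(Y,\rho_Y)$ represent the same class in $\PHT(G,n)$.

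For the statement about $\ol{\mathscr{C}}$, I would first record that $\mathscr{C}$ is compatible with the two $\Aut(G)$-actions. Unwinding the definition of the action on $\PHT(G,n)$ and of the twisted module $M_\theta$ gives an equality of $\Z G$-chain complexes $C_*(\wt X,\theta\circ\rho)=C_*(\wt X,\rho)_{\theta^{-1}}$ for every $\theta\in\Aut(G)$, since on both sides $g\in G$ acts through $\rho^{-1}(\theta^{-1}(g))\in\pi_1(X)$. Hence $\mathscr{C}$ intertwines the two actions up to the inversion automorphism of $\Aut(G)$, which carries $\Aut(G)$-orbits to $\Aut(G)$-orbits; together with $\HT(G,n)\cong\PHT(G,n)/\Aut(G)$ this shows $\mathscr{C}$ descends to a map $\ol{\mathscr{C}}\colon\HT(G,n)\to\Alg(G,n)/\Aut(G)$. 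Injectivity of $\ol{\mathscr{C}}$ then follows: if finite $(G,n)$-complexes $X$, $Y$ satisfy $C_*(\wt X,\rho_X)\simeq C_*(\wt Y,\rho_Y)_\theta$ in $\Alg(G,n)$ for some $\theta\in\Aut(G)$, the right-hand side equals $\mathscr{C}(Y,\theta^{-1}\circ\rho_Y)$, so injectivity of $\mathscr{C}$ makes $(X,\rho_X)$ polarised homotopy equivalent to $(Y,\theta^{-1}\circ\rho_Y)$; forgetting polarisations gives $X\simeq Y$, i.e.\ $[X]=[Y]$ in $\HT(G,n)$.

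The argument is entirely formal once \cref{prop:he-vs-he} is granted, and that proposition---proved via obstruction theory for maps between $(G,n)$-complexes---is where the real content lies. The only points that need a little care, and which I expect to be the mild obstacles, are (a) the remark that a homology isomorphism of the above type between these particular bounded free $\Z G$-complexes is genuinely a chain homotopy equivalence, so that the ad hoc equivalence relation defining $\Alg(G,n)$ matches the notion of chain homotopy equivalence used in \cref{prop:he-vs-he}(ii); and (b) keeping track of $\theta$ versus $\theta^{-1}$ when aligning the two $\Aut(G)$-actions.
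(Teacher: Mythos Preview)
Your proposal is correct and follows exactly the intended route: the paper does not give its own proof here but merely cites \cite[Proposition~5.1 \& Lemma~6.2]{Ni20-I}, treating the result as a formal consequence of \cref{prop:he-vs-che} (note the label is \texttt{prop:he-vs-che}, not \texttt{prop:he-vs-he}). Your handling of points (a) and (b) is accurate; in particular the identity $C_*(\wt X,\theta\circ\rho)=C_*(\wt X,\rho)_{\theta^{-1}}$ is precisely the content of \cite[Lemma~6.2]{Ni20-I}, and for injectivity of $\ol{\mathscr{C}}$ one could alternatively invoke \cref{prop:he-vs-che}(ii) directly with the given $\theta$ rather than reducing to the $\theta=\id_G$ case.
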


\begin{definition} \label{def:minimal-Gn}
	Let $n \ge 2$. If $G$ has type $F_n$, define
\[\chi_{\min}(G,n) = \min\{(-1)^n \chi(X)\colon \text{$X$ a finite $(G,n)$-complex} \}.\]
This value always exists (see, for example, \cite[Proposition 2.3 (ii)]{Ni23}).
We say that a finite $(G,n)$-complex $X$ is \textit{minimal} if $(-1)^n \chi(X) = \chi_{\min}(G,n)$, and we let $\HT_{\min}(G,n)$ denote the set of homotopy types of minimal finite $(G,n)$-complexes. Similarly, let $\Alg_{\min}(G,n)$ denote the set of chain homotopy types of algebraic $n$-complexes $C$ over $\Z G$ such that $(-1)^n\chi(C) = \chi_{\min}(G,n)$.
\end{definition}

\section{The bias invariant for $(G,n)$-complexes} \label{s:bias-complexes}

Throughout this section, we will fix an integer $n \ge 2$ and a finite group $G$. All algebraic $n$-complexes will be assumed to be over $\Z G$. All group homology groups will be taken to have coefficients in $\Z$ (with the trivial group action) unless otherwise mentioned. We let $d(G)$ denote the minimal number of generators of a group $G$

Throughout, we will make use of $0$th Tate cohomology. For a finite group $G$ and a $\Z G$-module $A$, recall that the corresponding $0$th Tate cohomology group is defined to be
\[ \wh H^0(G;A) := A^G / (N\cdot A)\]
where $N := \sum_{g \in G} g \in \Z G$ denotes the group norm and $N \cdot A:= \{N \cdot a\vv a\in A\} \le A^G$.
Then $(\,\cdot\,)\hh := \wh H^0(G; -)\colon \Z G\text{-mod} \to \Ab$ is a functor from $\Z G$-modules to abelian groups, and factors through the functor $(\,\cdot\,)^G \colon \Z G\text{-mod} \to \Ab$.

\subsection{The strong minimality hypothesis} \label{ss:min-hyp}

Since $G$ is finite, $H_n(G)$ is a finite abelian group and so $d(H_n(G)) < \infty$. The following is the geometric analogue of the minimality hypothesis \cite{Sieradski:1979}. 

\begin{definition} \label{def:min-hyp}
Let $n \ge 2$ and let $G$ be a finite group. We say that the pair $(G,n)$ satisfies the \textit{strong minimality hypothesis} if  $\chi_{\min}(G,n) = (-1)^n + d(H_n(G))$. 
\end{definition}

This can be viewed as a higher dimensional generalisation of efficiency, restricted to the case of finite groups.
Recall that a finitely presented group $G$ is \textit{efficient} if 
\[ \chi_{\min}(G) = 1 - r(H_1(G)) + d(H_2(G)),\] 
where $r(A)$ denotes the torsion free rank of $A$ \cite[p166]{Ha00}. If $G$ is finite, then $r(H_1(G))=0$ and so $(G,2)$ satisfies the strong minimality hypothesis if and only if $G$ is efficient.

\begin{remark} 
For $n \ge 2$ and a finite group $G$,  the pair $(G,n)$ satisfies the \textit{minimality hypothesis} if $\chi_{\min}^{\alg}(G,n) = (-1)^n+d(H_n(G))$, where $\chi_{\min}^{\alg}(G,n) = \min\{ (-1)^n\chi(C): C \in \Alg(G,n)\}$ (see \cite[Section 3]{Sieradski:1979}). If $n > 2$, then Wall \cite[Theorem E]{Wa65} shows that $\chi_{\min}(G,n) = \chi_{\min}^{\alg}(G,n)$ and so the minimality hypothesis and strong minimality hypothesis coincide in this case. If $n = 2$, then $\chi_{\min}^{\alg}(G,2) \le \chi_{\min}(G,2)$ and equality holds if $G$ has the D2 property \cite[Section 3]{Wa65}, \cite{Hambleton:2019a}, \cite{Jo03}.
It is currently not known whether or not $\chi_{\min}^{\alg}(G,2) = \chi_{\min}(G,2)$ for all finitely presented groups $G$.
\end{remark}

Note that $\chi_{\min}(G,2) \geq \chi_{\min}^{\alg}(G,2) \geq 1 + d(H_2(G))$. Thus, if $(G,2)$ satisfies the strong minimality hypothesis, then $(G,2)$ satisfies the minimality hypothesis.

It is known that finite abelian groups and finite $p$-groups satisfy the minimality hypothesis in all dimensions \cite[Proposition 5]{Sieradski:1979}, but finite $p$-groups are not known to be efficient in general.  Moreover, Swan \cite[p196]{Swan:1965} constructed a group of the form $G \cong(\cy 7)^3 \rtimes  \cy 3$ such that $(G,2)$ does not satisfy the minimality hypothesis, and so $G$ is not efficient.

We will now give an alternate formulation of the strong minimality hypothesis, which will be useful in defining the bias invariant in \cref{s:bias-complexes}. Define the \textit{invariant rank} to be $r_{(G,n)} := \rank_{\Z}(L^G)$ where $L = \pi_n(X)$ for $X$ any minimal $(G,n)$-complex. This does not depend on the choice of $X$.
The following can be proven similarly to \cite[Proposition 4]{Sieradski:1979}.

\begin{proposition} \label{prop:min-hyp}
Let $n \ge 2$ and let $G$ be a finite group. Then $\chi_{\min}(G,n) = (-1)^n + r_{(G,n)}$. In particular, $(G,n)$ satisfies the strong minimality hypothesis if and only if $r_{(G,n)} = d(H_n(G))$.
\end{proposition}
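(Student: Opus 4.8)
The plan is to compute $\chi_{\min}(G,n)$ by analysing the chain complex $C = C_*(\wt X)$ of a minimal finite $(G,n)$-complex $X$, and to relate the free rank of $L^G$, where $L = \pi_n(X) = H_n(\wt X)$, to $d(H_n(G))$ together with the ranks of the $C_i$. First I would recall that for a finite $(G,n)$-complex the reduced chain complex $C_*(\wt X)$ is a complex of finitely generated free $\Z G$-modules, exact in degrees $1,\dots,n-1$, with $H_0 = \Z$ and $H_n = L$. Tensoring with $\Q$ over $\Z$ (equivalently, passing to $\Q G$-modules) kills the finite group $G$-homology $H_i(G;\Q) = 0$ for $i>0$, and since $\Q G$ is semisimple, a rank count of the Euler characteristic of $C_* \otimes \Q$ yields
\[
\chi(X) = \chi(C_*) = 1 + (-1)^n \dim_\Q(L \otimes_\Z \Q)^{?},
\]
more precisely $(-1)^n\chi(X) = (-1)^n + \operatorname{rank}_{\Z G}(C_n \text{ after stabilisation correction})$; the cleanest way is to use that $L$ sits in a free resolution and $(-1)^n(\chi(X) - (-1)^n)$ equals the rank of the "extra" part of $L$, which is exactly $\operatorname{rank}_\Z(L^G)$ after the following observation. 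The key linear-algebra input is that for a $\Z G$-lattice $L$ arising this way, $\operatorname{rank}_\Z(L^G)$ is the $\Q G$-multiplicity of the trivial representation in $L \otimes \Q$, and this multiplicity is governed by the Euler characteristic because all the free summands contribute trivial-representation multiplicity equal to their rank and the nontrivial pieces cancel in the alternating sum over the exact portion of $C_*$.

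The second, and genuinely new, ingredient is identifying when the strong minimality hypothesis holds, i.e.\ comparing $r_{(G,n)}$ with $d(H_n(G))$. Here I would mimic Sieradski's argument \cite[Proposition 4]{Sieradski:1979}: one builds a minimal $(G,n)$-complex by attaching $n$-cells to kill $\pi_{n-1}$ of the $(n-1)$-skeleton, and the module $L = \pi_n$ surjects onto a relation module whose coinvariants $L_G = H_n(X) $ compute... more usefully, one uses the universal coefficient / Hurewicz picture $H_n(X;\Z) = L_G$ and the exact sequence relating $L^G$, $L_G$ and $H_n(G)$. The cleanest route: for any algebraic $n$-complex $C$ with top module $C_n$, there is an exact sequence
\[
0 \to L \to C_n \to B_{n-1} \to 0
\]
with $B_{n-1} = \operatorname{im}(\bd_n)$ a (stably) free module fitting into a partial free resolution of $\Z$ of length $n$; comparing invariants $(-)^G$ and using that $H_n(G) = \wh H^0(G; \text{cycle module})$-type computations (the $0$th Tate cohomology functor highlighted in the excerpt) one extracts that $\operatorname{rank}_\Z(L^G) \ge d(H_n(G))$ always, with equality precisely when no free $\Z G$-summand can be split off, i.e.\ precisely the minimality condition. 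The bound $\chi_{\min}(G,n) \ge (-1)^n + d(H_n(G))$ is already recorded in the excerpt for $n=2$ and follows in general from $\chi_{\min}^{\alg} \ge 1 + d(H_n(G))$ plus Wall's $\chi_{\min} = \chi_{\min}^{\alg}$ for $n>2$, so the content is the reverse inequality realised by an explicit minimal complex.

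Concretely the steps are: (1) show $\operatorname{rank}_\Z(L^G)$ is independent of the choice of minimal $X$ — this follows from \cref{prop:he-vs-che} since any two minimal complexes have chain homotopy equivalent (after the $\Aut(G)$-twist) $\Z G$-complexes, hence isomorphic-up-to-stabilisation $\pi_n$, and $(-)^G$-rank is a stable invariant as $(\Z G)^G = \Z \cdot N$ has rank $1$ per free summand, handled uniformly by subtracting; (2) prove the Euler characteristic formula $(-1)^n\chi(X) = (-1)^n + \operatorname{rank}_\Z(L^G)$ by the $\Q G$-semisimplicity rank count above, using that the alternating sum of $\Z$-ranks of $(C_i \otimes \Q)^G$ over the exact part telescopes against $H_0$; (3) deduce $\chi_{\min}(G,n) = (-1)^n + r_{(G,n)}$ by taking the minimum; (4) combine with the standing inequality $\chi_{\min}(G,n) \ge (-1)^n + d(H_n(G))$ and the elementary fact $r_{(G,n)} \ge d(H_n(G))$ (coinvariants of $L$ surject onto $H_n(G)$ and $\operatorname{rank}_\Z L^G \ge \operatorname{rank}_\Z L_G \ge d(H_n(G))$, the middle inequality by an averaging/norm argument over the finite group) to conclude the equivalence. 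The main obstacle I expect is step (2) — pinning down exactly which combination of ranks of cycles, boundaries and the free modules $C_i$ yields $\operatorname{rank}_\Z(L^G)$ rather than, say, $\operatorname{rank}_\Z(L)$ or $\operatorname{rank}_\Z(L_G)$; one must be careful that the trivial-representation multiplicity, not the total $\Q$-dimension, is what the Euler characteristic sees, which is where the hypothesis that $G$ is finite (so $H_i(G;\Q)=0$ for $i>0$, forcing all nontrivial isotypic pieces of the $C_i$ to cancel in the exact range) does the real work.
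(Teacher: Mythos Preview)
Your core idea (step (2), the trivial-representation multiplicity count using semisimplicity of $\Q G$) is correct and is exactly what the paper defers to \cite[Proposition~4]{Sieradski:1979} for. The computation is simply: in $\Q G$ the trivial representation occurs once in each free summand, so
\[
\chi(X) = \sum_{i=0}^n (-1)^i \rank_{\Z G}(C_i) = \sum_{i=0}^n (-1)^i \dim_\Q H_i(C_*\otimes\Q)^G = 1 + (-1)^n \rank_\Z(L^G),
\]
valid for \emph{every} finite $(G,n)$-complex $X$.

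Once you have this, however, the rest of your write-up is doing unnecessary work. Step~(1) is automatic because any two minimal complexes have the same $\chi$, hence the same $\rank_\Z(L^G)$ by the formula; step~(3) is a tautology; and the ``in particular'' clause needs no inequalities at all --- the strong minimality hypothesis reads $\chi_{\min}(G,n) = (-1)^n + d(H_n(G))$, which by your formula is literally the equation $r_{(G,n)} = d(H_n(G))$. Your entire second paragraph and step~(4) can be deleted. The aside about $\rank_\Z L^G \ge \rank_\Z L_G$ is a distraction (these ranks are in fact equal for a $\Z G$-lattice, since both compute the trivial isotypic multiplicity over $\Q$), and the surjection $L^G \twoheadrightarrow \wh L \cong H_n(G)$ giving $r_{(G,n)} \ge d(H_n(G))$ is already recorded in the paper immediately after the proposition, not part of its proof.

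In short: right approach, same as the paper's reference, but the write-up buries a one-line Euler characteristic computation under several superfluous steps.
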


If $X$ is a minimal $(G,n)$-complex and $L = \pi_n(X)$, then it is a consequence of dimension shifting that $H_n(G) \cong \wh L$,  and so the natural surjection $L^G \twoheadrightarrow \wh L$ implies that $r_{(G,n)} \ge d(H_n(G))$.

\begin{definition} \label{def:modulus}
Let $n \ge 2$, let $G$ be a finite group and let $H_n(G) \cong \Z/m_1 \oplus \cdots \oplus \Z/m_r$ where $m_i \mid m_{i+1}$ for all $i \ge 1$ and $r =r_{(G,n)}$. Such a decomposition exists since $r_{(G,n)} \ge d(H_n(G))$, but we need not have $m_i \ne 1$ for all $i$.
Define the \textit{modulus} to be $m_{(G,n)} = m_1$.
\end{definition}

By the classification of finite abelian groups, $m_{(G,n)}$ does not depend on the choice of identification of $H_n(G)$. As usual, when $n=2$, we will write $r_G := r_{(G,2)}$ and $m_G := m_{(G,2)}$.

\begin{remark} \label{remark:m=1}
It follows by comparing Definitions \ref{def:min-hyp} and \ref{def:modulus} that 	$m_{(G,n)} \ne 1$ if and only if $(G,n)$ satisfies the strong minimality hypothesis.
\end{remark}

\subsection{The bias invariant for algebraic $n$-complexes} \label{ss:bias-algebraic}

The aim of this section will be to define the bias invariant, which was introduced by Metzler in \cite{Metzler:1976}. 
We will formulate our definitions so that they are analogous to those made in the definition of the quadratic bias invariant in \cref{s:bias-manifolds}.
Our treatment resembles the one given by Sieradski-Dyer \cite[p202]{Sieradski:1979} and is the version implicitly used by Kreck-Schafer \cite[p34]{Kreck:1984}. 
A convenient reference is \cite[Section 2]{Schafer:1996}.

Observe that, if two algebraic $n$-complexes $C_*$ and $D_*$ are $\Z G$-chain homotopy equivalent, then $\chi(C_*) = \chi(D_*)$. We will therefore restrict to the case of chain complexes with equal Euler characteristic.
The following is \cite[Lemma 1 (\S 2)]{Schafer:1996} specialised to the case $j=0$ and $G'=G$.

\begin{proposition} \label{prop:bias-tate}
Let $C_*$, $D_*$ be algebraic $n$-complexes such that $\chi(C_*) = \chi(D_*)$ and let $h\colon C_* \to D_*$ be a chain map such that $H_0(h) = \id_{\Z}$. Then the map
\[ H_n(h)\hh\colon H_n(C_*)\hh \to H_n(D_*)\hh \]
is an isomorphism and is independent of the choice of $h$. Let this be denoted by $\sigma(C_*,D_*)$.
\end{proposition}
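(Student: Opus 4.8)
The plan is to reduce the statement to a comparison of two chain maps inducing the same map on $H_0$ and on top homology. First I would observe that, since $\chi(C_*) = \chi(D_*)$ and both complexes are algebraic $n$-complexes (so free, concentrated in degrees $0$ through $n$, with $H_0 \cong \Z$ and $H_i = 0$ for $0 < i < n$), the stable module $\Omega^n_{\Z G}(\Z)$ governing the top homology is the same for both; more precisely, a chain map $h$ with $H_0(h) = \id_\Z$ exists, and for any such $h$ the mapping cone is an exact complex of free modules. Chasing the long exact sequence of the cone, together with the fact that $C_i, D_i$ vanish outside $[0,n]$, shows $H_n(h)\colon H_n(C_*) \to H_n(D_*)$ fits into a sequence whose other terms are built from free modules whose alternating rank contribution is $\chi(D_*) - \chi(C_*) = 0$; this is where the equal-Euler-characteristic hypothesis is used to conclude $H_n(h)$ is an isomorphism. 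Applying the functor $\wh H^0(G;-) = (\,\cdot\,)\hh$, which is additive, then gives that $H_n(h)\hh$ is an isomorphism.

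For the independence of the choice of $h$, I would use a standard obstruction-theoretic argument. Given two chain maps $h, h'\colon C_* \to D_*$ with $H_0(h) = H_0(h') = \id_\Z$, the difference $h - h'$ induces the zero map on $H_0$. Because $C_*$ is a complex of free modules and $D_*$ is acyclic in the range $0 < i < n$, one constructs inductively (in degrees $0, 1, \dots, n-1$) a partial chain homotopy $s_i\colon C_i \to D_{i+1}$ such that $h - h'$ and $\bd s + s \bd$ agree in degrees $< n$; in top degree $n$ this forces $H_n(h) - H_n(h')$ to factor through $\bd_{n+1} = 0$ composed appropriately, i.e. the two maps $H_n(h), H_n(h')\colon H_n(C_*) \to H_n(D_*)$ differ by a map that, after passing to coinvariants modulo the norm — i.e. after applying $(\,\cdot\,)\hh$ — vanishes. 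The key point making the Tate cohomology essential here is that the chain-homotopy correction term, while not literally zero on $H_n$, lands in the image of the norm (it comes from a $\Z G$-map defined via the free resolution), so it dies in $\wh H^0(G;-)$. Hence $H_n(h)\hh = H_n(h')\hh$, and $\sigma(C_*, D_*) := H_n(h)\hh$ is well-defined.

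The main obstacle I expect is making the last clause precise: identifying exactly why the difference of the two induced maps on $H_n$, though possibly nonzero as a $\Z G$-module map, becomes zero after applying $\wh H^0(G;-)$. This is the crux of why the bias invariant lives in a Tate-cohomology quotient rather than being a genuine isomorphism-class invariant on the nose. I would handle it by writing the correction term explicitly as $s_{n-1}\bd_n + \bd_{n+1} s_n$; since $\bd_{n+1} = 0$ (complexes have length $n$), only $s_{n-1}\bd_n$ survives, and one checks its restriction to cycles $Z_n(C_*) = H_n(C_*)$ factors through $\bd_n$, whose image in $D_{n-1}$ meets $Z_n(D_*)$ only in elements that are norms — or, more cleanly, one notes $\sigma$ is being defined on $\wh H^0$ precisely because $(\,\cdot\,)\hh$ kills the indeterminacy coming from non-canonical splittings, and cite the dimension-shifting identification $H_n(G) \cong \wh{(\pi_n)}$ to see the target group is the relevant Tate group. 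I would point to \cite[Lemma 1 (\S 2)]{Schafer:1996} and \cite[p202]{Sieradski:1979} for the detailed bookkeeping, since the statement is explicitly quoted as a specialisation of that lemma.
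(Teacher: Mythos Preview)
Your proposal contains a genuine error in the first half. You claim that the equal-Euler-characteristic hypothesis forces $H_n(h)$ itself to be a $\Z G$-isomorphism, and then deduce that $H_n(h)\hh$ is an isomorphism by functoriality. But $H_n(h)$ is \emph{not} an isomorphism in general: if it were, then by the paper's definition of chain homotopy equivalence for algebraic $n$-complexes, any two such complexes with the same Euler characteristic would be chain homotopy equivalent, and the bias invariant would be vacuous. The mapping-cone argument you sketch shows only that $\ker H_n(h)$ and $\coker H_n(h)$ sit in an exact sequence with free modules of net rank zero; it does not show they vanish.

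The paper gives no proof, citing \cite[Lemma~1 (\S2)]{Schafer:1996}. The standard argument runs in the opposite order from yours: establish independence first, then deduce the isomorphism. For independence, your outline is basically right but the mechanism is misidentified. Given $h, h'$ with $H_0(h) = H_0(h') = \id_\Z$, the partial chain homotopy $s_0, \dots, s_{n-1}$ you describe produces a $\Z G$-map $\phi := h_n - h'_n - s_{n-1}\partial_n^C \colon C_n \to D_n$ landing in $Z_n(D_*) = H_n(D_*)$, whose restriction to $H_n(C_*) = Z_n(C_*)$ is exactly $H_n(h) - H_n(h')$. The crucial point is that this difference therefore \emph{factors through the free module $C_n$}: it is the composite $H_n(C_*) \hookrightarrow C_n \xrightarrow{\phi} H_n(D_*)$. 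Applying $(\,\cdot\,)\hh$ sends the middle term to $\wh H^0(G; C_n) = 0$, so $(H_n(h) - H_n(h'))\hh = 0$. This is why Tate cohomology kills the indeterminacy --- not that the correction ``lands in the image of the norm'' in any direct sense, and nothing to do with how $\partial_n$ meets $Z_n(D_*)$.

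For the isomorphism, pick any chain map $g\colon D_* \to C_*$ with $H_0(g) = \id_\Z$. Then $g \circ h$ and $\id_{C_*}$ both induce $\id_\Z$ on $H_0$, so by independence $H_n(g)\hh \circ H_n(h)\hh = H_n(\id_{C_*})\hh = \id$; symmetrically for $h \circ g$. Note that this argument does not actually invoke $\chi(C_*) = \chi(D_*)$; that hypothesis is carried along because the proposition is stated as a specialisation of Schafer's lemma and because it is the only case needed downstream, not because it drives the isomorphism.
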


Since $C_*$ and $D_*$ are algebraic $n$-complexes, it follows from standard homological algebra that a chain map such that $H_0(h)=\id_{\Z}$ always exists. Hence $\sigma(C_*,D_*)$ is always defined.

\begin{definition} \label{def:ref-2-complex}
Fix an algebraic $n$-complex $\ol{C}_*$, which we will refer to as the \textit{reference complex}. Let $L := H_n(\ol{C}_*)$, let $\wh L = H_n(\ol{C}_*)\hh$ and let
 $\psi \colon L^G \twoheadrightarrow \wh L$ denote the quotient map.  

Let $C_*$ be an algebraic $n$-complex such that $\chi(C_*) = \chi(\ol{C}_*)$ and let 
\[ \psi_{C_*}\colon H_n(C_*)^G \twoheadrightarrow H_n(C_*)\hh\] 
denote the quotient map. Since $L^G \cong H_n(C_*)^G$ and $\wh L \cong H_n(C_*)\hh$ are isomorphic as abelian groups, we can find isomorphisms $\ol{\tau}_{C_*}$ and $\tau_{C_*}$ such that $\psi_{C_*} \circ \ol{\tau}_{C_*} = \tau_{C_*} \circ \psi$. 
 We do this by choosing bases for $L^G$ and $H_n(C_*)^G$ whose images under the quotient maps give generating sets for invariant factor decompositions of the finite abelian groups $\wh L$ and $H_n(C_*)\hh$ (see \cref{remark:wlog-psi-diagonal} for more details). The reference maps $\ol{\tau}_{C_*}$ and $\tau_{C_*}$ should be regarded as fixed once and for all.
\end{definition}

Before defining the bias, we will first need the following definition. 

\begin{definition} \label{def:psi-auto}  For abelian groups $A_1$ and $A_2$, we will let $\Hom(A_1,A_2)$ (resp. $\Iso(A_1,A_2)$) denote the set of homomorphisms (resp. isomorphisms) from $A_1$ to $A_2$.

Let $\psi_1\colon A_1 \twoheadrightarrow B_1$ and $\psi_2\colon A_2 \twoheadrightarrow B_2$ be surjective homomorphisms for abelian groups $B_1$ and $B_2$.
We say an homomorphism $\varphi \in \Hom(A_1,A_2)$ is a \textit{$(\psi_1,\psi_2)$-homomorphism} if $\varphi(\ker(\psi_1)) \subseteq \ker(\psi_2)$.
Let $\Hom_{\psi_1,\psi_2}(A_1,A_2) \subseteq \Hom(A_1,A_2)$ denote the subset consisting of $(\psi_1,\psi_2)$-homomorphisms. 
There is an induced function
\[ (\psi_1,\psi_2)_*\colon \Hom_{\psi_1,\psi_2}(A_1,A_2)  \to \Hom(B_1,B_2), \quad \varphi \mapsto (x \mapsto \psi_2(\varphi(\wt x))) \]
where $\wt x \in A_1$ is any lift of $x \in B_1$, i.e. $\psi_1(\wt x) = x$.

Similarly, an isomorphism $\varphi \in \Iso(A_1,A_2)$ is a \textit{$(\psi_1,\psi_2)$-isomorphism} if $\varphi(\ker(\psi_1)) = \ker(\psi_2)$, and we let $\Iso_{\psi_1,\psi_2}(A_1,A_2) \subseteqq \Iso(A_1,A_2)$ denote the subset consisting of $(\psi_1,\psi_2)$-isomorphisms. The function $(\psi_1,\psi_2)_*$ defined above restricts to a function $\Iso_{\psi_1,\psi_2}(A_1,A_2)  \to \Iso(B_1,B_2)$.

In the case where $A_1=A_2=:A$, $B_1=B_2=:B$ and $\psi_1=\psi_2=:\psi$, a $(\psi,\psi)$-isomorphism will be referred to as a \textit{$\psi$-automorphism}. The set of $\psi$-automorphisms forms a subgroup $\Aut_\psi(A) \le \Aut(A)$ and the induced function $\psi_*$ is a group homomorphism
$\psi_*\colon \Aut_{\psi}(A)  \to \Aut(B)$.
\end{definition}

\begin{definition} \label{def:bias-schafer}
Fix a reference complex $\ol{C}_*$ with $(-1)^n\chi(\ol{C}_*) = \chi$ and let $L = H_n(\ol{C}_*)$. Define the \textit{polarised bias obstruction group} to be
\[ PB(G,n,\chi) := \frac{\Aut(\wh L)}{\Aut_\psi(L^G)}. \]
This depends on $\chi$ but, up to isomorphism, does not depend on the choice of reference complex $\ol{C}_*$ (see \cref{def:ref-2-complex}). If $\chi = \chi_{\min}(G,n)$, then we define $PB(G,n) := PB(G,n,\chi)$.

Let $C_*$, $D_*$ be algebraic $n$-complexes such that $(-1)^n\chi(C_*) = (-1)^n\chi(D_*) = \chi$.
	Define the \textit{bias invariant} to be:
\[ \beta(C_*,D_*) := [\tau_{D_*}^{-1} \circ \sigma(C_*,D_*) \circ \tau_{C_*}] \in PB(G,n,\chi). \] 
where $[\,\cdot\,]\colon \Aut(\wh L) \twoheadrightarrow PB(G,n,\chi)$ is the quotient map and $\tau_{C_*}$, $\tau_{D_*}$ are as above.
\end{definition}

It follows from \cref{prop:bias-tate} that the vanishing of $\beta(C_*,D_*)$ in its respective obstruction group does not depend on the choice of reference complex.

We will now establish the following, where $m_{(G,n)}$ is as defined in \cref{def:modulus}. This implies that the bias invariant contains no information in the non-minimal case.

\begin{proposition} \label{prop:PB(Gnl)} \mbox{}
\begin{clist}{(i)}
\item There is an isomorphism $PB(G,n) \cong (\Z/m)^\times/\{\pm 1\}$ where $m =m_{(G,n)}$. 
\item If $\chi > \chi_{\min}(G,n)$, then $PB(G,n,\chi) = 0$.
\end{clist}
\end{proposition}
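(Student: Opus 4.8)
\noindent\emph{Proposal.}
The plan is to turn the statement into a concrete computation with the lattice $L^G$ and its finite quotient $\wh L$. First I would record two facts about a reference complex $\ol C_*$ with $(-1)^n\chi(\ol C_*)=\chi$ and $L=H_n(\ol C_*)$: from the exact sequence $0\to L\to \ol C_n\to\cdots\to \ol C_0\to\Z\to 0$ the module $L$ is an $(n{+}1)$-st syzygy of $\Z$, so dimension shifting (free modules being cohomologically trivial) gives $\wh L=\wh H^0(G;L)\cong\wh H^{-(n+1)}(G;\Z)=H_n(G)$, regardless of minimality; and $L$ is a $\Z G$-lattice, so $L^G$ is free abelian, of rank $r:=\chi-(-1)^n$ by a rational Euler-characteristic count along the sequence (the functor $(-)^G$ being exact on $\Q G$-modules), whence $r=r_{(G,n)}$ when $\chi=\chi_{\min}(G,n)$ (\cref{prop:min-hyp}) and $r>r_{(G,n)}$ otherwise. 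Using the structure theorem I would then choose a basis $e_1,\dots,e_r$ of $L^G$ so that $\psi\colon L^G\twoheadrightarrow\wh L$ becomes the standard projection $\Z^r\twoheadrightarrow\bigoplus_{i=1}^r\Z/\mu_i$ with $\mu_1\mid\cdots\mid\mu_r$ and $\ker\psi=\bigoplus_i\mu_i\Z e_i$. In the minimal case the $\mu_i$ are the invariant factors $m_i$ of \cref{def:modulus}, so $\mu_1=m_{(G,n)}$; in the non-minimal case $r>r_{(G,n)}\ge d(H_n(G))$ forces at least one $\mu_i$, and hence $\mu_1$, to equal $1$. In all cases $PB(G,n,\chi)=\Aut(\wh L)/\psi_*(\Aut_\psi(L^G))$, and, writing $m:=\mu_1$, the target reduces to showing this quotient is $(\Z/m)^\times/\{\pm1\}$; note this is trivial when $m=1$, which will yield statement (ii) as well as the non-strongly-minimal case of (i).

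Next I would construct the comparison map. Since $m=\mu_1$ divides every $\mu_i$, reduction modulo $m$ gives $\wh L\otimes_\Z\Z/m\cong(\Z/m)^r$ and $\psi$ induces an isomorphism $L^G/mL^G\xrightarrow{\,\cong\,}\wh L\otimes\Z/m$. Define
\[ \delta\colon\Aut(\wh L)\longrightarrow(\Z/m)^\times,\qquad \delta(\varphi)=\det(\varphi\otimes\Z/m); \]
this is a surjective homomorphism (the automorphism multiplying the first cyclic summand by a unit and fixing the others realizes, under $\delta$, the class of that unit mod $m$). The relation $\psi_*(\varphi)\circ\psi=\psi\circ\varphi$ of \cref{def:psi-auto}, reduced mod $m$, shows $\psi_*(A)\otimes\Z/m=A\bmod m$ for any $A\in\Aut_\psi(L^G)\subseteq\GL_r(\Z)$, so $\delta(\psi_*(A))=\det A\bmod m\in\{\pm1\}$, and $A=\operatorname{diag}(-1,1,\dots,1)$ shows both signs occur. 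Since $\{\pm1\}$ is normal in the abelian group $(\Z/m)^\times$, it follows that $\psi_*(\Aut_\psi(L^G))\subseteq\delta^{-1}(\{\pm1\})$ is a normal subgroup and $\delta$ descends to a surjection $\bar\delta\colon PB(G,n,\chi)\twoheadrightarrow(\Z/m)^\times/\{\pm1\}$.

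The remaining point — the one I expect to be the main obstacle — is the injectivity of $\bar\delta$: every $\varphi\in\Aut(\wh L)$ with $\det(\varphi\bmod m)=1$ must be $\psi_*(A)$ for some $A\in\Aut_\psi(L^G)$, i.e.\ must lift to an integral automorphism of $L^G$ preserving $\ker\psi$. I would first lift $\varphi$ to an integral endomorphism $A_0$ preserving $\ker\psi$ (immediate on matrices, the constraint being $\mu_i/\mu_j\mid (A_0)_{ij}$ for $i>j$), so $\det A_0\equiv\det(\varphi\bmod m)=1\pmod m$; the set of all lifts of $\varphi$ is then the coset of $A_0$ by the integral matrices whose $i$-th row is divisible by $\mu_i$, and the crux is to show this coset meets $\GL_r(\Z)$. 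This is a Smith-normal-form/cofactor argument: after replacing $A_0$ by a suitable lift one arranges its adjugate to have content coprime to $\det A_0$, and then an explicit row-and-column modification forces the determinant to be $\pm1$; the divisibility chain $\mu_1\mid\cdots\mid\mu_r$ and the ``$\det$ mod $\mu_1$'' obstruction genuinely enter here. This lifting lemma is the analogue in the present formulation of the computations of Metzler \cite{Metzler:1976} and Sieradski--Dyer \cite{Sieradski:1979} (compare \cite[\S2]{Schafer:1996}). Granting it: when $m=1$ the hypothesis on $\varphi$ is vacuous, so $\psi_*$ is surjective and $PB(G,n,\chi)=0$, which proves (ii); and combining with the previous paragraph, $\bar\delta$ is an isomorphism $PB(G,n)\xrightarrow{\,\cong\,}(\Z/m)^\times/\{\pm1\}$ with $m=m_{(G,n)}$, which proves (i).
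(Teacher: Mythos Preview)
Your proposal is correct and follows essentially the same route as the paper: both reduce the computation of $PB(G,n,\chi)$ to the purely abelian-group statement that $\Aut(\wh L)/\psi_*(\Aut_\psi(\Z^r))\cong(\Z/\mu_1)^\times/\{\pm1\}$, which the paper packages as \cref{lemma:webb} (attributed to Webb~\cite[Corollary~3.2]{Webb:1981}) and simply cites, whereas you sketch the proof via the determinant-mod-$m$ map and the integral lifting step. Your treatment of part~(ii) via the inequality $r>r_{(G,n)}\ge d(H_n(G))$ forcing $\mu_1=1$ is slightly more direct than the paper's argument, which instead writes $L^G\cong L_0^G\oplus\Z^{r}$ for $L_0$ coming from a minimal complex; the two are equivalent since both amount to observing that $\rank_\Z L^G$ strictly exceeds $d(\wh L)$ in the non-minimal case.
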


This is a consequence of the following, which was pointed out by Webb \cite[Corollary 3.2]{Webb:1981}.  In what follows, we write $a \mid b$ for integers $a, b \ge 1$ if $a$ divides $b$. We write $m_1 \mid \cdots \mid m_d$ for integers $m_1, \cdots, m_d \ge 1$ if $m_i \mid m_{i+1}$ for all $1 \le i \le d-1$.

\begin{lemma} \label{lemma:webb}
Let $A$ be a finite abelian group, let $d \ge 1$ and let $\psi\colon \Z^d \twoheadrightarrow A$ be a surjective homomorphism. 
Suppose $A$ has invariant factors $m_1 \mid \cdots \mid m_d$ (possibly with some $m_i=1$),  i.e. $A \cong \Z/m_1 \times \cdots \times \Z/m_d$.
Consider the following composition:
\[ \rho\colon \Aut(A) \to \Aut(\Z/m_1 \otimes_{\Z} A) \xrightarrow[]{\det} (\Z/m_1)^\times \twoheadrightarrow (\Z/m_1)^\times/\{\pm 1\}.  \]
Then $\rho$ is surjective and $\IM(\psi_*\colon \Aut_\psi(\Z^d) \to \Aut(A)) = \ker(\rho)$. In particular, we have that $\IM(\Aut_\psi(\Z^d)) \unlhd \Aut(A)$ is a normal subgroup and $\rho$ induces an isomorphism
\[ \rho_*\colon \frac{\Aut(A)}{\Aut_\psi(\Z^d)} \to (\Z/m_1)^\times/\{\pm 1\}. \] 
Note that $\rho$ only depends on $A$ and $d$, and not on the choice of $\psi$.
\end{lemma}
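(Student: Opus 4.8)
The plan is to exploit that $\rho$ is a group homomorphism into an abelian group (being a composite of $\det$ with a quotient map), so that $\ker\rho$ is automatically normal in $\Aut(A)$ and the claimed isomorphism $\rho_*$ drops out of the first isomorphism theorem once we have shown that $\rho$ is surjective and that $\IM(\psi_*)=\ker\rho$. First I would normalise $\psi$: since $\Z^d$ is free and $A$ finite, $\ker\psi$ has rank $d$, so Smith normal form gives a basis $e_1,\dots,e_d$ of $\Z^d$ with $\ker\psi=\bigoplus_i m_i\Z e_i$, and after post-composing $\psi$ with an automorphism of $A$ we may take $A=\bigoplus_{i=1}^d\Z/m_i$ with $\psi$ the standard quotient. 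Replacing $\psi$ by $\theta\circ\psi$ for $\theta\in\Aut(A)$ only conjugates $\Aut_\psi(\Z^d)$, hence $\IM(\psi_*)$, by $\theta$; as $\ker\rho$ is normal and built solely from $A$ and $d$, the equality $\IM(\psi_*)=\ker\rho$ is insensitive to this reduction, which simultaneously yields the final assertion that $\rho$ does not depend on $\psi$. Using $m_1\mid m_i$ one checks $A\otimes_\Z\Z/m_1\cong(\Z/m_1)^d$ and that $\psi\otimes\Z/m_1$ is an isomorphism, so reduction mod $m_1$ is compatible along $\psi_*$ with $\alpha\mapsto\alpha\otimes\Z/m_1$; from this the inclusion $\IM(\psi_*)\subseteq\ker\rho$ is immediate (any $\varphi\in\Aut_\psi(\Z^d)$ has $\det\varphi=\pm1$, hence $\det(\varphi\bmod m_1)=\pm1$), and $\rho$ is surjective (for $u\in(\Z/m_1)^\times$, the automorphism of $A$ which is multiplication by an integer lift of $u$ coprime to $m_1$ on the summand $\Z/m_1$ and the identity elsewhere has $\alpha\otimes\Z/m_1=\mathrm{diag}(u,1,\dots,1)$, of determinant $u$).

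The heart of the argument is the reverse inclusion $\ker\rho\subseteq\IM(\psi_*)$. I would introduce the subgroup $N\le\Aut(A)$ generated by all transvections $e_k\mapsto e_k+te_l$ (fixing the other basis vectors, with $t$ subject only to the divisibility condition forced by the orders $m_k,m_l$) together with the sign changes $\mathrm{diag}(\pm1,\dots,\pm1)$. Each generator lifts transparently to $\Aut_\psi(\Z^d)$ — to a transvection of $\Z^d$, respectively a $\pm1$ diagonal matrix, preserving $\ker\psi$ — so $N\subseteq\IM(\psi_*)\subseteq\ker\rho$, and it remains to show $\ker\rho\subseteq N$. Given $\alpha\in\ker\rho$, I would first use that $\alpha$ is an automorphism to clear its matrix, by left and right multiplication by transvections, to diagonal form $\mathrm{diag}(u_1,\dots,u_d)$ with $\gcd(u_i,m_i)=1$ — working one prime at a time, where the relevant elementary operations over the discrete valuation ring $\Z_{(p)}$ are available and the divisibility constraints on the transvection entries are met automatically because $\alpha$ is bijective — so that $\rho(\alpha)=[\prod_i u_i]$ and the hypothesis forces $\prod_i u_i\equiv\pm1\pmod{m_1}$. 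Then I would apply the Whitehead-type identity $\mathrm{diag}(u,v)\equiv\mathrm{diag}(uv,1)\pmod N$ for adjacent coordinate pairs, verified by a direct computation in $\Aut(\Z/m_i\oplus\Z/m_j)$ with the two kinds of transvection taken to have entries in $\Z/m_i$ and in $(m_j/m_i)\Z/m_j$ respectively, to collapse $\mathrm{diag}(u_1,\dots,u_d)$ modulo $N$ to $\mathrm{diag}(w,1,\dots,1)$ with $w\equiv\prod_i u_i\equiv\pm1\pmod{m_1}$; this last automorphism is either $\id_A$ or the reflection $\mathrm{diag}(-1,1,\dots,1)$, both in $N$. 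Hence $\alpha\in N\subseteq\IM(\psi_*)$, giving $\IM(\psi_*)=\ker\rho$ (in particular normal), and then $\rho_*$ as stated.

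The main obstacle will be the last step: diagonalising an automorphism of a finite abelian group and then collapsing the diagonal via the Whitehead identity must both be done while respecting the incompatible moduli $m_1\mid m_2\mid\dots\mid m_d$ attached to the different coordinate directions, and this is exactly where the hypothesis $\alpha\in\ker\rho$, rather than merely $\alpha\in\Aut(A)$, is consumed. This is the substance of Webb's computation, for which \cite[Corollary 3.2]{Webb:1981} is a reference; an alternative to the Whitehead manipulation would be to write the lift of $\mathrm{diag}(u_1,\dots,u_d)$ directly as an integer matrix supported on its diagonal together with the first row and column — which automatically preserves $\ker\psi$ and induces $\mathrm{diag}(u_i)$ — and then to adjust its free entries so that the determinant is $\pm1$ by a Chinese-remainder argument using $m_1\mid m_i$ and $\gcd(u_i,m_i)=1$.
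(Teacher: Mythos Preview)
The paper does not actually prove this lemma: it simply attributes the result to Webb \cite[Corollary 3.2]{Webb:1981} and moves on to use it. So there is no ``paper's own proof'' to compare against, and your outline is already considerably more detailed than what appears in the paper.

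Your reduction steps are fine: the Smith normal form normalisation of $\psi$, the easy inclusion $\IM(\psi_*)\subseteq\ker\rho$ (via $\det=\pm1$ over $\Z$), and the surjectivity of $\rho$ (via the diagonal automorphism $\mathrm{diag}(u,1,\dots,1)$) are all correct and cleanly explained. The reduction-mod-$m_1$ compatibility argument you give for why the result is independent of $\psi$ is also the right one.

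The one place to be cautious is the claimed Whitehead-type identity $\mathrm{diag}(u,v)\equiv\mathrm{diag}(uv,1)\pmod N$ in $\Aut(\Z/m_i\oplus\Z/m_j)$ when $m_i\mid m_j$ strictly. The standard four-term Whitehead product
\[
\left(\begin{smallmatrix}1&a\\0&1\end{smallmatrix}\right)
\left(\begin{smallmatrix}1&0\\-a^{-1}&1\end{smallmatrix}\right)
\left(\begin{smallmatrix}1&a\\0&1\end{smallmatrix}\right)
\left(\begin{smallmatrix}0&-1\\1&0\end{smallmatrix}\right)
\]
uses an \emph{unrestricted} lower transvection, whereas here the $(2,1)$-entry is forced to lie in $(m_j/m_i)\Z/m_j$, so the identity does not transplant verbatim. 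A modified product does work in many cases (and when $\gcd(m_j/m_i,m_i)=1$ one can pass through the CRT splitting to reduce to an honest $GL_2$ over a quotient ring), but ``verified by a direct computation'' undersells what is needed. You are right that this is exactly the content of Webb's argument, and your citing it here is appropriate; just be aware that the naive Whitehead product is not the computation that succeeds.

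Your alternative route --- lifting $\mathrm{diag}(u_1,\dots,u_d)$ directly to an integer matrix with controlled off-diagonal entries and determinant $\pm1$ via a CRT/divisibility argument --- is in fact the more robust one and can be made to work: given lifts $\tilde u_i\equiv u_i\pmod{m_i}$, one adjusts the free entries (which are multiples of the $m_i$) so that the determinant, which is already $\equiv\pm1\pmod{m_1}$ by hypothesis, becomes exactly $\pm1$. This needs a short inductive or $2\times2$ calculation rather than a one-line appeal, but it avoids the transvection-divisibility issue entirely.
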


\begin{proof}[Proof of \cref{prop:PB(Gnl)}]
Let $\ol{C}_*$ be a reference complex with $(-1)^n\chi(\ol{C}_*) = \chi$, let $L = H_n(\ol{C}_*)$, let $\psi \colon L^G \twoheadrightarrow \wh L$ denote the quotient map, let $d= \rank_{\Z}(L^G)$ and suppose $\wh L$ has invariant factors $m_1 \mid \cdots \mid m_d$ (possibly with some $m_i=1$). Then \cref{lemma:webb} implies that $PB(G,n,\chi) \cong (\Z/m_1)^\times/\{\pm 1\}$.

(i) If $\chi = \chi_{\min}(G,n)$, then $r_{(G,n)} = \rank_{\Z}(L^G)$ and so $m_1 = m_{(G,n)}$ (see \cref{def:modulus}).

(ii) Suppose $\chi > \chi_{\min}(G,n)$, so that $r := \chi - \chi_{\min}(G,n) > 0$. Let $\ol{D}_*$ be a reference complex
 with $(-1)^n\chi(\ol{D}_*) = \chi_{\min}(G,n)$, and let $L_0 = H_n(\ol{D}_*)$. Since $L^G$ and $\wh L$ depend only on $\chi$ and not on $\ol{C}_*$ (see \cref{def:ref-2-complex}), we have that $L^G \cong (L_0 \oplus \Z G^r)^G \cong L_0^G \oplus \Z^r$ and $\wh L \cong (L_0 \oplus \Z G^r)\hh \cong \wh L_0$. Since $\rank_{\Z}(L_0^G) \ge d(\wh L_0)$, it follows that $\rank_{\Z}(L^G) \ge d(\wh L) + r > d(\wh L)$. Hence $m_1=1$.
\end{proof}

\begin{remark} \label{remark:wlog-psi-diagonal}
The quotient map $\psi\colon  L^G \twoheadrightarrow \wh L$ is induced by the identification $\wh L \cong L^G/(N \cdot L)$.
If $d = \rank_{\Z}(L^G)$, then we have isomorphisms $L^G \cong \Z^d$ and $\wh L \cong \Z/m_1 \times \cdots \times \Z/m_d$ for some $m_i$ such that $m_i \mid m_{i+1}$ for all $i$.
In fact, we can choose these isomorphisms such that the induced map $\psi\colon  \Z^d \twoheadrightarrow \Z/m_1 \times \cdots \times \Z/m_d$ is a direct sum of quotient maps $\Z \twoheadrightarrow \Z/m_i$ for $1 \le i \le d$. 
To see this, note that  since $\wh L$ is a finite group,   the kernel $\ker\psi$ of the surjection $\psi\colon  L^G \twoheadrightarrow \wh L$ must be of the form  $\ker \psi \cong \Z^d$.

Since $\Z$ is a principal ideal domain, we can choose bases of $\ker \psi$ and $L^G$ so that the inclusion map $\ker\psi \hookrightarrow L^G$ is given by a $d \times d$ matrix with non-zero diagonal entries $n_1, \cdots, n_d$ such that $n_{i} \mid n_{i+1}$ for all $i$. By comparing cokernels, it follows that $n_i=m_i$ for all $i$.
By taking the induced generating set for $\wh L$, we obtain identifications such that $\psi$ is as required.
\end{remark}

In light of \cref{prop:PB(Gnl)} (ii), we will now restrict to the case of minimal complexes.
The bias invariant has the following two basic properties. The first can be extracted from \cite[Theorem 1.13]{Latiolais:1993}, and the second is a consequence of the independence of the choice of chain map.

\begin{proposition} \label{prop:bias-an-invariant}
The bias invariant is a chain homotopy invariant. In particular, if $D_*$ is an algebraic $n$-complex with $(-1)^n\chi(D_*)=\chi_{\min}(G,n)$, and $m = m_{(G,n)}$, then the bias invariant defines a map
\[ \beta\colon \Alg_{\min}(G,n) \to (\Z/m)^\times/\{\pm 1\}, \quad C_* \mapsto \beta(C_*,D_*). \]
\end{proposition}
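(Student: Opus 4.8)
The plan is to establish \cref{prop:bias-an-invariant} in two parts: first, that $\beta(C_*, D_*)$ depends only on the chain homotopy types of $C_*$ and $D_*$; second, that fixing $D_*$ yields a well-defined map into $(\Z/m)^\times/\{\pm 1\}$ via \cref{prop:PB(Gnl)}(i). The second part is essentially immediate once the first is in hand: \cref{prop:PB(Gnl)}(i) identifies $PB(G,n)$ with $(\Z/m)^\times/\{\pm 1\}$ when $\chi = \chi_{\min}(G,n)$, and $D_*$ being a fixed minimal algebraic $n$-complex means $\beta(-, D_*)$ lands in this group; the only content is that the target no longer sees the choice of $D_*$ up to the canonical identification. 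So the heart of the matter is chain homotopy invariance in the first variable (and symmetrically the second).

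First I would unwind the definition: $\beta(C_*, D_*) = [\tau_{D_*}^{-1} \circ \sigma(C_*, D_*) \circ \tau_{C_*}]$, where $\sigma(C_*, D_*) = H_n(h)\hh$ for any chain map $h \colon C_* \to D_*$ with $H_0(h) = \id_\Z$, and where $\tau_{C_*}, \tau_{D_*}$ are the fixed reference isomorphisms from \cref{def:ref-2-complex}. Suppose $C_* \simeq C_*'$ via a chain homotopy equivalence $f \colon C_* \to C_*'$ with $H_0(f) = \id_\Z$ (we may arrange $H_0(f) = \id_\Z$ since $H_0$ of both is canonically $\Z$). I want to compare $\beta(C_*, D_*)$ with $\beta(C_*', D_*)$. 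Pick a chain map $h' \colon C_*' \to D_*$ with $H_0(h') = \id_\Z$; then $h' \circ f \colon C_* \to D_*$ is a chain map with $H_0(h' \circ f) = \id_\Z$, so by \cref{prop:bias-tate} we may use it to compute $\sigma(C_*, D_*)$, giving $\sigma(C_*, D_*) = H_n(h')\hh \circ H_n(f)\hh = \sigma(C_*', D_*) \circ H_n(f)\hh$. The subtle point is that $\tau_{C_*}$ and $\tau_{C_*'}$ are \emph{a priori} unrelated reference choices, so I need to show that $H_n(f)\hh \colon H_n(C_*)\hh \to H_n(C_*')\hh$, precomposed and postcomposed with the appropriate $\tau$'s, represents the identity class in $PB(G,n)$. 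Concretely, the claim is that $\tau_{C_*'}^{-1} \circ H_n(f)\hh \circ \tau_{C_*} \in \Aut(\wh L)$ lies in the image of $\psi_*\colon \Aut_\psi(L^G) \to \Aut(\wh L)$, i.e. in $\Aut_\psi(L^G)$'s image, so that it is trivial in the quotient $PB(G,n) = \Aut(\wh L)/\Aut_\psi(L^G)$. This follows because $f$ induces not just $H_n(f)\hh$ on Tate cohomology but also $H_n(f)^G \colon H_n(C_*)^G \to H_n(C_*')^G$ compatibly with the quotient maps $\psi_{C_*}, \psi_{C_*'}$, and the reference maps $\bar\tau_{C_*}, \bar\tau_{C_*'}$ were chosen precisely to intertwine these with the fixed $\psi \colon L^G \twoheadrightarrow \wh L$; so $\bar\tau_{C_*'}^{-1} \circ H_n(f)^G \circ \bar\tau_{C_*}$ is a $\psi$-automorphism of $L^G$ inducing $\tau_{C_*'}^{-1} \circ H_n(f)\hh \circ \tau_{C_*}$ on $\wh L$, which is exactly membership in $\IM(\psi_*)$. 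Hence $\beta(C_*', D_*) = \beta(C_*, D_*)$. The symmetric argument (or combining with the analogous statement already recorded for $\sigma$ via \cref{prop:bias-tate}) handles invariance in $D_*$.

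The main obstacle — really the only place requiring care — is keeping the bookkeeping of reference maps straight: the definition of $\beta$ involves transporting everything to a single fixed pair $(L^G, \wh L)$ via the $\tau$'s, and one must verify that the induced map $H_n(f)$ is compatible with \emph{both} the $G$-invariants functor and the Tate cohomology functor simultaneously, so that the resulting automorphism of $\wh L$ genuinely lifts to $\Aut_\psi(L^G)$. This is where the structure of \cref{def:ref-2-complex} (choosing bases of $L^G$ that descend to bases of $\wh L$) and \cref{def:psi-auto} (the notion of $\psi$-automorphism and the homomorphism $\psi_*$) do the work. I would also remark that one should cite \cref{prop:PB(Gnl)}(i) for the final identification and note that \cite[Theorem 1.13]{Latiolais:1993} gives an alternative route to the invariance statement, and that the well-definedness of $\sigma$ (independence of the choice of chain map $h$) is already \cref{prop:bias-tate}, so the incremental content here is genuinely just the compatibility of the $\tau$-transport with chain homotopy equivalences.
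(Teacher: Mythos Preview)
Your argument is correct. The paper itself does not give a proof of this proposition: it simply remarks that the statement ``can be extracted from \cite[Theorem 1.13]{Latiolais:1993}'' and moves on. What you have written is precisely the direct unpacking one would expect---using that a chain map $f$ realising the equivalence in $\Alg(G,n)$ has $H_n(f)$ a $\Z G$-isomorphism, hence induces compatible isomorphisms on $(-)^G$ and on $(-)\hh$, so that $\bar\tau_{C_*'}^{-1}\circ H_n(f)^G\circ\bar\tau_{C_*}$ is a $\psi$-automorphism lifting $\tau_{C_*'}^{-1}\circ H_n(f)\hh\circ\tau_{C_*}$. Your careful tracking of the reference maps from \cref{def:ref-2-complex} and the role of \cref{def:psi-auto} is exactly the bookkeeping needed, and your closing remark that \cref{prop:bias-tate} already handles independence of the chain map $h$ is on point. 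One small note: the equivalence relation in $\Alg(G,n)$ only requires a chain map with $H_0(f)=\id_\Z$ and $H_n(f)$ a $\Z G$-isomorphism (not a full chain homotopy equivalence), so your parenthetical about arranging $H_0(f)=\id_\Z$ is unnecessary---it is built into the definition.
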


\begin{proposition} \label{lemma:bias-composition}
Let $C_*$, $D_*$, $E_* \in \Alg_{\min}(G,n)$ and $m = m_{(G,n)}$, then 
\[ \beta(C_*,E_*) = \beta(C_*,D_*) \cdot \beta(D_*,E_*) \in (\Z/m)^\times/\{\pm 1\}. \]
\end{proposition}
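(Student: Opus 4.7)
The plan is to unwind the definition of $\beta$ from \cref{def:bias-schafer}, use that $\sigma(\,\cdot\,,\,\cdot\,)$ is multiplicative under composition of chain maps, and finally invoke \cref{lemma:webb} to translate multiplicativity in $PB(G,n) = \Aut(\wh L)/\Aut_\psi(L^G)$ into multiplicativity in $(\Z/m)^\times/\{\pm 1\}$.

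First I would pick chain maps $h_1\colon C_*\to D_*$ and $h_2\colon D_*\to E_*$ with $H_0(h_1)=H_0(h_2)=\id_\Z$; such maps exist because $C_*,D_*,E_*$ are algebraic $n$-complexes, and $h_2\circ h_1\colon C_*\to E_*$ is then a chain map with $H_0(h_2\circ h_1)=\id_\Z$. By functoriality of $(\,\cdot\,)\hh$ applied to $H_n(h_2\circ h_1)=H_n(h_2)\circ H_n(h_1)$, and by the independence statement in \cref{prop:bias-tate}, I get
\[ \sigma(C_*,E_*) = \sigma(D_*,E_*)\circ \sigma(C_*,D_*) \in \Aut(\wh L). \]
Inserting $\tau_{D_*}\circ \tau_{D_*}^{-1}$ in the middle and using the definition of $\beta$ gives, at the level of $\Aut(\wh L)$,
\[ \tau_{E_*}^{-1}\circ \sigma(C_*,E_*)\circ \tau_{C_*} = \bigl(\tau_{E_*}^{-1}\circ \sigma(D_*,E_*)\circ \tau_{D_*}\bigr)\circ \bigl(\tau_{D_*}^{-1}\circ \sigma(C_*,D_*)\circ \tau_{C_*}\bigr). \]

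Passing to the quotient $PB(G,n) = \Aut(\wh L)/\Aut_\psi(L^G)$, the right-hand side is the product of the classes representing $\beta(D_*,E_*)$ and $\beta(C_*,D_*)$, so
\[ \beta(C_*,E_*) = \beta(D_*,E_*)\cdot \beta(C_*,D_*) \in PB(G,n). \]
Under the isomorphism $\rho_*\colon PB(G,n) \xrightarrow{\cong} (\Z/m)^\times/\{\pm 1\}$ of \cref{prop:PB(Gnl)}(i), the group operation on $PB(G,n)$ (induced from composition in $\Aut(\wh L)$) corresponds to multiplication in $(\Z/m)^\times/\{\pm 1\}$, because $\rho$ is built from the group homomorphism $\Aut(\wh L)\to \Aut(\Z/m\otimes_\Z \wh L)$ followed by the multiplicative map $\det$ and a quotient. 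Since $(\Z/m)^\times/\{\pm 1\}$ is abelian, the two orderings agree, giving the stated formula.

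The only mildly subtle point is the well-definedness of the right-hand side: $\sigma(C_*,D_*)$ is manifestly independent of the choice of $h_1$ by \cref{prop:bias-tate}, and the reference isomorphisms $\tau_{C_*},\tau_{D_*},\tau_{E_*}$ are fixed once and for all in \cref{def:ref-2-complex}, so no further choices enter. I do not expect a serious obstacle here; the identity is essentially the statement that $\beta(\,\cdot\,,\,\cdot\,)$ is a cocycle, which follows directly from functoriality of Tate cohomology in the argument above.
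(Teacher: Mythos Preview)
Your argument is correct and is exactly the fleshed-out version of what the paper indicates (the paper does not give a proof, merely noting that the statement ``is a consequence of the independence of the choice of chain map''). One small notational slip: the displayed equation $\sigma(C_*,E_*)=\sigma(D_*,E_*)\circ\sigma(C_*,D_*)$ should not be labeled as lying in $\Aut(\wh L)$, since these are isomorphisms between distinct groups $H_n(-)\hh$; but your next line, after conjugating by the $\tau$'s, is the correct statement in $\Aut(\wh L)$ and the rest goes through.
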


We conclude this section by noting that Metzler's original formulation differs to the one presented above. We will present it here since it will be useful for the explicit computations in \cref{ss:examples-non-abelian}. A convenient reference for this formulation is \cite{Latiolais:1993}.

Observe that, if $C_*$, $D_* \in \Alg_{\min}(G,n)$, then $H_n(\Z \otimes_{\Z G} C_*)$ and $H_n(\Z \otimes_{\Z G} D_*)$ are free abelian groups of the same rank. The following is a consequence of \cite[Lemma 1.8 \& Exercise 1.10]{Latiolais:1993} and \cite[p163]{Schafer:1996}, and gives another formulation of the bias invariant.

We use the identification $PB(G,n) \cong (\Z/m)^\times/\{\pm 1\}$ given in \cref{prop:PB(Gnl)}.

\begin{proposition} \label{prop:bias-versions}
Let $C_*$, $D_*$ be algebraic $n$-complexes such that $(-1)^n\chi(C_*) = (-1)^n\chi(D_*)=\chi_{\min}(G,n)$ and let $h\colon C_* \to D_*$ be a chain map such that $H_0(h) = \id_{\Z}$. Fix identifications $H_n(\Z \otimes_{\Z G} C_*) \cong \Z^r$ and $H_n(\Z \otimes_{\Z G} D_*) \cong \Z^r$, and view $H_n(\id_{\Z} \otimes h)\colon H_n(\Z \otimes_{\Z G} C_*) \to H_n(\Z \otimes_{\Z G} D_*)$ as an element of $M_r(\Z)$. Let $m = m_{(G,n)}$ and let $\ol{\cdot}\colon \Z \to (\Z/m)/\{\pm 1\}$ denote the quotient map. Then:
\[ \beta(C_*,D_*) = \ol{\det(H_n(\id_{\Z} \otimes h))} \in (\Z/m)^\times/\{\pm 1\}. \]
\end{proposition}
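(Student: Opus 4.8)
The plan is to compare the two functors that compute the top homology of a minimal algebraic $n$-complex --- the ``Tate'' functor $C_*\mapsto H_n(C_*)\hh=\wh H^0(G;H_n(C_*))$ underlying \cref{def:bias-schafer}, and the ``coefficient'' functor $C_*\mapsto H_n(\Z\otimes_{\Z G}C_*)$ appearing in the statement --- and then feed the comparison into \cref{lemma:webb}. The comparison is essentially \cite[Lemma 1.8 \& Exercise 1.10]{Latiolais:1993} together with \cite[p163]{Schafer:1996}; I would phrase it as follows. For an algebraic $n$-complex $C_*$ with $L:=H_n(C_*)$, extend $C_*$ to a free $\Z G$-resolution $P_*\to\Z$ with $P_i=C_i$ for $i\le n$. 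Then $\Z\otimes_{\Z G}P_*$ and $\Z\otimes_{\Z G}C_*$ agree in degrees $\le n$, so they have the same group of $n$-cycles, and dividing by $\IM(\bd_{n+1})$ gives a surjection
\[
q_{C_*}\colon H_n(\Z\otimes_{\Z G}C_*)=\ker(\bd_n)\twoheadrightarrow H_n(\Z\otimes_{\Z G}P_*)=H_n(G),
\]
with kernel $\IM(\bd_{n+1})=\IM(\Z\otimes_{\Z G}L\to\Z\otimes_{\Z G}C_n)$, which is intrinsic to $C_*$. Composing with the (natural) dimension-shifting isomorphism $H_n(G)\cong\wh H^0(G;L)=H_n(C_*)\hh$ from the remark after \cref{prop:min-hyp} yields a surjection $q_{C_*}\colon H_n(\Z\otimes_{\Z G}C_*)\twoheadrightarrow H_n(C_*)\hh$ which is natural in chain maps $h$ with $H_0(h)=\id_{\Z}$.

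Now take $C_*,D_*\in\Alg_{\min}(G,n)$ and a chain map $h\colon C_*\to D_*$ with $H_0(h)=\id_{\Z}$; write $m=m_{(G,n)}$ and $r=r_{(G,n)}$. By \cref{prop:min-hyp} both $H_n(\Z\otimes_{\Z G}C_*)$ and $H_n(\Z\otimes_{\Z G}D_*)$ are free abelian of rank $r$, equal to the number of invariant factors of $\wh L:=H_n(C_*)\hh$; so $q_{C_*},q_{D_*}$ are surjections onto $\wh L$ of precisely the shape appearing in \cref{lemma:webb}, with smallest invariant factor $m$. Extending $h$ to a map of resolutions, naturality of $q$ gives a commutative square with top row $\Phi:=H_n(\id_{\Z}\otimes h)\in M_r(\Z)$, bottom row $\sigma(C_*,D_*)=H_n(h)\hh$, and vertical surjections $q_{C_*},q_{D_*}$; transporting the bottom row to $\Aut(\wh L)$ by the reference maps $\tau_{C_*},\tau_{D_*}$ of \cref{def:ref-2-complex} puts $\Phi$ into a commutative square with bottom row $\phi:=\tau_{D_*}^{-1}\circ\sigma(C_*,D_*)\circ\tau_{C_*}\in\Aut(\wh L)$ and vertical surjections $q_{C_*}':=\tau_{C_*}^{-1}\circ q_{C_*}$, $q_{D_*}'$ onto $\wh L$. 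Reducing this square modulo $m$, the vertical maps become isomorphisms $(\Z/m)^r\xrightarrow{\,\sim\,}\wh L\otimes\Z/m$ (surjections between finite groups of the same order), so $\Phi$ is invertible modulo $m$ and $\det(\Phi)\equiv\det(\phi\otimes\Z/m)\cdot\det(q_{C_*}'\bmod m)/\det(q_{D_*}'\bmod m)\pmod{m}$. By \cref{lemma:webb}, $\det(\phi\otimes\Z/m)$ represents the image $\rho(\phi)$ of $[\phi]\in PB(G,n)$ under the isomorphism $PB(G,n)\cong(\Z/m)^\times/\{\pm1\}$ of \cref{prop:PB(Gnl)}, i.e.\ $\beta(C_*,D_*)$; and the $\tau_{C_*}$ are constructed (in \cref{def:ref-2-complex}) precisely so that $q_{C_*}'$ differs from the reference quotient $\psi\colon L^G\twoheadrightarrow\wh L$ only by a $\psi$-automorphism --- on suitable bases, by an element of $\GL_r(\Z)$ --- so the correction factor above equals $\pm1$. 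Hence $\beta(C_*,D_*)=\ol{\det(H_n(\id_{\Z}\otimes h))}$ in $(\Z/m)^\times/\{\pm1\}$.

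I expect the main obstacle to be exactly the italicised claim: that the reference maps $\tau_{C_*}$ of \cref{def:ref-2-complex} are compatible with the natural surjection $q_{C_*}$, in the sense that $\tau_{C_*}^{-1}\circ q_{C_*}$ agrees with $\psi$ up to a $\psi$-automorphism. Granting this, everything else is a short linear-algebra computation, and the residual sign ambiguity is forced by the $\pm1=\det(\GL_r(\Z))$ indeterminacy in the identifications $H_n(\Z\otimes_{\Z G}C_*)\cong\Z^r$ --- which is why the identity holds only in $(\Z/m)^\times/\{\pm1\}$ and why the quotient map $\ol{\,\cdot\,}\colon\Z\to(\Z/m)/\{\pm1\}$ appears in the statement. (If one prefers, the general case reduces to the case $D_*=\ol{C}_*$ via \cref{lemma:bias-composition} and multiplicativity of the determinant, since $H_n(\id_{\Z}\otimes(h'h))=H_n(\id_{\Z}\otimes h')\circ H_n(\id_{\Z}\otimes h)$; and $\ol{\det(H_n(\id_{\Z}\otimes h))}$ is readily seen to be independent of the choice of $h$, because two lifts of $\phi$ agree modulo $m$, using \cref{prop:bias-tate}.) Throughout, minimality of $C_*$ is used essentially via \cref{prop:min-hyp}: it is what makes $\rank H_n(\Z\otimes_{\Z G}C_*)=r$ equal to the number of invariant factors of $\wh L$, so that \cref{lemma:webb} applies with modulus $m$; without it this rank is strictly larger, $m$ degenerates to $1$, and the formula becomes vacuous, consistently with \cref{prop:PB(Gnl)} (ii).
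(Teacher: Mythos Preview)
The paper does not give its own proof of this proposition; it simply states that the result ``is a consequence of \cite[Lemma 1.8 \& Exercise 1.10]{Latiolais:1993} and \cite[p163]{Schafer:1996}''. Your argument is correct and is essentially a reconstruction of what is in those references, so there is nothing substantive to compare.

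A remark on the point you flag as the ``main obstacle''. The cleanest way to see the required compatibility is to note that the norm map induces a natural isomorphism
\[
N\colon H_n(\Z\otimes_{\Z G}C_*) = H_n((C_*)_G)\ \xrightarrow{\ \cong\ }\ H_n((C_*)^G) = (\ker\bd_n)^G = L_C^G,
\]
since $(\,\cdot\,)_G \xrightarrow{N} (\,\cdot\,)^G$ is an isomorphism on free $\Z G$-modules and $(\,\cdot\,)^G$ is left exact. Under this identification your surjection $q_{C_*}$ becomes exactly the quotient $\psi_{C_*}\colon L_C^G \twoheadrightarrow \wh L_C$; this is the content of the dimension-shifting identification $H_n(G)\cong \wh L_C$ that you invoke. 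Then \cref{def:ref-2-complex} gives $\tau_{C_*}^{-1}\circ\psi_{C_*}\circ\ol\tau_{C_*}=\psi$ on the nose, so after choosing bases the correction factors $\det(q'_{C_*}\bmod m)$ and $\det(q'_{D_*}\bmod m)$ differ only by determinants of elements of $\GL_r(\Z)$, i.e.\ by $\pm 1$, as you assert. With this made explicit there is no remaining gap.
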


\subsection{The bias invariant for $(G,n)$-complexes} \label{ss:bias-complexes}

The following is implicit in \cite[Definition 1.11]{Latiolais:1993}. 
The proof is a consequence of \cref{lemma:bias-composition} and is omitted for brevity.

\begin{proposition} \label{prop:bias-Aut(G)-action}
Let $G$ be a finite group and let $n \ge 2$. If $X$ is a finite $(G,n)$-complex with $(-1)^n\chi(X) = \chi$, then the map
\[ \varphi_{(G,n,\chi)} \colon  \Aut(G) \to PB(G,n,\chi), \quad \theta \mapsto \beta(C_*(\wt X),C_*(\wt X)_\theta) \]
is a group homomorphism and is independent of the choice of $X$. If $\chi = \chi_{\min}(G,n)$, then we define $\varphi_{(G,n)} := \varphi_{(G,n,\chi)}$.
\end{proposition}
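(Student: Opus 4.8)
The plan is to reduce to the minimal case and then extract both assertions from the composition identity of \cref{lemma:bias-composition} together with one auxiliary fact, \emph{twist-invariance of the bias}. First I would dispose of the non-minimal case: if $\chi > \chi_{\min}(G,n)$ then $PB(G,n,\chi) = 0$ by \cref{prop:PB(Gnl)}(ii), so $\varphi_{(G,n,\chi)}$ is the zero map and the statement is vacuous. So assume $\chi = \chi_{\min}(G,n)$, set $m = m_{(G,n)}$, abbreviate $C_* := C_*(\wt X)$, and note $PB(G,n) \cong (\Z/m)^\times/\{\pm 1\}$ by \cref{prop:PB(Gnl)}(i) and $(C_*)_\theta \in \Alg_{\min}(G,n)$ for all $\theta \in \Aut(G)$, twisting not affecting $\Z G$-ranks.

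The key auxiliary claim is that $\beta((C_*)_\theta,(D_*)_\theta) = \beta(C_*,D_*)$ for every $\theta\in\Aut(G)$ and all $C_*,D_*\in\Alg_{\min}(G,n)$. The point is that $\wh H^0(G;-)$ literally ignores the twist: for a $\Z G$-module $M$, bijectivity of $\theta$ gives $(M_\theta)^G = M^G$ and $N\cdot M_\theta = N\cdot M$ (since $\sum_{g\in G}\theta(g) = \sum_{g\in G} g = N$), so $\wh H^0(G;M_\theta) = \wh H^0(G;M)$ with the same quotient map $\psi$. Applying this to $M = H_n(C_*)$ and $M = H_n(D_*)$ identifies the data of \cref{def:ref-2-complex} for the twisted complexes with that for the untwisted ones; and a chain map $h\colon C_*\to D_*$ with $H_0(h)=\id_\Z$ induces $h_\theta\colon (C_*)_\theta\to (D_*)_\theta$ (same underlying map) with $H_0(h_\theta)=\id_\Z$ and $H_n(h_\theta) = H_n(h)$ on underlying groups, so \cref{prop:bias-tate} gives $\sigma((C_*)_\theta,(D_*)_\theta) = \sigma(C_*,D_*)$. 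Since the value of $\beta$ in $PB(G,n)$ is unaffected by the choice of reference data (altering it multiplies $\beta$ by an element of $\IM(\psi_*)$, which $PB(G,n)$ quotients out), the claim follows.

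The two remaining assertions are then formal. For independence of $X$: given $C_*, D_* \in \Alg_{\min}(G,n)$, chaining \cref{lemma:bias-composition} and applying twist-invariance gives
\[ \beta\big(D_*,(D_*)_\theta\big) = \beta(D_*,C_*)\cdot\beta\big(C_*,(C_*)_\theta\big)\cdot\beta\big((C_*)_\theta,(D_*)_\theta\big) = \beta(D_*,C_*)\cdot\beta\big(C_*,(C_*)_\theta\big)\cdot\beta(C_*,D_*), \]
and $\beta(D_*,C_*)\cdot\beta(C_*,D_*) = \beta(D_*,D_*) = 1$ (a special case of \cref{lemma:bias-composition}, or directly via $h=\id$); as $PB(G,n)$ is abelian, $\beta(D_*,(D_*)_\theta) = \beta(C_*,(C_*)_\theta)$. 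Since every finite $(G,n)$-complex with $(-1)^n\chi = \chi_{\min}(G,n)$ gives an element of $\Alg_{\min}(G,n)$ by \cref{prop:he-vs-che}(i), the map $\varphi(\theta) := \beta(C_*,(C_*)_\theta)$ is independent of $X$. For the homomorphism property, $(C_*)_{\theta_1\theta_2}$ equals $((C_*)_{\theta_1})_{\theta_2}$ (up to swapping $\theta_1,\theta_2$, harmless since $PB(G,n)$ is abelian), so \cref{lemma:bias-composition} gives
\[ \varphi(\theta_1\theta_2) = \beta\big(C_*,(C_*)_{\theta_1}\big)\cdot\beta\big((C_*)_{\theta_1},\,((C_*)_{\theta_1})_{\theta_2}\big) = \varphi(\theta_1)\cdot\varphi(\theta_2), \]
the last step applying the independence just proved to $(C_*)_{\theta_1}\in\Alg_{\min}(G,n)$.

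The hard part is the twist-invariance claim, and within it the bookkeeping that the reference complex and the reference isomorphisms $\bar\tau,\tau$ of \cref{def:ref-2-complex} for $(C_*)_\theta$ can be taken to agree with those for $C_*$, so that no correction term in $PB(G,n)$ appears. Once one sees that $\wh H^0(G;-)$ does not see the twist, this reduces to routine bookkeeping, and the rest is a formal consequence of \cref{lemma:bias-composition}.
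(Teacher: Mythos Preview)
Your argument is correct and matches the approach the paper indicates: the paper omits the proof entirely, stating only that it ``is a consequence of \cref{lemma:bias-composition}'', and you have supplied exactly the details needed to make that work. The twist-invariance claim $\beta((C_*)_\theta,(D_*)_\theta)=\beta(C_*,D_*)$ is the natural missing ingredient, and your justification via $(M_\theta)^G=M^G$ and $N\cdot M_\theta=N\cdot M$ is sound; one minor point is that with the paper's convention $g\cdot m:=\theta(g)\cdot m$ one has $((C_*)_{\theta_1})_{\theta_2}=(C_*)_{\theta_1\theta_2}$ on the nose, so the hedge ``up to swapping'' is unnecessary, though as you note abelianness of $PB(G,n)$ makes this irrelevant.
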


We will now use \cref{prop:he-vs-che} to adapt the notion of bias to finite $(G,n)$-complexes. 

\begin{definition}[Bias invariant for $(G,n)$-complexes]\label{def:bias-original}
Let $\varphi_{(G,n,\chi)}$ be the group homomorphism given in \cref{prop:bias-Aut(G)-action}. Define
\[ D(G,n,\chi) := \IM(\varphi_{(G,n,\chi)}\colon \Aut(G) \to PB(G,n,\chi)) \le PB(G,n,\chi) \] 
and define the \textit{bias obstruction group} for $(G,n)$-complexes to be 
\[B(G,n,\chi) := \frac{PB(G,n,\chi)}{D(G,n,\chi)}. \]
If $\chi = \chi_{\min}(G,n)$, then we define $D(G,n) := D(G,n,\chi)$ and $B(G,n) := B(G,n,\chi)$. When $n=2$, we write $B(G) := B(G,2)$ and $D(G):=D(G,2)$.

Let $X$, $Y$ be finite $(G,n)$-complexes with $(-1)^n\chi(X)=(-1)^n\chi(Y)=\chi$. Define the \textit{bias invariant} to be
\[ \beta(X,Y) := [\beta(C_*(\wt X), C_*(\wt Y))] \in B(G,n,\chi) \]
where $[\, \cdot \, ]\colon PB(G,n,\chi) \twoheadrightarrow B(G,n,\chi)$ is the quotient map. 
\end{definition}

By \cref{prop:PB(Gnl)} (ii), we have that $B(G,n,\chi)=0$ for $\chi  > \chi_{\min}(G,n)$. In particular, if $X$ and $Y$ are finite $(G,n)$-complexes with $(-1)^n\chi(X) = (-1)^n\chi(Y) > \chi_{\min}(G,n)$, then $\beta(X,Y)=0$.
We will therefore primarily be interested in \emph{minimal} finite $(G,n)$-complexes $X$, for which $(-1)^n \chi(X) = \chi_{\min}(G,n)$ has the minimum possible value (see \cref{def:minimal-Gn}). 
We have:

\begin{proposition}
If $\ol{X}$ is a reference minimal $(G,n)$-complex, then the bias defines a map
\[ \beta\colon\HT_{\min}(G,n) \to B(G,n) \cong \frac{(\Z/m)^\times}{\pm D(G,n)} , \quad X \mapsto \beta(X,\ol{X}). \]
Thus, $\beta$ is an invariant of minimal $(G,n)$-complexes up to homotopy equivalence.
\end{proposition}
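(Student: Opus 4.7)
The plan is to verify two things: (a) the assignment $X \mapsto \beta(X,\ol X)$ descends from polarised homotopy types to ordinary homotopy types, and (b) the obstruction group $B(G,n)$ has the advertised explicit description. Note that, by \cref{def:bias-original}, the formula $X \mapsto \beta(X,\ol X)$ is already known to be well-defined on $\PHT_{\min}(G,n)$ (via \cref{prop:he-vs-che,prop:bias-an-invariant}), so the content is to show invariance under the $\Aut(G)$-action used to pass to $\HT_{\min}(G,n)$.

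For (a), suppose $X, X'$ are minimal finite $(G,n)$-complexes and $f\colon X \to X'$ is an (unpolarised) homotopy equivalence, which induces some $\theta := \pi_1(f) \in \Aut(G)$. By \cref{prop:he-vs-che}(ii), there is a $\Z G$-chain homotopy equivalence $C_*(\wt X) \simeq C_*(\wt X')_\theta$. Combining the composition law of \cref{lemma:bias-composition} with the chain homotopy invariance of \cref{prop:bias-an-invariant}, one gets
\[ \beta(C_*(\wt X), C_*(\wt{\ol X})) \;=\; \beta(C_*(\wt X')_\theta, C_*(\wt X')) \cdot \beta(C_*(\wt X'), C_*(\wt{\ol X})) \]
in $PB(G,n)$. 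By \cref{prop:bias-Aut(G)-action}, the first factor on the right is $\varphi_{(G,n)}(\theta)^{\pm 1} \in D(G,n)$, so the two sides agree modulo $D(G,n)$. Thus $\beta(X,\ol X) = \beta(X',\ol X)$ in $B(G,n)$, proving well-definedness on $\HT_{\min}(G,n)$.

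For (b), \cref{prop:PB(Gnl)}(i) gives $PB(G,n) \cong (\Z/m)^\times/\{\pm 1\}$ where $m = m_{(G,n)}$. Since $B(G,n)$ is by \cref{def:bias-original} the quotient of $PB(G,n)$ by the subgroup $D(G,n)$, pulling back through the canonical surjection $(\Z/m)^\times \twoheadrightarrow (\Z/m)^\times/\{\pm 1\}$ yields the identification
\[ B(G,n) \;\cong\; \frac{(\Z/m)^\times}{\pm D(G,n)}, \]
where $\pm D(G,n)$ denotes the preimage of $D(G,n)$ in $(\Z/m)^\times$. No step here is genuinely delicate: the main conceptual point — the fact that conjugating by $\theta$ changes the bias by an element of $D(G,n)$ — is baked into the definition of $D(G,n)$ via $\varphi_{(G,n)}$, and the rest is bookkeeping using the composition and chain-homotopy-invariance properties established earlier.
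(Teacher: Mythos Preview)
Your proof is correct and follows exactly the approach implicit in the paper. The paper states this proposition without an explicit proof, since it is a direct consequence of \cref{prop:bias-an-invariant}, \cref{lemma:bias-composition}, \cref{prop:bias-Aut(G)-action}, and \cref{prop:PB(Gnl)}(i); your argument is precisely the natural way to string these together, and your observation that $\beta(C_*(\wt X')_\theta, C_*(\wt X')) = \varphi_{(G,n)}(\theta)^{-1} \in D(G,n)$ is the key point that makes the quotient by $D(G,n)$ the right one.
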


It follows from work of Metzler \cite{Metzler:1976}, Sieradski \cite{Sieradski:1977}, Sieradski-Dyer \cite{Sieradski:1979}, Browning \cite{Browning:1979c,Browning:1979} and Linnell \cite{Linnell:1993} that, for any finite abelian group $G$, the bias invariant completely classifies minimal $(G,n)$-complexes up to homotopy equivalence (\cref{thm:bias-surj-abelian}).
More details on the computation of the bias invariant, particularly for finite abelian groups, can be found in \cref{s:examples-complexes}.

\begin{remark} \label{remark:bias-minimality}
If $(G,n)$ does not satisfy the strong minimality hypothesis (\cref{def:min-hyp}), then \cref{remark:m=1} implies that $m_{(G,n)}=1$ and so $B(G,n)=0$. In particular, the strong minimality hypothesis is a necessary condition for the non-vanishing of the bias invariant.
\end{remark}

\begin{remark}
It is currently open whether or not there exists a finite group $G$ and finite $(G,n)$-complexes $X$, $Y$ with $(-1)^n\chi(X) = (-1)^n\chi(Y) > \chi_{\min}(G,n)$ but $X \not \simeq Y$ (see \cite[Question 7.4]{Ni20-I}). The above shows that the bias invariant cannot be used to distinguish such examples if they exist.
It is known that such examples do not exist if $(-1)^n\chi(X) = (-1)^n\chi(Y) > \chi_{\min}(G,n)+1$ \cite{Dy78,Wi73} or if $n$ is even \cite{Browning:1979}.
\end{remark}

\section{Preliminaries on hermitian forms} 
\label{s:forms}

\subsection{Hermitian forms on $R$-modules}

A convenient reference is Scharlau \cite{Scharlau:1985} (see also \cite[Section 6.2]{Teichner:1992}). 
Let $R$ be a ring with involution and let $L$ be an $R$-module. We will mostly be concerned with  commutative rings $R=  \bZ$ or $\cy m$, with trivial involution,  or $R  =\ZG$, with involution $g \mapsto g^{-1}$, for $g \in G$ a finite group.
A \textit{sesquilinear form} on $L$ is a bilinear form 
\[ h\colon L \times L \to R\] 
such that $h(a \cdot m, b \cdot n) = a \cdot h(m,n) \cdot \ol{b} \in R$ for all $a,b \in R$ and $m,n\in L$. Define $\Sesq(L)$ to be the set of sesquilinear forms on $L$.
This is an abelian group under pointwise addition of functions. The term  \textit{quadratic form}  and notation $(L,q)$ for $q \in \Sesq(L)$ is standard in the algebraic theory. 

Let $T_\ep\colon \Sesq(L) \to \Sesq(L)$, where $\ep \in \{\pm 1\} \subseteq R$, denote the transpose operator, defined by $(Th)(m,n) = \ep \ol{h(n,m)}$, for all $m, n \in L$.
An \textit{$\ep$-hermitian form} on $L$ is a sesquilinear form $h$ such that $T_\ep h = h$, or  $h(m,n) = \ep\ol{h(n,m)}$ for all $m,n \in L$. Define $\Herm_\ep(L) = \ker(1-T_\ep)$ to be the set of $\ep$-hermitian forms on $L$, which is a subgroup of $\Sesq(L)$. The standard terminology is \emph{symmetric} for $\ep = +1$ and \emph{skew-symmetric} for $\ep = -1$.
When $\ep = +1$, we will just write  $\Herm(L)$ to simplify the notation.

Let $\ad\colon \Sesq(L) \to \Hom_R (L,L^*)$, $h \mapsto (m \mapsto h(-,m))$, 
denote the \emph{adjoint} map, where we consider  $L^*$ as a left  $R$-module via the involution on $R$.
We say that an  $\ep$-hermitian form $h \in \Herm_\ep(L)$ is \textit{non-singular} if $\ad(h)$ is an $R$-isomorphism.  

\begin{definition}\label{def:meta}
Let $L$ be an $R$-module and let $h \in \Herm_\ep(L)$. Then the associated \textit{metabolic form} is
\[ \Met_\ep (L,h)\colon (L^* \oplus L) \times (L^* \oplus L) \to R, \quad (\varphi_1,m_1),(\varphi_2,m_2) \mapsto \ep\ol{\varphi_1(m_2)} + \varphi_2(m_1)+h(m_1,m_2). \]
We have $\Met_\ep (L,h) \in \Herm_\ep(L^* \oplus L)$.
In matrix notation, we can write this as $\Met_\ep(L,h) = \left(\begin{smallmatrix} 0 & 1 \\ \ep & h \end{smallmatrix}\right)$ where the off-diagonal maps are understood to denote the maps induced by evaluation $(\varphi, m) \mapsto \varphi(m)$.
The \textit{hyperbolic form} on $L$ is defined as $H_\ep(L) := \Met_\ep (L,0)$.
\end{definition}

A form $ h \in \Herm_\ep(L)$ is called \emph{even} if $h = q + T_\ep q \in \IM(1+T_\ep)$ for some $q \in \Sesq(L)$. In this case, we say the $h$ admits a quadratic refinement $q$. A form $h \in  \Herm_\ep(L)$ is called \emph{weakly even} if $h(m,m) = a + \ep\bar a$, for some $a \in R$ and all $m \in L$. These notions are equivalent if $L$ is a projective $R$-module, but not in general. 

In the other direction,  a form  $q \in \Sesq(L)$ is called a \emph{non-singular $\ep$-quadratic form} if its associated hermitian form 
$h = q + T_\ep q$ is non-singular.

\subsection{Hermitian forms on $\Z G$-modules} \label{ss:Herm-ZG}

Let $G$ be a finite group and let $L$ be a $\Z G$-module. 
We will now define an alternative notion of dual module.
For a $\Z G$-module $L$, define $L^* := \Hom_{\Z}(L,\Z)$ to be the $\Z G$-module where, for $r \in \Z G$ and $\varphi \in \Hom_{\Z}(L,\Z)$, we let $(r \cdot \varphi)(m) := \varphi(\ol{r} \cdot m)$ for all $m \in L$.  
This coincides with the notion of dual module defined previously via the $\Z G$-isomorphism:
\[ \psi\colon \Hom_{\Z}(L,\Z) \to \Hom_{\Z G}(L,\Z G), \quad \varphi \mapsto \widehat\varphi,  \text{\ with \ } \widehat\varphi(m)=  \sum_{g \in G} \varphi(g^{-1} \cdot m)g ) \]
(see, for example, \cite[VI.3.4]{Brown:1994}). The inverse is given by $\widehat\varphi \mapsto \varepsilon_1\circ \widehat\varphi$, where for $h \in G$, we let $\varepsilon_h\colon \Z G \to \Z$ denote the map $\sum_{g \in G} n_g g \mapsto n_h$.

\begin{definition}\label{def:fourtwo}
 Let $\Sym_\ep(L)$ denote the set of homomorphisms
$$\Sym_\ep(L)= \{\varphi \in \Hom_{\Z}(L \otimes_{\Z} L,\Z)^G : \varphi(m \otimes n) = \ep\varphi(n \otimes m), \text{\ for all\ } m,n \in L\}$$
 where the action of $G$ on $\Hom_{\Z}(L \otimes_{\Z} L,\Z)$ is defined by $(g \cdot \varphi)(m,n) = \varphi(gm,  gn)$.

In other words, $\Sym_\ep(L)$ is the set of $\ep$-symmetric bilinear forms $h \colon L \times L \to \bZ$ such that  $h(g \cdot m,g \cdot n) = h(m,n)$ for all $g \in G$ and $m,n \in L$.
\end{definition}

There is a canonical isomorphism of abelian groups (with inverse induced by $\psi$):
\[ \Xi\colon \Sesq(L) \xrightarrow[]{\cong} \Hom_{\Z}(L \otimes_{\Z} L,\Z)^G, \quad h \mapsto (m \otimes n \mapsto \varepsilon_1(h(m,n))), \]
given by composing with $\varepsilon_1\colon \Z G \to \Z$.
The following is a consequence of the definition of $\Xi$ on restriction to $\Herm_\ep(L) \subseteq \Sesq(L)$.

\begin{proposition} The map $\Xi$ induces a natural correspondence $\Herm_\ep(L)\cong \Sym_\ep(L)$.
\end{proposition}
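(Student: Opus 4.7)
The plan is to exploit the fact, already noted in the excerpt, that $\Xi$ is an abelian group isomorphism $\Sesq(L) \xrightarrow{\cong} \Hom_{\Z}(L \otimes_{\Z} L,\Z)^G$, so it suffices to show that under $\Xi$ the $\ep$-hermitian condition on $h \in \Sesq(L)$ is equivalent to the $\ep$-symmetry condition on $\Xi(h)$. The whole argument hinges on the one elementary fact that $\varepsilon_1(\bar r) = \varepsilon_1(r)$ for all $r \in \Z G$: if $r = \sum_g n_g g$, then $\bar r = \sum_g n_g g^{-1}$, and both sides of the equation pick out the coefficient $n_1$.

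First I would handle the forward implication: assume $h \in \Herm_\ep(L)$, so $h(m,n) = \ep \overline{h(n,m)}$ for all $m,n \in L$. Applying $\varepsilon_1$ and using the above identity gives
\[ \Xi(h)(m \otimes n) = \varepsilon_1(h(m,n)) = \ep \varepsilon_1(\overline{h(n,m)}) = \ep \varepsilon_1(h(n,m)) = \ep \, \Xi(h)(n \otimes m), \]
so $\Xi(h) \in \Sym_\ep(L)$.

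The less obvious direction is the converse, and this is the main point to check carefully. Assume $\Xi(h)(m \otimes n) = \ep \Xi(h)(n \otimes m)$ for all $m,n$; the task is to recover the full identity $h(m,n) = \ep \overline{h(n,m)}$ in $\Z G$, not merely on the coefficient of $1$. The idea is to pick off each coefficient $\varepsilon_g(h(m,n))$ separately using sesquilinearity. Since $h(g^{-1}m,n) = g^{-1} h(m,n)$, applying $\varepsilon_1$ gives $\varepsilon_g(h(m,n)) = \varepsilon_1(h(g^{-1}m,n)) = \Xi(h)(g^{-1}m \otimes n)$. Similarly, from $h(n,g^{-1}m) = h(n,m)\,g$ one computes $\varepsilon_1(h(n,g^{-1}m)) = \varepsilon_{g^{-1}}(h(n,m))$. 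Combining these with the assumed $\ep$-symmetry yields
\[ \varepsilon_g(h(m,n)) = \Xi(h)(g^{-1}m \otimes n) = \ep \, \Xi(h)(n \otimes g^{-1}m) = \ep \, \varepsilon_{g^{-1}}(h(n,m)) \]
for every $g \in G$. Summing over $g$ and rewriting $\sum_g \varepsilon_{g^{-1}}(h(n,m)) g = \overline{h(n,m)}$ gives $h(m,n) = \ep \overline{h(n,m)}$, as required.

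The only obstacle worth mentioning is the bookkeeping in the converse direction: one must verify that the $\varepsilon_1$-level symmetry transports, via sesquilinearity, to an identity between $h(m,n)$ and $\ep\overline{h(n,m)}$ coefficient by coefficient. Once this is done, the restriction $\Xi|_{\Herm_\ep(L)} \colon \Herm_\ep(L) \to \Sym_\ep(L)$ is a well-defined bijection, and its naturality in $L$ is inherited from that of $\Xi$.
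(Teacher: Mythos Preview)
Your proof is correct and complete. The paper does not actually supply a proof of this proposition: it precedes the statement with only the one-line remark that the result ``is a consequence of the definition of $\Xi$ on restriction to $\Herm_\ep(L) \subseteq \Sesq(L)$,'' and then moves on. Your argument is exactly the kind of verification the paper is implicitly leaving to the reader, and you have carried it out carefully in both directions, including the less obvious converse where one must recover the full $\Z G$-valued identity from the $\varepsilon_1$-level symmetry via sesquilinearity.
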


\begin{convention}
From now on, unless stated otherwise, we will use the natural correspondence $\Xi\colon \Herm_\ep(L)\cong \Sym_\ep(L)$ to work with $\Z$-valued forms rather than $\Z G$-valued forms. In particular, 
$\Met_\ep (L,h)$ and $H_\ep(L)$ will mean the $\bZ$-valued equivalent metabolic or hyperbolic forms.
\end{convention}

 Let $R$ be a commutative ring and let $R G$ denote the group ring with involution $g \mapsto g^{-1}$. Let $\SymCat_\ep(G, R)$ denote the category whose objects are forms $h \in \Sym_\ep(L)$, for some $\RG$-module $L$, with morphisms from $(L,h)$ to $(L',h')$ given by $\RG$-module homomorphisms $f \colon L \to L'$ such that $h'(f(m), f(n)) = h(m,n)$, for all $m,n \in L$.  If $G = 1$ is the trivial group, we will write $\SymCat_\ep(R) := \SymCat_\ep(1, R)$.
 We will later be interested in $\SymCat_\ep(R)$ in the case where $R = \Z$ or $\Z/m$ for $m \in \Z$ (see \cref{def:fixTate}).

 We will now restrict to the case where $R = \Z$.
There is an action of $\Aut(G)$ on $\SymCat_\ep(G, \Z)$ defined as follows. Let $\theta \in \Aut(G)$ and $h \in \Sym_\ep(L)$. Let $L_\theta$ be the $\Z G$-module whose underlying abelian group $L$ and with $G$-action defined by $g\cdot m := \theta(g)\cdot m$, for all $g \in G$ and $m \in L$. Then $h_\theta \in \Sym_\ep(L_\theta)$ is the induced form on $L_\theta$. 
 
\begin{example} \label{example:intform}
Let $G$ be a finite group. If $M$ is a closed oriented $4$-dimensional Poincar\'{e} complex with $\pi_1(M) \cong G$, then the equivariant intersection form $S_M$ is a hermitian form on $\pi_2(M)$. That is, $S_M \in \Herm(\pi_2(M))$. Using the convention above, we will view this as a bilinear form
\[ S_M\colon \pi_2(M) \times \pi_2(M) \to \Z \]
which is $G$-invariant and symmetric, so that $S_M \in \Sym(\pi_2(M))$.
The hermitian form $S_M$ satisfies the Bredon condition  \cite[Theorem 7.4]{Bredon:1972} that $S_M(\tau \cdot m,m) \equiv 0 \mod 2$ for all $m \in \pi_2(M)$ and $\tau \in G$ of order two. This leads to the observation that the $\Z G$-valued hermitian form is weakly even 
if  the universal covering of $M$ is a spin manifold. Note that an even metabolic form is isometric to a hyperbolic form, but this is not true for weakly even metabolic forms.
\end{example}

The following result is well-known.  Recall that a $\Z G$-lattice is a $\Z G$-module which is finitely generated and torsion free as an abelian group.

\begin{proposition}\label{prop:oddmetabolic}
Let $L$ be an $\ZG$-lattice, where $G$ is a finite group of odd order.  If $(L,h)$ is a weakly even form, then $(L,h)$ is even and the metabolic form $\Met_\ep(L, h)$ is hyperbolic.
\end{proposition}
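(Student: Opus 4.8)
The plan is to reduce the assertion to the standard fact (recalled in \cref{def:meta}) that an \emph{even} metabolic form over $\ZG$ is isometric to the corresponding hyperbolic form: a quadratic refinement $q \in \Sesq(L)$ of $h$ determines an elementary $\ZG$-automorphism of $L^* \oplus L$ carrying $\Met_\ep(L,h)$ to $\Met_\ep(L,0) = H_\ep(L)$. Since $H_\ep(L)$ is itself even, $\Met_\ep(L,h)$ is weakly even if and only if $h$ is, so it suffices to prove that, for $G$ of odd order and $L$ a $\ZG$-lattice, every weakly even form $h \in \Herm_\ep(L)$ is even.

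For this I would pass to $\Z$-valued forms via $\Xi$ and set $M := \Hom_\Z(L \otimes_\Z L, \Z)$, on which $G$ acts diagonally through its action on $L$ and $\Z/2 = \langle T_\ep \rangle$ acts by $T_\ep$. These actions commute, $\Sesq(L) = M^G$, $\Herm_\ep(L) = (M^G)^{T_\ep}$, and the even forms are exactly $(1 + T_\ep)(M^G)$, so the obstruction to $h$ being even is its class $[h]$ in the Tate cohomology group $\wh H^0(\Z/2; M^G)$, with $\Z/2$ acting through $T_\ep$. The first step is to note that $h$ is weakly even exactly when $\Xi(h) \in (1 + T_\ep)(M)$, i.e.\ when $h$ is ``even over $\Z$'' after forgetting the $G$-action: since $L$ is $\Z$-free this is elementary (for $\ep=-1$ every integral skew form has this shape; for $\ep=+1$ a symmetric integral form with even diagonal admits a quadratic refinement, obtained by halving the diagonal of an upper-triangular refinement). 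Consequently $[h]$ maps to $0$ under the map $\wh H^0(\Z/2; M^G) \to \wh H^0(\Z/2; M)$ induced by the inclusion $M^G \hookrightarrow M$.

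Finally I would run a transfer argument. The composite of this map with the map $\wh H^0(\Z/2; M) \to \wh H^0(\Z/2; M^G)$ induced by $x \mapsto \sum_{g \in G} gx$ is multiplication by $|G|$: both maps are $\Z/2$-equivariant because the $G$- and $\Z/2$-actions commute, and $\sum_{g \in G} gx = |G|\,x$ for $x \in M^G$. Hence $|G| \cdot [h] = 0$. On the other hand $\wh H^0(\Z/2; M^G)$ is annihilated by $|\Z/2| = 2$, and $\gcd(2, |G|) = 1$ since $|G|$ is odd, so $[h] = 0$; that is, $h$ is even, and the reduction in the first paragraph finishes the proof. The only delicate point is the identification of ``weakly even'' with ``even over $\Z$'' in the middle step — this is precisely what places $[h]$ in the kernel of the map to $\wh H^0(\Z/2; M)$ and thereby lets the coprimality of $2$ and $|G|$ be used; the remainder is formal.
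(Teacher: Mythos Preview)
Your proof is correct, and it takes a genuinely different route from the paper's. Both arguments begin the same way: reduce to showing that a weakly even $h \in \Herm_\ep(L)$ is even, and identify the obstruction as the class $[h] \in \wh H^0(\Z/2; \Sesq(L))$. From there the paper reduces the \emph{coefficients} mod $2$: it observes that $\wh H^0(\Z/2; \Sesq(L)) \hookrightarrow \wh H^0(\Z/2; \Sesq(L/2))$ is injective (because $\Sesq(L)$ is torsion-free), and then uses that $\mathbb{F}_2[G]$ is semisimple for $|G|$ odd, so $L/2$ is projective and one can cite \cite[Proposition~3.4]{Bass:1973} to conclude. You instead reduce the \emph{group} by a transfer argument: passing to the ambient $\Z[G \times \Z/2]$-module $M = \Hom_\Z(L \otimes_\Z L, \Z)$, you show that weak evenness kills $[h]$ in $\wh H^0(\Z/2; M)$, and then the norm map forces $|G|\cdot[h] = 0$ in $\wh H^0(\Z/2; M^G)$, which is $2$-torsion. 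Your approach is more self-contained (no external citation needed; the $\Z$-level step is elementary linear algebra over a free module), while the paper's approach makes more visible structural use of the odd-order hypothesis via Maschke's theorem.
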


\begin{proof} A metabolic form $\Met_\ep(L,h)$ is weakly even if the form $(L, h)$ is a weakly even $\ep$-symmetric  form on the $\La =\ZG$ module $L$, where $L$ is a $\ZG$-lattice.
Since $h$ is $\ep$-symmetric, it defines a class
$$[h] \in \wH^0(\cy 2; \Sesq(L))$$
which vanishes if and only if $h = q + T_\ep q$ for some $q \in \Sesq(L)$.  
Since the obstruction group is an $\mathbb F_2$ vector space, and the reduction mod 2 map 
$$\wH^0(\cy 2; \Sesq(L)) \to \wH^0(\cy 2; \Sesq(L/2))$$
is an injection (since $\Sesq(L)$ is torsion free as an abelian group), it follows that $(L, h)$ is an even form if and only if its reduction
$(L/2, \bar h)$ is an even form with values in $\La/2 = \mathbb F_2[G]$. 

 If $G$ has odd order, the ring $ \mathbb F_2[G]$ is semisimple \cite[Theorem 3.14]{CR81}, so the $\La/2$-module $L/2$ is projective. Hence $(L/2, \bar h)$ is an even form by \cite[Proposition 3.4]{Bass:1973}, and so is $h$.
\end{proof}

\begin{definition}\label{def:fixTate} The \emph{fixed-point functor} $\SymCat_\ep(G, \bZ) \to \SymCat_\ep(\bZ)$ is defined on objects by $h \mapsto h^G$, where $h^G\in \Sym_\ep(L^G)$ is the restriction of the form $h$ to the fixed set $L^G \subset L$.  The  \emph{Tate functor} $\SymCat_\ep(G, \bZ) \to \SymCat_\ep(\cy {|G|})$ is defined on objects by $h \mapsto \wh h$ where 
$\wh h(\wbar{m},\wbar{n}) = [h^G(m,n)] \in \cy {|G|}$ for all $m,n \in L^G$, and where $\wbar{\cdot}$ denotes the quotient map $L^G \to L^G/(N \cdot L) = \wh H^0(G;L)$ and $[\,\cdot\,]$ denotes the quotient map $\bZ \to \cy {|G|}$.
The functors can be defined on the morphisms in the natural way.
\end{definition}

Note that $\wh \Z = \wh H^0(G;\Z) \cong \Z/|G|$ is a ring and, if $L$ is a $\Z G$-module, then $\wh L$ is a $\wh \Z$ module in a natural way. For example, we can take the action induced by the cup product
\[ \wh H^0(G;\Z) \times \wh H^0(G;L) \xrightarrow[]{- \cup -} \wh H^0(G;\Z \otimes_{\Z} L) \xrightarrow[]{\cong} \wh H^0(G;L). \]

The following is \cite[Exercise VI.7.3]{Brown:1994}.

\begin{proposition} \label{prop:tate-identify}
Let $L$ be a $\Z G$-lattice. Then the cup product and the evaluation map $L^* \otimes_{\Z} L \to \Z$, $\varphi \otimes m \mapsto \varphi(m)$ induce a non-singular duality pairing
\[ \wh H^0(G;L^*) \times \wh H^0(G;L) \xrightarrow[]{-\cup -} \wh H^0(G;L^* \otimes_{\Z} L) \to \wh H^0(G;\Z). \]
In particular, there is an isomorphism of abelian groups $\wh H^0(G;L^*) \cong \Hom_{\Z}(\wh H^0(G;L),\Z/|G|)$.
\end{proposition}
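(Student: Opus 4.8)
The plan is to follow \cite[Exercise VI.7.3]{Brown:1994}, reducing the claim to an elementary duality between the cokernel of a nonsingular integer matrix and that of its transpose. The key input is an explicit description of $\wh H^0(G;-)$ on lattices via the norm. Since $L$ is a $\Z G$-lattice, $L^G$ is a pure, hence split, subgroup, so the quotient $L \twoheadrightarrow L_G := L/I_G L$ restricts to a map $\iota\colon L^G \to L_G$, and the norm $N = \sum_{g\in G} g$ factors as $L \twoheadrightarrow L_G \xrightarrow{\bar N} L^G \hookrightarrow L$. First I would record the two identities $\bar N \circ \iota = |G|\cdot\id_{L^G}$ and $\iota \circ \bar N = |G|\cdot\id_{L_G}$, which follow from $Nm = |G|m$ for $m \in L^G$ and from $N \equiv |G| \bmod I_G$. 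Tensoring with $\Q$ then shows $\bar N$ is the canonical isomorphism $(L\otimes\Q)_G \cong (L\otimes\Q)^G$ up to the scalar $|G|$; in particular $L^G$ and $L_G/\Tors$ are lattices of the same rank $r$, and $\bar N$ induces an injection $A\colon L_G/\Tors \hookrightarrow L^G$ with finite cokernel. Fixing bases identifies $A$ with a nonsingular matrix in $M_r(\Z)$ and $\iota$ with $|G|A^{-1}$, which is integral because $\wh H^0(G;L)$ is annihilated by $|G|$.

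Next I would identify the Tate groups and the pairing at the level of representatives. By definition $\wh H^0(G;L) = L^G/NL = \coker(A)$. Since $L^*$ is again a lattice, $(L^*)^G = \Hom_\Z(L_G,\Z) = \Hom_\Z(L_G/\Tors,\Z)$ and $N(L^*) = \IM\big(A^{\top}\colon \Hom_\Z(L^G,\Z)\to\Hom_\Z(L_G/\Tors,\Z)\big)$, so $\wh H^0(G;L^*) = \coker(A^{\top})$. The cup product on $\wh H^0 \times \wh H^0$ is induced by the pairing of modules on $G$-invariants, so composing with the evaluation $L^*\otimes_\Z L \to \Z$ identifies the pairing of the proposition with
\[ \wh H^0(G;L^*)\times\wh H^0(G;L) = \coker(A^{\top})\times\coker(A) \longrightarrow \Z/|G|,\qquad ([\varphi],[m])\longmapsto \varphi\big(\iota(m)\big)\bmod|G|. \]
Since $\iota$ corresponds to $|G|A^{-1}$, this is the pairing $([\varphi],[m])\mapsto \varphi^{\top}(|G|A^{-1})m \bmod |G|$, the classical duality pairing between $\coker(A^{\top})$ and $\coker(A)$; one checks it is well defined using $A(|G|A^{-1}) = (|G|A^{-1})A = |G|I$.

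Finally I would prove non-singularity by putting $A$ in Smith normal form $A = UDV$ with $U,V\in\GL_r(\Z)$ and $D = \operatorname{diag}(d_1,\dots,d_r)$, where $d_1 \mid \cdots \mid d_r \mid |G|$ (the last divisibility again uses that $\wh H^0(G;L)\cong\bigoplus_i\Z/d_i$ is killed by $|G|$). After the corresponding changes of basis the pairing becomes the orthogonal sum over $i$ of the pairings $\Z/d_i\times\Z/d_i\to\Z/|G|$, $(a,b)\mapsto (|G|/d_i)\,ab$, each of which is non-singular since its adjoint is the isomorphism $\Z/d_i\xrightarrow{\cong}\{x\in\Z/|G|: d_i x = 0\}$. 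Hence the pairing of the proposition is non-singular, and since $\wh H^0(G;L)$ is finite this gives the asserted isomorphism $\wh H^0(G;L^*)\cong\Hom_\Z(\wh H^0(G;L),\Z/|G|)$. The step I expect to need the most care is the second: checking that the abstract cup-product pairing in Tate cohomology really unwinds to the naive formula $([\varphi],[m])\mapsto[\varphi(m)]$, and bookkeeping the torsion of $L_G$ — which is harmless, since it dies under $\Hom_\Z(-,\Z)$ and maps to $0$ in the torsion-free group $L^G$. (Alternatively one could dimension shift along $0\to L'\to F\to L\to 0$ with $F$ a free $\Z G$-lattice and its $\Z$-dual, using compatibility of cup products with connecting homomorphisms; but this only relates the pairing to one between Tate groups in shifted degrees rather than to an obviously true base case, so I would favour the direct computation above.)
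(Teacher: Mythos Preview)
The paper does not give a proof of this proposition; it simply records the statement and cites \cite[Exercise VI.7.3]{Brown:1994}. Your argument is a correct and complete solution to that exercise, so there is nothing to compare: you have supplied exactly the proof the paper defers to the reader.

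One small presentational remark: the purity of $L^G$ in $L$ is not needed to define $\iota$ (the restriction of $L\twoheadrightarrow L_G$ to $L^G$ always exists); where you actually use it is in identifying $N(L^*)$ with $\IM(A^{\top})$, since you need every $\psi\colon L^G\to\Z$ to extend to $L$. It would be cleaner to invoke purity at that point rather than at the outset.
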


If $A$ is a $\Z/|G|$-module, then we will write $A^* := \Hom_{\Z}(A,\Z/|G|)$. The above shows that, if $L$ is a $\Z G$-lattice, then there is a canonical identification 
 $\theta\colon (L^*)\hh \cong (\wh L)^*$. If $f\colon L _1 \to L_2$ is an isomorphism of $\ZG$-lattices, then the duality pairing implies that $(f^*)^{-1}\oplus f \colon L_1^* \oplus L_1 \to L_2^* \oplus L_2$ induces an isomorphism commuting with the identifications $(L_i^*)\hh \cong (\wh L_i)^*$, for $i = 1,2$.

\begin{proposition} \label{prop:two-hat-forms}
Let $h \in \Sym_\ep(L)$ be a $\Z$-valued hermitian form on a $\Z G$-module $L$. Then the form $\wh h \in \Sym_\ep(\wh H^0(G;L))$ defined in \textup{\cref{def:fixTate}} coincides with the form induced by the cup product on Tate cohomology:
\[ \wH^0(G;L) \times \wH^0(G;L) \xrightarrow{\ \cup \ } \wH^0(G; L \otimes_\bZ L) \xrightarrow{\ h \ } \wH^0(G; \bZ) \cong \cy {|G|}.\]
If $L$ is a $\Z G$-lattice and $h$ is non-singular, then $h^G$ is non-degenerate and $\wh h$ is non-singular. 
\end{proposition}

\begin{proof} The cup product is induced by the composite
$ L^G \otimes L^G \subseteq (L \otimes L)^G \xrightarrow{h} \bZ$, 
and the form $\wh h$ is induced by the restriction 
$h^G \colon L^G \otimes L^G \to \bZ$ of $h$ to the fixed subspaces. Hence the form $\wh h$ coincides with the form induced by the cup product. 

If $L$ is a $\Z G$-lattice and $h$ is non-singular, then for any non-zero $x \in L^G$, there exists $y \in L$ such that $h(x,y) \neq 0$, implying that $h(x, Ny) = h(Nx,y) = |G|\cdot h(x,y)  \neq 0$, Hence $h^G$ is 
non-degenerate.  To show that $\wh h \colon \wh L \times \wh L \to \cy{|G|}$ is non-singular, we consider its adjoint $\ad \wh h \colon \wh L \to (\wh L)^*$. By  \cref{prop:tate-identify}, there is an isomorphism $\theta\colon (L^*)\hh \xrightarrow{\ \cong\ }(\wh L)^*$, such that
$\ad \wh h = \theta\circ (\ad h)\hh$. To check this formula, let $i \colon L^G \to L$ denote the inclusion, and note that $\theta(\phi)(\bar y) =\overline{(i^*\phi)(y)} \in \cy{|G|}$, for any $\phi \in   (L^*)^G$ and $y \in L^G$, where $i^* \colon (L^*)^G \to (L^G)^*$ is the induced restriction map. Then for $x \in L^G$, we set $\phi := \ad h(x) \in (L^*)^G$ and see that $(i^* \ad h)(x)(y) = h(x,y)$, implying the formula above. 

 Since  $h$ is non-singular, $\ad h$ is an isomorphism, and hence  
$(\ad h)\hh $ is also an isomorphism. Therefore $\ad \wh h=\theta\circ (\ad h)\hh$ is an isomorphism and $\wh h$ is non-singular. 
\end{proof}

\subsection{Evaluation forms}

Let $G$ be a finite group and let $L$ be a $\Z G$-module. The \textit{evaluation form} on $L$ is the bilinear form 
\[ e_L\colon (L^* \oplus L) \times (L^* \oplus L) \to \Z, \quad (\varphi_1,m_1), (\varphi_2,m_2) \mapsto \ep\varphi_1(m_2)+\varphi_2(m_1),\]
where $L^* = \Hom_\bZ(L, \bZ)$ and $e_L \in \Sym_\ep(L^* \oplus L)$, for $\ep \in \{\pm 1\}$ is an $\ep$-symmetric form.
When the module $L$ is understood from the context, we will often write $e = e_L$. This coincides with the hyperbolic form previously defined, so that $e_L $ is isometric to $H_\ep(L)$ and  $\Met_\ep(L, \phi) \cong \left(\begin{smallmatrix} 0 & 1 \\ \ep & \phi \end{smallmatrix}\right)$, where $\phi \in \Sym_\ep(L)$. We will now define two related evaluation forms.

\begin{definition}\label{def:eG}
The restriction of  the evaluation form $e_L$  to the fixed set 
$(L^* \oplus L)^G = H^0(G; L^*\oplus L)$ induces  a bilinear form:
\[ e_L^G\colon ((L^*)^G \oplus L^G) \times ((L^*)^G \oplus L^G) \to \Z. \]
This is an $\ep$-symmetric form over the ring $\Z$ with trivial involution,  and coincides with $h^G$ for $h = e_L$ from \cref{def:fixTate}. We write $e_L^G \in \Sym_{\ep}((L^*)^G \oplus L^G)$.
\end{definition}

\begin{definition}\label{def:ehatG}
By functoriality of $\wH^0(G; -)$, and the isomorphism $(L^*)\hh \cong (\wh L)^*$, there is an induced bilinear form:
\[ \wh e_L\colon ((L^*)\hh \oplus \wh L) \times ((L^*)\hh \oplus \wh L) \to \Z/|G|.  \]
This is a non-singular $\ep$-symmetric form of $\Z/|G|$-modules. We write $\wh e_L \in \Sym_{\ep}(\wh{L}^* \oplus \wh L)$. 
\end{definition}

 The form $\wh e_L$ in the definition above coincides with $\wh h$ for $h = e_L$ from \cref{def:fixTate}. This is a consequence of \cref{prop:two-hat-forms}.

The following will be useful later (see \cref{def:fixTate}).
 
\begin{proposition} \label{prop:adapted} 
Let $\phi \in \Sym_\ep(L)$ be an $\ep$-symmetric form such that $\phi^G = 0$. If $h= \Met_\ep(L, \phi)$ is the corresponding metabolic form, then $h^G\cong e_L^G$ and $\wh h\cong \wh e_L$.
\end{proposition}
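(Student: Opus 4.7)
The plan is to unwind the definitions and observe that the metabolic form $h=\Met_\ep(L,\phi)$ differs from the evaluation form $e_L$ only by a term which is pulled back from $L$ via projection, and that the hypothesis $\phi^G=0$ makes this correction term vanish both after restriction to the $G$-fixed subgroup and after Tate reduction.

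More concretely, I would start by writing, for $(\varphi_i,m_i)\in L^*\oplus L$,
\[
h\bigl((\varphi_1,m_1),(\varphi_2,m_2)\bigr)
= \ep\varphi_1(m_2)+\varphi_2(m_1)+\phi(m_1,m_2)
= e_L\bigl((\varphi_1,m_1),(\varphi_2,m_2)\bigr)+\phi(p(\varphi_1,m_1),p(\varphi_2,m_2)),
\]
where $p\colon L^*\oplus L\to L$ is the projection. Thus $h=e_L+\phi\circ(p\times p)$ as elements of $\Sym_\ep(L^*\oplus L)$. Next I would invoke that the functors $(-)^G$ and $(-)\hh$ commute with finite direct sums, yielding canonical identifications $(L^*\oplus L)^G=(L^*)^G\oplus L^G$ and $(L^*\oplus L)\hh=(L^*)\hh\oplus\wh L$. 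Under these identifications the projection $p$ restricts to the projection $(L^*)^G\oplus L^G\to L^G$, and similarly on Tate cohomology.

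Now I would carry out the two verifications. For the fixed-point claim: by \cref{def:fixTate} the form $h^G$ is just the restriction of $h$ to $(L^*\oplus L)^G=(L^*)^G\oplus L^G$, and the restriction of $\phi\circ(p\times p)$ to this subgroup factors through $\phi|_{L^G\times L^G}=\phi^G=0$. Hence $h^G=e_L^G$ as bilinear forms on $(L^*)^G\oplus L^G$, giving the desired isometry via the identity. For the Tate claim: $\wh h$ is by \cref{def:fixTate} obtained by composing $h|_{(L^*\oplus L)^G\times(L^*\oplus L)^G}$ with the quotient $\bZ\to\cy{|G|}$ and descending to $(L^*\oplus L)\hh$. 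Since $\phi^G=0$ in $\bZ$, its reduction mod $|G|$ also vanishes, and the descent is well defined and equal to $\wh e_L$ by \cref{def:ehatG} (using the identification $(L^*)\hh\cong(\wh L)^*$ from \cref{prop:tate-identify}, which matches the duality pairings used to construct $\wh e_L$ and the reduction of $e_L$).

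There is no real obstacle here beyond bookkeeping. The only point worth being careful about is the naturality of the identifications $(L^*\oplus L)^G\cong(L^*)^G\oplus L^G$ and $(L^*\oplus L)\hh\cong(L^*)\hh\oplus\wh L$, and that these are compatible with the hyperbolic pairing supplied by $e_L$ and its Tate version $\wh e_L$; this is immediate from \cref{prop:tate-identify} and functoriality of $\wh H^0(G;-)$.
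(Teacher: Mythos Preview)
Your proof is correct and follows essentially the same approach as the paper's: both observe that $h$ and $e_L$ differ only by the pullback of $\phi$, use $\phi^G=0$ to conclude $h^G=e_L^G$, and then obtain $\wh h=\wh e_L$ by reducing modulo $|G|$. Your write-up is slightly more explicit about the decomposition $h=e_L+\phi\circ(p\times p)$ and the direct-sum compatibilities, while the paper phrases the same step via the inclusion $(L^*)^G\hookrightarrow (L^G)^*$, but the underlying argument is identical.
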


\begin{proof} 
The first statement is immediate from the assumption that $\phi^G =0$.  It follows from 
\cref{def:fixTate} that $\wh h$ is the reduction of $h^G$, and so we also have $\wh h\cong \wh e_L$.
\end{proof}

 The following summary is the basis for the explicit models for $e_L^G$ and $\wh e_L$ we give in \cref{prop:sevenfour}.
 
\begin{lemma} \label{lemma:evaluation-induced=evaluation}
Under the inclusion map $(L^*)^G \subseteq  (L^G)^*$ induced by restriction, $e_L^G$ is the restriction of the evaluation form
\[ e_{L^G}: ((L^G)^* \oplus L^G) \times ((L^G)^* \oplus L^G) \to \Z, \quad (\varphi_1,m_1), (\varphi_2,m_2) \mapsto \varphi_1(m_2)+\varphi_2(m_1). \]

Under the canonical identification $(L^*)\hh \oplus \wh L \cong (\wh L)^* \oplus \wh L$, 
the form $\wh e_L$ corresponds to the evaluation form
	\[ e_{\wh L}\colon((\wh L)^* \oplus \wh L) \times ((\wh L)^* \oplus \wh L) \to \Z, \quad (\varphi_1,m_1), (\varphi_2,m_2) \mapsto \varphi_1(m_2)+\varphi_2(m_1). \]
\end{lemma}

\begin{proof} This follows directly from the definitions above and \cref{prop:two-hat-forms}.
\end{proof}

\subsection{Matrix representations for the  evaluation forms}
Let $G$ be a finite group, let $L$ be a $\Z G$-lattice and let $e = e_{L}$ be the non-singular evaluation form.
The following is a consequence of \cite[Propositions III.1 \& III.2]{Kreck:1984}.

\begin{lemma} \label{lemma:G*-subtle}
Suppose $L^G \cong \Z^d$ for some $d \ge 0$ and that $\wh L$ has invariant factors $n_d \mid \cdots \mid n_1$ (possibly with some $n_i=1$), i.e. $\wh L \cong \Z/n_1 \times \cdots \times \Z/n_d$. Then there is an exact sequence of abelian groups:
\[ 0 \to (L^*)^G \xrightarrow[]{\rest} (L^G)^* \to \Z/\beta_1 \times \cdots \times \Z/\beta_d \to 0 \]
where $\rest\colon \varphi \mapsto \varphi \mid_{L^G}$ is the restriction map and $\beta_i\colon = |G|/n_i$ for $1 \le i \le d$ so that $\beta_1 \mid \cdots \mid \beta_d$.
\end{lemma}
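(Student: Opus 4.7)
The plan is to analyse the map $\rest$ directly. Injectivity is essentially immediate: if $\varphi \in (L^*)^G$ satisfies $\varphi|_{L^G} = 0$, then $G$-invariance gives $|G|\varphi(m) = \varphi(N\cdot m) = 0$ for every $m \in L$ (using that $N\cdot m \in L^G$), so $\varphi = 0$ by torsion-freeness of $\Z$.

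For the cokernel, the key step is to identify $(L^*)^G$ with the dual of $N\cdot L \subseteq L^G$. A $G$-invariant $\Z$-linear map $L \to \Z$ factors through the coinvariants $L_G := L \otimes_{\Z G} \Z$, and since $\Z$ is torsion-free, further through the free quotient $L_G/\Tors(L_G)$. The standard Tate cohomology sequence
\[ 0 \to \wh H^{-1}(G;L) \to L_G \xrightarrow{\;N\;} L^G \to \wh L \to 0, \]
together with torsion-freeness of $L^G$, identifies $\Tors(L_G) = \ker N$, so the norm induces an isomorphism $L_G/\Tors(L_G) \xrightarrow{\;\cong\;} N\cdot L$. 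Under the resulting identification $(L^*)^G \cong (N\cdot L)^*$, the map $\rest$ becomes precomposition with the map $\bar i\colon L^G \to N\cdot L$ coming from $L^G \hookrightarrow L \twoheadrightarrow L_G \twoheadrightarrow L_G/\Tors(L_G)$; since $N$ acts as multiplication by $|G|$ on $L^G$, this $\bar i$ is simply $m \mapsto |G|\cdot m$.

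To finish, I would use Smith normal form to choose a basis $e_1, \dots, e_d$ of $L^G$ such that $n_1 e_1, \dots, n_d e_d$ is a basis of $N\cdot L$, which is possible because $L^G/(N\cdot L) \cong \wh L$ has the prescribed invariant factors. Then $\bar i(e_j) = |G|\cdot e_j = \beta_j\cdot(n_j e_j)$, so in the corresponding dual bases the map $\rest\colon (N\cdot L)^* \to (L^G)^*$ is diagonal with entries $\beta_1, \dots, \beta_d$, yielding cokernel $\Z/\beta_1 \oplus \cdots \oplus \Z/\beta_d$; the divisibility $\beta_1 \mid \cdots \mid \beta_d$ follows from $n_d \mid \cdots \mid n_1$ and $\beta_j = |G|/n_j$. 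The only step requiring genuine care is the natural identification $(L^*)^G \cong (N\cdot L)^*$ and its compatibility with the two restriction maps; once that is in place, the remainder is a direct matrix computation.
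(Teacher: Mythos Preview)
Your proof is correct. The paper does not give its own argument for this lemma; it simply records the statement as a consequence of \cite[Propositions III.1 \& III.2]{Kreck:1984}. Your approach---factoring $G$-invariant functionals through $L_G/\Tors(L_G)$, identifying the latter with $N\cdot L$ via the norm map from the Tate sequence, and then reducing $\rest$ to the dual of the ``multiply by $|G|$'' map $L^G \to N\cdot L$---is a clean and self-contained route. The compatibility check you flag as the delicate point is handled correctly: if $\varphi \in (L^*)^G$ corresponds to $\psi \in (N\cdot L)^*$ via $\psi(N\cdot m)=\varphi(m)$, then for $m\in L^G$ one has $\psi(\bar i(m))=\psi(|G|\,m)=\psi(N\cdot m)=\varphi(m)=\rest(\varphi)(m)$, exactly as you say. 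The Smith normal form step is then a routine computation, and the divisibility $\beta_1\mid\cdots\mid\beta_d$ is immediate from $n_d\mid\cdots\mid n_1$ together with the fact that each $n_i$ divides $|G|$ (since $\wh L$ is $|G|$-torsion).
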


 By combining this with \cref{lemma:evaluation-induced=evaluation}, we now have the following (which can be found in the discussion on \cite[p24]{Kreck:1984}). 

\begin{proposition}\label{prop:sevenfour}
Let $n_1, \cdots, n_d$ and $\beta_1, \cdots, \beta_d$ be as in \cref{lemma:G*-subtle}. Then, by choosing bases for $(L^*)^G$ and $(L^G)^*$ such that $(L^*)^G = \beta_1 \Z \oplus \cdots \beta_d \Z \subseteq \Z^d = (L^G)^*$, the bilinear form $e^G$ has the matrix representation: \[ 
e^G\colon \underbrace{(\Z^d \oplus \Z^d)}_{\cong (L^*)^G \oplus L^G} \times \underbrace{(\Z^d \oplus \Z^d)}_{\cong (L^*)^G \oplus L^G} \to \Z , \quad
e^G = 
\left(\begin{array}{c|c} 
	0 & \begin{smallmatrix} \beta_1 & & \\ & \ddots & \\ & & \beta_d \end{smallmatrix} 
 \\ 
 \vspace{-5mm}
 \\
	\hline 	
\vspace{-5mm}	
\\
\begin{smallmatrix} \beta_1 & & \\ & \ddots & \\ & & \beta_d \end{smallmatrix}
 & 0 
\end{array}\right)
\]

We can choose bases for $(L^*)\hh$ and $\wh L$ such that $\wh e$ has matrix representation: 
\smallskip
\[
 \wh e\colon \underbrace{\textstyle (\bigoplus_{i=1}^d \Z/n_i  \oplus \bigoplus_{i=1}^d \Z/n_i)}_{\cong (L^*)\hh \oplus \wh L} \times \underbrace{\textstyle (\bigoplus_{i=1}^d \Z/n_i  \oplus \bigoplus_{i=1}^d \Z/n_i)}_{\cong (L^*)\hh \oplus \wh L} \to \Z/|G| , 
 \]
\smallskip
 \[
\wh e = 
\left(\begin{array}{c|c} 
	0 & \begin{smallmatrix} \beta_1 & & \\ & \ddots & \\ & & \beta_d \end{smallmatrix} 
 \\ 
 \vspace{-5mm}
 \\
	\hline 	
\vspace{-5mm}	
\\
\begin{smallmatrix} \beta_1 & & \\ & \ddots & \\ & & \beta_d \end{smallmatrix}
 & 0 
\end{array}\right).
\vspace{2mm}
\]
\end{proposition}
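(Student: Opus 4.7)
The plan is to deduce the matrix representations by combining the two preceding lemmas with a judicious choice of bases, using Smith normal form for the inclusion $(L^*)^G \hookrightarrow (L^G)^*$.

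First I would apply \cref{lemma:G*-subtle} to obtain the exact sequence
\[ 0 \to (L^*)^G \xrightarrow[]{\rest} (L^G)^* \to \Z/\beta_1 \times \cdots \times \Z/\beta_d \to 0 \]
and use the structure theorem for finitely generated modules over a PID (equivalently Smith normal form) to choose a basis $e_1, \dots, e_d$ of $(L^G)^* \cong \Z^d$ such that $\beta_1 e_1, \dots, \beta_d e_d$ is a basis of $(L^*)^G$. Let $e_1^*, \dots, e_d^*$ denote the basis of $L^G \cong \Z^d$ dual to $e_1, \dots, e_d$, so that $e_i(e_j^*) = \delta_{ij}$.

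Next I would invoke \cref{lemma:evaluation-induced=evaluation}, which identifies $e^G$ with the restriction of the standard evaluation form $e_{L^G}$ on $(L^G)^* \oplus L^G$ to the sublattice $(L^*)^G \oplus L^G$. Since $e_{L^G}$ is identically zero on the $(L^G)^*$-factor and on the $L^G$-factor separately, the only nontrivial block is the off-diagonal one, and a direct computation gives
\[ e^G\bigl((\beta_i e_i, 0),(0, e_j^*)\bigr) = \beta_i \cdot e_i(e_j^*) = \beta_i \delta_{ij}, \]
yielding the claimed diagonal off-diagonal block $\mathrm{diag}(\beta_1, \dots, \beta_d)$.

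For $\wh e$ the argument is entirely parallel: choose generators $f_1, \dots, f_d$ of $\wh L \cong \bigoplus_{i=1}^d \Z/n_i$ and note that the canonical identification $\Hom_\Z(\Z/n_i, \Z/|G|) \cong \Z/n_i$ sends $\varphi \mapsto \varphi(f_i)/\beta_i$, so that the element $f_i^\vee \in (\wh L)^*$ corresponding to the $i$-th standard generator of $\Z/n_i$ satisfies $f_i^\vee(f_j) = \beta_i \delta_{ij} \in \Z/|G|$. By \cref{lemma:evaluation-induced=evaluation} and \cref{prop:tate-identify}, $\wh e$ is the standard evaluation pairing on $(\wh L)^* \oplus \wh L$, so evaluating on these bases produces the same matrix. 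The only potential obstacle is bookkeeping with the identification of $\Hom_\Z(\Z/n_i, \Z/|G|)$ and the dual bases, but once the $\beta_i$ normalisation is tracked correctly the two block matrix formulas follow simultaneously.
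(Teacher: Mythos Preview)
Your proposal is correct and follows essentially the same approach as the paper: the paper does not give an explicit proof but states the proposition as a consequence of \cref{lemma:G*-subtle} and \cref{lemma:evaluation-induced=evaluation}, citing the discussion on \cite[p24]{Kreck:1984} for details. Your argument is precisely the natural way to fill in those details---Smith normal form for the inclusion $(L^*)^G \hookrightarrow (L^G)^*$, followed by direct evaluation on the resulting bases---and the parallel computation for $\wh e$ via the identification $\Hom_{\Z}(\Z/n_i,\Z/|G|)\cong \Z/n_i$ is exactly what the cited discussion amounts to.
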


\begin{remark}
(a)
That $\wh e$ is non-singular follows from \cite[Theorem III.1]{Kreck:1984} and the fact that $e$ is non-singular. However, it also follows directly from \cref{lemma:evaluation-induced=evaluation}. 

(b)
In the matrix representation for $\wh e$, the non-zero entries $\beta_i$ correspond to maps $\Z/n_i \times \Z/n_i \to \Z/|G|$, $(x,y) \mapsto \beta_ixy$. This is well-defined since $\beta_i = |G|/n_i$ and so, if $x \equiv x' \mod n_i$ and $y\equiv y' \mod n_i$, then $\beta_ixy \equiv \beta_ix'y' \mod |G|$.
\end{remark}

\subsection{Isometries of evaluation forms} \label{ss:isom(e)}

The following definitions are motivated by the definitions made in \cref{ss:bias-algebraic}. The first is analogous to \cref{def:psi-auto}. Even more generally, we could make both definitions for homomorphisms instead of automorphisms. Many of the results we need extend to this setting (see, for example, \cite[Lemma 3 (\S 2)]{Schafer:1996}). 

\begin{definition} \label{def:psi-isom}
For $i=1,2$, let $\L_i$ be a $\Z G$-lattice, let $e_i = e_{\L_i}$ be its evaluation form and let $\psi_i\colon \L_i^G \twoheadrightarrow \wh \L_i$ and $\psi_i'\colon (\L_i^*)^G \twoheadrightarrow (\L_i^*)\hh$ denote the canonical reduction maps. Let $\Psi_i = \psi_i' \oplus \psi_i$.

We say an isometry $\varphi \in \Isom(e_1^G,e_2^G)$ is a \textit{$(\Psi_1,\Psi_2)$-isometry} if $\varphi(\ker(\Psi_1)) = \ker(\Psi_2)$.
Let $\Isom_{\Psi_1,\Psi_2}(e_1^G,e_2^G) \subseteq \Isom(e_1^G,e_2^G)$ denote the subset consisting of $(\Psi_1,\Psi_2)$-isometries. There is an induced function
\[ (\Psi_1,\Psi_2)_*\colon \Isom_{\Psi_1,\Psi_2}(e_1^G,e_2^G)  \to \Isom(\wh e_1, \wh e_2), \quad \varphi \mapsto (x \mapsto \Psi_2(\varphi(\wt x))) \]
where $\wt x \in (\L_1^*)^G \oplus \L_1^G$ is any lift of $x \in (\L_1^*)\hh \oplus \wh \L_1$, i.e. $\Psi_1(\wt x) = x$.

In the case where $\L_1=\L_2=:\L$, write $e=e_i$, $\psi=\psi_i$ and $\Psi=\Psi_i$ for $i=1,2$. A $(\Psi,\Psi)$-isometry will be referred to as a \textit{$\Psi$-isometry}. The set of $\Psi$-isometries forms a subgroup $\Isom_{\Psi}(e^G) \le \Isom(e^G)$ and the induced function $(\Psi,\Psi)_*$ is a group homomorphism
$\Psi_*\colon \Isom_{\Psi}(e^G) \to \Isom(\wh e)$.
\end{definition}

Before making the next definition, we will start by establishing the following.

\begin{lemma} \label{lemma:G-map-restriction}
For $i=1,2$, let $\L_i$ be a $\Z G$-lattice and let $\psi_i\colon \L_i^G \twoheadrightarrow \wh \L_i$ and $\psi_i'\colon (\L_i^*)^G \twoheadrightarrow (\L_i^*)\hh$ denote the canonical reduction maps. 
 Let $f \in \Hom_{\psi_1,\psi_2}(\L_1^G,\L_2^G)$.
Then there exists a unique map $g \in \Hom_{\psi_1',\psi_2'}((\L_2^*)^G,(\L_1^*)^G)$ such that we have a commutative diagram

\[
\begin{tikzcd}
(\L_2^*)^G	\ar[d, "g"'] \ar[r,"\rest"] & (\L_2^G)^* \ar[d,"f^*"'] \\
(\L_1^*)^G	\ar[r,"\rest"] & (\L_1^G)^*
\end{tikzcd}		
\]
where $\rest$ denotes the restriction maps as used in \cref{lemma:G*-subtle}.   If $f \in \Iso_{\psi_1,\psi_2}(\L_1^G,\L_2^G)$, then 
$g \in \Iso_{\psi_1',\psi_2'}((\L_2^*)^G,(\L_1^*)^G)$.
\end{lemma}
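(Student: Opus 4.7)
The plan is to use the characterisation of the image of $\rest_i \colon (\L_i^*)^G \to (\L_i^G)^*$ to reduce the statement to a formal manipulation with the dual map $f^*$.

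First, I would pin down the image of $\rest_i$ intrinsically. Since $\ker \psi_i = N \cdot \L_i$, where $N = \sum_{g \in G} g$ is the norm, the standard norm construction shows that
\[ \IM(\rest_i) = \{\phi \in (\L_i^G)^* : \phi(\ker\psi_i) \subseteq |G| \Z\}. \]
The inclusion $\subseteq$ is immediate from $\varphi(Nx) = |G|\varphi(x)$ for $G$-invariant $\varphi$. For the reverse, given $\phi$ with $\phi(N\L_i) \subseteq |G|\Z$, define $\varphi \in (\L_i^*)^G$ by $\varphi(x) := \phi(Nx)/|G|$; this is $G$-invariant and restricts to $\phi$ on $\L_i^G$ (since $Ny = |G|y$ for $y \in \L_i^G$). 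The same identity shows that $\rest_i$ is injective, so $(\L_i^*)^G$ is identified with this subgroup of $(\L_i^G)^*$.

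Next, since $f(\ker \psi_1) = \ker \psi_2$ by hypothesis, the dual map $f^* \colon (\L_2^G)^* \to (\L_1^G)^*$, $\phi \mapsto \phi \circ f$, preserves the intrinsic description: for $\phi \in \IM(\rest_2)$ we have
\[ (f^*\phi)(\ker\psi_1) = \phi(f(\ker\psi_1)) = \phi(\ker\psi_2) \subseteq |G|\Z, \]
so $f^*\phi \in \IM(\rest_1)$. Applying this to $f^{-1}$ gives the other inclusion, so $f^*$ restricts to an abelian group isomorphism $g \colon (\L_2^*)^G \to (\L_1^*)^G$ characterised uniquely by $\rest_1 \circ g = f^* \circ \rest_2$, which is exactly the commutative diagram required.

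It remains to verify that $g$ is a $(\psi_1', \psi_2')$-isomorphism, i.e.\ that $g(\ker \psi_2') = \ker \psi_1'$. For this I would establish the companion identity
\[ \rest_i(\ker \psi_i') = |G| \cdot (\L_i^G)^* \quad\subseteq (\L_i^G)^*. \]
The inclusion $\subseteq$ is immediate since for $\chi \in \L_i^*$ one has $(N\chi)\big|_{\L_i^G}(y) = \sum_g \chi(g^{-1}y) = |G|\chi(y)$ on $\L_i^G$. For the reverse, any $\phi_0 \in (\L_i^G)^*$ can be extended to some $\chi \in \L_i^*$ (because $\L_i^G \hookrightarrow \L_i$ splits as a map of abelian groups), and then $\rest_i(N\chi) = |G|\phi_0$. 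Since $f^* \colon (\L_2^G)^* \to (\L_1^G)^*$ is $\Z$-linear, it clearly carries $|G|(\L_2^G)^*$ bijectively onto $|G|(\L_1^G)^*$; combined with the characterisation of $\IM(\rest_i)$ above and the injectivity of $\rest_i$, this gives $g(\ker \psi_2') = \ker\psi_1'$, completing the proof. The only mildly subtle point is identifying $\IM(\rest_i)$ intrinsically in the first paragraph; everything else is formal once this is in hand, and is consistent with the numerical cokernel description of Lemma \ref{lemma:G*-subtle}.
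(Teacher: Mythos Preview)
Your proof is correct and self-contained. The paper, by contrast, does not give a direct argument: it simply cites \cite[Proposition III.6]{Kreck:1984} and observes that dualising the commutative diagram established there (using that double-dualising is the identity on $\Z G$-lattices) yields the required statement. Your approach is more elementary and avoids the external reference entirely. The key ingredient you introduce is the intrinsic characterisation $\IM(\rest_i) = \{\phi \in (\L_i^G)^* : \phi(N\L_i) \subseteq |G|\Z\}$, which reduces the existence of $g$ to the obvious fact that $f^*$ preserves this condition; the companion identity $\rest_i(\ker\psi_i') = |G|\cdot(\L_i^G)^*$ then handles the compatibility with $\psi_1',\psi_2'$. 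One small point worth making explicit: the extension of $\phi_0 \in (\L_i^G)^*$ to some $\chi \in \L_i^*$ uses that $\L_i/\L_i^G$ is torsion-free (if $nx \in \L_i^G$ then $n(gx-x)=0$ in the lattice $\L_i$, forcing $x \in \L_i^G$), so that $\L_i^G$ is indeed a $\Z$-direct summand of $\L_i$.
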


\begin{proof}
This is a consequence of the proof of \cite[Proposition III.6]{Kreck:1984}. 
Simply note that dualising the commutative diagram given there gives the commutative diagram we require. This works since all modules involved are $\Z G$-lattices and so double dualising returns the original module.
\end{proof}

From now on, let $e_i$, $\Psi_i$ be as defined above for $i=1,2$.
The following is defined in \cite[p30-31]{Kreck:1984}. 

\begin{definition} \label{def:DiagIsom}
An isometry $\rho \in \Isom(\wh e_1, \wh e_2)$ is called \textit{diagonal} if there exists an isomorphism of abelian groups $f\colon \wh \L_1 \to \wh \L_2$ such that $\rho = (f^*)^{-1} \oplus f$. Here $f^*\colon (\wh \L_2)^* \to (\wh \L_1)^*$ is viewed as a map $(\L_2^*)\hh \to (\L_1^*)\hh$ via the canonical identifications $(\L_i^*)\hh \cong (\wh \L_i)^*$.
We write $\DiagIsom(\wh e_1, \wh e_2)$ for the set of diagonal isometries from $\wh e_1$ to $\wh e_2$, which we will view as a subset of $\Isom(\wh e_1,\wh e_2)$.
When $e_1=e_2=:e$, this defines a subgroup $\DiagIsom(\wh e) \le \Isom(\wh e)$.

An isometry $\rho \in \Isom(e_1^G,e_2^G)$ is called \textit{diagonal} if there exists an isomorphism of abelian groups $f\colon \L_1^G \to \L_2^G$ such that $\rho = (f^* \mid_{(\L_2^*)^G})^{-1} \oplus f$. Here we note that $f^*$ restricts to an isomorphism $f^* \mid_{(\L_2^*)^G}\colon (\L_2^*)^G \to (\L_1^*)^G$ by \cref{lemma:G-map-restriction}. We write $\DiagIsom(e_1^G,e_2^G)$ for the set of diagonal isometries from $e_1^G$ to $e_2^G$, which we will view as a subset of $\Isom(e_1^G,e_2^G)$.
When $e_1=e_2=:e$, this defines a subgroup $\DiagIsom(e^G) \le \Isom(e^G)$.

In the notation of \cref{def:psi-isom}, we write $\DiagIsom_{\Psi_1,\Psi_2}(e_1^G,e_2^G)$ to denote 
$$\DiagIsom(e_1^G,e_2^G) \cap \Isom_{\Psi_1,\Psi_2}(e_1^G,e_2^G).$$
The map $(\Psi_1,\Psi_2)_*$ defined in \cref{def:psi-isom} restricts to a function
\[ (\Psi_1,\Psi_2)_*\colon \DiagIsom_{\Psi_1,\Psi_2}(e_1^G,e_2^G)  \to \DiagIsom(\wh e_1, \wh e_2). \]

\begin{remark}\label{rem:check}
As a check: if  $\rho =  (f^* \mid_{(\L_2^*)^G})^{-1} \oplus f \in \DiagIsom(e_1^G,e_2^G)$, then $\rho \in 
\Isom_{\Psi_1,\Psi_2}(e_1^G,e_2^G)$ if and only if $f \in \Iso_{\psi_1,\psi_2}(\L_1^G,\L_2^G)$, by
\cref{lemma:G-map-restriction}. In addition, the commutative diagram in \cref{lemma:G-map-restriction} shows that the Tate reductions of
$f^* \mid_{(\L_2^*)^G}$ and $f^*$ agree,  via the canonical identifications $(\L_i^*)\hh \cong (\wh \L_i)^*$.
Hence under the restriction map $(\Psi_1,\Psi_2)_*(\rho) \in \DiagIsom(\wh e_1, \wh e_2)$. 
\end{remark}   

In the case where $e_1=e_2=:e$, we write this as $\DiagIsom_{\Psi}(e^G)$. The map  $\Psi_*$ restricts to a group homomorphism
\[ \Psi_*\colon \DiagIsom_{\Psi}(e^G) \to \DiagIsom(\wh e). \]
\end{definition}

 Note that we actually gave a definition of $\DiagIsom(\wh e_1, \wh e_2)$ which is a priori stronger than the one given in \cite[p30-31]{Kreck:1984}.
 We will  now show that these definitions are equivalent subject to the hypothesis that the $\Z G$-lattices $L_1$ and $L_2$ are \emph{$|G|$-locally isomorphic}, meaning that $\bZ_{(p)} \otimes_{\bZ} L_1 \cong \bZ_{(p)} \otimes_{\bZ} L_2$ as $\bZ_{(p)} G$-modules for all primes $p \mid |G|$, where $\bZ_{(p)}$ is the localisation of $\Z$ at the prime ideal $(p)$.

 \begin{proposition} \label{prop:diag-def-equiv} 
 Let $\rho \in \Isom(\wh e_1,\wh e_2)$. If $L_1$ and $L_2$ are $|G|$-locally isomorphic, then $\rho \in \DiagIsom(\wh e_1,\wh e_2)$ if and only if 
there exist maps $f \in \Hom_{\psi_1,\psi_2}(\L_1^G,\L_2^G)$ and $g \in \Hom_{\psi_2,\psi_1}(\L_2^G,\L_1^G)$ such that $\rho = \ol{(g^*\mid_{(\L_1^*)^G})} \oplus \ol{f}$, under the Tate reduction maps $\ol{\cdot} = (\psi_1', \psi_2')_*(\cdot)$ and $\ol{\cdot} = (\psi_1,\psi_2)_*(\cdot)$ in the two cases respectively.
\end{proposition}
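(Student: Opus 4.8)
## Proof Plan

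The plan is to prove the two implications separately, translating between the two notions of "diagonal" via the restriction maps and the canonical identifications $(\L_i^*)\hh \cong (\wh\L_i)^*$.

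\textbf{The ($\Leftarrow$) direction.} Suppose we are given $f \in \Iso_{\psi_1,\psi_2}(\L_1^G,\L_2^G)$ and $g \in \Iso_{\psi_2,\psi_1}(\L_2^G,\L_1^G)$ with $\rho = \overline{(g^*\mid_{(\L_1^*)^G})} \oplus \overline{f}$. First I would set $\bar f := (\psi_1,\psi_2)_*(f) \in \Iso(\wh\L_1,\wh\L_2)$, which is well-defined precisely because $f$ is a $(\psi_1,\psi_2)$-isomorphism. The key computation is to identify the dual $\bar f^*\colon (\wh\L_2)^* \to (\wh\L_1)^*$, viewed via $(\L_i^*)\hh \cong (\wh\L_i)^*$, with a map induced by $f^*$ at the Tate level. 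Using \cref{lemma:G-map-restriction} applied to $f$ (which gives a $\Z G$-lattice map on the invariants of the $\Z$-duals fitting into the stated commutative square), together with the naturality of $\wH^0(G;-)$ and the duality pairing of \cref{prop:tate-identify}, one sees that $(\bar f)^{-1\,*}$ corresponds to $(\psi_1',\psi_2')_*$ applied to the restriction-compatible dual of $f$. It then remains to check $(\bar f^*)^{-1} = \overline{(g^*\mid_{(\L_1^*)^G})}$; this follows because $g$ being a two-sided style inverse datum (both $f$ and $g$ being $\psi$-isomorphisms in opposite directions) forces $\bar g = (\bar f)^{-1}$ after passing to Tate cohomology, and dualising commutes with inversion. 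Hence $\rho = (\bar f^*)^{-1} \oplus \bar f$ has exactly the form required in \cref{def:DiagIsom} for $\wh e_1 \to \wh e_2$, so $\rho \in \DiagIsom(\wh e_1,\wh e_2)$.

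\textbf{The ($\Rightarrow$) direction.} Conversely, suppose $\rho \in \DiagIsom(\wh e_1,\wh e_2)$, so $\rho = (F^*)^{-1}\oplus F$ for some isomorphism $F\colon \wh\L_1 \to \wh\L_2$ of abelian groups. The task is to lift $F$ to the level of $G$-invariants: I need $f \in \Iso_{\psi_1,\psi_2}(\L_1^G,\L_2^G)$ with $(\psi_1,\psi_2)_*(f) = F$, and similarly a $g$ for $F^{-1}$. This is where I would use \cref{remark:wlog-psi-diagonal}: choosing bases for $\L_i^G \cong \Z^{d}$ and $\wh\L_i$ so that $\psi_i$ is a direct sum of the standard quotients $\Z \twoheadrightarrow \Z/m_j$, any isomorphism $F$ of $\wh\L_1 \cong \wh\L_2$ lifts to an integral matrix giving $f\colon \Z^d \to \Z^d$; automatically $f(\ker\psi_1) = \ker\psi_2$ since both kernels are the same standard sublattice and $F$ is an isomorphism respecting the invariant-factor filtration, so $f \in \Iso_{\psi_1,\psi_2}$. (The lift of a single invertible matrix over $\Z/m$ to an invertible matrix over $\Z$ is elementary for the diagonal $\psi$; compatibility with distinct invariant factors requires a short upper-triangular argument but is standard.) Do the same for $F^{-1}$ to get $g$. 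Then by \cref{lemma:G-map-restriction} the restriction of $g^*$ to $(\L_1^*)^G$ lands in $(\L_2^*)^G$ — wait, I must be careful with variance: apply the lemma to $g\colon \L_2^G \to \L_1^G$ to get that $g^*$ restricts to $(\L_1^*)^G \to (\L_2^*)^G$ — and $\overline{(g^*\mid_{(\L_1^*)^G})}$ computes, after the identifications, to $(F^*)^{-1}$ by the same naturality/duality argument as in the first direction. Thus $\rho$ has the claimed form.

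\textbf{Main obstacle.} The principal difficulty is bookkeeping the four-way compatibility: passing $f$ (a map on $\Z G$-lattice invariants) through $\Z$-linear duality, then through restriction $(\L^*)^G \hookrightarrow (\L^G)^*$, then through $\wH^0(G;-)$, and checking all of this agrees with the abstract dual $F^*$ of the Tate-level map under the canonical identification $(\L^*)\hh \cong (\wh\L)^*$ of \cref{prop:tate-identify}. The commutative square of \cref{lemma:G-map-restriction} and the duality pairing are exactly the tools to force this, but I expect the variance conventions (which of $f$, $g$ gets dualised, and in which direction the restriction goes) to be the subtle point that needs the most care; the existence of integral lifts in the $(\Rightarrow)$ direction is comparatively routine given \cref{remark:wlog-psi-diagonal}.
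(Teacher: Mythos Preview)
Your $(\Leftarrow)$ direction has a genuine gap. You write that ``$g$ being a two-sided style inverse datum (both $f$ and $g$ being $\psi$-isomorphisms in opposite directions) forces $\bar g = (\bar f)^{-1}$''. But the hypothesis places no relation whatsoever between $f$ and $g$: they are independent isomorphisms $\L_1^G \to \L_2^G$ and $\L_2^G \to \L_1^G$, and nothing in the statement says $g$ is anything like an inverse of $f$. The only constraint linking them is that $\rho = \ol{(g^*\mid_{(\L_1^*)^G})} \oplus \bar f$ happens to lie in $\Isom(\wh e_1,\wh e_2)$, and it is precisely this \emph{isometry condition} on $\rho$ that forces the first block to equal $(\bar f^*)^{-1}$. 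Your argument never invokes it.

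The paper's approach sidesteps all of your duality bookkeeping: by \cite[Proposition~III.6]{Kreck:1984}, for any isomorphism $\bar f\colon \wh\L_1 \to \wh\L_2$ there is a \emph{unique} map $t$ on the dual side making $t \oplus \bar f$ an isometry of the Tate evaluation forms. Since both $\ol{(g^*\mid)}$ (by hypothesis on $\rho$) and $(\bar f^*)^{-1}$ (by construction of $\DiagIsom$) play this role, they coincide, and $\rho \in \DiagIsom(\wh e_1,\wh e_2)$ is immediate. No naturality chase through \cref{prop:tate-identify} is needed.

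For $(\Rightarrow)$, your approach via \cref{remark:wlog-psi-diagonal} is more elaborate than the paper's, which simply uses freeness of $\L_1^G$ and $(\L_1^*)^G$ to lift $f$ and $(f^*)^{-1}$ to homomorphisms $t$ and $s$, and then invokes \cref{lemma:G-map-restriction} to realise $s$ as $r^*\mid_{(\L_1^*)^G}$ for some $r\colon \L_2^G \to \L_1^G$. Your concern about producing genuine \emph{isomorphisms} (the statement demands $\Iso_{\psi_1,\psi_2}$, not merely homomorphisms) is legitimate, and the paper's proof is terse on this point; your explicit lifting over diagonal $\psi_i$ does address it, though it costs you more work than the paper expends.
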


\begin{proof}
Firstly note that, since $L_1$ and $L_2$ are $|G|$-locally isomorphic, it follows directly that $\rank_{\bZ} L_1^G = \rank_{\bZ} L_2^G$ and $\wh L_1 \cong \wh L_2$ (see, for example, the proof of \cite[Proposition III.5]{Kreck:1984}).

If $\rho \in \DiagIsom(\wh e_1,\wh e_2)$, then  $\rho = (h^*)^{-1} \oplus h$ for some $h \in \Iso(\wh \L_1,\wh \L_2)$. 
By \cref{lemma:G*-subtle}, for $i = 1,2$ we have two presentations
\[ 0 \to (L_i^*)^G \xrightarrow[]{\rest} (L_i^G)^* \to  Z/\beta_1 \times \cdots \times \Z/\beta_d \to 0 \]
of the form $0 \to \bZ^d \to \bZ^d  \to A \to 0$, where $\rank_{\bZ} L_i^G = d$ and $A = \wh L_1 \cong \wh L_2$ is a finite abelian group.
 By the structure theorem for  finitely generated abelian groups, any isomorphism $h \colon \wh L_1 \to \wh L_2$ can be lifted to a map $f \in \Hom_{\psi_1,\psi_2}(\L_1^G,\L_2^G)$ (see \cref{remark:wlog-psi-diagonal}). 
Similarly, we have a lift 
$s\colon (\L_1^*)^G \to (\L_2^*)^G$ such that
  $\ol{s} = (f^*)^{-1}$. 
By \cref{lemma:G-map-restriction}, there exists a homomorphism $g\colon \L_2^G \to \L_1^G$ for which $g^*\mid_{(\L_1^*)^G} = s$ (compare the last paragraph of the proof of \cite[Proposition III.5]{Kreck:1984}).
Hence $\rho = \ol{(g^*\mid_{(\L_1^*)^G})} \oplus \ol{f}$.

Conversely, suppose $f \in \Hom_{\psi_1,\psi_2}(\L_1^G,\L_2^G)$ and $g \in \Hom_{\psi_2,\psi_1}(\L_2^G,\L_1^G)$ are such that $\rho = \ol{(g^*\mid_{(\L_1^*)^G})} \oplus \ol{f} \in \Isom(\wh e_1,\wh e_2)$. Then $h = \ol{f} \in \Iso(\wh L_1,\wh L_2)$ and $t = \ol{(g^*\mid_{(\L_1^*)^G})} \in \Iso((\wh L_1)^*,(\wh L_2)^*)$.
It follows from \cite[Proposition III.6]{Kreck:1984} that there exists a unique $h' \in \Iso((\wh L_1)^*,(\wh L_2)^*)$ for which $h' \oplus h \in \Isom(\wh e_1,\wh e_2)$. Since $(h^*)^{-1} \in \Iso((\wh L_1)^*,(\wh L_2)^*)$ and we have $(h^*)^{-1} \oplus h \in \Isom(\wh e_1,\wh e_2)$, it follows that $t = (h^*)^{-1}$, and so $\rho \in \DiagIsom(\wh e_1,\wh e_2)$.
\end{proof}

The following is immediate from \cref{def:DiagIsom}.

\begin{proposition}\label{prop:sevenfifteen}
There are bijections
\begin{align*}
A\colon & \DiagIsom(\wh e_1,\wh e_2) \to \Iso(\wh \L_1,\wh \L_2), 	\quad (f^*)^{-1} \oplus f \mapsto f \\
B\colon & \DiagIsom(e_1^G,e_2^G) \to \Iso(\L_1^G,\L_2^G), 	\quad (f^* \mid_{(\L_2^*)^G})^{-1} \oplus f \mapsto f
\end{align*}
such that
\begin{clist}{(i)}
\item
$B$ restricts to a bijection	 $B'\colon \DiagIsom_{\Psi_1,\Psi_2}(e_1^G,e_2^G) \to \Iso_{\psi_1,\psi_2}(\L_1^G,\L_2^G)$.
\item 
There is a commutative diagram
\[
\begin{tikzcd}[column sep=15mm]
	\DiagIsom_{\Psi_1,\Psi_2}(e_1^G,e_2^G) \ar[d,"B'"'] \ar[r,"\text{$(\Psi_1,\Psi_2)_*$}"] & \DiagIsom(\wh e_1, \wh e_2) \ar[d,"A"'] \\
	\Iso_{\psi_1,\psi_2}(\L_1^G,\L_2^G) \ar[r,"\text{$(\psi_1,\psi_2)_*$}"] & \Iso(\wh \L_1,\wh \L_2).
\end{tikzcd}
\]
\item
In the case where $e_1=e_2$, $A$, $B$ and $B'$ are isomorphisms of abelian groups.
\end{clist}
\end{proposition}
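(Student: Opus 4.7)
The bijections $A$ and $B$ are both given by extracting the second component of a diagonal isometry. Injectivity is immediate, since the pair $(f^*)^{-1}\oplus f$ (respectively its restricted variant) is determined by its second coordinate $f$. The substance of the proof is therefore surjectivity (constructing the inverse assignment $f \mapsto (f^*)^{-1}\oplus f$) together with the three compatibility claims (i)--(iii).

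For the surjectivity of $A$, I would verify directly that for any $f \in \Iso(\wh\L_1, \wh\L_2)$, the pair $(f^*)^{-1}\oplus f$ defines an isometry of the evaluation forms $\wh e_1$ and $\wh e_2$. Using the identification in \cref{lemma:evaluation-induced=evaluation} of $\wh e_i$ with the hyperbolic pairing on $(\wh\L_i)^* \oplus \wh\L_i$, this reduces to the elementary calculation $((f^*)^{-1}\varphi)(f(m)) = \varphi(m)$. For the surjectivity of $B$, the analogous argument requires that $f^*\colon (\L_2^G)^* \to (\L_1^G)^*$ restrict to an isomorphism $(\L_2^*)^G \to (\L_1^*)^G$. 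This is supplied by \cref{lemma:G-map-restriction}, together with the injectivity of $\rest\colon (\L^*)^G \hookrightarrow (\L^G)^*$ from \cref{lemma:G*-subtle}, which makes the restriction uniquely determined. The isometry property then follows from the same evaluation-pairing identity combined with the $G$-fixed analogue of \cref{lemma:evaluation-induced=evaluation}.

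Parts (i)--(iii) then follow by routine bookkeeping. For (i), the restriction $B'$ is obtained by intersecting the domain and codomain of $B$ with the $(\Psi_1,\Psi_2)$- and $(\psi_1,\psi_2)$-compatible subsets; a diagonal isometry $(f^*\mid)^{-1}\oplus f$ preserves $\ker(\Psi_i) = \ker(\psi_i')\oplus\ker(\psi_i)$ precisely when $f$ preserves $\ker(\psi_i)$, since the condition on the $\L^*$-summand then follows automatically via the dual statement of \cref{lemma:G-map-restriction}. For (ii), commutativity amounts to the identity that the reduction of $f^*\mid$ coincides with the dual of the reduction of $f$ under the canonical identification $(\L^*)\hh \cong (\wh\L)^*$ from \cref{prop:tate-identify}; this is the naturality of the duality pairing under passage to Tate cohomology. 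For (iii), in the case $e_1 = e_2$, the identity
\[ ((f_2^*\mid)^{-1}\oplus f_2) \circ ((f_1^*\mid)^{-1}\oplus f_1) = ((f_2 f_1)^*\mid)^{-1} \oplus (f_2 f_1)\]
shows that $A$, $B$, and $B'$ are group homomorphisms, and hence group isomorphisms.

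The main technical obstacle will be the well-definedness of $B$: that the assignment $f \mapsto (f^*\mid_{(\L_2^*)^G})^{-1}\oplus f$ genuinely lies in $\DiagIsom(e_1^G,e_2^G)$ for every $f \in \Iso(\L_1^G,\L_2^G)$ depends on invoking \cref{lemma:G-map-restriction} carefully to control the behavior of $f^*$ on the subgroup $(\L_2^*)^G \subseteq (\L_2^G)^*$, and on making the identification of $e^G$ with the restricted evaluation pairing in \cref{lemma:evaluation-induced=evaluation} fully explicit.
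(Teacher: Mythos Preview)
Your proposal is correct and matches the paper's approach: the paper simply declares the proposition ``immediate from \cref{def:DiagIsom}'' and gives no proof, so your argument is a faithful (and more explicit) unpacking of that definition. One small caution: \cref{lemma:G-map-restriction} is stated only for $f \in \Iso_{\psi_1,\psi_2}(\L_1^G,\L_2^G)$, so your invocation of it for the surjectivity of the unrestricted map $B$ (arbitrary $f \in \Iso(\L_1^G,\L_2^G)$) inherits the same imprecision already present in the paper's \cref{def:DiagIsom}; this does not affect parts (i)--(iii) or any downstream use, since only $B'$ and $A$ are needed later.
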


In particular, this shows that $\IM(\DiagIsom_{\Psi}(e^G)) \unlhd \DiagIsom(\wh e)$ is a normal subgroup and there are isomorphisms of abelian groups
\eqncount
\begin{equation}\label{eq:diagquotient}
\frac{\DiagIsom(\wh e)}{\DiagIsom_{\Psi}(e^G)} \xrightarrow[\cong]{A} \frac{\Aut(\wh \L)}{\Aut_{\psi}(\L^G)} \xrightarrow[\cong]{\rho_*} (\Z/m)^\times/\{\pm 1\} 
\end{equation}
where $\rho_*$ is the map defined in \cref{lemma:webb} and $m=m_{(G,n)}$.
We will also need to consider non-diagonal isometries. The following special cases will suffice.

\begin{definition}
An isometry $\rho \in \Isom(\wh e)$ is called \textit{(upper) triangular} if there exists a homomorphism $f\colon \wh \L \to (\wh \L)^*$ such that 
$\rho = \left(\begin{smallmatrix} \id & f \\ 0 & \id \end{smallmatrix}\right)\colon  (\wh L)^* \oplus \wh L \to (\wh L)^* \oplus \wh L$.		
The set of triangular isometries defines a subgroup $\TriIsom(\wh e) \le \Isom(\wh e)$.	
\end{definition}

Note that $\TriIsom(\wh e)$ is abelian since $ \left(\begin{smallmatrix} \id & f \\ 0 & \id \end{smallmatrix}\right) \left(\begin{smallmatrix} \id & g \\ 0 & \id \end{smallmatrix}\right) =  \left(\begin{smallmatrix} \id & f+g \\ 0 & \id \end{smallmatrix}\right)$ for homomorphisms $f,g \colon \wh \L \to (\wh \L)^*$.

\section{The doubling construction} \label{s:doubling-construction}

We will begin by defining the doubling construction for finite $(G,n)$-complexes. Details can be found in \cite[Section II]{Kreck:1984}. 

Let $n \ge 2$ and let $G$ be a group of type $F_n$. If $X$ is a finite $(G,n)$-complex, then there exists an embedding $i\colon  X \hookrightarrow \R^{2n+1}$. Let $N(X) \subseteq \R^{2n+1}$ be a smooth regular neighbourhood of this embedding (unique up to concordance, by \cite[p76]{Wall:1966}) and define $M(X) := \bd N(X)$. This is a closed oriented smooth stably parallelisable $2n$-manifold (see, for example, \cite[p15]{Kreck:1984}).
If $X$ is well defined up to homotopy equivalence, then $M(X)$ is well-defined up to $h$-cobordism and, in particular, does not depend on the choice of embedding or smooth regular neighbourhood.  A polarisation $\pi_1(X) \cong G$ induces a polarisation $\pi_1(M(X)) \cong G$ and, from now on, we will assume that the manifolds $M(X)$ come equipped with a polarisation of this form.

We will refer to $M(X)$ as the \textit{double} of $X$ and the \textit{doubling construction} as the function
\[ \mathscr{D}\colon  \HT(G,n) \to \{\text{closed oriented smooth $2n$-manifolds}\} / \simeq \,\, , \quad X \mapsto M(X). \]
We refer to the manifolds arising via this construction as \textit{doubled $(G,n)$-complexes} and denote the set of all such manifolds up to homotopy equivalence by
$\dHT(G,n) := \IM(\mathscr{D})$. 
The doubles of minimal finite $(G,n)$-complexes could then be denoted by $\dHT_{\min}(G,n)$, but instead we will write $\sM_{2n}(G) := \mathscr{D}(\HT_{\min}(G,n))$ to simplify the notation, as in the Introduction. 

This map is referred to as the doubling construction since it has the following equivalent form. By \cite[Proposition II.2]{Kreck:1984}, there exists a $2n$-thickening $L(X)$ of $X$ (which need not embed in $\R^{2n}$) such that
\[ M(X) \cong_{\hCob} \bd(L(X) \times [0,1]) \cong L(X) \cup -L(X) \] 
where $\cong_{\hCob}$ denotes $h$-cobordism and $L(X) \cup -L(X)$ is the double of $L(X)$ along its boundary. 
Note that more general notions of double exist (see, for example, \cite{Hambleton:2009,NNP24}); our notion is sometimes referred to as a trivial double elsewhere in the literature.

The following is \cite[Proposition I.1]{Kreck:1984}. Since it is illuminating, we include a short proof below.

\begin{proposition} \label{prop:stable} 
If $X$ and $Y$ are  finite $(G,n)$-complexes such that $\chi(X) = \chi(Y)$, then $M(X)$ and $M(Y)$ are stably diffeomorphic.
\end{proposition}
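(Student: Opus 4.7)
The plan is to produce some $r \ge 0$ together with a diffeomorphism $M(X) \# r(S^n \times S^n) \cong M(Y) \# r(S^n \times S^n)$, by reducing the claim to a stable homotopy equivalence of the underlying $(G,n)$-complexes.

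First I would record two geometric identities. The standard embedding $S^n \hookrightarrow \R^{n+1} \subseteq \R^{2n+1}$ has trivial normal bundle of rank $n+1$, so a regular neighbourhood is $S^n \times D^{n+1}$ and hence $M(S^n) \cong S^n \times S^n$. More generally, one may embed $X \vee S^n$ in $\R^{2n+1}$ so that the two wedge factors meet only near the wedge point, with their regular neighbourhoods joined along a common $(2n+1)$-ball; the full regular neighbourhood then splits as a boundary connect sum $N(X) \natural N(S^n)$, and taking boundaries yields $M(X \vee S^n) \cong M(X) \# (S^n \times S^n)$. Iterating,
\[ M(X \vee \bigvee\nolimits_r S^n) \cong M(X) \# r(S^n \times S^n), \]
and similarly for $Y$.

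The essential input is the stable homotopy classification for $(G,n)$-complexes: if $X, Y$ are finite $(G,n)$-complexes with $\chi(X) = \chi(Y)$, then $X \vee \bigvee_r S^n \simeq Y \vee \bigvee_r S^n$ for some $r \ge 0$. Since wedging $S^n$ corresponds at the chain level to adding a free summand $\Z G$ to $C_n(\wt X)$, by \cref{prop:he-vs-che} this reduces to a statement in $\Alg(G,n)$: any two algebraic $n$-complexes over $\Z G$ of equal Euler characteristic become chain homotopy equivalent after sufficiently many $\Z G$-summands are added in degree $n$. This in turn follows from iterated application of Schanuel's lemma to the syzygies in the two partial free resolutions of $\Z$, which shows that the $n$th syzygies are stably isomorphic as $\Z G$-modules.

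Combining these two inputs with property (1) of the doubling construction, we obtain
\[ M(X) \# r(S^n \times S^n) \cong_{\hCob} M(Y) \# r(S^n \times S^n). \]
For $n \ge 3$ the ambient dimension is $2n \ge 6$, and the $s$-cobordism theorem (after possibly one further trivial $S^n \times S^n$-stabilisation to absorb any Whitehead torsion via cancelling handle pairs) upgrades this $h$-cobordism to a diffeomorphism. For $n = 2$, Wall's stable diffeomorphism theorem (extended to finite $\pi_1$ via Kreck's modified surgery) implies that $h$-cobordant closed smooth $4$-manifolds become diffeomorphic after further connect sum with copies of $S^2 \times S^2$. Either way $M(X)$ and $M(Y)$ are stably diffeomorphic. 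The main obstacle is the second input, since $\chi(X) = \chi(Y)$ does not force $X \simeq Y$ — detecting this failure is precisely the purpose of the bias invariants developed later in the paper — so genuine geometric stabilisation by wedging $S^n$'s is indispensable.
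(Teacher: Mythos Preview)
Your overall strategy matches the paper's: stabilise the complexes by wedging copies of $S^n$, use the identity $M(X \vee rS^n) \cong M(X) \# r(S^n \times S^n)$, and then compare the doubled manifolds. The paper's execution is sharper in one respect: it observes right away that the stabilised complexes $X \vee rS^n$ and $Y \vee rS^n$ can be taken to be \emph{simply} homotopy equivalent (citing Whitehead), and then applies Wall's regular-neighbourhood embedding result \cite[Corollary~2.1]{Wall:1966} to exhibit $N(Y')$ inside $N(X')$ with complement an $s$-cobordism. This bypasses the case split you introduce.

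Your route through Schanuel's lemma yields only an ordinary homotopy equivalence, hence only an $h$-cobordism via property~(i). Your sentence ``after possibly one further trivial $S^n \times S^n$-stabilisation to absorb any Whitehead torsion via cancelling handle pairs'' is the imprecise point: stabilising the $h$-cobordism by trivially cancelling $(n,n{+}1)$-handle pairs does not change its torsion, and a single stabilisation need not suffice in any case. The correct fix is to return to the complex level: after wedging with enough copies of $S^n$, self-equivalences of $Y \vee kS^n$ coming from $GL_k(\Z G)$ acting on the free summand of $\pi_n$ realise every element of $\Wh(G)$, so the homotopy equivalence can be \emph{chosen} simple. This then gives an $s$-cobordism (via the simple-homotopy strengthening of property~(i), cf.\ \cite[p.~15]{Kreck:1984}), and the $s$-cobordism theorem applies for $n \ge 3$. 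Your $n=2$ case is fine, though the relevant input---that $h$-cobordant closed $4$-manifolds are stably diffeomorphic---is classical handle theory rather than Kreck's modified surgery.
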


\begin{proof} Since $\chi(X) = \chi(Y)$, there exists $r \geq 0$ such that $X' = X \vee rS^n \simeq Y' = Y \vee rS^n$ are simple homotopy equivalent (see \cite{Whitehead:1939}). Then by \cite[Corollary 2.1]{Wall:1966}, $N(Y')$ embeds in $N(X')$, and the region $W = N(X') - N(Y')$ is an $s$-cobordism between $M(X')$ and $M(Y')$,  which implies that $M(X')$ and $M(Y')$ are stably diffeomorphic (or diffeomorphic if $n > 2$).
 But $M(X') \cong M(X)\# r(S^n \times S^n)$ and $M(Y') \cong M(Y) \# r(S^n \times S^n)$ by construction, and so $M(X)$ and $M(Y)$ are stably diffeomorphic.
\end{proof}

We will now restrict to the case where $G$ is a finite group. 
In this case, we can identify the equivariant intersection form $S_{M(X)}$ as follows.

\begin{proposition}
\label{prop:intdouble}
Let $G$ be a finite group and $\ep = (-1)^n$. If $X$ is a finite $(G,n)$-complex, then 
$$\pi_n(M(X)) \cong \pi_n(X)^* \oplus \pi_n(X)$$ and $S_{M(X)} \cong \Met_\ep(\pi_n(X), \phi)$  are isometric for some $\phi\in \Sym_\ep(\pi_n(X))$ with $\phi^G = 0$.
\end{proposition}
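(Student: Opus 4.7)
The plan is to use the alternative form of the doubling construction, $M(X) \cong L(X) \cup_{\bd L(X)} (-L(X)) \cong \bd W$ (see the discussion after \cref{prop:stable}), where $W := L(X) \times I$ is a compact oriented $(2n+1)$-manifold homotopy equivalent to $X$. Set $L := \pi_n(X) = H_n(\wt X)$.

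First, I would identify $\pi_n(M(X))$ as a $\Z G$-module. Applying equivariant Poincar\'e--Lefschetz duality
\[
H_k(\wt W, \wt{M(X)}) \;\cong\; H^{2n+1-k}(\wt W) \;\cong\; H^{2n+1-k}(\wt X)
\]
to the long exact sequence of the pair $(\wt W, \wt{M(X)})$, and using that $\wt X$ is $n$-dimensional and $(n-1)$-connected (so $H^{n+1}(\wt X) = 0$ and, by universal coefficients, $H^n(\wt X) \cong L^*$), the sequence reduces to
\[
0 \to L^* \to \pi_n(M(X)) \to L \to 0.
\]
The inclusion $j\colon L(X) \hookrightarrow M(X)$ as the first half of the double, composed with the retraction $M(X) \hookrightarrow W \simeq L(X)$, induces the identity on $L$; this provides a splitting and yields $\pi_n(M(X)) \cong L^* \oplus L$ as $\Z G$-modules.

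Next, for the form $S_{M(X)}$, the summand $L^*$ coincides with $\ker(\pi_n(M(X)) \to \pi_n(W))$, and the standard equivariant ``half-lives--half-dies'' argument for the bounding manifold $W$ shows that $L^*$ is a Lagrangian for $S_{M(X)}$. By functoriality of Poincar\'e--Lefschetz duality and the cap product, the pairing between $L^*$ and its complementary $L$ induced by $S_{M(X)}$ is the canonical evaluation; combined with the non-singularity of $S_{M(X)}$, this forces $S_{M(X)} \cong \Met_\ep(L, \phi)$ for some $\phi \in \Sym_\ep(L)$, where $\phi$ records the self-intersection pairing on the Lagrangian complement $j_*(L) \subset \pi_n(M(X))$.

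The final assertion $\phi^G = 0$ is the main obstacle. Since $\phi$ depends on the chosen lift $L \to \pi_n(M(X))$, and modifying the lift by some $f\colon L \to L^*$ replaces $\phi$ by $\phi + (\tilde f + T_\ep \tilde f)$ (where $\tilde f$ is the bilinear form associated to $f$), one must show the splitting can be chosen so that $\phi^G \in \Sym_\ep(L^G)$ is identically zero. I would exploit the reflection involution $\sigma\colon M(X) \to M(X)$ that swaps the two copies of $L(X)$ and fixes $\bd L(X)$ pointwise: for a $G$-invariant class $a \in L^G$, a $G$-equivariant spherical representative in $X \subset L(X)$ admits a $\sigma$-symmetric pushoff into a collar of $\bd L(X) \subset M(X)$, providing a natural lift $L^G \to \pi_n(M(X))^G$ whose geometric representatives can be made pairwise disjoint, so that all equivariant intersection numbers on $L^G$ vanish. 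Making this geometric argument rigorous at the chain level—and verifying that the resulting lift is compatible with the algebraic splitting produced above—is the key technical step that goes beyond the general formalism of metabolic forms on bounding manifolds.
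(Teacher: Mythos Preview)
Your outline for the module identification and the metabolic shape of $S_{M(X)}$ is essentially what the paper (via \cite[Proposition II.2, II.4]{Kreck:1984}) does, so that part is fine.

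For $\phi^G = 0$, however, your involution argument has a genuine gap, and the paper proceeds differently. The self-intersection $\phi(a,b) = S_{M(X)}(j_*a,j_*b)$ for $a,b \in L^G$ is computed entirely inside the first copy $L(X)$, so it equals the ordinary intersection pairing $S_X(a,b)$ on $H_n(L(X);\Z)$. This is a homotopy invariant: pushing representatives into a collar of $\bd L(X)$ does not change it, and a ``$\sigma$-symmetric'' representative would represent $j_*a + \sigma_*j_*a$ rather than $j_*a$, so it does not give a legitimate lift of $a$. Moreover, elements of $L^G$ need not have $G$-equivariant spherical representatives, and modifying the $\Z G$-splitting by $f\colon L\to L^*$ only alters $\phi$ by an even form, which is not enough to kill an arbitrary $\phi^G$.

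The paper's argument supplies the missing ingredient: the thickening $L(X)$ is not canonical, and by choosing the framings of the $n$-handles appropriately one can arrange that the ordinary intersection form $S_X$ on $H_n(L(X);\Z)$ is identically zero (this is part of the construction in \cite[Proposition II.2]{Kreck:1984}). Then the rational transfer isomorphism
\[
\pi_n(M(X))^G \otimes \Q \;\cong\; H_n(M(X);\Q) \;\cong\; H_n(L(X);\Q)\oplus H_n(-L(X);\Q)
\]
identifies $\phi^G\otimes\Q$ with $S_X\otimes\Q = 0$. Since $L^G$ is a free abelian group, $\phi^G\otimes\Q = 0$ forces $\phi^G = 0$. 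The point you were missing is that $\phi^G = 0$ is not a formal consequence of the doubling construction; it requires this specific choice of thickening.
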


This is a consequence of \cite[Proposition II.2]{Kreck:1984}. The property that $\phi^G=0$ is not stated explicitly and so we include further details on this below. Recall that $\Sym_\ep(L)$ denotes the $\ep$-symmetric forms on a $\ZG$-module $L$, which admit elements $g \in G$ as isometries (see \cref{def:fourtwo}).

\begin{proof} By \cite[Proposition II.4]{Kreck:1984}, the equivariant intersection form 
$$S_{M(X)}\cong \Met(\pi_n(X), \phi),$$
restricted to the  summand $0 \oplus \pi_n (X)$, defines a form $\phi \in \Sym_\ep(\pi_n(X))$. It remains to show that $\phi^G = 0$.
The  $2n$-thickening  $L(X)$ can be constructed explicitly as follows (see the proof of \cite[Proposition II.2 (i)]{Kreck:1984}).  Let $N(X^{(n-1)}) \subseteq \R ^{2n}$ be a $2n$-thickening of the $(n-1)$-skeleton of $X$. For each $n$-cell of $X$, the attaching map $f_i\colon  S^{n-1} \to X^{(n-1)}$ can be shown to induce an embedding $g_i\colon  S^{n-1} \times D^n \hookrightarrow \bd N(X^{(n-1)})$. Then form $L(X)$ by attaching $n$-handles to $N(X^{(n-1)})$ along the embeddings $g_i$ for each $n$-cell of $X$. It follows that $N(X) = L(X) \times I$ and $M(X) = L(X) \cup - L(X)$  (compare \cite[Lemma 3.3]{Hambleton:2009} for  uniqueness of the thickenings).

By an appropriate choice of these embeddings, one can ensure that the ordinary intersection form
$$S_X \colon H_n(L(X); \bZ) \otimes H_n(L(X); \bZ) \to \bZ$$
is zero.  This part of the proof is carefully explained in 
 \cite[Proposition II.4(ii), pp17-18]{Kreck:1984}. By the transfer map on rational homology, we have
$$H_n(M(X); \Z G)^G\otimes \bQ = H_n(\widetilde{M(X)};\bQ)^G = H_n(M(X); \bQ) = H_n(L(X); \bQ) \oplus H_n(-L(X); \bQ). $$
Since the intersection forms respect this splitting, the restriction $\phi$ of $S_{M(X)}$ has the property that $\phi^G \otimes \bQ = S_X \otimes \bQ = 0$. It follows that $\phi^G=0$, as required.
\end{proof}

The following will lead to our algebraic model for doubled $(G,n)$-complexes. A similar model was given by Kreck-Schafer in \cite[Section III.1]{Kreck:1984}.

\begin{definition} For $n \geq 2$ and $\ep = (-1)^n$, let $\mathfrak{C}^{\alg}_n(G)$ denote the category whose objects consist of pairs $(C, \phi)$, where $C \in \Alg(G,n)$  and $\phi \in \Sym_\ep(H_n(C))$, such that $\phi^G = 0$. A morphism $(C, \phi) \to (C', \phi')$ is a pair of 
chain maps $f\colon C \to C'$ and $g\colon C' \to C$ inducing the identity on $H_0$.

The objects of $\mathfrak{C}^{\alg}_n(G)$ admit an $\Aut(G)$-action where, for $\theta \in \Aut(G)$ and $(C,\phi) \in \mathfrak{C}^{\alg}_n(G)$, we define $(C,\phi)_\theta = (C_\theta, \phi_\theta)$ where $C_\theta$ and $\phi_\theta$ are as defined in Sections \ref{s:preliminaries} and \ref{ss:Herm-ZG} respectively.
\end{definition}

\begin{definition}  For $n \geq 2$ and $\ep = (-1)^n$, let $\mathfrak{M}^{\alg}_{2n}(G)$ denote the category  whose objects are pairs $(D,\Phi)$ where $D = (D_*,\bd_*)$ is a chain complex of (finitely generated) free $\ZG$-modules $D_*$ equipped with choices of $\Z G$-module isomorphisms $H_0(D) \cong \Z$ and $H_{2n}(D) \cong \Z$ such that
\begin{clist}{(i)}
\item
$D_i = 0$ for $i < 0$ or $i > 2n$.
\item
$H_i(D)=0$ for $0 < i < n$ and $n < i < 2n$.
\item 
$\Phi \in \Sym_\ep(H_n(D))$ is a non-singular $\ep$-symmetric form.
\end{clist}
 A morphism $(D, \Phi) \to (D', \Phi')$ is a chain map $h \colon D \to D'$ such that $H_0(h) = \id_{\Z}$ and $H_{2n}(h)=\id_{\Z}$, and $H_n(h)$ induces an isometry $H_n(h)\hh \colon \wh \Phi \to \wh \Phi'$ of the induced Tate forms $(\wH^0(G; H_n(D)), \hat\Phi)$ and  $(\wH^0(G; H_n(D')), \hat\Phi')$.

Two objects $(D,\Phi)$, $(D',\Phi') \in \mathfrak{M}^{\alg}_{2n}(G)$ are said to be \textit{homotopy equivalent} if there exists a chain homotopy equivalence $h\colon D \to D'$ such that $H_0(h) = \id_{\Z}$ and $H_{2n}(h)=\id_{\Z}$, and $H_n(h)$ defines an isometry $\Phi \to \Phi'$.

The objects in $\mathfrak{M}^{\alg}_{2n}(G)$  admit an $\Aut(G)$-action where, for $\theta \in \Aut(G)$ and $(D,\Phi) \in \mathfrak{M}^{\alg}_{2n}(G)$, we define
$(D,\Phi)_\theta = (D_\theta, \Phi_\theta)$. Here, if $D = (D_*,\bd_*)$, then $D_\theta = ((D_*)_\theta, \bd_*)$ similarly to the $\Aut(G)$-action on $\Alg(G,n)$. The form $\Phi_\theta$ is as defined in \cref{ss:Herm-ZG}.
\end{definition}
	
\begin{definition}\label{def:algdouble}
For an object $(C, \phi) \in \mathfrak{C}^{\alg}_n(G)$, we define a chain complex
$D_* = M(C_*)$, called the \textit{algebraic $2n$-double}, as follows
\begin{enumerate}
\item $D_i = C_i$ for $0 \leq i \leq n-1$, $D_i = C_{2n-i}^*$ for $n+1 \le i \le 2n$, and $D_n= C_n^* \oplus C_n$.
\item 
$\bd^D_i = \bd^C_i$ for $0 \leq i \leq n-1$, $\bd^D_i = \bd^*_{2n-i+1}$ for $n+2 \leq i \leq 2n$, \\ $\bd^D_{n} = \binom{0}{\pm \bd^C_n} \colon   C_n^* \oplus C_n \to  C_{n-1}$, and $\bd^D_{n+1} = (\pm \bd^*_{n}, 0) \colon C^*_{n-1} \to C_n^* \oplus C_n$.
\item
The identifications $H_0(D) \cong \Z$ and $H_{2n}(D) \cong \Z$ are induced by the identification $H_0(C) \cong \Z$.
\end{enumerate}

For $n \geq 2$ and $\ep = (-1)^n$, we define
a functor $M \colon \mathfrak{C}^{\alg}_n(G) \to \mathfrak{M}^{\alg}_{2n}(G)$ given on objects by $M(C, \phi) := (D, \Phi)$, where $D_* = M(C_*)$ and $\Phi = \Met_\ep( H_n(C), \phi)$.
If $f \colon C \to C'$ and $g\colon C' \to C$ are morphisms in $\mathfrak{C}^{\alg}_n(G)$, then $M(f,g) := h$ where $h_i = f_i$, $0 \leq i <n$; $h_i = g^*_i$, $n+1 \leq i \leq n$ and $h_n = g_n^* \oplus f_n$. 
See \cref{prop:chain-map}(i) for the proof that this defines a morphism in $\mathfrak{M}^{\alg}_{2n}(G)$.
\end{definition}

\begin{definition} \label{def:Dalg} 
Define $\dAlg(G,n)$ to be the set of homotopy types of algebraic $2n$-doubles $M(C, \phi)$ for $(C, \phi) \in \mathfrak{C}^{\alg}_n(G)$. Define the \textit{algebraic doubling construction} to be the map
\[ \dalg\colon\Alg(G,n) \to \dAlg(G,n), \quad C \mapsto M(C,0) = \big (M(C),H_\ep(H_n(C))\big ) \]
where $H_\ep(H_n(C))$ denotes the hyperbolic form on the module $H_n(C)^* \oplus H_n(C)$.

We also define $\Malg_{2n}(G)$ to be the set of homotopy types of algebraic $2n$-doubles $M(C, \phi)$ for $(C, \phi) \in \mathfrak{C}^{\alg}_n(G)$ such that $(-1)^n\chi(C) = \chi_{\min}(G,n)$; that is, $C \in \Alg_{\min}(G,n)$. Note that we could alternatively write this as $\dAlg_{\min}(G,n)$, but we use $\Malg_{2n}(G)$ to simplify notation.
\end{definition}

We will now introduce two new equivalence relations on the objects in $\mathfrak{M}^{\alg}_{2n}(G)$. Both equivalence relations refine homotopy equivalence, so induce a priori weaker equivalence relations on the set of algebraic $2n$-doubles $\dAlg(G,n)$.

\begin{definition} \label{def:Z-isometry}
Let $(D,\Phi)$, $(D',\Phi') \in \mathfrak{M}^{\alg}_{2n}(G)$ be two objects. A morphism $\Phi \to \Phi'$ is said to be an \textit{integral isometry} if it induces an isometry $\Phi^G \to (\Phi')^G$, and a \textit{Tate isometry} if it induces an isometry $\hat\Phi \to \hat\Phi'$. Note that integral isometries are Tate isometries.

If there exists a chain homotopy equivalence $h \colon  D \to D'$ such that $H_0(h) = \id_{\Z}$, $H_{2n}(h)=\id_{\Z}$, and $H_n(h)$ defines a morphism $\Phi \to \Phi'$ which is an integral isometry (resp. Tate isometry), then we write $(D,\Phi)\simeq_{\Z} (D,'\Phi')$ (resp. $(D,\Phi)\simeq_{\wh \Z} (D,'\Phi')$).
\end{definition} 

For $(C,\phi) \in \mathfrak{C}^{\alg}_n(G)$, it can be shown that $M((C,\phi)_\theta) = M(C,\phi)_\theta$. Note that $(M_\theta)^* \cong (M^*)_\theta$  if $M$ is a $\Z G$-lattice (see, for example, \cite[Section 6.1]{Ni20-I}). In particular, $\dalg$ induces a map on the orbits under the $\Aut(G)$-actions
\[ \dalg\colon\Alg(G,n)/\Aut(G) \to \dAlg(G,n)/\Aut(G). \]

Recall that, by \cref{prop:d2-problem}, there is an injective map $\mathscr{C} \colon \HT(G,n) \hookrightarrow \Alg(G,n)/\Aut(G)$ given by $X \mapsto C_*(\wt X)$. Similarly, by \cite[Proposition II.3]{Kreck:1984}, we have a map
\[ d(\mathscr{C}) \colon \dHT(G,n) \to \dAlg(G,n)/\Aut(G), \quad M \mapsto (C_*(\wt M), S_M).\]
The action of $\Aut(G)$ on $\dAlg(G,n)$ induces an action on
$\dAlg(G,n)/\simeq_{\Z}\,$,  so there is a bijection
$$(\dAlg(G,n)/\Aut(G))/\simeq_{\Z} \,\,\,\,\to \,\, (\dAlg(G,n)/\simeq_{\Z}\,)/\Aut(G).$$
The following observation will be crucial in our definition of the quadratic bias in \cref{s:bias-manifolds}.

\begin{proposition} \label{prop:Dalg-commutes}
Let $G$ be a finite group.
If $X$ is a finite $(G,n)$-complex, then 
 $$(C_*(\wt{M(X)}), S_{M(X)}) \simeq_{\Z} (C_*(\wt{M(X)}),H_\ep(\pi_n(X))).$$
 That is, there is a commutative diagram:
\[
\begin{tikzcd}
\HT(G,n) \ar[d,"\mathscr{D}"] \ar[r,"\mathscr{C}"] & \Alg(G,n)/\Aut(G) \ar[d,"\dalg"] \\
\dHT(G,n) \ar[r,"d(\mathscr{C})"] & (\dAlg(G,n)/\Aut(G))/\simeq_{\Z}.
\end{tikzcd}
\]
\end{proposition}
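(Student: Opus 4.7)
The plan is to combine Proposition \ref{prop:intdouble} with Proposition \ref{prop:adapted}. By Proposition \ref{prop:intdouble}, there exists a $\Z G$-module identification $\pi_n(M(X)) \cong \pi_n(X)^* \oplus \pi_n(X)$ under which $S_{M(X)}$ corresponds to $\Met_\ep(\pi_n(X), \phi)$ for some $\phi \in \Sym_\ep(\pi_n(X))$ with $\phi^G = 0$. Using the same identification to view $H_\ep(\pi_n(X)) = \Met_\ep(\pi_n(X), 0)$ as a form on $H_n(C_*(\wt{M(X)})) = \pi_n(M(X))$, the first assertion reduces to showing that the identity morphism of $\pi_n(X)^* \oplus \pi_n(X)$ is an integral isometry between $\Met_\ep(\pi_n(X), \phi)$ and $\Met_\ep(\pi_n(X), 0)$.

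This follows directly from Proposition \ref{prop:adapted} applied to each of $\phi$ (with $\phi^G = 0$) and the zero form: both metabolic forms restrict on the $G$-fixed part $(\pi_n(X)^*)^G \oplus \pi_n(X)^G$ to the evaluation form $e_{\pi_n(X)}^G$. Hence, taking the chain homotopy equivalence $h = \id \colon C_*(\wt{M(X)}) \to C_*(\wt{M(X)})$ supplies the data required by \cref{def:Z-isometry}, which proves the first assertion.

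For the commutative diagram, observe that $d(\mathscr{C})(\mathscr{D}(X)) = (C_*(\wt{M(X)}), S_{M(X)})$ while $\dalg(\mathscr{C}(X)) = (M(C_*(\wt X)), H_\ep(\pi_n(X)))$. To show that these represent the same class in $(\dAlg(G,n)/\Aut(G))/\simeq_{\Z}$, I would use the handle decomposition $M(X) \cong_{\hCob} L(X) \cup -L(X)$ from \cite[Proposition II.2]{Kreck:1984} (which underlies the proof of \cref{prop:intdouble}): since $L(X)$ is a $2n$-thickening of $X$ with one $k$-handle for each $k$-cell of $X$, the induced handle structure on $M(X)$ yields a cellular chain complex $C_*(\wt{M(X)})$ identifiable with the algebraic double $M(C_*(\wt X))$ of \cref{def:algdouble} via a chain homotopy equivalence inducing the identity on $H_0$ and $H_{2n}$. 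Under this equivalence the hyperbolic form on $M(C_*(\wt X))$ corresponds to $H_\ep(\pi_n(X))$ on $\pi_n(M(X))$ using the identification of \cref{prop:intdouble}, so combined with the integral isometry of the previous paragraph, this yields the commutativity.

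The main obstacle is the geometric-algebraic identification $M(C_*(\wt X)) \simeq C_*(\wt{M(X)})$ compatible with the $\pi_n$-identification of \cref{prop:intdouble}; one must check in particular that the signs in \cref{def:algdouble}(ii) match the boundary maps of the dual handles, and that the algebraic identification $H_{2n}(M(C_*(\wt X))) \cong \Z$ matches the fundamental class of $M(X)$ arising from the double structure. Once this bridge is established, the form-theoretic content of the proposition—the conversion of a metabolic form with vanishing fixed part into the hyperbolic form via an integral isometry—is the purely formal consequence of \cref{prop:adapted} outlined above.
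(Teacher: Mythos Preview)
Your proposal is correct and follows essentially the same approach as the paper: invoke \cref{prop:intdouble} to write $S_{M(X)} \cong \Met_\ep(\pi_n(X),\phi)$ with $\phi^G=0$, apply \cref{prop:adapted} to see that the identity is an integral isometry to $H_\ep(\pi_n(X))$, and use the chain identification $C_*(\wt{M(X)}) \simeq M(C_*(\wt X))$ from Kreck--Schafer. The only difference is that the paper cites \cite[Proposition II.3]{Kreck:1984} directly for this last identification, whereas you sketch its derivation from the handle structure of \cite[Proposition II.2]{Kreck:1984}; the concerns you flag about signs and the fundamental class are exactly what that reference resolves.
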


\begin{proof}
For $X \in \HT(G,n)$, we have $(d(\mathscr{C}) \circ \mathscr{D})(X) = (C_*(\wt{M(X)}), S_{M(X)})$. By \cref{prop:intdouble}, there is an isometry $S_{M(X)} \cong \Met_\ep(\pi_n(X), \phi)$ for some $\phi\in \Sym_\ep(\pi_n(X))$ with $\phi^G = 0$.
Next, we have $(\dalg \circ \mathscr{C})(X) = (M(C_*(\wt X)),H_\ep(\pi_n(X)))$. By \cite[Proposition II.3]{Kreck:1984}, we have that $C_*(\wt{M(X)}) \cong M(C_*(\wt X))$ are chain isomorphic. 
Let $\Phi = \Met_\ep(\pi_n(X), \phi)$ and $L = \pi_n(X)$, so that $H_\ep(\pi_n(X)) \cong e_L$. Since $\phi^G = 0$, \cref{prop:adapted} implies that $\Phi^G \cong e_L^G$. Hence  $(C_*(\wt{M(X)}),\Phi) \simeq_{\Z} (C_*(\wt{M(X)}),e_L)$ via the identity map on $C_*(\wt{M(X)})$, as required.
\end{proof}

\section{The quadratic bias invariant} \label{s:bias-manifolds}

Throughout this section, we will fix $n \ge 2$ and a finite group $G$. 
We will now introduce the quadratic bias invariant, which is a homotopy invariant for the class of doubles $M(X)$ for $X$ a finite $(G,n)$-complex.

In \cref{s:bias-complexes}, we defined the bias invariant for an arbitrary finite $(G,n)$-complex $X$ (see \cref{def:bias-original}). It followed from \cref{prop:PB(Gnl)} that the bias invariant vanishes if $X$ is non-minimal; in fact, the obstruction group $B(G,n,\chi)$ is trivial in this case. 
The construction of the quadratic bias invariant factors through the bias invariant and so, in the general setting, has an obstruction group $B_Q(G,n,\chi)$ which is a quotient of $B(G,n,\chi)$. This consequently vanishes for $\chi > \chi_{\min}(G,n)$. For notational simplicity, we will restrict to \emph{minimal} finite $(G,n)$-complexes from now on (see \cref{def:minimal-Gn}), and define the quadratic bias invariant only in this case.

We will begin with a polarised version of the invariant  in the more general setting of algebraic $2n$-doubles $M(C, \phi) \in \Malg_{2n}(G)$. In \cref{ss:qbias_manifolds}, we return to manifolds and prove \cref{thmx:main-general}.

\subsection{The quadratic bias for algebraic $2n$-doubles}

For a $\Z G$-module $L$, recall that there is a canonical identification $(L^*)\hh \cong (\wh L)^*$. 
For $(C, \phi) \in \mathfrak{C}^{\alg}_n(G)$, we let $L= H_n(C)$ and denote the evaluation form on $L^*\oplus L$ by $e: = e_{L}$.
Recall that, if $(D,\Phi) = M(C,\phi)$, then \cref{prop:adapted} implies that $\Phi^G \cong e^G$ and $\wh \Phi \cong \wh e$. For the remainder of this section, we will use the identifications $(L^*)\hh \cong (\wh L)^*$, $\Phi^G \cong e^G$ and $\wh \Phi \cong \wh e$ without further mention.

The following is a slight extension of \cite[Propositions III.3 \& III.4]{Kreck:1984}. We can view it as the analogue of \cref{prop:bias-tate}. The notation $\DiagIsom(\wh e_1,\wh e_2)$ for the set of  \emph{diagonal isometries} is given in \cref{def:DiagIsom}.

\begin{proposition} \label{prop:chain-map} \label{def:I(D1D2)}
For $i = 1,2$, let $(D_i, \Phi_i) =M(C_i, \phi_i)$, for $(C_i, \phi_i) \in \mathfrak{C}^{\alg}_n(G)$ such that $\chi(C_1)= \chi(C_2)$.
Let $L_i = H_n(C_i)$ and $e_i = e_{L_i}$, so that $\wh \Phi_i \cong \wh e_i$ for $i=1, 2$.
Then:
\begin{clist}{(i)}
\item
There exists a chain map $h\colon D_1 \to D_2$ such that $H_0(h) = \id_\Z$, $H_{2n}(h)=\id_\Z$ and 
\[ H_n(h)\hh\colon (\L_1^*)\hh \oplus \wh \L_1 \to (\L_2^*)\hh \oplus \wh \L_2\]
is a diagonal isometry from $\wh e_1$ to $\wh e_2$. Furthermore, we can take $h = M(f,g)$ where $f \colon C_1 \to C_2$ and $g \colon  C_2 \to C_1$ are any chain maps such that $H_0(f) = \id_{\Z}$ and $H_0(g) = \id_{\Z}$.
\item
Let $h\colon D_1 \to D_2$ be a chain map such that $H_0(h) = \id_\Z$, $H_{2n}(h)=\id_\Z$ and 
$H_n(h)\hh \in \DiagIsom(\wh e_1,\wh e_2)$. Then $H_n(h)\hh$ is independent of the choice of $h\colon D_1 \to D_2$.
We will write this as $I(D_1,D_2) = (\nu(D_1,D_2)^*)^{-1} \oplus \nu(D_1,D_2) \in \DiagIsom(\wh e_1,\wh e_2)$ where $\nu(D_1,D_2) \in \Iso(\wh \L_1,\wh \L_2)$.
\item
More generally, let $h\colon D_1 \to D_2$ be a chain map such that $H_0(h) = \id_\Z$, $H_{2n}(h)=\id_\Z$ and $H_n(h)\hh \in \Isom(\wh e_1,\wh e_2)$. Then 
\[ H_n(h)\hh = I(D_1,D_2) \circ \left(\begin{smallmatrix}\id & \alpha \\ 0 & \id\end{smallmatrix}\right)\]
for some $\alpha\colon  \wh L_1 \to (\wh L_1)^*$, where $\left(\begin{smallmatrix}\id & \alpha \\ 0 & \id\end{smallmatrix}\right) : (\L_1^*)\hh \oplus \wh \L_1 \to (\L_1^*)\hh \oplus \wh \L_1$, $(x,y) \mapsto (x+\alpha(y),y)$.
\end{clist}
\end{proposition}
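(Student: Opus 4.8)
The plan is to dispatch the three parts in order, reducing everything to the explicit description of $D_i = M(C_i)$ in \cref{def:algdouble} and to the algebraic input about $H_n(\,\cdot\,)\hh$ already recorded in \cref{prop:bias-tate}. Throughout I write $L_i = H_n(C_i)$ and, using \cref{prop:intdouble} and \cref{prop:adapted}, identify $H_n(D_i)$ with $L_i^*\oplus L_i$ and $\wh\Phi_i$ with $\wh e_i$.

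\emph{Part (i).} Choose chain maps $f\colon C_1\to C_2$ and $g\colon C_2\to C_1$ with $H_0(f)=H_0(g)=\id_\Z$ (these exist since the $C_{i,*}$ are free) and set $h := M(f,g)$. First I would verify directly from the formulas in \cref{def:algdouble} that $h$ is a chain map, that $H_0(h)=\id_\Z$, and that $H_{2n}(h)=\id_\Z$ (the last from the compatibility of $g_0$ with the augmentations). A short computation then shows $H_n(h) = (H_n(g))^*\oplus H_n(f)$ is block-diagonal relative to $L_i^*\oplus L_i$; applying $\wh H^0(G;-)$ and the canonical identifications $(L_i^*)\hh\cong(\wh L_i)^*$ gives $H_n(h)\hh = \mu^*\oplus\nu$ with $\nu=\widehat{H_n(f)}$, $\mu=\widehat{H_n(g)}$. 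Since $g\circ f\colon C_1\to C_1$ and $f\circ g\colon C_2\to C_2$ both induce $\id$ on $H_0$, \cref{prop:bias-tate} yields $\mu\circ\nu = \sigma(C_1,C_1)=\id$ and $\nu\circ\mu=\sigma(C_2,C_2)=\id$, so $\mu=\nu^{-1}$ and hence $H_n(h)\hh = (\nu^*)^{-1}\oplus\nu$ is a diagonal isometry.

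\emph{Part (ii).} The crux is a lemma that I would isolate: for \emph{any} chain map $h\colon D_1\to D_2$ with $H_0(h)=\id_\Z$, the lower-right block of $H_n(h)$ (the component $L_1\to L_2$) equals $H_n(f)$ for some chain map $f\colon C_1\to C_2$ with $H_0(f)=\id_\Z$, and, dually, the upper-left block equals $(H_n(g))^*$ for some chain map $g\colon C_2\to C_1$ with $H_0(g)=\id_\Z$. Concretely, write $h_n=\left(\begin{smallmatrix}a&b\\c&d\end{smallmatrix}\right)\colon C_{1,n}^*\oplus C_{1,n}\to C_{2,n}^*\oplus C_{2,n}$. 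The chain-map identities at degrees $n$ and $n+1$ force $\bd_n^{C_2}\circ d = h_{n-1}\circ\bd_n^{C_1}$ (up to a harmless sign) and $\bd_n^{C_2*}\circ h_{n+1}=a\circ\bd_n^{C_1*}$, so $(h_0,\dots,h_{n-1},d)$ is a chain map $C_1\to C_2$, while $(a,h_{n+1},\dots,h_{2n})$ is a chain map between the brutal truncations of $D_1,D_2$ in degrees $\ge n$, which are the shifted dual complexes; the latter is dual to a chain map $g\colon C_2\to C_1$, and $H_{2n}(h)=\id$ forces $H_0(g)=\id$. By \cref{prop:bias-tate}, $\widehat{H_n(f)}=\sigma(C_1,C_2)$ and $\widehat{H_n(g)}=\sigma(C_2,C_1)$ depend only on $C_1,C_2$. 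If $H_n(h)\hh$ happens to be diagonal, then it is block-diagonal relative to $(\wh L_i)^*\oplus\wh L_i$, hence determined by these two blocks together with the shape $(\phi^*)^{-1}\oplus\phi$ of diagonal isometries (\cref{def:DiagIsom}); one concludes $H_n(h)\hh = (\sigma(C_1,C_2)^*)^{-1}\oplus\sigma(C_1,C_2)$, independent of $h$, and this is $I(D_1,D_2)$ with $\nu(D_1,D_2):=\sigma(C_1,C_2)$.

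\emph{Part (iii).} Given the lemma, for general $h$ the matrix $H_n(h)\hh$ already has lower-right block $\nu:=\nu(D_1,D_2)$ and, using $\sigma(C_1,C_2)\circ\sigma(C_2,C_1)=\id$, upper-left block $(\nu^*)^{-1}$. Hence $T:=I(D_1,D_2)^{-1}\circ H_n(h)\hh$ is an automorphism of $\wh e_1$ whose lower-right and upper-left blocks are both the identity, and the assertion is exactly that $T\in\TriIsom(\wh e_1)$, i.e. that the lower-left block of $T$ — equivalently, of $H_n(h)\hh$ — vanishes; granting this, set $\alpha:=\nu^*\circ(\text{upper-right block of }H_n(h)\hh)$, so that $H_n(h)\hh = I(D_1,D_2)\circ\left(\begin{smallmatrix}\id&\alpha\\0&\id\end{smallmatrix}\right)$, with $\alpha$ automatically of the symmetry type making the factor an isometry (both $H_n(h)\hh$ and $I(D_1,D_2)$ being isometries). \textbf{I expect the vanishing of the lower-left block to be the main obstacle}: it is not forced by the chain-map relations alone (they leave the off-diagonal entries of $h_n$ essentially free), so the argument must play the hypothesis that $H_n(h)\hh$ is an isometry — which rigidifies the remaining blocks through the identities governing isometries of the hyperbolic/metabolic form $\wh e_1$ — against the precise structure of $h$ as a chain map of the specific complexes $M(C_1)$, $M(C_2)$ (in particular the vanishing of $\wh\phi_i$, i.e. $\phi_i^G=0$), so as to show that $H_n(h)\hh$ carries the Lagrangian $(\wh L_1)^*\oplus 0$ onto $(\wh L_2)^*\oplus 0$.
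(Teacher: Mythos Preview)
Your approach to (i) coincides with the paper's. Your approach to (ii) is genuinely different and in fact sharper: the paper \emph{deduces} (ii) from (iii), by first invoking the Kreck--Schafer argument \cite[Proposition III.4]{Kreck:1984} to write $H_n(h)\hh = I\circ\left(\begin{smallmatrix}\id&\alpha\\0&\id\end{smallmatrix}\right)$ and then observing that diagonality forces $\alpha=0$. You instead read off directly from the block structure of $h_n$ that the $L_1\to L_2$ component of $H_n(h)$ equals $H_n(f)$ for a chain map $f\colon C_1\to C_2$ with $H_0(f)=\id$, and dually for the $L_1^*\to L_2^*$ component, so that on Tate these blocks are $\sigma(C_1,C_2)$ and $(\sigma(C_2,C_1))^*=(\sigma(C_1,C_2)^*)^{-1}$ by \cref{prop:bias-tate}. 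This gives (ii) without any appeal to (iii) and, as a bonus, identifies $\nu(D_1,D_2)=\sigma(C_1,C_2)$ explicitly --- a fact the paper only establishes later (see the proof of \cref{prop:bias-vs-qbias}).

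For (iii), both your proposal and the paper's proof land on the same missing ingredient: the vanishing of the lower-left Tate block $\widehat{\bar c}\colon(L_1^*)\hh\to\wh L_2$. The paper does not argue this out either; it simply says ``it follows from the arguments given in \cite[Proposition III.4]{Kreck:1984}'' and notes that those arguments apply because $\phi_i^G=0$ makes $\wh\Phi_i$ hyperbolic. Your own diagnosis is accurate: neither the chain-map identities nor the isometry condition on $\wh e$ alone force $\widehat{\bar c}=0$ (the isometry relations only give that the lower-left block is $(-\ep)$-symmetric and satisfies a bilinear constraint with the upper-right block). The Kreck--Schafer argument combines both pieces of structure, so your stated plan is pointing in the right direction; but as written it is an outline, not a proof. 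If you want a self-contained argument you will need to supply this step --- otherwise you are in the same position as the paper, citing \cite[Proposition III.4]{Kreck:1984} for it.
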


\begin{proof}
(i) We combine parts of the arguments in \cite[Propositions III.3 \& III.4]{Kreck:1984} to verify these statements. Since $D_i = M(C_i,\phi_i)$, there exists a chain map $f\colon C_1\to C_2$ such that
$H_0(f) = \id_\Z$, which induces an isomorphism $f_*\colon H_n(C_1)\hh \to H_n(C_2)\hh$. Similarly, there exists a chain map $g\colon C_2\to C_1$ such that
$H_0(f) = \id_\Z$, which induces an isomorphism $g_*\colon H_n(C_1)\hh \to H_n(C_2)\hh$. The chain map $h = M(f,g)$ has the required properties. In particular, $H_n(h)\hh \in \DiagIsom(\wh e_1,\wh e_2)$, so that $h\colon (D_1, \Phi_1) \to (D_2, \Phi_2)$ gives a morphism in the category $\mathfrak{M}^{\alg}_{2n}(G)$.

(ii)/(iii) Let $h\colon D_1 \to D_2$ be a chain map such that $H_0(h) = \id_\Z$, $H_{2n}(h)=\id_\Z$ and $H_n(h)\hh \in \Isom(\wh e_1,\wh e_2)$. By (i), there exists a chain map $h_0\colon D_1 \to D_2$ such that $H_0(h_0)=\id_{\Z}$, $H_{2n}(H_0)=\id_{\Z}$ and $I := H_n(h_0)\hh \in \DiagIsom(\wh e_1, \wh e_2)$.
It follows from the arguments given in \cite[Proposition III.4]{Kreck:1984} that $H_n(h)\hh = I \circ \left(\begin{smallmatrix}\id & \alpha \\ 0 & \id\end{smallmatrix}\right)$ for some $\alpha\colon  \wh L_1 \to (\wh L_1)^*$.
The argument applies since the equivariant intersection form $\Phi_i =\Met_\ep(H_n(C_i), \phi_i)$ has $\phi_i^G=0$, implying that the Tate forms $\wh \Phi_i$ are hyperbolic, i.e. $\wh \Phi_i = \wh e_i$.

To prove (ii), suppose that $H_n(h)\hh \in \DiagIsom(\wh e_1,\wh e_2)$. If $I = (\nu^*)^{-1} \oplus \nu = \left(\begin{smallmatrix}(\nu^*)^{-1} & 0 \\ 0 & \nu \end{smallmatrix}\right)$ for some $\nu \in \Iso(\wh L_1,\wh L_2)$, then $H_n(h)\hh = \left(\begin{smallmatrix}(\nu^*)^{-1} & (\nu^*)^{-1} \circ \alpha \\ 0 & \nu \end{smallmatrix}\right)$. Since this is diagonal, we have $(\nu^*)^{-1} \circ \alpha = 0$ and so $\alpha = 0$. Hence $H_n(h)\hh = I$ and so $I = I(D_1,D_2)$ is independent of the choice of $h$.
(iii) now follows immediately by returning to the general case.
\end{proof}

 The following is a consequence of \cite[Proposition III.5]{Kreck:1984}. Note that the definition of $\DiagIsom(\wh e_1,\wh e_2)$ given in \cref{ss:isom(e)} is a priori different to the one given in in \cite{Kreck:1984}.
 However, since the lattices $L_1$ and $L_2$ are $|G|$-locally isomorphic (see, for example, the proof of \cite[Proposition III.5]{Kreck:1984}), it follows from \cref{prop:diag-def-equiv} that these definitions are equivalent.

\begin{lemma} \label{lemma:diag-inducing-diag} Under the assumptions in \cref{prop:chain-map}, 
there exists a diagonal isometry $\varphi\colon e_1^G \to e_2^G$ inducing a diagonal isometry $\wh \varphi\colon \wh e_1 \to \wh e_2$. That is, $\IM(\DiagIsom_{\Psi_1,\Psi_2}(e_1^G,e_2^G)) \cap \DiagIsom(\wh e_1, \wh e_2) \ne \emptyset$.
\end{lemma}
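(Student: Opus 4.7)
The plan is to reduce the construction to producing a single isomorphism $f \in \Iso_{\psi_1,\psi_2}(L_1^G,L_2^G)$: by \cref{prop:sevenfifteen}(i), the bijection $B'$ then furnishes a diagonal isometry
\[ \varphi := (B')^{-1}(f) = (f^*\mid_{(L_2^*)^G})^{-1} \oplus f \;\in\; \DiagIsom_{\Psi_1,\Psi_2}(e_1^G,e_2^G), \]
and the commutative square in \cref{prop:sevenfifteen}(ii) guarantees that $(\Psi_1,\Psi_2)_*(\varphi) = A^{-1}(\wh f) \in \DiagIsom(\wh e_1,\wh e_2)$. Hence this $\varphi$ lies in the intersection whose non-emptiness we wish to verify.

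To construct $f$, I would first establish that $L_1^G$ and $L_2^G$ are free abelian groups of equal rank. Freeness is automatic since each $L_i$ is a $\bZ G$-lattice, so $L_i^G \subset L_i$ is torsion-free. For the rank equality I would compute $\chi(C_i)$ after tensoring with $\bQ$: since $\bQ G$ is semisimple, the functor $(-)^G$ is exact on $\bQ G$-modules, so it commutes with taking homology of $C_i \otimes \bQ$; moreover, for a free $\bZ G$-module of rank $r$ the space of $G$-invariants in its rationalisation is $r$-dimensional over $\bQ$ (being spanned by the norms of the basis elements). This yields
\[ \chi(C_i) \;=\; 1 + (-1)^n \rank_{\bZ} L_i^G, \]
so the hypothesis $\chi(C_1) = \chi(C_2)$ forces $\rank_{\bZ} L_1^G = \rank_{\bZ} L_2^G =: d$. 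At the same time, \cref{prop:bias-tate} supplies an isomorphism $\sigma(C_1,C_2)\colon \wh L_1 \xrightarrow{\cong} \wh L_2$, so $\wh L_1$ and $\wh L_2$ have identical invariant factors $n_1 \mid \cdots \mid n_d$ (some possibly equal to $1$).

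The existence of $f$ is then a Smith normal form calculation: each surjection $\psi_i\colon L_i^G \cong \bZ^d \twoheadrightarrow \wh L_i$ can be put, in appropriate bases, into the standard diagonal form with diagonal entries $n_1,\dots,n_d$, so that $\ker\psi_i \cong n_1\bZ \oplus \cdots \oplus n_d\bZ$ sits inside $L_i^G$ in the same way for $i=1,2$. The resulting change-of-basis isomorphism $f\colon L_1^G \to L_2^G$ then satisfies $f(\ker\psi_1)=\ker\psi_2$, i.e.\ $f \in \Iso_{\psi_1,\psi_2}(L_1^G,L_2^G)$, and feeding it into the reduction above completes the argument.

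I do not anticipate a serious obstacle here; the only mildly subtle step is the identity $\chi(C_i) = 1 + (-1)^n \rank_{\bZ} L_i^G$, which I would justify via the semisimplicity of $\bQ G$ rather than any deeper input. Everything else is either a direct invocation of \cref{prop:sevenfifteen} and \cref{prop:bias-tate}, or a standard application of Smith normal form.
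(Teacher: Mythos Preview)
Your proposal is correct and follows essentially the same approach as the paper: reduce via \cref{prop:sevenfifteen} to the existence of some $f \in \Iso_{\psi_1,\psi_2}(L_1^G,L_2^G)$, then construct such an $f$. The paper merely cites \cite[Proposition III.5]{Kreck:1984} for the existence of $f$, whereas you spell out the rank equality (via the Euler characteristic over $\bQ G$), the isomorphism $\wh L_1\cong\wh L_2$ (via \cref{prop:bias-tate}), and the Smith normal form step (cf.\ \cref{remark:wlog-psi-diagonal}) --- all of which are exactly the ingredients behind the cited argument.
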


We will now restrict to the case of minimal complexes.
Fix $X \in \HT_{\min}(G,n)$, $L = \pi_n(X)$ and $\mathscr{D}^{\alg}(C_*(\wt X)) = (D,e)$ where $D = C_*(\wt M(X))$ and $e = e_L \cong H_\ep(L)$. We will refer to $(D,e) \in \Malg_{2n}(G)$ as the \textit{reference minimal algebraic $2n$-double}. See \cref{def:Dalg} for the definition of $\Malg_{2n}(G)$. 
We  often write this as $(D,L,e)$ when $L$ is not clear from the context.

For $i = 1,2$, let $(D_i,L_i,e_i)$ be as in \cref{prop:chain-map} and such that $\chi(D_1) = \chi(D_2) = \chi(D)$. By \cref{lemma:diag-inducing-diag}, there exists $\ol{\tau}_{D_i} \in \DiagIsom(e^G,e_i^G)$ inducing $\tau_{D_i} \in \DiagIsom(\wh e, \wh e_i)$.
Fix these reference isometries once and for all.

We are now ready to define the quadratic bias invariant, first in the setting of algebraic $2n$-doubles.
Recall from \cref{ss:isom(e)} that $\DiagIsom(\wh e)\le \Isom(\wh e)$ denotes the set of diagonal isometries and  $\TriIsom(\wh e) \le \Isom(\wh e)$ the set of triangular isometries. For subgroups $A, B \le C$, we define $A \cdot B = \{ab : a\in A, b \in B\} \subseteq C$.
 
\begin{definition}[Quadratic bias invariant for algebraic $2n$-doubles] \label{def:bias-quad}
Fix a reference minimal doubled complex $(D,e)$. 
Define the \textit{polarised quadratic bias obstruction group} to be
\[ PB_Q(G,n):= \frac{\DiagIsom(\wh e)}{[\IM(\Isom_\Psi(e^G)) \cdot \TriIsom(\wh e)] \cap \DiagIsom(\wh e)}. \]	
When $n = 2$, we write $P_Q(G):=P_Q(G,2)$.

For $i=1,2$, let $(D_i,e_i) \in \Malg_{2n}(G)$ such that $e_i = e_{L_i}$ for some module $L_i$.
Define the \textit{quadratic bias invariant} to be:
\[ \beta_{Q}((D_1,e_1),(D_2,e_2)) := [\tau_{D_2}^{-1} \circ I(D_1,D_2) \circ \tau_{D_1}] \in PB_Q(G,n) \]
where the $\tau_{D_i}$ are as defined above and $[\,\cdot\,]\colon  \DiagIsom(\wh e) \twoheadrightarrow PB_Q(G,n)$ is the quotient map.
\end{definition}

We will now establish the following two propositions.

\begin{proposition} \label{prop:PB-well-defined}
$[\IM(\Isom_\Psi(e^G)) \cdot \TriIsom(\wh e)] \cap \DiagIsom(\wh e)$ is a normal subgroup of $\DiagIsom(\wh e)$. In particular, $PB_Q(G,n)$ is a well-defined abelian group.
\end{proposition}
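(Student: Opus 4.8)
The plan is to show that the subgroup $K := [\IM(\Isom_\Psi(e^G)) \cdot \TriIsom(\wh e)] \cap \DiagIsom(\wh e)$ is normal in $\DiagIsom(\wh e)$ by exhibiting $\DiagIsom(\wh e)/K$ explicitly as a quotient of the abelian group $(\Z/m)^\times/\{\pm 1\}$, $m = m_{(G,n)}$; since $\DiagIsom(\wh e)$ is abelian by \cref{prop:sevenfifteen}(iii), \emph{every} subgroup is automatically normal, so in fact the only real content is that $PB_Q(G,n)$ is then a well-defined abelian group (being a quotient of an abelian group). First I would record that $\DiagIsom(\wh e)$ is abelian: by \cref{prop:sevenfifteen}(iii) the map $A\colon \DiagIsom(\wh e)\xrightarrow{\cong}\Aut(\wh\L)$ is a group isomorphism, and $\Aut(\wh\L)$ need not be abelian — so I must be careful here. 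The correct route is instead: by \cref{eq:diagquotient}, $\DiagIsom(\wh e)/\DiagIsom_\Psi(e^G) \cong (\Z/m)^\times/\{\pm 1\}$ is abelian, and $\DiagIsom_\Psi(e^G)$ maps onto a subgroup of $K$ (indeed $\IM(\DiagIsom_\Psi(e^G)) \subseteq \IM(\Isom_\Psi(e^G))\cap\DiagIsom(\wh e) \subseteq K$), so $K$ contains the subgroup $\IM(\DiagIsom_\Psi(e^G))$, which by \cref{prop:sevenfifteen} and \cref{eq:diagquotient} is normal in $\DiagIsom(\wh e)$ with abelian quotient $(\Z/m)^\times/\{\pm 1\}$.

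The key steps, in order, are: (1) Observe $\IM(\DiagIsom_\Psi(e^G)) \unlhd \DiagIsom(\wh e)$ with $\DiagIsom(\wh e)/\IM(\DiagIsom_\Psi(e^G)) \cong (\Z/m)^\times/\{\pm 1\}$ — this is exactly the content of \cref{prop:sevenfifteen}(i)--(iii) together with \cref{eq:diagquotient}. (2) Note the chain of inclusions $\IM(\DiagIsom_\Psi(e^G)) \subseteq K \subseteq \DiagIsom(\wh e)$: the first holds because a diagonal $\Psi$-isometry of $e^G$ induces, via $\Psi_*$, a diagonal isometry of $\wh e$ lying both in $\IM(\Isom_\Psi(e^G))$ (since $\DiagIsom_\Psi(e^G)\subseteq \Isom_\Psi(e^G)$) and in $\DiagIsom(\wh e)$; the second is by definition of $K$ as an intersection with $\DiagIsom(\wh e)$. (3) Conclude that $K$ corresponds, under the isomorphism $\DiagIsom(\wh e)/\IM(\DiagIsom_\Psi(e^G))\cong(\Z/m)^\times/\{\pm1\}$, to a subgroup of an abelian group, hence $K \unlhd \DiagIsom(\wh e)$ and $PB_Q(G,n) = \DiagIsom(\wh e)/K$ is a well-defined abelian group, being a further quotient of $(\Z/m)^\times/\{\pm1\}$.

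I expect the main (and only nontrivial) obstacle to be step (2): verifying carefully that $\IM(\DiagIsom_\Psi(e^G))\subseteq K$, i.e. that the diagonal isometries of $\wh e$ induced by $\Psi$-isometries of $e^G$ are genuinely contained in $\IM(\Isom_\Psi(e^G))\cdot\TriIsom(\wh e)$ and are diagonal — this is immediate once one notes $\DiagIsom_\Psi(e^G)\subseteq\Isom_\Psi(e^G)$ and that $\Psi_*$ restricts to $\DiagIsom_\Psi(e^G)\to\DiagIsom(\wh e)$ by \cref{def:DiagIsom}, and that $\id\in\TriIsom(\wh e)$. The remaining bookkeeping — that a subgroup of an abelian group is normal in the ambient, possibly nonabelian, group once it contains a normal subgroup with abelian quotient — is the standard correspondence theorem, so no further calculation is needed. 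One subtlety to flag: $\DiagIsom(\wh e)$ itself is a group (\cref{def:DiagIsom}, \cref{prop:sevenfifteen}(iii)) so $K$, being the intersection of a subgroup with it, is at least a subgroup; normality is then the only thing to check, and this is exactly what steps (1)--(3) deliver.
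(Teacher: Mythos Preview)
Your normality argument (steps (1)--(3)) is exactly the paper's: once one knows that $\IM(\DiagIsom_\Psi(e^G))$ is normal in $\DiagIsom(\wh e)$ with abelian quotient $(\Z/m)^\times/\{\pm 1\}$ and that it sits inside $K$, the correspondence theorem finishes. That part is fine.

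The gap is in your final parenthetical remark, where you assert that $K$ is ``at least a subgroup'' because it is ``the intersection of a subgroup with'' $\DiagIsom(\wh e)$. This presumes that $\IM(\Isom_\Psi(e^G)) \cdot \TriIsom(\wh e)$ is a subgroup of $\Isom(\wh e)$, but a product $AB$ of subgroups is a subgroup only when $AB = BA$, and there is no obvious reason why $\IM(\Isom_\Psi(e^G))$ and $\TriIsom(\wh e)$ should commute in this sense. Your correspondence-theorem argument does not bypass this: to conclude $K$ is a subgroup via $K = f^{-1}(f(K))$ you would need $f(K)$ to be a subgroup of the abelian quotient, which is exactly what is in question.

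The paper handles this with two auxiliary lemmas. First (\cref{lemma:denominator-group}) it rewrites $K$ as $[((\DiagIsom(\wh e)\cdot\TriIsom(\wh e)) \cap \IM(\Isom_\Psi(e^G)))\cdot\TriIsom(\wh e)]\cap\DiagIsom(\wh e)$. Second (\cref{lemma:Diag-Tri-commute}) it checks directly by a $2\times 2$ block-matrix computation that $\DiagIsom(\wh e)$ normalises $\TriIsom(\wh e)$, so that $\DiagIsom(\wh e)\cdot\TriIsom(\wh e)$ is a subgroup, and more generally that any subgroup of $\DiagIsom(\wh e)\cdot\TriIsom(\wh e)$ has a well-defined product with $\TriIsom(\wh e)$. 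Chaining these intersections and products then shows $K$ is a subgroup. Only after this does the paper invoke the correspondence-theorem argument you gave.
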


\begin{proposition} \label{prop:PB-an-invariant}
$\beta_{Q}((D_1,e_1),(D_2,e_2)) \in PB_Q(G,n)$ does not depend on the choice of representatives $(D_i,e_i) \in \Malg_{2n}(G)$ and isometries $\tau_{D_i} \in \DiagIsom(\wh e, \wh e_i)$. In particular, if $(\ol{D},\ol{e})$ is a reference minimal algebraic $2n$-double, then the quadratic bias invariant defines a map
\[ \beta_Q\colon\{(D,e) \in \Malg_{2n}(G) : \chi(D)=\chi(\ol{D}) \} \to PB_Q(G,n), \quad (D,e) \mapsto \beta_Q((D,e),(\ol{D},\ol{e})). \]

Furthermore, $\beta_Q$ is an invariant of algebraic $2n$-doubles up to the equivalence relation $\simeq_{\Z}\,$. That is, if $(D_1,e_1) \simeq_{\Z} (D_2,e_2)$, then $\beta_Q((D_1,e_1)) = \beta_Q((D_2,e_2)) \in PB_Q(G,n)$.
\end{proposition}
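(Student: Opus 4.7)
The plan is to verify, in turn, (a) independence of the choice of reference diagonal isometries $\tau_{D_i}$ and (b) invariance under the equivalence relation $\simeq_{\Z}$; note that (b) subsumes invariance under the chain homotopy equivalence used to define $\Malg_{2n}(G)$, since a full isometry $\Phi \to \Phi'$ restricts to an integral isometry. Throughout, let
\[ \mathcal{N} := \big [\IM(\Isom_\Psi(\ol{e}^G)) \cdot \TriIsom(\wh{\ol{e}}) \big ] \cap \DiagIsom(\wh{\ol{e}}) \]
denote the denominator subgroup, which is normal in $\DiagIsom(\wh{\ol{e}})$ by \cref{prop:PB-well-defined}; consequently, multiplying on either side by an element of $\mathcal{N}$ preserves the class in $PB_Q(G,n)$.

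For (a), suppose $\tau_{D_i}, \tau'_{D_i} \in \DiagIsom(\wh{\ol{e}}, \wh{e_i})$ are two lifts, each induced by $\Psi_*$ from diagonal isometries $\ol{\tau}_{D_i}, \ol{\tau}'_{D_i} \in \DiagIsom_{\Psi,\Psi_i}(\ol{e}^G, e_i^G)$. The difference satisfies $\tau_{D_i}^{-1} \circ \tau'_{D_i} = \Psi_*(\ol{\tau}_{D_i}^{-1} \circ \ol{\tau}'_{D_i}) \in \IM(\DiagIsom_\Psi(\ol{e}^G)) \subseteq \mathcal{N}$, so normality implies that $\tau_{D_2}^{-1} \circ I(D_1,D_2) \circ \tau_{D_1}$ has a well-defined class in $PB_Q(G,n)$.

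For (b), suppose $(D,e) \simeq_{\Z} (D',e')$ via a chain homotopy equivalence $h \colon D \to D'$ whose induced map on middle homology restricts to an integral isometry (identifying $\Phi^G \cong e^G$ and $(\Phi')^G \cong (e')^G$ using \cref{prop:adapted}). Applying \cref{prop:chain-map}(iii) to the composite $D' \xrightarrow{h^{-1}} D \to D_2$, where the second map realises $I(D,D_2)$, produces some $T \in \TriIsom(\wh{e'})$ with
\[ I(D, D_2) \circ H_n(h^{-1})\hh = I(D', D_2) \circ T. \]
A direct substitution shows that the ratio of the two candidate values of $\beta_Q$ is the class of
\[ \eta := \tau_{D'}^{-1} \circ T \circ H_n(h)\hh \circ \tau_D. \]

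Writing $\eta = T' \circ \zeta$ with $T' := \tau_{D'}^{-1} \circ T \circ \tau_{D'}$ and $\zeta := \tau_{D'}^{-1} \circ H_n(h)\hh \circ \tau_D$, one has $T' \in \TriIsom(\wh{\ol{e}})$ because the conjugate of $\left(\begin{smallmatrix}\id & * \\ 0 & \id\end{smallmatrix}\right)$ by a diagonal block $(f^*)^{-1} \oplus f$ is strictly upper triangular, and $\zeta = \Psi_*(\ol{\tau}_{D'}^{-1} \circ H_n(h)^G \circ \ol{\tau}_D) \in \IM(\Aut_\Psi(\ol{e}^G))$ because the $\ol{\tau}$'s are $\Psi$-isometries and $H_n(h)^G$ is induced by a $\Z G$-module map and so automatically preserves the kernels of the reduction maps $\Psi$. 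Hence $\eta \in \TriIsom(\wh{\ol{e}}) \cdot \IM(\Aut_\Psi(\ol{e}^G))$; and since $\eta$ is manifestly a ratio of diagonal automorphisms of $\wh{\ol{e}}$, also $\eta \in \DiagIsom(\wh{\ol{e}})$. Combining, $\eta \in \mathcal{N}$ and the two classes agree in $PB_Q(G,n)$. The main obstacle is the careful bookkeeping of domains and codomains across the several evaluation forms $\wh{\ol{e}}, \wh{e_i}, \wh{e'}$; once that is set up correctly, every remaining ingredient reduces to functoriality of Tate cohomology and the stability of upper-triangular blocks under conjugation by diagonal isometries.
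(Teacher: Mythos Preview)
Your proof is correct and follows essentially the same strategy as the paper's: decompose the discrepancy as a product of a triangular factor (obtained by conjugating via the diagonal reference isometries) and an element in $\IM(\Isom_\Psi(\ol e^G))$ (arising from the integral isometry $H_n(h)^G$), then observe the result lies in $\DiagIsom(\wh{\ol e})$ and hence in $\mathcal N$. The only cosmetic difference is that the paper changes both slots simultaneously via the cocycle identity $I(D_1,D_2)=I(D_2,D_2')^{-1}\,I(D_1',D_2')\,I(D_1,D_1')$, whereas you change one slot at a time by applying \cref{prop:chain-map}(iii) to a composite; also your factors appear in the order $\TriIsom\cdot\IM(\Isom_\Psi)$ rather than $\IM(\Isom_\Psi)\cdot\TriIsom$, but this is harmless since $\mathcal N$ is a subgroup by \cref{prop:PB-well-defined}, so $\eta^{-1}\in\mathcal N$ gives $\eta\in\mathcal N$.
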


We will begin with the proof of \cref{prop:PB-well-defined}.
Recall that two subgroups $A, B \le C$ \textit{commute} if $A \cdot B = B \cdot A$. If so, then it follows that $A \cdot B$ is a subgroup of $C$.

\begin{lemma} \label{lemma:denominator-group}
In the notation above, we have:
\[ [\IM(\Isom_\Psi(e^G)) \cdot \TriIsom(\wh e)] \cap \DiagIsom(\wh e) = [((\DiagIsom(\wh e) \cdot \TriIsom(\wh e)) \cap \IM(\Isom_\Psi(e^G))) \cdot \TriIsom(\wh e)] \cap \DiagIsom(\wh e).\]
\end{lemma}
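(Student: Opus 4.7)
The plan is to reduce the claim to a purely formal set-theoretic identity among subsets of the group $\Isom(\wh e)$. Abbreviate
\[ A := \IM(\Isom_\Psi(e^G)), \qquad D := \DiagIsom(\wh e), \qquad T := \TriIsom(\wh e), \]
each regarded as a subset of $\Isom(\wh e)$. The claimed equality then becomes
\[ (A \cdot T) \cap D \ =\ \big(((D \cdot T) \cap A) \cdot T\big) \cap D, \]
and I would prove this by double containment. The only property of the ingredients I would use is that $T$ is a subgroup of $\Isom(\wh e)$, so $t^{-1} \in T$ whenever $t \in T$; the specific descriptions of $A$, $D$, $T$ are irrelevant at this stage.

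For the inclusion $\supseteq$, suppose $x \in D$ has the form $x = b \cdot t$ with $b \in (D \cdot T) \cap A$ and $t \in T$. Since $b \in A$, the product $x = b t$ lies in $A \cdot T$, and combined with $x \in D$ this gives $x \in (A \cdot T) \cap D$.

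For the inclusion $\subseteq$, take $x \in (A \cdot T) \cap D$ and write $x = a t$ with $a \in A$ and $t \in T$. Then $a = x \cdot t^{-1}$, and since $x \in D$ and $t^{-1} \in T$, we have $a \in D \cdot T$; together with $a \in A$ this places $a$ in $(D \cdot T) \cap A$. Writing $x = a \cdot t$ exhibits $x$ as an element of $((D \cdot T) \cap A) \cdot T$, and combined with $x \in D$ this yields the desired conclusion.

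There is no real obstacle here, as the identity is purely formal bookkeeping. The reason for isolating this reformulation — and presumably the motivation for stating it as a separate lemma — is that the rewritten denominator $((D \cdot T) \cap A) \cdot T$ singles out the subset $(D \cdot T) \cap A$ of $A$, consisting of isometries in $\IM(\Isom_\Psi(e^G))$ whose image is diagonal modulo a triangular correction. This subset can then be analysed with the structural results of \cref{ss:isom(e)} (in particular \cref{prop:sevenfifteen} and the description of $\DiagIsom_\Psi$), which is the natural first step in the proof of \cref{prop:PB-well-defined} that the denominator is normal in $\DiagIsom(\wh e)$.
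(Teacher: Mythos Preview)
Your proof is correct and essentially identical to the paper's argument. The paper also proceeds by double containment, declaring $\supseteq$ clear and proving $\subseteq$ by the same manipulation: write $\varphi = \rho_1 \rho_2$ with $\rho_1 \in A$, $\rho_2 \in T$, observe $\rho_1 = \varphi \rho_2^{-1} \in D \cdot T$, and conclude.
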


\begin{proof}
The inclusion $\supseteq$ is clear and so it suffices to prove $\subseteq$. Let $\varphi \in [\IM(\Isom_\Psi(e^G)) \cdot \TriIsom(\wh e)] \cap \DiagIsom(\wh e)$. Then $\varphi \in \DiagIsom(\wh e)$ and $\varphi = \rho_1 \circ \rho_2$ for some $\rho_1 \in \IM(\Isom_\Psi(e^G))$ and $\rho_2 \in \TriIsom(\wh e)$. We have $\rho_1 = \varphi \circ \rho_2^{-1} \in \DiagIsom(\wh e) \cdot \TriIsom(\wh e)$ which implies that $\rho_1 \in (\DiagIsom(\wh e) \cdot \TriIsom(\wh e)) \cap \IM(\Isom_\Psi(e^G))$, and so $\varphi \in ((\DiagIsom(\wh e) \cdot \TriIsom(\wh e)) \cap \IM(\Isom_\Psi(e^G))) \cdot \TriIsom(\wh e)$, which completes the proof. 
\end{proof}

In order to prove this equivalent form is a subgroup of $\DiagIsom(\wh e)$, we will use:

\begin{lemma} \label{lemma:Diag-Tri-commute}
If $\varphi \in \DiagIsom(\wh e)$, then $\varphi \cdot \TriIsom(\wh e) = \TriIsom(\wh e) \cdot \varphi$. It follows that:
\begin{clist}{(i)}
\item
$\DiagIsom(\wh e)$ normalises $\TriIsom(\wh e)$ in $\Isom(\wh e)$, and so $\DiagIsom(\wh e) \cdot \TriIsom(\wh e) \le \Isom(\wh e)$ is a subgroup. 
\item 
If $H \le \DiagIsom(\wh e) \cdot \TriIsom(\wh e)$ is a subgroup, then $H \cdot \TriIsom(\wh e) \le \Isom(\wh e)$ is a subgroup.
\end{clist}
\end{lemma}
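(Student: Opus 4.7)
The plan is to reduce everything to a direct block-matrix conjugation calculation, and then to quote two standard group-theoretic facts.

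First I would write the elements in block form with respect to the splitting $(\wh L)^* \oplus \wh L$. By \cref{def:DiagIsom}, every $\varphi\in\DiagIsom(\wh e)$ has the form $\varphi = (f^*)^{-1}\oplus f$ for some $f\in\Aut(\wh L)$, and every element $T\in\TriIsom(\wh e)$ has the form $T = \left(\begin{smallmatrix}\id & \alpha \\ 0 & \id\end{smallmatrix}\right)$ for some homomorphism $\alpha\colon\wh L\to (\wh L)^*$. A one-line block multiplication then gives
\[
\varphi\, T\, \varphi^{-1} \;=\; \begin{pmatrix} (f^*)^{-1} & 0 \\ 0 & f \end{pmatrix}\begin{pmatrix}\id & \alpha \\ 0 & \id\end{pmatrix}\begin{pmatrix} f^* & 0 \\ 0 & f^{-1}\end{pmatrix} \;=\; \begin{pmatrix}\id & (f^*)^{-1}\alpha f^{-1} \\ 0 & \id\end{pmatrix}.
\]
The right-hand side is an isometry of $\wh e$ (being a product of three isometries) and it is upper-triangular in block form, so it lies in $\TriIsom(\wh e)$. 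Running the same computation with $\varphi^{-1}$ gives the reverse inclusion, so $\varphi\,\TriIsom(\wh e)\,\varphi^{-1} = \TriIsom(\wh e)$. This is equivalent to the identity $\varphi\cdot\TriIsom(\wh e) = \TriIsom(\wh e)\cdot\varphi$ claimed in the lemma.

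For (i), I would invoke the elementary fact that if a subgroup $A\le G$ normalises a subgroup $B\le G$, then $A\cdot B = B\cdot A$ is a subgroup of $G$ (namely the subgroup generated by $A\cup B$). The paragraph above shows that every $\varphi\in\DiagIsom(\wh e)$ normalises $\TriIsom(\wh e)$, hence the subgroup $\DiagIsom(\wh e)$ normalises $\TriIsom(\wh e)$ in $\Isom(\wh e)$, and the product $\DiagIsom(\wh e)\cdot\TriIsom(\wh e)$ is a subgroup.

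For (ii), I would argue that any $H\le\DiagIsom(\wh e)\cdot\TriIsom(\wh e)$ still normalises $\TriIsom(\wh e)$. Writing an arbitrary $h\in H$ as $h = d\,t$ with $d\in\DiagIsom(\wh e)$ and $t\in\TriIsom(\wh e)$, we get
\[
h\,\TriIsom(\wh e)\,h^{-1} \;=\; d\bigl(t\,\TriIsom(\wh e)\,t^{-1}\bigr)d^{-1} \;=\; d\,\TriIsom(\wh e)\,d^{-1} \;=\; \TriIsom(\wh e),
\]
using that $\TriIsom(\wh e)$ is already normalised by itself and by the first part. Applying the same elementary group-theoretic fact as above then gives that $H\cdot\TriIsom(\wh e)$ is a subgroup of $\Isom(\wh e)$. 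The only substantive step is the matrix computation at the start; everything else is formal, so I do not anticipate any real obstacle.
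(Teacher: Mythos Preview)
Your proposal is correct and follows essentially the same approach as the paper: the paper also does the block-matrix computation (phrased as $\varphi\,\varphi_T = \varphi_T'\,\varphi$ rather than conjugation, but equivalently), deduces (i) from the normalisation, and for (ii) observes that $\DiagIsom(\wh e)\cdot\TriIsom(\wh e)$ lies in the normaliser of $\TriIsom(\wh e)$, which is exactly your element-wise argument.
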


\begin{proof}
Let $\varphi \in \DiagIsom(\wh e)$ and $\varphi_T \in \TriIsom(\wh e)$. Then there exists an isomorphism $f \colon \wh L \to \wh L$ and a homomorphism  $h \colon  \wh L \to (\wh L)^*$ such that $\varphi = \left(\begin{smallmatrix} (f^*)^{-1} & 0 \\ 0 & f \end{smallmatrix}\right)$ and  $\varphi_T = \left(\begin{smallmatrix} \id & h \\ 0 & \id \end{smallmatrix}\right)$. 
Then we have 
\[ \varphi \circ \varphi_T = \left(\begin{smallmatrix} (f^*)^{-1} & 0 \\ 0 & f \end{smallmatrix}\right)\left(\begin{smallmatrix} \id & h \\ 0 & \id \end{smallmatrix}\right) 
= \left(\begin{smallmatrix} \id & (f^*)^{-1} \circ h \circ f^{-1} \\ 0 & \id \end{smallmatrix}\right)
\left(\begin{smallmatrix} (f^*)^{-1} & 0 \\ 0 & f \end{smallmatrix}\right) = \varphi_T' \circ \varphi \]
where $\varphi_T' \in \TriIsom(\wh e)$. 
Hence $\varphi \cdot \TriIsom(\wh e) \cdot \varphi^{-1} \subseteq \TriIsom(\wh e)$, and equality follows by applying this to $\varphi^{-1}$.

Part (i) now follows immediately. To see part (ii), consider the normaliser subgroup
\[ N_{\Isom(\wh e)}(\TriIsom(\wh e)) = \{ \varphi \in \Isom(\wh e) : \varphi \cdot \TriIsom(\wh e) = \TriIsom(\wh e) \cdot \varphi\}.\]
Since $\DiagIsom(\wh e), \TriIsom(\wh e) \le N_{\Isom(\wh e)}(\TriIsom(\wh e))$, we have $\DiagIsom(\wh e) \cdot \TriIsom(\wh e) \le N_{\Isom(\wh e)}(\TriIsom(\wh e))$.
\end{proof}

\begin{proof}[Proof of \cref{prop:PB-well-defined}]
We will start by showing that $[\IM(\Isom_\Psi(e^G)) \cdot \TriIsom(\wh e)] \cap \DiagIsom(\wh e)$ is a subgroup of $\DiagIsom(\wh e)$.
By \cref{lemma:Diag-Tri-commute} (i), $\DiagIsom(\wh e) \cdot \TriIsom(\wh e)$ is a subgroup. Since $\IM(\Isom_\Psi(e^G)))$ is a subgroup, this gives that $H := (\DiagIsom(\wh e) \cdot \TriIsom(\wh e)) \cap \IM(\Isom_\Psi(e^G))$ is a subgroup. Since $H \le \DiagIsom(\wh e) \cdot \TriIsom(\wh e)$ is a subgroup, \cref{lemma:Diag-Tri-commute} (ii) now implies that $H \cdot \TriIsom(\wh e)$ is a subgroup and so $(H \cdot \TriIsom(\wh e))\cap \DiagIsom(\wh e)$ is a subgroup.
The result now follows by combining with \cref{lemma:denominator-group}.

To see that $K:=[\IM(\Isom_\Psi(e^G)) \cdot \TriIsom(\wh e)] \cap \DiagIsom(\wh e)$ is a normal subgroup of $\DiagIsom(\wh e)$, note that $\IM(\DiagIsom_\Psi(e^G)) \le K$. By combining \cref{lemma:webb} and \cref{prop:sevenfifteen} (see the discussion following \cref{prop:sevenfifteen}), we have that $\IM(\DiagIsom_\Psi(e^G)) \unlhd \DiagIsom(\wh e)$ is a normal subgroup and $\DiagIsom(\wh e)/\IM(\DiagIsom_\Psi(e^G)) \cong (\Z/m)^\times/\{\pm 1\}$ is abelian, for some $m \ge 1$. This implies there is a quotient map $f \colon \DiagIsom(\wh e) \twoheadrightarrow (\Z/m)^\times/\{\pm 1\}$ with $\ker(f) = \IM(\DiagIsom_\Psi(e^G))$. 

Let $K' = f(K)$. Since $\ker(f) = \IM(\DiagIsom_\Psi(e^G)) \le K$, we have that $f^{-1}(K') = K \cdot \ker(f) =K$. Since $(\Z/m)^\times/\{\pm 1\}$ is abelian, $K' \unlhd (\Z/m)^\times/\{\pm 1\}$ is a normal subgroup. The preimage of a normal subgroup is normal, and so $K \unlhd \DiagIsom(\wh e)$ is normal.
Thus, $PB_Q(G,n)$ is a well-defined group. It is a quotient of $(\Z/m)^\times/\{\pm 1\}$ and so is abelian.
\end{proof}

\begin{proof}[Proof of \cref{prop:PB-an-invariant}]
We begin by noting that, given $(D_1,e_1)$ and $(D_2,e_2)$, the class $[\tau_{D_2}^{-1} \circ I(D_1,D_2) \circ \tau_{D_1}] \in PB_Q(G,n)$ is independent of the choice of $\tau_{D_1}$ and $\tau_{D_2}$. This follows from the fact that, if $\tau_{D_1}'$ and $\tau_{D_2}'$ are other choices, then the two classes would differ by multiplication by $(\tau_{D_2}')^{-1} \circ \tau_{D_2}$ and $\tau_{D_1}^{-1} \circ \tau_{D_1}'$, which are both in $\Isom_{\Psi}(e^G) \cap \DiagIsom(\wh e)$.

Next suppose that, for $i =1,2$, there is a chain homotopy equivalence $h_i\colon D_i \to D_i'$ which induces an integral isometry. Thus $H_n(h_i)\hh \in \Isom(\wh e_i,  (e_i')\hh)$ is an isometry which lifts to an isometry $H_n(h_i)^G \in \Isom(e_i^G,  (e_i')^G)$. 
By \cref{prop:chain-map} (iii), we have that
\[ H_n(h_i)\hh = I(D_i,D_i') \circ \left(\begin{smallmatrix}\id & \alpha_i \\ 0 & \id\end{smallmatrix}\right)\]
for some $\alpha_i \colon  \wh L_i \to (\wh L_i')^*$ and where $I(D_i,D_i')$ is as defined in \cref{def:I(D1D2)}. This implies that
\[ A_i := \tau_{D_i'}^{-1} \circ I(D_i,D_i') \circ \tau_{D_i} = (\tau_{D_i'}^{-1} \circ H_n(h_i)\hh \circ \tau_{D_i}) \circ (\circ \tau_{D_i}^{-1} \circ \left(\begin{smallmatrix}\id & -\alpha_i \\ 0 & \id\end{smallmatrix}\right) \circ \circ \tau_{D_i}).  \]
We have $\tau_{D_i'}^{-1} \circ H_n(h_i)\hh \circ \tau_{D_i} = \Psi_*(\ol{\tau}_{D_i'}^{-1} \circ H_n(h_i)^G \circ \ol{\tau}_{D_i}) \in \IM(\Isom_\Psi(e^G))$. Since $\tau_{D_i} \in \DiagIsom(\wh e_i, \wh e)$, we have that $\tau_{D_i}^{-1} \circ \left(\begin{smallmatrix}\id & -\alpha_i \\ 0 & \id\end{smallmatrix}\right) \circ \tau_{D_i} \in \TriIsom(\wh e)$ by the same argument as \cref{lemma:Diag-Tri-commute}. In particular, we have that $A_i \in [\IM(\Isom_\Psi(e^G))) \cdot \TriIsom(\wh e)] \cap \DiagIsom(\wh e)$ for $i=1,2$.

It follows from \cref{prop:chain-map} that $I(D_1,D_2) = I(D_2,D_2')^{-1} \circ I(D_1',D_2') \circ I(D_1,D_1')$.
Hence we have
\[\tau_{D_2}^{-1} \circ I(D_1,D_2) \circ \tau_{D_1} = A_2^{-1} \circ (\tau_{D_2'}^{-1} \circ I(D_1',D_2') \circ \tau_{D_1'}) \circ A_1 \]
and so $[\tau_{D_2}^{-1} \circ I(D_1,D_2) \circ \tau_{D_1}] = [\tau_{D_2'}^{-1} \circ I(D_1',D_2') \circ \tau_{D_1'}] \in PB_Q(G,n)$, as required.
\end{proof}

Define $\wh{\mathscr{D}}$ to be the surjective abelian group homomorphism given by the composition
\[ \wh{\mathscr{D}} \colon \frac{\Aut(\wh \L)}{\Aut_{\psi}(\L^G)} \xrightarrow[\cong]{A^{-1}} \frac{\DiagIsom(\wh e)}{\DiagIsom_{\Psi}(e^G)}   \twoheadrightarrow PB_Q(G,n) \]	
where $A$ is as defined in the discussion following \cref{prop:sevenfifteen}.
In particular, for $f \in \Aut(\wh L)$, we have that $\wh{\mathscr{D}}([f]) = [(f^*)^{-1} \oplus f]$ where $(f^*)^{-1} \oplus f = \left(\begin{smallmatrix} (f^*)^{-1} & 0 \\ 0 & f \end{smallmatrix}\right) \in \DiagIsom(\wh e)$.

We conclude this section by establishing the following relationship between the bias invariant for an algebraic $n$-complex $C$ over $\Z G$ with $(-1)^n\chi(C) = \chi_{\min}(G,n)$ and the quadratic bias invariant for the corresponding algebraic $2n$-double $(D,e)$.
The following was asserted on \cite[p34]{Kreck:1984} though no argument was given.

\begin{proposition}
\label{prop:bias-vs-qbias}	
Fix $C \in \Alg_{\min}(G,n)$, let $L = \pi_n(C)$ and let $(D,e) = \dalg(C) \in \Malg_{2n}(G)$ denote the corresponding algebraic $2n$-double. Then there is a commutative diagram of sets
\[
\begin{tikzcd}
\Alg_{\min}(G,n) \ar[r,"\mathscr{D}^{\alg}"] \ar[d,"\text{$\beta(\cdot,C)$}"'] & \Malg_{2n}(G)
 \ar[d,"\text{$\beta_Q(\cdot, (D,e))$}"] \\
\displaystyle \frac{\Aut(\wh \L)}{\Aut_{\psi}(\L^G)} \ar[r,twoheadrightarrow,"\text{$\wh{\mathscr{D}}$}"] & \displaystyle PB_Q(G,n).
\end{tikzcd}
\]	
\end{proposition}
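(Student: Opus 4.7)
The plan is to verify the diagram commutes pointwise. Fix $C' \in \Alg_{\min}(G,n)$, let $L' := H_n(C')$ and write $(D',e') := \mathscr{D}^{\alg}(C')$. I must show
$$\beta_Q((D',e'),(D,e)) = \wh{\mathscr{D}}(\beta(C',C)) \in PB_Q(G,n).$$
First, I would choose chain maps $f\colon C'\to C$ and $g\colon C\to C'$ with $H_0(f)=H_0(g)=\id_\Z$, which exist by standard homological algebra. Setting $h := M(f,g)\colon D' \to D$ as in \cref{def:algdouble}, \cref{prop:chain-map} (i) implies $H_0(h) = \id_\Z$, $H_{2n}(h)=\id_\Z$, and $I(D',D) = H_n(h)\hh \in \DiagIsom(\wh{e'},\wh e)$.

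The key step is to identify $I(D',D)$ explicitly. The component of $h_n = g_n^* \oplus f_n$ on the second summand of $D'_n = (C'_n)^* \oplus C'_n$ is $f_n$, which descends to $H_n(f)\hh = \sigma(C',C) \colon \wh{L'} \to \wh L$ on Tate cohomology (by \cref{prop:bias-tate}). Via the bijection $A \colon \DiagIsom(\wh{e'},\wh e) \to \Iso(\wh{L'},\wh L)$ of \cref{prop:sevenfifteen}, a diagonal isometry of hyperbolic forms is determined by its restriction to the second summand, so
$$I(D',D) = A^{-1}(\sigma(C',C)) = (\sigma(C',C)^*)^{-1} \oplus \sigma(C',C).$$
Next, apply the bijection $B'$ of \cref{prop:sevenfifteen} (i) to the bias reference isomorphisms $\ol{\tau}_{C'} \in \Iso_{\psi,\psi_{C'}}(L^G,(L')^G)$ and $\ol{\tau}_C \in \Aut_\psi(L^G)$ to produce diagonal lifts $\ol{\tau}_{D'} \in \DiagIsom_{\Psi,\Psi_{C'}}(e^G,(e')^G)$ and $\ol{\tau}_D \in \DiagIsom_\Psi(e^G)$, whose induced Tate isometries are $\tau_{D'} = A^{-1}(\tau_{C'})$ and $\tau_D = A^{-1}(\tau_C)$ by \cref{prop:sevenfifteen} (ii). Take these as the reference isometries for the quadratic bias. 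Since $A^{-1}$ respects composition and inversion, one computes
$$\tau_D^{-1} \circ I(D',D) \circ \tau_{D'} = A^{-1}(\tau_C^{-1} \circ \sigma(C',C) \circ \tau_{C'}).$$
Passing to the class in $PB_Q(G,n)$ and invoking the definition $\wh{\mathscr{D}}([\alpha]) = [A^{-1}(\alpha)]$ yields the required equality $\beta_Q((D',e'),(D,e)) = \wh{\mathscr{D}}(\beta(C',C))$.

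The main obstacle is the explicit identification of $I(D',D)$: although \cref{prop:chain-map} (i) guarantees $I(D',D)$ is diagonal, a priori its first coordinate could depend on the auxiliary chain map $g$. It is the rigidity of diagonal isometries of hyperbolic forms, namely that the first coordinate is forced by the second via the bijection $A$, which shows that $I(D',D)$ depends only on $\sigma(C',C)$. Once this is in hand, all remaining steps are a routine bookkeeping check transporting the bias formula to the quadratic-bias formula through the functorial diagonal embedding $A^{-1}$.
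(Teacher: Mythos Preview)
Your proof is correct and follows essentially the same approach as the paper: both arguments choose the reference isometries $\tau_{D}$, $\tau_{D'}$ to be the diagonal lifts $(\tau_C^*)^{-1}\oplus\tau_C$, $(\tau_{C'}^*)^{-1}\oplus\tau_{C'}$, compute $I(D',D)$ via $h=M(f,g)$, and reduce to the identity $\tau_D^{-1}\circ I(D',D)\circ\tau_{D'}=A^{-1}(\tau_C^{-1}\circ\sigma(C',C)\circ\tau_{C'})$. The only minor stylistic difference is in identifying the first coordinate of the diagonal isometry $I(D',D)$: the paper explicitly computes $(H_n(g)^*)\hh = ((H_n(f)\hh)^*)^{-1}$ using the chain-map independence from \cite[Lemma~1]{Schafer:1996}, whereas you invoke the rigidity of diagonal isometries via the bijection $A$ of \cref{prop:sevenfifteen} to force the first coordinate from the second---a slightly cleaner route to the same conclusion.
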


\begin{proof}
Let $C' \in \Alg_{\min}(G,n)$. We start by evaluating the bottom left composition.
By the discussion in \cref{ss:bias-algebraic}, there exists an isomorphism $\ol{\tau}_C\colon H_n(C)^G \to H_n(C')^G$ inducing an isomorphism $\tau_C\colon H_n(C)\hh \to H_n(C')\hh$, and similarly for $\tau_{C'}$, and there exists a chain map $f\colon C \to C'$ such that $H_0(f) = \id_{\Z}$. By \cref{prop:bias-tate}, $H_n(f)\hh$ is an isomorphism and coincides with $\sigma(C,C')$. By definition, we have $\beta(C',C) = [F]$ where $F =  \tau_{C}^{-1} \circ H_n(f)\hh \circ \tau_{C'}$.
This implies that $\wh{\mathscr{D}}(\beta(C',C)) = [(F^*)^{-1} \oplus F]$.

To evaluate the top right composition, first let $(D',e') = \dalg(C')$ where $e' = e_{L'}$ for $L' = H_n(C')$.
Let $\tau_D = (\tau_C^*)^{-1} \oplus \tau_C$, $\tau_{D'} = (\tau_{C'}^*)^{-1} \oplus \tau_{C'}$, and let $h = M(f,g)$ where $g\colon C' \to C$ is a chain map such that $H_0(g) = \id_{\Z}$. By \cref{prop:chain-map}(i), $H_n(h)\hh$ is a diagonal isometry which coincides with $I(D,D')$. By definition, we have $\beta_Q((D',e'),(D,e)) = [\tau_D^{-1} \circ H_n(h)^\hh \circ \tau_{D'}]$. We have $H_n(h)\hh = (H_n(g)^*)\hh \oplus H_n(f)\hh$.
It is shown in \cite[Lemma 1]{Schafer:1996} that, if $f' \colon C_1 \to C_2$ is a chain map with $H_0(f')=\id_{\Z}$, then $H_n(f')\hh$ depends only on $C_1$ and $C_2$ and not on the map $f'$. It follows that, since $g \circ f \colon C \to C$ has $H_0(g \circ f) = \id_{\Z}$, we have that $H_n(g \circ f)\hh = H_n(\id_C)\hh = \id$ and so $H_n(g)\hh = (H_n(f)\hh)^{-1}$, and similarly we can obtain $(H_n(g)^*)\hh = ((H_n(f)^*)\hh)^{-1}$.
Hence $H_n(h)\hh = ((H_n(f)^*)\hh)^{-1} \oplus H_n(f)\hh$ and so $\beta_Q((D',e'),(D,e)) =[(F^*)^{-1} \oplus F]$, as required.
\end{proof}

\subsection{The quadratic bias invariant for manifolds}\label{ss:qbias_manifolds}

Fix an integer $n \ge 2$ and a finite group $G$.
The proof of \cref{thmx:main-general} follows from \cref{thm:main-high} and  \cref{prop:beta-to-betaQ} in this section.
Recall from \cref{s:doubling-construction} that $\sM_{2n}(G)$ denotes the set of homotopy types of doubles $M(X)$ for $X$ a minimal finite $(G,n)$-complex.  
    
We will now establish the analogue of \cref{prop:bias-Aut(G)-action} for the quadratic bias invariant. Recall the definitions of $\rho$ and $\varphi_{(G,n)}$ from \cref{lemma:webb} and \cref{prop:bias-Aut(G)-action} respectively. 

\begin{proposition} \label{prop:quadratic-bias-Aut(G)-action}
Let	$n \ge 2$ and let $G$ be a finite group. If $X$ is a minimal finite $(G,n)$-complex and $(D,e) = \dalg(C_*(\wt X))$, then the map
\[ \Psi_{G,n}\colon  \Aut(G) \to PB_Q(G,n), \quad \theta \mapsto \beta_Q((D,e),(D_\theta,e_\theta))  \]
is a group homomorphism and is independent of the choice of $X$. Furthermore, we have that $\Psi_{G,n} = \wh{\mathscr{D}} \circ (\rho^{-1})_* \circ  \varphi_{(G,n)}$.
\end{proposition}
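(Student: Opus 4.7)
The plan is to derive the formula $\Psi_{G,n} = \wh{\mathscr{D}} \circ (\rho^{-1})_* \circ \varphi_{(G,n)}$ directly from the commutative diagram of \cref{prop:bias-vs-qbias}, and then deduce both the homomorphism property and independence of $X$ as formal consequences.

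Set $C = C_*(\wt X)$, so that $(D,e) = \dalg(C)$. The algebraic doubling construction intertwines the $\Aut(G)$-action, that is $\dalg(C_\theta) = (D,e)_\theta = (D_\theta, e_\theta)$, as observed in the discussion preceding \cref{prop:Dalg-commutes}. Applying the commutative diagram of \cref{prop:bias-vs-qbias} with reference $C$ and input $C' = C_\theta$ therefore yields
\[ \beta_Q((D_\theta, e_\theta), (D, e)) = \wh{\mathscr{D}}\bigl(\beta(C_\theta, C)\bigr). \]

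Next I establish a composition law for $\beta_Q$, analogous to \cref{lemma:bias-composition}: namely
\[ \beta_Q((D_1,e_1), (D_3, e_3)) = \beta_Q((D_1,e_1), (D_2, e_2)) \cdot \beta_Q((D_2, e_2), (D_3, e_3)) \]
in the abelian group $PB_Q(G,n)$. Indeed, by \cref{prop:chain-map}(ii), if $h_{12}\colon D_1 \to D_2$ and $h_{23}\colon D_2 \to D_3$ realise $I(D_1,D_2)$ and $I(D_2,D_3)$, then $h_{23} \circ h_{12}$ realises $I(D_1, D_3)$ and induces a diagonal isometry on Tate cohomology, so by the uniqueness statement in \cref{prop:chain-map}(ii) we have $I(D_1, D_3) = I(D_2, D_3) \circ I(D_1, D_2)$. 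Inserting the reference maps $\tau_{D_i}$ and passing to classes gives the formula. As an immediate consequence, $\beta_Q((D,e),(D_\theta,e_\theta)) = \beta_Q((D_\theta,e_\theta),(D,e))^{-1}$, and the analogous identity $\beta(C,C_\theta) = \beta(C_\theta,C)^{-1}$ holds for $\beta$ by \cref{lemma:bias-composition}. Combining these with the fact that $\wh{\mathscr{D}}$ is a group homomorphism,
\[ \Psi_{G,n}(\theta) = \beta_Q((D,e),(D_\theta,e_\theta)) = \wh{\mathscr{D}}\bigl(\beta(C_\theta,C)\bigr)^{-1} = \wh{\mathscr{D}}\bigl(\beta(C,C_\theta)\bigr). \]
By \cref{prop:bias-Aut(G)-action}, $\beta(C,C_\theta) = \varphi_{(G,n)}(\theta) \in PB(G,n) \cong \Aut(\wh L)/\Aut_\psi(L^G)$, and the isomorphism $\rho_*$ of \cref{lemma:webb} (compare \cref{prop:PB(Gnl)}) identifies $PB(G,n)$ with $(\Z/m)^\times/\{\pm 1\}$. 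Viewing $\varphi_{(G,n)}$ as taking values in the latter, this becomes the asserted formula $\Psi_{G,n} = \wh{\mathscr{D}} \circ (\rho^{-1})_* \circ \varphi_{(G,n)}$.

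Finally, the formula expresses $\Psi_{G,n}$ as a composition of the group homomorphisms $\varphi_{(G,n)}$ (\cref{prop:bias-Aut(G)-action}), $(\rho^{-1})_*$ (an isomorphism by \cref{lemma:webb}) and $\wh{\mathscr{D}}$ (by construction); hence $\Psi_{G,n}$ is a group homomorphism. Since $\varphi_{(G,n)}$ is independent of the choice of $X$, so is $\Psi_{G,n}$. The main technical obstacle is verifying the composition law for $\beta_Q$ in the second paragraph; once this is in hand, the rest is a formal manipulation using \cref{prop:bias-vs-qbias} and the established properties of $\beta$.
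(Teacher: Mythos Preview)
Your proof is correct and follows essentially the same approach as the paper: invoke \cref{prop:bias-vs-qbias} together with $\dalg(C_\theta)=(D_\theta,e_\theta)$ to obtain the formula $\Psi_{G,n}=\wh{\mathscr{D}}\circ(\rho^{-1})_*\circ\varphi_{(G,n)}$, and then read off the homomorphism property and independence of $X$ from the corresponding facts for $\varphi_{(G,n)}$. You are more explicit than the paper in handling the argument order---the diagram of \cref{prop:bias-vs-qbias} yields $\beta_Q((D_\theta,e_\theta),(D,e))=\wh{\mathscr{D}}(\beta(C_\theta,C))$ rather than the expressions defining $\Psi_{G,n}$ and $\varphi_{(G,n)}$---and you resolve this by proving a composition law for $\beta_Q$; the paper treats this inversion as implicit.
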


\begin{proof}
The equality $\Psi_{G,n} = \wh{\mathscr{D}} \circ (\rho^{-1})_* \circ  \varphi_{(G,n)}$ follows directly from \cref{prop:bias-vs-qbias} and the fact that, if $\theta \in \Aut(G)$ and $C \in \Alg(G,n)$ has $(D,e) = \mathscr{D}^{\alg}(C)$, then $\mathscr{D}^{\alg}(C_\theta) = (D_\theta,e_\theta)$. Since, by \cref{prop:bias-Aut(G)-action}, $\varphi_{(G,n)}$ is a group homomorphism and is independent of the choice of $X$, the same therefore holds for $\Psi_{G,n}$.
\end{proof}

\begin{definition}[Quadratic bias invariant for doubled $(G,n)$-complexes]
\label{def:quadbias-high}
Let $n \ge 2$ and let $G$ be a finite group. Define $D_Q(G,n)$ to be the image of the map $\Psi_{G,n}$ given in \cref{prop:quadratic-bias-Aut(G)-action}.
Fix a reference minimal $(G,n)$-complex $\ol{X}$ and let $(D,e) = \mathscr{D}^{\alg}(C_*(\ol{X}))$. Define the \textit{quadratic bias obstruction group} for doubled minimal $(G,n)$-complexes to be 
\[ B_Q(G,n)\colon = \frac{PB_Q(G,n)}{D_Q(G,n)} = \frac{\DiagIsom(\wh e)}{([\IM(\Isom_\Psi(e^G))) \cdot \TriIsom(\wh e)] \cap \DiagIsom(\wh e)) \cdot D_Q(G,n)}. \]
When $n = 2$, we write $B_Q(G):=B_Q(G,2)$.

Let $X_1$, $X_2$ be minimal finite $(G,n)$-complexes, let $M(X_1)$, $M(X_2)$ be the doubles, and let $(D_1,e_1) = \dalg(C_*(\wt X_1))$, $(D_2,e_2) = \dalg(C_*(\wt X_2))$.
Define the \textit{quadratic bias invariant} to be
\[ \beta_Q(M(X_1),M(X_2))\colon = [\beta_Q((D_1,e_1),(D_2,e_2))] \in B_Q(G,n) \]
where $[\,\cdot\,]\colon  PB_Q(G,n) \twoheadrightarrow B_Q(G,n)$ is the quotient map and $\beta_Q((D_1,e_1),(D_2,e_2))$ denotes the quadratic bias invariant defined in the case of algebraic $2n$-doubles in \cref{def:bias-quad}.
\end{definition}

The quotient $PB_Q(G,n)/D_Q(G,n)$ is well defined since $PB_Q(G,n)$ is an abelian group.

\begin{remark} \label{remark:Aut(G)-is-easy}
(i)
It follows from the definition, as well as \cref{prop:quadratic-bias-Aut(G)-action}, that $D_Q(G,n)$ is the image of $D(G,n)$ under $\wh{\mathscr{D}} \circ (\rho^{-1})_*$. In particular, there is a commutative diagram
\[
\begin{tikzcd}
(\Z/m)^\times/\{\pm 1\} \ar[r,"(\rho^{-1})_*","\cong"'] \ar[d,twoheadrightarrow] & \displaystyle \frac{\Aut(\wh \L)}{\Aut_{\psi}(\L^G)} \ar[r,twoheadrightarrow,"\wh{\mathscr{D}}"] & PB_Q(G,n) \ar[d,twoheadrightarrow] \\
	\displaystyle B(G,n) \ar[rr,twoheadrightarrow,"q"] & & B_Q(G,n).
\end{tikzcd}
\]
where $m=m_{(G,n)}$, $B(G,n) = (\Z/m)^\times/\pm D(G,n)$ and $q$ is the natural quotient map.

(ii) In the Introduction, we considered (the $n=2$ case of) the group 
\[ N(G,n) := \ker(q \colon B(G,n) \twoheadrightarrow B_Q(G,n)).\]
This is the image of 
$[\IM(\Isom_\Psi(e^G))) \cdot \TriIsom(\wh e)] \cap \DiagIsom(\wh e) \le \DiagIsom(\wh e)$ under the surjection
 $$\DiagIsom(\wh e) \twoheadrightarrow \frac{\DiagIsom(\wh e)}{\DiagIsom_{\Psi}(e^G)} \cong \frac{\Aut(\wh \L)}{\Aut_{\psi}(\L^G)} \cong  (\Z/m)^\times/\{\pm 1\} $$ provided by \eqref{eq:diagquotient}.
\end{remark}

Note that the quadratic bias invariant $\beta_Q(M(X_1),M(X_2))$ was defined using the algebraic $2n$-doubles $\dalg(C_*(\wt X_i)) = (C_*(\wt{M(X_i)}), H_\ep( \pi_2(X_i)))$. The latter form does not necessarily coincide with the equivariant intersection form $S_{M(X_i)}$ and so is not clearly a function of the manifold $M(X_i)$ itself.
In \cref{prop:Dalg-commutes}, we showed that there is an equivalence
\[ d(\mathscr{C})(M(X)) = (C_*(\wt{M(X_i)}),S_{M(X_i)}) \simeq_{\Z} (C_*(\wt{M(X_i)}),H_\ep( \pi_2(X_i)). \]
Since the quadratic bias invariant $\beta_Q$ is a $\simeq_{\Z}$ invariant for algebraic $2n$-doubles (\cref{def:Z-isometry} and \cref{prop:PB-an-invariant}), it follows that $\beta_Q(M(X_1),M(X_2))$ depends only on $M(X_1)$ and $M(X_2)$ up to homotopy equivalence (or, more generally, up to $\simeq_{\Z}\,$).

In particular we have now established the following result which, alongside \cref{prop:beta-to-betaQ} below, implies \cref{thmx:main-general}.

\begin{theorem}\label{thm:main-high}
$\beta_Q(M(X_1),M(X_2)) \in B_Q(G,n)$ depends only on the manifolds $M(X_1)$ and $M(X_2)$ up to homotopy equivalence. In particular, if $\ol{X}$ is a reference minimal $(G,n)$-complex, then the quadratic bias invariant defines a map
\[ \beta_Q\colon \sM_{2n}(G) \to B_Q(G,n), \quad X \mapsto \beta_Q(X,\ol{X}). \]
Thus, $\beta_Q$ is an invariant of doubled minimal $(G,n)$-complexes up to homotopy equivalence.
\end{theorem}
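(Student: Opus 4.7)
The plan is to deduce \cref{thm:main-high} from three ingredients already in place: the $\simeq_{\Z}$-invariance of the algebraic quadratic bias from \cref{prop:PB-an-invariant}, the intrinsic identification of manifold data with algebraic doubles provided by \cref{prop:Dalg-commutes}, and the absorption of the $\Aut(G)$-ambiguity in the quotient $PB_Q(G,n) \twoheadrightarrow B_Q(G,n)$ built from $\Psi_{G,n}$ in \cref{prop:quadratic-bias-Aut(G)-action}.

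First, by \cref{prop:Dalg-commutes}, for each $i$ the algebraic double $(D_i,e_i) = \dalg(C_*(\wt X_i))$ is $\simeq_{\Z}$-equivalent to the manifold datum $d(\mathscr{C})(M(X_i)) = (C_*(\wt{M(X_i)}), S_{M(X_i)})$. Combined with \cref{prop:PB-an-invariant} this yields
\[ \beta_Q((D_1,e_1),(D_2,e_2)) = \beta_Q\bigl(d(\mathscr{C})(M(X_1)),\, d(\mathscr{C})(M(X_2))\bigr) \in PB_Q(G,n), \]
so that $\beta_Q(M(X_1),M(X_2)) \in B_Q(G,n)$ depends only on the $G$-polarised homotopy types of the two manifolds.

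Next, I will account for changes of polarisation. Let $h\colon M(X_1) \to M(X_1')$ be a homotopy equivalence, inducing $\theta := \pi_1(h) \in \Aut(G)$ with respect to the fixed polarisations. Naturality of the equivariant intersection form under orientation-preserving homotopy equivalences of oriented $2n$-dimensional Poincar\'{e} complexes promotes the chain homotopy equivalence from \cref{prop:he-vs-che}(ii) to an isometry in the sense of \cref{def:Z-isometry}, so that
\[ d(\mathscr{C})(M(X_1)) \simeq_{\Z} d(\mathscr{C})(M(X_1'))_\theta, \qquad \text{equivalently} \qquad (D_1,e_1) \simeq_{\Z} (D_1',e_1')_\theta. \]
The composition rule $I(D,D'') = I(D',D'') \circ I(D,D')$, immediate from \cref{prop:chain-map}(ii) and the functoriality of $H_n(\,\cdot\,)\hh$, then produces the identity
\[ \beta_Q\bigl((D_1',e_1')_\theta,(D_2,e_2)\bigr) = \beta_Q\bigl((D_1',e_1')_\theta,(D_1',e_1')\bigr) \cdot \beta_Q\bigl((D_1',e_1'),(D_2,e_2)\bigr) \]
in $PB_Q(G,n)$. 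The first factor equals $\Psi_{G,n}(\theta)^{-1}$ by the reference-independence of $\Psi_{G,n}$ in \cref{prop:quadratic-bias-Aut(G)-action}, hence lies in $D_Q(G,n)$ and becomes trivial modulo the quotient. Applying the same argument to $M(X_2)$ shows that $\beta_Q(M(X_1),M(X_2)) \in B_Q(G,n)$ depends only on the unpolarised homotopy types of $M(X_1),M(X_2)$, and fixing a reference $\ol{X}$ gives the claimed map.

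The main obstacle is the second step: promoting a manifold homotopy equivalence to a chain-level integral isometry in the sense of \cref{def:Z-isometry}, rather than only the chain homotopy equivalence from \cref{prop:he-vs-che}(ii). The required naturality of the equivariant intersection form on an oriented Poincar\'{e} complex is classical, but care is needed in translating the change of polarisation via $\theta$ into the $\Aut(G)$-twist on objects of $\Malg_{2n}(G)$ and in verifying that the resulting map preserves the form on the $G$-fixed subgroup, not merely after reduction mod $|G|$, so that the hypotheses of \cref{prop:PB-an-invariant} genuinely apply.
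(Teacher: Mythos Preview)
Your proposal is correct and follows essentially the same route as the paper: first use \cref{prop:Dalg-commutes} to identify $(D_i,e_i)$ with the intrinsic manifold datum $d(\mathscr{C})(M(X_i))$ up to $\simeq_{\Z}$, then invoke the $\simeq_{\Z}$-invariance of \cref{prop:PB-an-invariant}, and finally absorb the polarisation ambiguity via the quotient by $D_Q(G,n)=\IM(\Psi_{G,n})$. The paper's justification (the paragraph immediately preceding the theorem) is terser and leaves the $\Aut(G)$ step and the naturality of the equivariant intersection form implicit; your version spells these out explicitly and your flagged ``main obstacle'' is exactly the point the paper passes over in silence.
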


In fact, \cref{prop:PB-an-invariant} actually proves the following stronger statement. 
Note that $\simeq_{\Z}\,$ can be viewed as an equivalence relation on manifolds via the map $d(\mathscr{C})$ defined in \cref{s:doubling-construction}.

\begin{proposition}
Let $X_1, X_2 \in \HT_{\min}(G,n)$. If $M(X_1) \simeq_{\Z} M(X_2)$, then $\beta_Q(M(X_1)) = \beta_Q(M(X_2))$. In particular, the quadratic bias invariant defines a map
\[ \beta_Q\colon\sM_{2n}(G)/\simeq_{\Z} \,\,\, \to \, B_Q(G,n). \]
\end{proposition}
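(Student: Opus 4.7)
The proposition asserts that the quadratic bias invariant for manifolds descends to the quotient by the equivalence relation $\simeq_\bZ$, so the plan is to reduce the claim for manifolds to the already-established invariance statement at the level of algebraic $2n$-doubles (\cref{prop:PB-an-invariant}), using \cref{prop:Dalg-commutes} as the bridge. The key point is that $\beta_Q(M(X))$ is defined via $\dalg(C_*(\wt X)) = (C_*(\wt{M(X)}), H_\ep(\pi_n(X)))$, whereas the relation $\simeq_\bZ$ on manifolds is defined via $d(\mathscr{C})(M) = (C_*(\wt M), S_M)$. These two algebraic $2n$-doubles are \emph{not} literally the same, so the argument must first bring them into a common framework.

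My plan is as follows. First, I will recall from \cref{prop:Dalg-commutes} that for each minimal finite $(G,n)$-complex $X$ there is an equivalence
\[ d(\mathscr{C})(M(X)) \; = \; (C_*(\wt{M(X)}), S_{M(X)}) \; \simeq_\bZ \; (C_*(\wt{M(X)}), H_\ep(\pi_n(X))) \; = \; \dalg(C_*(\wt X)), \]
obtained via the identity chain map together with the observation that $S_{M(X)} \cong \Met_\ep(\pi_n(X),\phi)$ for some $\phi$ with $\phi^G=0$, so that $S_{M(X)}^G \cong e^G_{\pi_n(X)}$ by \cref{prop:adapted}. Second, I will note that $\simeq_\bZ$ is visibly an equivalence relation on the objects of $\mathfrak{M}^{\alg}_{2n}(G)$ (reflexivity, symmetry, and transitivity all come from composing chain homotopy equivalences whose restrictions to $H_n$ are integral isometries).

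Third, assume $M(X_1) \simeq_\bZ M(X_2)$, which by definition means $d(\mathscr{C})(M(X_1)) \simeq_\bZ d(\mathscr{C})(M(X_2))$. Combining this with the equivalences in the previous paragraph (for $i=1,2$) and transitivity of $\simeq_\bZ$ yields
\[ \dalg(C_*(\wt X_1)) \; \simeq_\bZ \; \dalg(C_*(\wt X_2)). \]
By \cref{prop:PB-an-invariant}, the polarised quadratic bias $\beta_Q$ on $\Malg_{2n}(G)$ is an invariant of $\simeq_\bZ$-equivalence classes, so the two algebraic $2n$-doubles have the same image in $PB_Q(G,n)$. Passing to the quotient $B_Q(G,n)$ via the surjection $PB_Q(G,n) \twoheadrightarrow B_Q(G,n)$ from \cref{def:quadbias-high}, I conclude $\beta_Q(M(X_1)) = \beta_Q(M(X_2))$ in $B_Q(G,n)$, giving the desired factorisation through $\sM_{2n}(G)/\simeq_\bZ$.

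There is no serious obstacle: all of the work sits in \cref{prop:Dalg-commutes} and \cref{prop:PB-an-invariant}, and the argument is purely formal once one is careful to distinguish between $S_{M(X)}$ and the hyperbolic form $H_\ep(\pi_n(X))$ appearing in the algebraic double. The only point requiring brief verification is that the chain-level equivalence producing $d(\mathscr{C})(M(X)) \simeq_\bZ \dalg(C_*(\wt X))$ is indeed \emph{integral}, not just Tate; this is exactly the content of \cref{prop:adapted} combined with $\phi^G=0$ from \cref{prop:intdouble}, both of which are already in place.
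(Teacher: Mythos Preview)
Your proposal is correct and follows the same approach as the paper: the paper simply states that the proposition is a direct consequence of \cref{prop:PB-an-invariant} (together with the bridge provided by \cref{prop:Dalg-commutes}), and you have spelled out precisely that reasoning in detail.
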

The following is a consequence of \cref{prop:bias-vs-qbias}. 
Here $q\colon B(G,n) \to B_Q(G,n)$ is the map defined in \cref{remark:Aut(G)-is-easy} (i).

\begin{proposition}\label{prop:beta-to-betaQ}
There is a commutative diagram of sets
\[
\begin{tikzcd}
\HT_{\min}(G,n)
\ar[r,twoheadrightarrow,"\mathscr{D}"] \ar[d,"\text{$\beta$}"'] & \sM_{2n}(G)
 \ar[d,"\text{$\beta_Q$}"] \\
\displaystyle B(G,n) \ar[r,twoheadrightarrow,"q"] & \displaystyle B_Q(G,n)
\end{tikzcd}
\]
In particular, if $\beta\colon  \HT_{\min}(G,n) \to B(G,n)$ is surjective, then $\beta_Q\colon  \sM_{2n}(G) \to B_Q(G,n)$ is surjective. 
\end{proposition}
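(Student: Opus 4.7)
The plan is to derive both statements as formal consequences of \cref{prop:bias-vs-qbias} and \cref{remark:Aut(G)-is-easy}(i), with essentially no new computation required.

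First I would fix compatible reference data. Choose a reference minimal $(G,n)$-complex $\ol{X}$ and set $\ol{C} = C_*(\wt{\ol{X}})$ and $(\ol{D},\ol{e}) = \dalg(\ol{C})$. With this choice, \cref{prop:Dalg-commutes} says that $M(\ol{X})$ represents $(\ol{D},\ol{e})$ up to $\simeq_\Z$, so that for any $X \in \HT_{\min}(G,n)$ with $(D,e) = \dalg(C_*(\wt X))$ we have, by \cref{def:bias-original} and \cref{def:quadbias-high},
\[ \beta(X) = \beta(C_*(\wt X), \ol{C}) \quad \text{and} \quad \beta_Q(M(X)) = \beta_Q((D,e),(\ol{D},\ol{e})). \]

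Next I would apply \cref{prop:bias-vs-qbias} with $C = \ol{C}$ to the algebraic $n$-complex $C_*(\wt X) \in \Alg_{\min}(G,n)$. Its commutative square, read off at the element $C_*(\wt X)$, gives the identity
\[ \beta_Q((D,e),(\ol{D},\ol{e})) = \wh{\mathscr{D}}\bigl(\beta(C_*(\wt X), \ol{C})\bigr) \in PB_Q(G,n), \]
where $\wh{\mathscr{D}} \colon \Aut(\wh \L)/\Aut_\psi(\L^G) \twoheadrightarrow PB_Q(G,n)$ is the surjection introduced just before \cref{prop:bias-vs-qbias}. Passing to the quotients $B(G,n)$ and $B_Q(G,n)$, the commutative diagram of \cref{remark:Aut(G)-is-easy}(i) identifies the induced map with $q$; explicitly, $\wh{\mathscr{D}}$ sends $D(G,n)$ onto $D_Q(G,n)$ (this is the content of \cref{prop:quadratic-bias-Aut(G)-action}, which expresses $\Psi_{G,n}$ as $\wh{\mathscr{D}} \circ (\rho^{-1})_* \circ \varphi_{(G,n)}$). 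Combining these two facts yields the equality $\beta_Q(M(X)) = q(\beta(X))$ in $B_Q(G,n)$, which is the asserted commutativity.

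For the surjectivity statement, the map $\mathscr{D}\colon \HT_{\min}(G,n) \twoheadrightarrow \sM_{2n}(G)$ is surjective by the very definition $\sM_{2n}(G) = \mathscr{D}(\HT_{\min}(G,n))$, and $q$ is surjective as a quotient map. If $\beta$ is surjective, then so is the composition $q \circ \beta = \beta_Q \circ \mathscr{D}$; since $\mathscr{D}$ is surjective, this forces $\beta_Q$ to be surjective.

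There is no real obstacle here: the argument is entirely formal once the reference points on both sides are chosen compatibly via $\dalg$. The only item requiring a moment of care is the identification $\wh{\mathscr{D}}(D(G,n)) = D_Q(G,n)$, but this is already recorded in \cref{prop:quadratic-bias-Aut(G)-action} and \cref{remark:Aut(G)-is-easy}(i), so the proof reduces to assembling these statements.
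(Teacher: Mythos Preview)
Your proposal is correct and follows essentially the same approach as the paper, which simply states that the proposition is a consequence of \cref{prop:bias-vs-qbias}. You have unpacked the passage from the polarised identity of \cref{prop:bias-vs-qbias} to the unpolarised statement via \cref{remark:Aut(G)-is-easy}(i) and \cref{prop:quadratic-bias-Aut(G)-action}, which is exactly the routine verification the paper leaves implicit.
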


\begin{remark}
If $(G,n)$ does not satisfy the strong minimality hypothesis then, by \cref{remark:bias-minimality}, the bias invariant is zero and hence the quadratic bias invariant is zero.
\end{remark}

\subsection{Relationship between the quadratic 2-type and the quadratic bias} \label{ss:Q2type}

The quadratic $2$-type of a closed oriented (smooth or topological) $4$-manifold $M$ is the quadruple 
\[ Q(M) = [\pi_1(M), \pi_2(M), k_M, S_M], \]
where $k_M \in H^3(\pi_1(M);\pi_2(M))$ denotes the $k$-invariant and $S_M\colon  \pi_2(M) \times \pi_2(M) \to \Z[\pi_1(M)]$ denotes the equivariant intersection form. An isometry of two such quadruples is an isomorphism of pairs $\pi_1$, $\pi_2$ respecting the $k$-invariant and inducing an isometry on $S$ (similar definitions apply for $X$ a closed oriented Poincar\'e $4$-complex).

The data $[\pi_1(M), \pi_2(M), k_M]$ determine the \emph{algebraic $2$-type} $B = B(M)$, which is the total space of a $2$-stage Postnikov fibration $K(\pi_2(M), 2) \to B \to K(\pi_1(M), 1)$. 
A $B$-polarised oriented finite Poincar\'e $4$-complex is a $3$-equivalence $f\colon X \to B$. Let  $\mathscr{S}^{PD}_4(B)$ denote the set of  $B$-polarised homotopy types over $B$ (see \cite[\S 1]{Hambleton:1988} for more details).

We will now prove the following result from the Introduction (see \cref{thm:main-Q2type}).

\begin{theorem} \label{thm:main-Q2type-body}
The quadratic $2$-type determines the quadratic bias invariant.
More specifically, let $G$ be a finite group and let $M_1, M_2 \in \sM_4(G)$. If $Q(M_1) \cong Q(M_2)$, then $\beta_Q(M_1) = \beta_Q(M_2)$.
\end{theorem}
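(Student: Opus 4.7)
The plan is to show that the chain-level data of $\beta_Q$ is already controlled by the quadratic $2$-type. For $M_i = M(X_i)$, set $(D_i, \Phi_i) := (C_*(\wt{M_i}), S_{M_i})$. By \cref{prop:Dalg-commutes} we have $(D_i, \Phi_i) \simeq_\Z (D_i, e_i)$ where $e_i = H_\ep(\pi_2(X_i))$, and by \cref{prop:PB-an-invariant} the invariant $\beta_Q$ is $\simeq_\Z$-invariant on algebraic $4$-doubles. It therefore suffices to exhibit an integral isometry $(D_1, \Phi_1) \simeq_\Z (D_2, \Phi_2)$ after possibly applying an element of $\Aut(G)$. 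An isometry $Q(M_1) \cong Q(M_2)$ includes a group isomorphism $\pi_1(M_1) \to \pi_1(M_2)$ corresponding via the polarisations to some $\theta \in \Aut(G)$, together with a $\theta$-equivariant isomorphism $\psi\colon \pi_2(M_1) \to \pi_2(M_2)$ that respects the $k$-invariants and is an integral isometry $S_{M_1} \cong S_{M_2}$. Since $\beta_Q$ descends to $B_Q(G) = PB_Q(G)/D_Q(G)$ and $D_Q(G) = \IM(\Psi_{G,2})$ absorbs the $\Aut(G)$-action on doubled complexes, I may twist the polarisation of $M_2$ by $\theta$ to reduce to $\theta = \id_G$; then $\psi$ is a genuine $\Z G$-isomorphism.

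The crux is to lift $\psi$ to a chain homotopy equivalence $h\colon D_1 \to D_2$ with $H_0(h) = \id_\Z$, $H_2(h) = \psi$, and $H_4(h) = \id_\Z$. The $k$-invariant compatibility ensures that $\psi$ is realised on the algebraic $2$-type, so Postnikov obstruction theory (compare the arguments underlying \cite{Hambleton:1988,Ba88}) gives a chain lift through degree $2$; the extension to degrees $3$ and $4$ is controlled by the Poincar\'e duality structure of the doubled complex $D_i = M(C_*(\wt X_i))$ together with the fact that $\psi$ is an integral isometry of intersection forms. Once $h$ is defined and induces isomorphisms on every homology group it is automatically a chain homotopy equivalence, as each $D_i$ is a bounded chain complex of free $\Z G$-modules. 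Then $H_2(h) = \psi$ is by construction an integral isometry $\Phi_1 \to \Phi_2$, so $(D_1, \Phi_1) \simeq_\Z (D_2, \Phi_2)$; combining with the $\simeq_\Z$-equivalences from \cref{prop:Dalg-commutes} yields $(D_1, e_1) \simeq_\Z (D_2, e_2)$, hence $\beta_Q(M_1) = \beta_Q(M_2)$ in $B_Q(G)$.

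The main obstacle is this chain-level lift: rigorously handling the higher obstructions beyond the $2$-type and verifying that the additional $\Gamma$-valued invariant (which can obstruct a full homotopy equivalence $M_1 \simeq M_2$) does not obstruct the construction of $h$ at the chain level. A back-up route avoiding the lift question is to compute directly in $PB_Q(G)$: by \cref{prop:chain-map}(iii), any chain map $h$ with $H_0(h) = H_4(h) = \id_\Z$ satisfies $H_2(h)\hh = I(D_1, D_2) \circ \left(\begin{smallmatrix}\id & \alpha \\ 0 & \id\end{smallmatrix}\right)$, so if some such $h$ realises $\psi$ then $I(D_1, D_2) = \wh\psi \circ \left(\begin{smallmatrix}\id & -\alpha \\ 0 & \id\end{smallmatrix}\right)$. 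Integrality of $\psi^G$ together with \cref{prop:adapted} and the fact that the reference isometries $\ol\tau_{D_i}$ lift to integral $(\Psi, \Psi_i)$-isometries places $\tau_{D_2}^{-1} \circ \wh\psi \circ \tau_{D_1} \in \IM(\Isom_\Psi(e^G))$, and \cref{lemma:Diag-Tri-commute} ensures that conjugating the triangular factor by the diagonal $\tau_{D_1}$ keeps it triangular. Thus $\tau_{D_2}^{-1} \circ I(D_1, D_2) \circ \tau_{D_1}$ lands in $[\IM(\Isom_\Psi(e^G)) \cdot \TriIsom(\wh e)] \cap \DiagIsom(\wh e)$, the denominator of $PB_Q(G)$, which gives the theorem via the composition identity $\beta_Q((D_1,e_1),(\ol D,\ol e)) = \beta_Q((D_2,e_2),(\ol D,\ol e)) \cdot \beta_Q((D_1,e_1),(D_2,e_2))$.
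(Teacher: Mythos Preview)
Your reduction is correct: after absorbing the $\Aut(G)$-twist into $D_Q(G)$, it suffices to produce a chain homotopy equivalence $h\colon D_1 \to D_2$ with $H_0(h)=H_4(h)=\id_\Z$ such that $H_2(h)$ is an integral isometry $\Phi_1 \to \Phi_2$. The gap, which you yourself flag, is exactly the construction of this $h$. Your appeal to ``Postnikov obstruction theory'' and ``Poincar\'e duality structure'' does not explain why the obstruction in $\Tors(\Gamma(\pi_2(B)) \otimes_{\Z G} \Z)$, which can genuinely block $M_1 \simeq M_2$, fails to obstruct the chain-level lift. Your back-up route does not escape this either: it still begins ``if some such $h$ realises $\psi$'', so the same existence question is presupposed.

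The paper closes this gap by a concrete cell-attachment argument rather than abstract obstruction theory. Write $M_1 \simeq K \cup_g D^4$ with $K$ a finite $3$-complex and $g \in \pi_3(K)$. Since $G$ is finite and $Q(M_1) \cong Q(M_2)$, Kasprowski--Teichner \cite[Theorem~1.5]{KT21} shows that $M_1$ and $M_2$ differ by the action of some $\alpha \in \Tors(\Gamma(\pi_2(B)) \otimes_{\Z G} \Z)$; unwinding the action as in \cite[p.~89--90]{Hambleton:1988} gives $M_2 \simeq K \cup_{g+\alpha'} D^4$ for a class $\alpha' \in \pi_3(K)$ constructed from $\alpha$. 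By construction the Hurewicz image of $\alpha'$ in $H_3(\wt K)$ is torsion, hence zero since $H_3(\wt K) \subseteq C_3(\wt K)$ is torsion-free. Thus the top boundary map is unchanged and $C_*(\wt M_1)$ and $C_*(\wt M_2)$ are literally the same chain complex over the common skeleton $K$; the equivariant intersection forms agree for the same reason. This delivers $(C_*(\wt M_1), S_{M_1}) \simeq (C_*(\wt M_2), S_{M_2})$ immediately, making transparent why the $\Gamma$-invariant is invisible at the chain level.
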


The proof of \cref{thm:main-Q2type-body} will be based on the following observation which is a direct consequence from the definition of $\beta_Q$. For convenience, we will work in the $B$-polarised setting. 

\begin{lemma}\label{lem:Ctobeta}
Let $G$ be a finite group and let $M_1, M_2 \in \sM_4(G)$. If there is a $B$-polarised equivalence $(C_*(\wt M_1), S_{M_1}) \simeq (C_*(\wt M_2), S_{M_2})$, then $\beta_Q(M_1) = \beta_Q(M_2)$.
\end{lemma}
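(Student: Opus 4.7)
The plan is to reduce the claim to \cref{prop:PB-an-invariant}, which asserts that $\beta_Q$ descends through the equivalence relation $\simeq_{\Z}$ on algebraic $4$-doubles, combined with the bridge provided by \cref{prop:Dalg-commutes} between the geometric double $M(X)$ and the algebraic double $\dalg(C_*(\wt X))$.

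First I would fix representatives. Writing $M_i = M(X_i)$ for minimal $(G,2)$-complexes $X_i$ and setting $(D_i, e_i) = \dalg(C_*(\wt X_i))$, \cref{prop:Dalg-commutes} supplies $\simeq_{\Z}$ equivalences
\[ (D_i, e_i) \simeq_{\Z} (C_*(\wt M_i), S_{M_i}) \quad \text{for } i = 1,2. \]
The content of \cref{prop:Dalg-commutes}, which relies on \cref{prop:intdouble} and \cref{prop:adapted}, is that $S_{M_i} \cong \Met_\ep(\pi_2(X_i), \phi_i)$ with $\phi_i^G = 0$, so its restriction to the fixed submodule agrees with that of the hyperbolic form $e_i = H_\ep(\pi_2(X_i))$, under the chain isomorphism $C_*(\wt M(X_i)) \cong M(C_*(\wt X_i))$ of \cite[Proposition II.3]{Kreck:1984}.

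Next I would unpack the given $B$-polarised equivalence as a $\Z G$-chain homotopy equivalence $h\colon C_*(\wt M_1) \to C_*(\wt M_2)$ compatible with the $B$-structures. The $B$-polarisation forces $\pi_1(h) = \id_G$ (so that the two chain complexes are formed over the same group ring using consistent polarisations) and forces $H_0(h) = \id_{\Z}$ together with $H_4(h) = \id_{\Z}$ (by orientation compatibility). The compatibility with $S_{M_i}$ means that $H_2(h)$ is a $\Z G$-isometry of the full $G$-invariant intersection forms, which automatically restricts to an isometry of their fixed submodules $S_{M_1}^G \to S_{M_2}^G$. Thus $h$ qualifies as an integral isometry in the sense of \cref{def:Z-isometry}, giving $(C_*(\wt M_1), S_{M_1}) \simeq_{\Z} (C_*(\wt M_2), S_{M_2})$.

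Concatenating these three $\simeq_{\Z}$ equivalences produces $(D_1, e_1) \simeq_{\Z} (D_2, e_2)$, and \cref{prop:PB-an-invariant} then yields $\beta_Q((D_1, e_1), (\ol D, \ol e)) = \beta_Q((D_2, e_2), (\ol D, \ol e)) \in PB_Q(G)$; pushing through the quotient $PB_Q(G) \twoheadrightarrow B_Q(G)$ gives $\beta_Q(M_1) = \beta_Q(M_2)$. The main obstacle, and essentially the only non-formal step, is confirming that the $B$-polarised equivalence between the pairs $(C_*(\wt M_i), S_{M_i})$ posed in the lemma carries the information required to produce an \emph{integral} isometry rather than merely a Tate isometry; this, however, requires nothing beyond the $B$-structure's control of the $\pi_1$-polarisation and of the full equivariant intersection form, rather than of its Tate reduction, so the verification reduces to the transitivity of $\simeq_{\Z}$ under composition of chain equivalences and $G$-equivariant isometries.
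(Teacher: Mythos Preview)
Your proposal is correct and matches the paper's approach: the paper states that this lemma ``is a direct consequence from the definition of $\beta_Q$'' and gives no further argument, and what you have written is precisely the unpacking of that claim via \cref{prop:Dalg-commutes} and \cref{prop:PB-an-invariant}. Your identification of the only non-formal point---that a $B$-polarised equivalence yields a genuine $\Z G$-isometry of the full forms $S_{M_i}$, hence an integral isometry in the sense of \cref{def:Z-isometry}---is exactly right, and is made explicit in the paper only later, in the proof of \cref{thm:main-Q2type-body}, where $B$-polarised equivalence of pairs is defined so as to require $f_*\colon H_2(\wt M_1)\to H_2(\wt M_2)$ to be an isometry $S_{M_1}\to S_{M_2}$.
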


In particular, it now suffices to prove the following, where $B = B(X)$ for some fixed  finite Poincar\'e $4$-complex $X$  with $\pi_1(X) = G$ as reference.

\begin{proposition}\label{prop:QtoC}
For $G$ a finite group,  let $X_1, X_2$ be oriented finite $B$-polarised Poincar\'{e} $4$-complexes with $Q(X_1) \cong Q(X_2)$. 
Then $(C_*(\wt M_1), S_{M_1})$ and $(C_*(\wt M_2), S_{M_2})$ are $B$-polarised homotopy equivalent.
\end{proposition}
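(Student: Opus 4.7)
The plan is to lift the isometry of quadratic $2$-types to a chain homotopy equivalence of $(C_*(\wt X_i), S_{X_i})$. The key ingredients are a fixed chain model of the $2$-type $B$, Poincar\'e duality for the $X_i$, and the hypothesis that the isometry preserves the equivariant intersection form $S$.

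For each $X_i$, the $B$-polarisation is a $3$-equivalence $X_i \to B$; on universal covers this yields a chain map $\alpha_i\colon C_*(\wt X_i) \to C_*^B$ into a fixed free $\Z G$-chain model $C_*^B$ of the Postnikov $2$-truncation of $B$ (with $H_0 = \Z$, $H_2 = \pi_2$, and realised $k$-invariant), which is a $\Z G$-isomorphism on $H_j$ for $j \leq 2$. An isometry $\psi\colon Q(X_1) \to Q(X_2)$ provides an automorphism of the $2$-type $B$, and after composing one of the polarisations with this automorphism one may assume $\psi$ is the identity on $B$ and satisfies $\psi^*S_{X_2} = S_{X_1}$. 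After chain-homotopy adjustments one can arrange the $\alpha_i$ so that the identification of the low-dimensional parts of $C_*(\wt X_1)$ and $C_*(\wt X_2)$ is induced by the identity on $C_*^B$.

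Next, Poincar\'e duality controls the top dimensions. For each $X_i$, cap product with the fundamental class $[X_i]$ gives a chain homotopy equivalence $C^{4-*}(\wt X_i) \xrightarrow{\sim} C_*(\wt X_i)$. Combined with the identification of the low-dimensional parts, this determines the chain complex in degrees $3$ and $4$ as (up to chain homotopy) the $\Z$-duals of the chain complex in degrees $0$ and $1$, with the self-pairing in degree $2$ being exactly $S_{X_i}$. The hypothesis $\psi^*S_{X_2} = S_{X_1}$ is precisely what is needed to glue the dualised identification in the top degrees to the identification in the bottom degrees.

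The main obstacle is the middle-degree assembly: combining the low-degree identification with its dual in the top degrees into a single chain map that respects $S$. The relevant obstructions live in $\operatorname{Ext}^*_{\Z G}$-groups that vanish because the identifications already agree on homology by construction. The resulting chain map $f\colon C_*(\wt X_1) \to C_*(\wt X_2)$ is a $\Z G$-isomorphism on every homology group, hence a chain homotopy equivalence by Whitehead's theorem for chain complexes, and it preserves $S$ by construction. This yields the $B$-polarised equivalence asserted in the proposition.
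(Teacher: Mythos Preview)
Your approach is genuinely different from the paper's and has a real gap at the crucial step.

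The paper's proof does not attempt to build a chain equivalence directly from Poincar\'e duality. Instead it invokes a classification input: writing $X_1 \simeq K \cup_g D^4$ for a finite $3$-complex $K$, it uses \cite[Theorem~1.5]{KT21} (valid for finite $G$) to conclude that $X_1$ and $X_2$ with isometric $Q$ differ only by the action of an element $\alpha \in \Tors(\Z \otimes_{\Z G} \Gamma(\pi_2(B)))$ on the attaching map. The induced perturbation $\alpha' \in \pi_3(K)$ has trivial image in $H_3(\wt K)$, so the cellular boundary $\partial_4$ is literally unchanged and the chain complexes (with their intersection forms) coincide on the nose. The heavy lifting is outsourced to the KT21 result; no obstruction-theoretic gluing is needed.

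Your proposal, by contrast, tries to assemble a chain equivalence by hand: identify the bottom via the common $2$-type model $C^B_*$, identify the top by dualising, and glue using $S_{X_1} \cong S_{X_2}$. The gap is in the sentence ``The relevant obstructions live in $\operatorname{Ext}^*_{\Z G}$-groups that vanish because the identifications already agree on homology by construction.'' Agreement on homology does not make these Ext groups vanish; that is precisely backwards. The Poincar\'e duality equivalence $C^{4-*}(\wt X_i) \simeq C_*(\wt X_i)$ is a chain-level datum specific to each $X_i$, and the hypothesis $S_{X_1} \cong S_{X_2}$ only tells you these dualities agree on $H_2$, not on chains. To splice the bottom-half identification with its dual you would need the two candidate maps on $C_2$ to differ by something in the image of $\partial_3$ plus a homotopy, and the obstruction to arranging this lives in groups like $\operatorname{Ext}^1_{\Z G}(\text{cokernels}, \text{kernels})$ which are not obviously zero over $\Z G$ for $G$ finite. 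You have asserted away exactly the point where the content lies. If you want to pursue a direct argument, you would need either a genuine vanishing result for these obstructions or a different organising principle (e.g.\ symmetric Poincar\'e chain complexes in the sense of Ranicki), neither of which is supplied.
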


\begin{proof}
First note that we can write $X_1 \simeq K \cup_g D^4$ where $K$ is a finite $3$-complex and the attaching map $g\colon  S^3 \to K$ is an element of $\pi_3(K)$.
Since $G$ is finite and $Q(X_1) \cong Q(X_2)$, it follows from \cite[Theorem 1.5]{KT21} that $X_1$ and $X_2$ differ by the action of an element $\alpha \in \Tors(\Z \otimes_{\Z G} \Gamma(\pi_2(B)))$ where $B$ is a 3-coconnected CW-complex which is 3-equivalent to both $X_1$ and $X_2$.
This action is described on \cite[p89-90]{Hambleton:1988}. We have that $\Gamma(\pi_2(B)) \cong H_4(\wt B)$. The action implies that $X_2 \simeq K \cup_{g+\alpha'} D^4$ where $\alpha' \in \pi_3(K)$ is obtained by choosing a preimage of 
$\alpha \in \Tors(H_4(\widetilde B) \otimes _\l \Z)$ in $H_4(\wt B)$ and mapping it under the composition $H_4(\wt B) \to H_4(\wt B, \wt K) \cong \pi_4(B, K) \to \pi_3(K)$. 

It now suffices to show that there is an equivalence $(C_*(\wt X_1), S_{X_1}) \simeq (C_*(\wt X_2), S_{X_2})$. However, by construction the image of $\alpha' \in \pi_3(K)$ is a torsion element which vanishes in $H_3(\widetilde K)$, and therefore does not affect the chain complexes. 
\end{proof}

\begin{proof}[The proof of \cref{thm:main-Q2type-body}]
For a closed oriented 4-manifold $M$, consider the pair of invariants $(C_*(\wt M), S_M)$.
For closed oriented 4-manifolds $M$ and $N$, an isometry $Q(M) \cong Q(N)$ implies that $B(M) \simeq B(N)$, and we can work in the $B$-polarised setting with $B:= B(M)$. We say that these pairs are $B$-polarised equivalent, which we write as $(C_*(\wt M), S_M) \simeq (C_*(\wt N), S_N)$, if there exists a chain homotopy equivalence $f\colon  C_*(\wt M) \to C_*(\wt N)$ over $B$ such that the induced map $f_*\colon  H_2(\wt M) \to H_2(\wt N)$ is an isometry $S_M \to S_N$ under the identifications $H_2(\wt M) \cong \pi_2(M)$ and $H_2(\wt N) \cong \pi_2(N)$.
This is a $B$-polarised homotopy invariant of $4$-manifolds in 
$\mathscr{S}^{PD}_4(B)$. The proof now follows from \cref{lem:Ctobeta} and \cref{prop:QtoC}.
\end{proof}

\section{Evaluation of the quadratic bias obstruction group}
\label{s:qbias_Lgroups}

Let $n \ge 2$ and let $G$ be a finite group. Recall from \cref{ss:bias-algebraic} that the invariant rank is defined to be $r_{(G,n)} = \rank_{\Z}(L^G)$ where $L = \pi_n(X)$ for $X$ any minimal $(G,n)$-complex. By \cref{prop:min-hyp}, $(G,n)$ satisfies the strong minimality hypothesis if and only if $r_{(G,n)} = d(H_n(G))$.

The main result of this section will be the following, which implies \cref{thmx:main-B_Q(G)}.

\begin{theorem}\label{thm:BQ-main}
Let $n \ge 2$ and let $G$ be a finite group such that $H_n(G) \cong (\Z/m)^d$ for some $m \ge 1$ and $d = r_{(G,n)}\geq 3$. Then:
\begin{clist}{(i)}
\item If $n$ is even, there are isomorphisms 
\[ PB_Q(G,n) \cong 
\frac{(\Z/m)^\times}{\pm (\cy m)^{\times 2}}
 \quad \text{\ and\ } \quad  B_Q(G,n) \cong  
 \frac{(\Z/m)^\times}{\pm (\cy m)^{\times 2}\cdot D(G,n)} \cdot
 \]
 \item If $n$ is odd, then $PB_Q(G,n) = B_Q(G,n) =0$.
 \end{clist}
\end{theorem}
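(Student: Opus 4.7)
The plan is to use \cref{remark:Aut(G)-is-easy}(ii) to identify $PB_Q(G,n)$ as the quotient of $(\Z/m)^\times/\{\pm 1\}$ by the subgroup $N(G,n)$, the image of $[\IM(\Isom_\Psi(e^G)) \cdot \TriIsom(\wh e)] \cap \DiagIsom(\wh e)$ in $(\Z/m)^\times/\{\pm 1\}$ via the surjection from \eqref{eq:diagquotient}; explicitly, the class of $(f^*)^{-1}\oplus f \in \DiagIsom(\wh e)$ is $\det(f) \bmod \{\pm 1\}$. The hypothesis $d = r_{(G,n)}$ together with $H_n(G)\cong(\Z/m)^d$ gives $r_{(G,n)} = d = d(H_n(G))$, so $(G,n)$ satisfies the strong minimality hypothesis. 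By \cref{prop:sevenfour} we may therefore choose bases so that $L^G \cong \Z^d$, $\wh L \cong (\Z/m)^d$, and $e^G$, $\wh e$ are standard hyperbolic forms of rank $2d$ with all scale factors $\beta_i = |G|/m$, reducing the problem to a form-theoretic computation about hyperbolic $\ep$-symmetric forms.

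For part (ii), with $n$ odd, the form $\wh e$ is skew-symmetric and $\Isom(\wh e)$ is the symplectic group of a rank-$2d$ hyperbolic form over $\Z/m$. For any $u \in (\Z/m)^\times$, consider the diagonal isometry $(f^*)^{-1}\oplus f$ with $f = \mathrm{diag}(u,1,\ldots,1)$. Via strong approximation for the symplectic group, $\mathrm{Sp}_{2d}(\Z) \twoheadrightarrow \mathrm{Sp}_{2d}(\Z/m)$ is surjective, and a lift is produced explicitly inside an $\SL_2(\Z) \hookrightarrow \mathrm{Sp}_{2d}(\Z)$ subgroup corresponding to a hyperbolic pair containing the first basis vector. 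This element lies in $\IM(\Isom_\Psi(e^G))$ directly (no triangular correction needed), so every unit is realised. Hence $N(G,n) = (\Z/m)^\times/\{\pm 1\}$, giving $PB_Q(G,n) = 0$ and $B_Q(G,n) = 0$.

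For part (i), with $n$ even, $\wh e$ is symmetric hyperbolic of rank $2d$ and $\Isom(\wh e)$ is the corresponding orthogonal group. The claim $PB_Q(G,n) \cong (\Z/m)^\times/\pm(\Z/m)^{\times 2}$ is equivalent to $N(G,n) = \pm(\Z/m)^{\times 2}/\{\pm 1\}$, proved by two containments. For $\supseteq$, given $u \in (\Z/m)^\times$ we construct an integer orthogonal isometry of $e^G$ supported on a single rank-$2$ hyperbolic plane which, after correction by a triangular isometry of $\wh e$, reduces to the diagonal isometry with $f = \mathrm{diag}(u^2,1,\ldots,1)$; the assumption $d \geq 3$ supplies enough additional hyperbolic planes to accommodate the construction without interference. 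For $\subseteq$, we use a spinor-norm type homomorphism $\theta\colon \Isom(\wh e) \to (\Z/m)^\times/(\Z/m)^{\times 2}$ which (a) vanishes on $\TriIsom(\wh e)$ since triangular isometries are unipotent, (b) takes values in $\{\pm 1\}\cdot(\Z/m)^{\times 2}$ on $\IM(\Isom_\Psi(e^G))$ because an integer orthogonal isometry of the hyperbolic form has spinor norm $\pm 1$, and (c) satisfies $\theta((f^*)^{-1}\oplus f) \equiv \det(f) \pmod{(\Z/m)^{\times 2}}$ on diagonal isometries. Combining (a)--(c), any diagonal in $\IM(\Isom_\Psi(e^G))\cdot\TriIsom(\wh e)$ has $\det(f) \in \pm(\Z/m)^{\times 2}$. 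The formula for $B_Q(G,n)$ then follows by the further quotient by $D_Q(G,n)$, which by \cref{remark:Aut(G)-is-easy}(i) coincides with the image of $D(G,n)$.

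The main obstacle is Case (i): the symplectic statement (ii) follows cleanly from the size of $\mathrm{Sp}$, but the orthogonal case requires both an explicit realisability construction for squares and a matching spinor-norm obstruction ruling out non-squares. The hypothesis $d \ge 3$ is essential on both sides, as it provides enough room for Witt-extension and Bruhat-decomposition arguments to produce the required integer orthogonal isometries and to ensure the spinor norm is well-behaved on the relevant diagonal subgroup. The detailed form-theoretic calculations underlying both steps — in particular, the lifting of orthogonal elements and the spinor norm calibration — are those carried out in the $L$-theoretic appendices \cref{ss:L-odd} and \cref{ss:L-theory-computations}.
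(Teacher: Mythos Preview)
Your overall strategy is correct and closely parallels the paper's, but with one genuine simplification and one imprecise step.

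For $n$ odd your argument is cleaner than the paper's. The paper passes through $L_3^s(\cy m)=0$ (\cref{prop:except}) and then invokes unitary stability for $d\geq 3$ to destabilise. Your observation that the diagonal element $\mathrm{diag}(u^{-1},1,\dots,1,u,1,\dots,1)$ lies in the image of an $\SL_2(\bZ)$ sitting on a single symplectic pair, via surjectivity of $\SL_2(\bZ)\to\SL_2(\cy m)$, gives the vanishing directly and in fact does not use $d\geq 3$ at all.

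For $n$ even your spinor-norm obstruction is essentially the paper's $L$-theory argument in different clothing: the map $\theta$ you describe is the composite $D_{2d}(\cy m)\hookrightarrow\SU_{2d}(\cy m)\to L_1^s(\cy m)\cong(\cy m)^\times/\pm(\cy m)^{\times 2}$ of \cref{prop:Lgroups-main}, and your properties (a), (c) are straightforward. However, property (b) --- that integral orthogonal isometries have spinor norm in $\{\pm 1\}$ --- is not a formality: it is exactly the content of $L_1^s(\bZ)=0$ (\cref{lemma:L_1(Z)}), and you should say so rather than asserting it. More seriously, your $\supseteq$ construction is not right as stated. On a \emph{single} hyperbolic plane the integral orthogonal group has order $4$ (just $\pm I$ and $\pm\,\mathrm{swap}$), and there are no nontrivial triangular isometries since for $\ep=+1$ the matrix $P$ must be skew-symmetric and hence zero in rank $1$; so nothing supported on one plane can yield $u^2$. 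The paper's construction (\cref{lemma:lifting-squares}) genuinely uses three planes via the Whitehead-type identities of \cref{lemma:identities} and \cref{lem:3step}, and this is where the hypothesis $d\geq 3$ enters. Your parenthetical about $d\geq 3$ ``supplying additional planes'' suggests you have something like this in mind, but the sentence as written is self-contradictory and the actual mechanism needs to be spelled out.
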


\begin{remark}
If $d \ne r_{(G,n)}$, then $(G,n)$ does not satisfy the strong minimality hypothesis and \cref{remark:bias-minimality} implies that $B(G,n)=0$ and so $B_Q(G,n)=0$. Hence the above computes $B_Q(G,n)$ for all finite groups $G$ such that $H_n(G) \cong (\Z/m)^d$ for some $m \ge 1, d \ge 3$.
\end{remark}

This section will be structured as follows. In \cref{ss:PB-to-U}, we will begin by giving a formulation of $PB_Q(G,n)$ in the case where $H_n(G) \cong (\Z/m)^d$ in terms of unitary groups (\cref{thm:PB-H_2-elementary}).
In \cref{ss:proof-n-even}, we will prove \cref{thm:BQ-main} in the case where $n$ is even. The strategy will be to use the formulation in terms of unitary groups to establish a group homomorphism
\[ \phi\colon PB_Q(G,n) \longrightarrow \frac{(\Z/m)^\times}{\pm (\cy m)^{\times 2}} \]
by using the connection between unitary groups and algebraic $L$-theory.
Evaluating the $L$-groups and showing that $\phi$ is an isomorphism then leads to the result. The proof of \cref{thm:BQ-main} in the case where $n$ is odd is carried out in \cref{ss:proof-n-odd}. 

Detailed background on unitary groups and algebraic $L$-theory can be found in \cref{ss:L-odd} and the $L$-theory calculations are carried out in \cref{ss:L-theory-computations}.

\subsection{Formulation in terms of unitary groups} \label{ss:PB-to-U}

Let $(\l,-)$ be a ring with involution, let $\varepsilon =(-1)^n$ for some $n$, and let $d \ge 1$. The \textit{unitary group} $\U_{2d}(\La)$ is the subgroup of $GL_{2d}(\l)$ consisting of block matrices of the form
$\sigma = \left(\begin{smallmatrix} \alpha & \beta \\ \gamma & \delta \end{smallmatrix}\right)$
where $\alpha,\beta,\gamma, \delta \in M_d(\l)$ are such that
\begin{enumerate}
\item $\alpha \delta^* + (-1)^n\beta \gamma^* = I$
\item $\alpha\beta^*$ and $\gamma \delta^*$ each have the form $\theta - (-1)^n\theta^*$ for some  $d\times d$ matrix $\theta$.
\end{enumerate}
where, if $\alpha =(\alpha_{ij}) \in GL_{d}(\La)$, then $\alpha^* = (\wbar{\alpha}_{ji}) $ denotes the conjugate transpose matrix.
By \cref{remark:unitary-definitions} the unitary group $\U_{2d}(\La)$ is a subgroup of the \textit{hermitian unitary group} $\Isom(H_\ep(\l^d))$.

Define $D_{2d}(\l)$ to be the subgroup of $GL_{2d}(\l)$ consisting of matrices of the form 
\[ \left(\begin{smallmatrix} Q & 0 \\ 0 & (Q^*)^{-1} \end{smallmatrix}\right) \text{ where $Q = \left(\begin{smallmatrix} a &  & &  \\ &  1 & &  \\ &  & \ddots &  \\ &  & & 1 \end{smallmatrix}\right) \in GL_d(\l)$ for $a \in \La^\times$}. \]
This is a subgroup of $\U_{2d}(\La)$ for both choices of $\varepsilon \in \{\pm 1\}$.
Next recall that, in \cref{def:bias-quad}, we defined
\[ PB_Q(G,n) = \frac{\DiagIsom(\wh e)}{\left [\IM(\Isom_\Psi(e^G)) \cdot \TriIsom(\wh e)\right ] \cap \DiagIsom(\wh e)} \]	
where $e=e_L$ is the evaluation form and $L = \pi_n(X)$ for $X$ a reference minimal $(G,n)$-complex.
By hypothesis, we have that $L^G \cong \Z^d$ and $\wh L \cong H_n(G) \cong (\Z/m)^d$.
For the forms $e^G$ and $\wh e$, this implies that $\beta_1 = \cdots = \beta_d = m^{d-1}$ and so $e^G = m^{d-1} \cdot H_\ep(\Z^d)$ and $\wh e = m^{d-1} \cdot H_\ep((\Z/m)^d)$ are just scaled $\varepsilon$-hyperbolic forms for $\varepsilon=(-1)^n$ by \cref{prop:sevenfour}. In particular, there are isometries $e^G \cong H_\ep(\Z^d)$ and $\wh e \cong H_\ep((\Z/m)^d)$.

 By \cref{remark:wlog-psi-diagonal}, we can choose identifications $\L^G \cong \Z^d$ and $\wh \L \cong (\Z/m)^d$ so that $\psi\colon  L^G \twoheadrightarrow \wh L$ is reduction mod $m$.
 Since $\ker(\psi) \subseteq \Z^d$ is a characteristic subgroup, this implies that
\[ \Isom_{\Psi}(e^G) = \Isom(e^G) \cong \Isom(H_\ep(\Z^d)).\] 
In the case where $\l = \Z$ with the trivial involution, we have $\Isom(H_\ep(\Z^d)) = \U_{2d}(\Z)$ for $n$ even (\cref{example:unitary}). However, note that $\Isom(H_\ep(\Z^d)) \ne \U_{2d}(\Z)$ for $n$ odd and any $d \ge 1$.

The remainder of this section will be devoted to establishing the following.

\begin{proposition} \label{thm:PB-H_2-elementary}
Let $n \ge 2$, let $\varepsilon=(-1)^n$ and let $G$ be a finite group such that $H_n(G) \cong (\Z/m)^d$ where $m \ge 1$ and $d = r_{(G,n)}$. Then:
\begin{clist}{(i)}
\item If $n$ is even, there is 
an isomorphism
\[ PB_Q(G,n) \cong \frac{D_{2d}(\Z/m)}{\IM{(\U_{2d}(\Z))} \cap D_{2d}(\Z/m)} \cdot \]
\item If $n$ is odd,  there is a surjection 
$$\frac{D_{2d}(\Z/m)}{\IM(\U_{2d}(\Z))) \cap D_{2d}(\Z/m)} \twoheadrightarrow PB_Q(G,n).$$ 
\end{clist}
\end{proposition}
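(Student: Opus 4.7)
The plan is to unwind the definition of $PB_Q(G,n)$ by using the hypothesis to put the forms $e^G$ and $\wh e$ into standard hyperbolic shape, and then to reduce the resulting matrix-group quotient to one involving only the small subgroup $D_{2d}(\Z/m)$ via an $\SL_d$-versus-$\GL_d$ decomposition inside $\DiagIsom(\wh e)$. First, I apply \cref{prop:sevenfour}: since $H_n(G) \cong (\Z/m)^d$ with $d = r_{(G,n)}$, all invariant factors coincide ($\beta_i = |G|/m$), so after rescaling $e^G \cong H_\varepsilon(\Z^d)$ and $\wh e \cong H_\varepsilon((\Z/m)^d)$. By \cref{remark:wlog-psi-diagonal} I choose coordinates so that $\psi\colon L^G \twoheadrightarrow \wh L$ is coordinate-wise reduction mod $m$; its kernel $mL^G$ is then characteristic, giving $\Isom_\Psi(e^G) = \Isom(e^G) = \Isom(H_\varepsilon(\Z^d))$. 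By \cref{example:unitary} this equals $\U_{2d}(\Z)$ when $n$ is even and properly contains $\U_{2d}(\Z)$ when $n$ is odd. Under the natural identification $\DiagIsom(\wh e) \cong \GL_d(\Z/m)$ sending $(f^*)^{-1} \oplus f \mapsto f$, the subgroup $D_{2d}(\Z/m)$ corresponds to the scalings $\mathrm{diag}(a,1,\dots,1)$.

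Next, I exhibit $\SL_d(\Z/m)$ inside the denominator of $PB_Q(G,n)$. The block-diagonal map $\GL_d(\Z) \hookrightarrow \U_{2d}(\Z)$, $Q \mapsto \mathrm{diag}(Q,(Q^*)^{-1})$, satisfies both conditions defining $\U_{2d}$ with $\theta = 0$; composing its restriction to $\SL_d(\Z)$ with reduction mod $m$ (surjective onto $\SL_d(\Z/m)$ for $d \geq 2$ via elementary matrices) gives $\SL_d(\Z/m) \subseteq \IM(\U_{2d}(\Z)) \cap \DiagIsom(\wh e)$. Combined with the product decomposition $\GL_d(\Z/m) = \SL_d(\Z/m) \cdot D_{2d}(\Z/m)$ with trivial intersection (induced by the determinant), this shows that the composite
$$D_{2d}(\Z/m) \hookrightarrow \DiagIsom(\wh e) \twoheadrightarrow PB_Q(G,n)$$
is surjective, with kernel $K := D_{2d}(\Z/m) \cap \bigl(\IM(\Isom_\Psi(e^G)) \cdot \TriIsom(\wh e)\bigr)$.

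The kernel $K$ trivially contains $\IM(\U_{2d}(\Z)) \cap D_{2d}(\Z/m)$, yielding the surjection in part (ii). For part (i), where $n$ is even and $\Isom_\Psi(e^G) = \U_{2d}(\Z)$, the reverse inclusion requires that whenever $x \in D_{2d}(\Z/m)$ can be written as $pt$ with $p \in \IM(\U_{2d}(\Z))$ and $t \in \TriIsom(\wh e)$, then $x$ itself lies in $\IM(\U_{2d}(\Z))$. This amounts to lifting the triangular factor $t$ back to $\U_{2d}(\Z)$. For $m$ odd this is immediate, since every skew-symmetric matrix over $\Z/m$ automatically has zero diagonal and lifts to $\theta - \theta^*$ for some integral $\theta$. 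The main obstacle will be the case $m$ even, where $\TriIsom(\wh e)$ may include elements with $m/2$ on the diagonal which are not of the form $\theta - \theta^*$ integrally; I expect to handle this by absorbing the resulting quadratic-refinement discrepancy into a modification of $p$ via an Eichler-type transformation inside $\U_{2d}(\Z)$ whose reduction itself lies in $\TriIsom(\wh e)$, thereby completing the proof of part (i).
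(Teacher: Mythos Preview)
Your overall strategy matches the paper's: identify $e^G$ and $\wh e$ with standard hyperbolic forms (the paper's \cref{prop:sevenfour} and \cref{remark:wlog-psi-diagonal}), note that $\Isom_\Psi(e^G) = \Isom(H_\varepsilon(\Z^d))$ equals $\U_{2d}(\Z)$ precisely when $n$ is even, and then reduce from $\DiagIsom(\wh e)\cong\GL_d(\Z/m)$ down to the one-parameter subgroup $D_{2d}(\Z/m)$. Your reduction via the $\SL_d(\Z/m)\cdot D_{2d}(\Z/m)$ decomposition is a minor variant of the paper's, which instead quotes the previously established isomorphism $\DiagIsom(\wh e)/\DiagIsom_\Psi(e^G)\cong(\Z/m)^\times/\{\pm 1\}$; both work.

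The genuine gap is in your treatment of the triangular factor for $m$ even, and your proposed Eichler-type fix is unnecessary. The point you are missing is that membership in $\U_{2d}(\Z/m)$ (the \emph{quadratic} unitary group, not just the hermitian isometry group) is exactly what forces the required $\theta-\theta^*$ shape. Concretely: if $x=pt$ with $x\in D_{2d}(\Z/m)\subseteq\U_{2d}(\Z/m)$ and $p\in\IM(\U_{2d}(\Z))\subseteq\U_{2d}(\Z/m)$, then $t=p^{-1}x\in\U_{2d}(\Z/m)$ because $\U_{2d}(\Z/m)$ is a group. Writing $t=\left(\begin{smallmatrix}I&P\\0&I\end{smallmatrix}\right)$, condition (ii) in the definition of $\U_{2d}$ now gives $P^*=\theta-(-1)^n\theta^*$ for some $\theta\in M_d(\Z/m)$, so $P=F-(-1)^nF^*$ with $F\in M_d(\Z/m)$. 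Lifting $F$ arbitrarily to $\wt F\in M_d(\Z)$ and setting $\wt P=\wt F-(-1)^n\wt F^*$ exhibits $t$ in $\IM(\U_{2d}(\Z))$, hence $x=pt\in\IM(\U_{2d}(\Z))$ as well. This is exactly the content of the paper's \cref{lem:tri}, and it works uniformly for all $m$; the potential $m/2$ diagonal entries you worry about are simply ruled out by the hypothesis $p\in\IM(\U_{2d}(\Z))$.
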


To prove this, first note that the discussion above implies that
\[ PB_Q(G,n) \cong \frac{\DiagIsom(\wh e)}{[\IM(\Isom(H_\ep(\Z^d))) \cdot \TriIsom(\wh e)] \cap \DiagIsom(\wh e)} \]
where $\wh e \cong H_\ep((\Z/m)^d)$.
It follows by the same argument as given in \cref{prop:PB-well-defined} that 
$[\IM(\U_{2d}(\Z)) \cdot \TriIsom(\wh e)] \cap \DiagIsom(\wh e)$ is a normal subgroup of $\DiagIsom(\wh e)$, where $\IM(\cdot)$ denotes the image under the reduction map $\U_{2d}(\Z) \to \U_{2d}(\Z/m)$. 
Thus we can define a group
\[ \wt{PB}_Q(G,n) := \frac{\DiagIsom(\wh e)}{[\IM(\U_{2d}(\Z)) \cdot \TriIsom(\wh e)] \cap \DiagIsom(\wh e)}.\]
There is a surjection $\pi: \wt{PB}_Q(G,n) \twoheadrightarrow PB_Q(G,n)$ induced by inclusion $\U_{2d}(\Z) \le \Isom(H_\ep(\Z^d))$. Note that, if $n$ is even, then $\U_{2d}(\Z) = \Isom(H_\ep(\Z^d))$ and so $\pi$ is an isomorphism.

The key technical step in the proof is the following, which shows that the subgroup of triangular isometries $\TriIsom(\wh e)$ does not contribute to $\wt{PB}_Q(G,n)$.

\begin{lemma}\label{lem:tri}
$[\IM(\U_{2d}(\Z)) \cdot \TriIsom(\wh e)] \cap \DiagIsom(\wh e) = \IM(\U_{2d}(\Z)) \cap \DiagIsom(\wh e)$. 
\end{lemma}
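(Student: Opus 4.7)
The inclusion $\supseteq$ is trivial because $\id \in \TriIsom(\wh e)$. For the opposite inclusion, the strategy is to show that whenever $\varphi \in \DiagIsom(\wh e)$ admits a factorisation $\varphi = \sigma\tau$ with $\sigma \in \IM(\U_{2d}(\Z))$ and $\tau \in \TriIsom(\wh e)$, the triangular factor $\tau$ itself lifts to an element $\tilde\tau \in \U_{2d}(\Z)$. Once such a lift is produced, $\tilde\sigma\tilde\tau \in \U_{2d}(\Z)$ is a lift of $\varphi$ for any choice of lift $\tilde\sigma$ of $\sigma$, placing $\varphi$ in $\IM(\U_{2d}(\Z)) \cap \DiagIsom(\wh e)$.

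To construct $\tilde\tau$, I would write $\varphi = \left(\begin{smallmatrix}(f^*)^{-1} & 0 \\ 0 & f\end{smallmatrix}\right)$ and $\tau = \left(\begin{smallmatrix}\id & k \\ 0 & \id\end{smallmatrix}\right)$, fix any lift $\tilde\sigma = \left(\begin{smallmatrix}\tilde\alpha & \tilde\beta \\ \tilde\gamma & \tilde\delta\end{smallmatrix}\right) \in \U_{2d}(\Z)$ of $\sigma = \varphi\tau^{-1}$, and observe that reduction modulo $m$ gives $\tilde\alpha \equiv (f^*)^{-1}$, $\tilde\beta \equiv -(f^*)^{-1}k$, $\tilde\gamma \equiv 0$ and $\tilde\delta \equiv f$. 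The hermitian unitary identity $\tilde\alpha\tilde\beta^* = \theta - \varepsilon\theta^*$ holds for some $\theta \in M_d(\Z)$, and a short manipulation of this identity modulo $m$, substituting the values above and multiplying on the left by $-f^*$ and on the right by $f$, yields
\[ k = \varepsilon\vartheta - \vartheta^* \pmod{m}, \qquad \vartheta := f^*\theta f \bmod m. \]
I would then pick any integer lift $\tilde\vartheta \in M_d(\Z)$ of $\vartheta$ and set $\tilde k := \varepsilon\tilde\vartheta - \tilde\vartheta^*$, which reduces to $k$ modulo $m$. A direct check of the three axioms defining $\U_{2d}$, using the matrix $-\tilde\vartheta$ as the witness for the hermitian identity, confirms that $\tilde\tau := \left(\begin{smallmatrix}\id & \tilde k \\ 0 & \id\end{smallmatrix}\right) \in \U_{2d}(\Z)$, and by construction $\tilde\tau$ reduces to $\tau$ modulo $m$.

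The only delicate point is the extraction of the identity $k = \varepsilon\vartheta - \vartheta^*$ from the unitary identity on $\sigma$; this crucially uses that $\sigma$ is the reduction of a \emph{unitary} matrix rather than of a general isometry of $H_\varepsilon(\Z^d)$. This is precisely the reason the lemma is formulated in terms of $\IM(\U_{2d}(\Z))$ and why the auxiliary group $\wt{PB}_Q(G,n)$ is introduced in the first place: the analogous lift of $\tau$ would not exist for a general integral isometry, so the triangular factors could not be collapsed into the diagonal image in the quotient.
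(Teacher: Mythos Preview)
Your proof is correct and follows essentially the same approach as the paper's. Both arguments extract from the unitary condition on $\sigma$ the identity expressing $k$ in the form $\varepsilon\vartheta - \vartheta^*$, lift $\vartheta$ to $\Z$, and thereby lift $\tau$ to $\U_{2d}(\Z)$. The only cosmetic difference is that the paper applies the unitary condition~(ii) to $\sigma$ directly at the $\Z/m$ level (using $\sigma \in \U_{2d}(\Z/m)$), whereas you pass through the integral lift $\tilde\sigma$ first and then reduce; the paper also packages the key step as an auxiliary ``iff'' claim before applying it, while you proceed directly. Your closing remark correctly identifies why the quadratic refinement (condition~(ii) rather than~(ii)$'$) is essential here.
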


\begin{proof}
First note that
\begin{align*}
\DiagIsom(\wh e) &= \left\{ \left(\begin{smallmatrix} Q & 0 \\ 0 & (Q^*)^{-1} \end{smallmatrix}\right) : Q \in GL_d(\Z/m) \right\} \le GL_{2d}(\Z/m) \\
\TriIsom(\wh e) &= \left\{ \left(\begin{smallmatrix} I & P \\ 0 & I \end{smallmatrix}\right) : P \in M_d(\Z/m), \, P^* = -(-1)^nP \right\} \le GL_{2d}(\Z/m).
\end{align*} 
We claim that, if $A \in \DiagIsom(\wh e)$ and $B \in \TriIsom(\wh e)$, then $AB \in \IM{(\U_{2d}(\Z))}$ if and only if $A, B \in \IM{(\U_{2d}(\Z))}$.
Let $A = \left(\begin{smallmatrix} Q & 0 \\ 0 & (Q^*)^{-1} \end{smallmatrix}\right)$ and $B = \left(\begin{smallmatrix} I & P \\ 0 & I \end{smallmatrix}\right)$ where $Q \in GL_d(\Z/m)$ and $P \in M_d(\Z/m)$ is such that $P^* = - (-1)^nP$. If $AB \in \IM{(\U_{2d}(\Z))}$, then $AB \in \U_{2d}(\Z/m)$ and so
\[ \left(\begin{smallmatrix} Q & QP \\ 0 & (Q^*)^{-1} \end{smallmatrix}\right) = AB \in \U_{2d}(\Z/m). \]
It follows from the definition of $\U_{2d}(\Z/m)$ that $Q(QP)^* = E - (-1)^n E^*$ for some $E \in M_d(\Z/m)$ and so $P = F - (-1)^n F^*$ where $F = Q^{-1}E^* (Q^*)^{-1} \in M_d(\Z/m)$. 
Let $\wt F \in M_d(\Z)$ be any integral lift of $F$ and let $\wt P = \wt F - (-1)^n (\wt F)^*$. Then $\left(\begin{smallmatrix} I & \wt P \\ 0 & I \end{smallmatrix}\right) \in \U_{2d}(\Z)$ and so 
\[ B = \left(\begin{smallmatrix} I & P \\ 0 & I \end{smallmatrix}\right) = \psi_*\left[\left(\begin{smallmatrix} I & \wt P \\ 0 & I \end{smallmatrix}\right)\right] \in \IM{(\U_{2d}(\Z))}. \]
It follows that $A \in \IM{(\U_{2d}(\Z))}$. The converse is clear. This completes the proof of the claim.

Let $N = \IM{(\U_{2d}(\Z))} \cdot \TriIsom(\wh e)$.
By the claim, we have $N \cap \DiagIsom(\wh e) = \IM{(\U_{2d}(\Z))} \cap \DiagIsom(\wh e)$. In particular, suppose $AB \in N \cap \DiagIsom(\wh e)$ for some $A \in  \IM{(\U_{2d}(\Z))}$ and $B \in \TriIsom(\wh e)$. Since $AB \in \DiagIsom(\wh e)$, $B^{-1} \in \TriIsom(\wh e)$ and $(AB) B^{-1} \in  \IM{(\U_{2d}(\Z))}$, the claim implies that $AB \in  \IM{(\U_{2d}(\Z))}$ and so $AB \in \IM{(\U_{2d}(\Z))} \cap \DiagIsom(\wh e)$.
\end{proof}

\begin{proof}[Proof of \cref{thm:PB-H_2-elementary}]
By \cref{lem:tri}, we now have that
\[ \wt{PB}_Q(G,n) \cong \frac{\DiagIsom(\wh e)}{\IM(\U_{2d}(\Z)) \cap \DiagIsom(\wh e)}. \]
By \cref{prop:sevenfifteen} and the discussion that followed, we have that
\[ \frac{\DiagIsom(\wh e)}{\DiagIsom_{\Psi}(e^G)} \cong (\Z/m)^\times /\{\pm 1\} \]
and we can see directly that $\DiagIsom_{\Psi}(e^G) = \DiagIsom(H_\ep(\Z^d)) \le \U_{2d}(\Z)$.
The subgroup $D_{2d}(\Z/m) \le \DiagIsom(\wh e)$, which is isomorphic to $(\Z/m)^\times$, maps surjectively onto $(\Z/m)^\times/\{\pm 1\}$. This implies that the inclusion map induces an isomorphism
\[ \wt{PB}_Q(G,n) \cong \frac{D_{2d}(\Z/m)}{\IM(\U_{2d}(\Z)) \cap D_{2d}(\Z/m)}.\]  
This completes the proof since there is a surjection $\pi\colon \wt{PB}_Q(G,n) \twoheadrightarrow PB_Q(G,n)$ which is an isomorphism provided $n$ is even.
\end{proof}

\subsection{Proof of \cref{thm:BQ-main} for $n$ even} \label{ss:proof-n-even}

Let $n \ge 2$ be even. We will now evaluate $PB_Q(G,n)$ using the form established in \cref{thm:PB-H_2-elementary} (i). 

\begin{proposition} \label{prop:hom-to-Lgroups}
Let $n \ge 2$ be even, let $G$ be a finite group, and suppose that $H_n(G) \cong (\Z/m)^d$ where $m \ge 3$ and $d = r_{(G,n)}$.
Then the quotient map $ D_{2d}(\Z/m) \to (\Z/m)^\times/\{\pm 1\} $ induces a homomorphism 
\[\phi\colon PB_Q(G,n) \cong
 \frac{D_{2d}(\Z/m)}{\IM(U_{2d}(\Z)) \cap D_{2d}(\Z/m)} \longrightarrow
  \frac{(\Z/m)^\times}{\pm (\cy m)^{\times 2}}.  \]
\end{proposition}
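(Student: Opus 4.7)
The plan is to prove the proposition by constructing an auxiliary spinor-norm type invariant on $\U_{2d}(\Z/m)$ and verifying its compatibility with the quotient map.

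First, I would make explicit the identification $D_{2d}(\Z/m) \cong (\Z/m)^\times$ sending $\left(\begin{smallmatrix} Q & 0 \\ 0 & (Q^*)^{-1}\end{smallmatrix}\right)$ with $Q = \mathrm{diag}(a,1,\ldots,1)$ to $a$, so that the stated quotient map is reduction to $(\Z/m)^\times/\{\pm 1\}$. The proposition then reduces to showing the containment
\[ \IM(\U_{2d}(\Z)) \cap D_{2d}(\Z/m) \ \subseteq \ \pm (\Z/m)^{\times 2}. \]
Once this holds, the composition $D_{2d}(\Z/m)\to(\Z/m)^\times/\{\pm 1\}\to(\Z/m)^\times/\pm(\Z/m)^{\times 2}$ descends to the desired map $\phi$, which is automatically a homomorphism since everything in sight is abelian.

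The approach to the containment is to construct a homomorphism
\[ \delta \colon \U_{2d}(\Z/m) \longrightarrow (\Z/m)^\times / \pm (\Z/m)^{\times 2} \]
with two key properties: (a) $\delta$ vanishes on the image of $\U_{2d}(\Z) \to \U_{2d}(\Z/m)$, and (b) $\delta$ restricted to $D_{2d}(\Z/m)$ agrees with $a \mapsto [a]$. Since $n$ is even, $\U_{2d}(R)$ is the isometry group of the hyperbolic symmetric form $H_{+}(R^d)$, and the natural candidate for $\delta$ is the classical spinor norm: write an isometry as a product of reflections $\tau_{v_1}\cdots\tau_{v_k}$ in vectors with $\langle v_i,v_i\rangle \in (\Z/m)^\times$, and set $\delta(\sigma) = [\prod_i \langle v_i,v_i\rangle]$. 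Property (a) is then transparent: any reflection inside $\U_{2d}(\Z)$ is along a vector of norm in $\Z^\times = \{\pm 1\}$, and the reduction mod $m$ of a product of such lies in $\{\pm 1\}\cdot(\Z/m)^{\times 2}$. For property (b) one computes directly that the scaling $e\mapsto ae$, $f\mapsto a^{-1}f$ on a hyperbolic pair factors as the reflection in $e+f$ (of norm $2$) followed by the reflection in $e+af$ (of norm $2a$), giving $\delta = 2\cdot 2a = 4a \equiv a \pmod{\pm(\Z/m)^{\times 2}}$.

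The main obstacle is the well-definedness of $\delta$ on all of $\U_{2d}(\Z/m)$: since $\Z/m$ is not a field in general (in particular when $m$ is even or not squarefree), reflection decompositions need not exist for every isometry, and the product of norms is only well-defined modulo the appropriate signs and squares. This is exactly where the $L$-theoretic technology of \cref{ss:L-odd} and \cref{ss:L-theory-computations} enters: $\delta$ is to be realised as a component of the relative surgery obstruction map attached to the ring pair $(\Z,\Z/m)$ with trivial involution, whose value group is shown there to contain the factor $(\Z/m)^\times/\pm(\Z/m)^{\times 2}$. With $\delta$ thus identified as a genuine homomorphism, properties (a) and (b) become computations inside these $L$-groups, completing the proof.
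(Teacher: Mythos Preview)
Your reduction to the containment $\IM(\U_{2d}(\Z)) \cap D_{2d}(\Z/m) \subseteq \pm(\Z/m)^{\times 2}$ is correct and is exactly where the paper begins. However, the spinor-norm heuristic you build on top of it has a genuine gap. Your argument for property~(a) assumes that every element of $\U_{2d}(\Z)$ is a product of reflections in vectors of norm in $\Z^\times=\{\pm 1\}$; but over $\Z$ the Cartan--Dieudonn\'e theorem fails, and in any case reflections in the hyperbolic lattice exist for vectors of norm $\pm 2$ as well, which would contribute a factor of $2$ to your spinor norm --- and $2$ need not lie in $\pm(\Z/m)^{\times 2}$. So the transparent argument for~(a) does not go through, and you are left (as you acknowledge) needing to define $\delta$ abstractly via $L$-theory. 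At that point the spinor-norm picture is no longer doing any work.

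The paper's proof bypasses this entirely. It first observes that since $m\ge 3$, any element of $\U_{2d}(\Z)$ whose reduction lies in $D_{2d}(\Z/m)$ already has determinant~$1$, so $\IM(\U_{2d}(\Z))\cap D_{2d}(\Z/m)=\IM(\SU_{2d}(\Z))\cap D_{2d}(\Z/m)$. It then uses the \emph{absolute} stabilisation maps $\SU_{2d}(\Lambda)\to L_1^s(\Lambda)=\SU(\Lambda)/\RU(\Lambda)$ for $\Lambda=\Z$ and $\Lambda=\Z/m$, together with naturality under reduction mod~$m$. The key inputs are then purely computational: Wall's calculation $L_1^s(\Z)=0$ forces the image of $N$ after stabilisation to be zero in $L_1^s(\Z/m)$, and \cref{prop:Lgroups-main} identifies the composite $D_{2d}(\Z/m)\to\SU_{2d}(\Z/m)\to L_1^s(\Z/m)\cong(\Z/m)^\times/\pm(\Z/m)^{\times 2}$ with the quotient map defining $\phi$. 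No reflection factorisations and no relative $L$-theory are needed; the vanishing of $L_1^s(\Z)$ replaces your property~(a), and the explicit description of the hyperbolic map into $L_1^s(\Z/m)$ replaces your property~(b).
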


\begin{proof} 
To check that $\phi$ is well-defined, we must show that 
\[ \phi(\IM{(U_{2d}(\Z))} \cap D_{2d}(\Z/m)) \subseteq  \{\pm (\cy m)^{\times 2}\}. \]
Since $m \ge 3$, we have that
\[ \IM{(U_{2d}(\Z))} \cap D_{2d}(\Z/m) = \IM{(SU_{2d}(\Z))} \cap D_{2d}(\Z/m). \]
To see this note that, if $A \in GL_{2d}(\Z)$ and the reduction $\ol{A} \in GL_{2d}(\Z/m)$ has $\det(\ol{A}) = 1$, then $\det(A) \in \{\pm 1\}$ and $\det(A) \equiv \det(\ol{A}) \equiv 1 \mod m$ which implies that $\det(A)=1$ since $m \ge 3$.

Let $\rho_m\colon  SU_{2d}(\Z) \to SU_{2d}(\Z/m)$ denote reduction mod $m$.
Then we have a commutative diagram
\[
\begin{tikzcd}
\rho_m^{-1}(D_{2d}(\Z/m)) \ar[r,hookrightarrow] \ar[d,"\rho_m"] & SU_{2d}(\Z) \ar[d,"\rho_m"] \ar[r] & SU(\bZ)/ RU(\bZ)  = L_1^s(\bZ) \ar[d,"\rho_m"] \\
	D_{2d}(\Z/m) \ar[r,hookrightarrow] & SU_{2d}(\Z/m) \ar[r] & SU(\cy m)/RU(\cy m) =L_1^s(\cy m).
\end{tikzcd}
\]
The image of the subgroup 
$$N:= \IM(U_{2d}(\Z)) \cap D_{2d}(\Z/m) \le \IM(SU_{2d}(\Z) \to SU_{2d}(\cy m))$$ after stabilisation gives a subgroup 
$$\overline{N} \subseteq \IM(L_{1}^s(\bZ) \to L_{1}^s(\cy m)).$$
The calculations of Wall \cite[\S 1]{Wall:1976} show that $L^s_1(\bZ) = L^h_1(\bZ) = 0$, so that
$$0 = \overline{N} \subseteq \ker(L_{1}^s(\cy m) \to L_{1}^h(\cy m)) \cong
  (\Z/m)^\times/\pm (\cy m)^{\times 2}$$
by naturality and \cref{prop:Lgroups-main}  (based on the calculations of \cref{lemma:L_1(Z),lemma:L_1(Z/m)}).
Moreover,  the composite under stabilisation 
$$D_{2d}(\cy m) \to SU_{2d}(\cy m) \to L_1^s(\cy m) \cong 
(\Z/m)^\times/\pm (\cy m)^{\times 2}$$
is just the quotient map used to define $\phi$ (see \cref{cor:exceptcor}).
Therefore  $\phi(\IM(U_{2d}(\Z)) \cap D_{2d}(\Z/m))\subseteq \{\pm (\Z/m)^{\times 2}\}$, and $\phi$ is well-defined.
\end{proof}

We now claim that the map $\phi$ is an isomorphism provided that, in addition, we have $d \ge 3$.
We will begin by establishing the following lifting result for squares, where $(D_{2r}(\Z/m))^2$ denotes the subgroup $\{a^2 : a \in D_{2d}(\Z/m)\}$.

\begin{proposition}
\label{lemma:lifting-squares}
Let $m \ge 1$, $d \ge 3$,  and let $\varepsilon \in \{\pm 1\}$.
Then 
\[(D_{2d}(\Z/m))^2 \subseteq \IM(\EU_{2d}(\Z)) \subseteq \IM(\U_{2d}(\Z))  \] 
where $\IM(\cdot)$ denotes the image under the mod $m$ reduction  map $\U_{2d}(\Z) \to \U_{2d}(\Z/m)$. 
\end{proposition}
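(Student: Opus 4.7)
The second inclusion $\IM(\EU_{2d}(\Z)) \subseteq \IM(\U_{2d}(\Z))$ is immediate from $\EU_{2d} \le \U_{2d}$. For the first, writing $\sigma \in D_{2d}(\Z/m)$ as $\sigma = \left(\begin{smallmatrix} Q & 0 \\ 0 & (Q^*)^{-1}\end{smallmatrix}\right)$ with $Q = \mathrm{diag}(a, 1, \ldots, 1)$ for $a \in (\Z/m)^\times$, the square is $\sigma^2 = \left(\begin{smallmatrix} Q^2 & 0 \\ 0 & (Q^*)^{-2}\end{smallmatrix}\right)$ with $Q^2 = \mathrm{diag}(a^2, 1, \ldots, 1)$. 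The task is to exhibit a preimage of $\sigma^2$ in $\EU_{2d}(\Z)$ under reduction modulo $m$.

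The key idea I would pursue is that, while a naive block-diagonal embedding $SL_d(\Z) \hookrightarrow \EU_{2d}(\Z)$, $Q \mapsto \left(\begin{smallmatrix} Q & 0 \\ 0 & (Q^*)^{-1}\end{smallmatrix}\right)$, cannot produce $\sigma^2$ directly (since $\det Q^2 = a^2$ need not equal $1$ in $\Z$, so no integer $Q \in SL_d(\Z)$ reduces to $Q^2$), one can use non-trivial off-block-diagonal entries divisible by $m$ to cancel this determinant obstruction. Concretely, choose integer lifts $p, q \in \Z$ of $a^2, a^{-2}$ and adjust $p$ by a multiple of $m$ using the fact that $q \in (\Z/m)^\times$ to arrange $pq \equiv 1 \pmod{m^2}$; writing $pq = 1 + m^2 c$, the matrix
\[
M = \left(\begin{smallmatrix} p & m \\ mc & q\end{smallmatrix}\right) \in SL_2(\Z) = E_2(\Z)
\]
reduces to $\mathrm{diag}(a^2, a^{-2})$ modulo $m$ and is a product of elementary matrices by the Euclidean algorithm. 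For $\varepsilon = -1$, the standard embedding $\mathrm{Sp}_2(\Z) \hookrightarrow \mathrm{Sp}_{2d}(\Z)$ acting on the coordinate pair $(1, d+1)$ and fixing the others carries $M$ into $\EU_{2d}(\Z)$ (after possibly adjusting the parity of $p$ to satisfy the quadratic-refinement constraint $\alpha\beta^* = \theta + \theta^*$), and its reduction is $\sigma^2$.

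For $\varepsilon = +1$ this direct symplectic embedding fails, as the skew constraint $\alpha\beta^* = \theta - \theta^*$ cannot be met when the nontrivial entries of $M$ lie only in the $(1, d+1)$-plane. The hypothesis $d \ge 3$ enters essentially here: one uses auxiliary hyperbolic planes on the coordinate pairs $(2, d+2)$ and $(3, d+3)$ to insert additional elementary unitary generators into the word for the embedded $M$, each of which has entries congruent to the identity modulo $m$, thereby leaving the mod-$m$ reduction unchanged while compensating for the quadratic-refinement obstruction over $\Z$. The main obstacle is the detailed verification that this insertion procedure produces an element of $\EU_{2d}(\Z)$ with the prescribed reduction; I would handle it by reducing, via the framework of Appendices \ref{ss:L-odd} and \ref{ss:L-theory-computations}, to the vanishing of the relative $L$-theoretic class controlling the map $U_{2d}(\Z)/\EU_{2d}(\Z) \to U_{2d}(\Z/m)/\EU_{2d}(\Z/m)$. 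Once in the stable range $d \ge 3$ this reduction is effective, and the squares lift as required.
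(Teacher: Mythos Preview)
Your proposal has a genuine gap, and the paper's argument is quite different and more elementary.

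For $\varepsilon = -1$, your construction of $M = \left(\begin{smallmatrix} p & m \\ mc & q\end{smallmatrix}\right) \in SL_2(\Z)$ reducing to $\mathrm{diag}(a^2,a^{-2})$ is fine, and with some care on parities its embedding in positions $\{1,d+1\}$ does lie in $\U_{2d}(\Z)$. But the step ``$SL_2(\Z) = E_2(\Z)$, hence $M$ is a product of elementary unitary matrices'' conflates $K$-theoretic elementary matrices with elementary \emph{unitary} matrices. A generator $\left(\begin{smallmatrix} 1 & x \\ 0 & 1\end{smallmatrix}\right)$ of $E_2(\Z)$, embedded in the $\{1,d+1\}$ block, gives a type~(ii) elementary unitary matrix only when $x$ is even (since $P = \theta + \theta^*$ forces even diagonal over $\Z$). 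So the factorisation of $M$ in $E_2(\Z)$ does not yield a word in $\EU_{2d}(\Z)$, and you have not shown your lift lies in $\EU_{2d}(\Z)$ rather than merely $\U_{2d}(\Z)$.

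For $\varepsilon = +1$, you do not actually produce a lift at all; the phrase ``reduce to the vanishing of the relative $L$-theoretic class controlling the map $U_{2d}(\Z)/\EU_{2d}(\Z) \to U_{2d}(\Z/m)/\EU_{2d}(\Z/m)$'' is not a proof, and the appendices do not contain such a statement. The $L$-theory in \cref{ss:L-theory-computations} is used elsewhere to show $\phi$ is well-defined (\cref{prop:hom-to-Lgroups}); the present proposition is needed separately to show $\phi$ is injective, so deferring to the same machinery risks circularity and in any case does not address the lifting question.

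The paper's proof avoids all of this by never trying to lift $\sigma^2$ as a single matrix. Instead, working in $GL_4(\Z/m)$, one writes
\[
\left(\begin{smallmatrix} a^2 & 0 \\ 0 & a^{-2}\end{smallmatrix}\right) \oplus I_2 \;=\; ((BA)^{-1}\oplus BA)\,(A\oplus A^{-1})\,(B\oplus B^{-1})
\]
via the commutator identity of \cref{lemma:identities}(i), taking $A = \left(\begin{smallmatrix} a^{-1} & 0 \\ 0 & a\end{smallmatrix}\right)$ and $B = \left(\begin{smallmatrix} 0 & 1 \\ 1 & 0\end{smallmatrix}\right)$. Each factor then lifts to $\EU_6(\Z)$ uniformly in $\varepsilon$: the symmetric matrices $A$ and $B$ lift to $\wt A, \wt B \in GL_2(\Z)$ (for $A$ via the Whitehead identity), so $A\oplus A^{-1} = A\oplus (A^*)^{-1}$ and $B\oplus B^{-1}$ lift to type~(i) elementary unitaries; and $(BA)^{-1}\oplus BA \oplus I_2$ lifts by \cref{lem:3step}. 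The point is that factors of the form $Q \oplus (Q^*)^{-1}$ are elementary unitary of type~(i) regardless of $\varepsilon$, so no sign-dependent quadratic-refinement obstruction ever arises.
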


The proof will be based on the following well-known identities. For $r \ge 1$, let $I_r$ denote the $n \times n$ identity matrix and $\oplus$ denote the orthogonal block sum of matrices.

\begin{lemma} \label{lemma:identities}
Let $\l$ be a ring and set $I = I_r$ for some $r \geq 1$.
\begin{clist}{(i)}
\item
If $A, B \in GL_r(\l)$, then
\[
A^{-1}B^{-1}AB \oplus I = ((BA)^{-1} \oplus BA)(A \oplus A^{-1})(B\oplus B^{-1}) \in GL_{2r}(\l). \]
\item
If $A \in GL_r(\l)$, then
\[ A \oplus A^{-1} = \left(\begin{smallmatrix} I & A \\ 0 & I \end{smallmatrix}\right) \left(\begin{smallmatrix}I & 0 \\ I-A^{-1} & I \end{smallmatrix}\right)\left(\begin{smallmatrix} I & -I \\ 0 & I \end{smallmatrix}\right)\left(\begin{smallmatrix} I & 0 \\ I-A & I\end{smallmatrix}\right) \in GL_{2r}(\l).\]
\end{clist}
\end{lemma}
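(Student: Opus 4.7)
Both identities are purely formal matrix manipulations: they hold in an arbitrary ring $\Lambda$ because no commutativity of entries or finiteness beyond invertibility of $A$ (and of $B$ in (i)) is invoked. My plan is therefore simply to verify each identity by direct block multiplication, after which the statements will follow at once.

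For (i), I would observe that each of the three factors on the right, namely $(BA)^{-1} \oplus BA$, $A \oplus A^{-1}$, and $B \oplus B^{-1}$, is block-diagonal, so their product is block-diagonal as well. Reading off the two diagonal blocks separately gives $(BA)^{-1} \cdot A \cdot B = A^{-1}B^{-1}AB$ in the upper-left and $BA \cdot A^{-1} \cdot B^{-1} = I$ in the lower-right, which is exactly the asserted form $A^{-1}B^{-1}AB \oplus I$.

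For (ii), I would carry out the multiplication from right to left, exploiting the fact that each factor differs from the identity in only one block. Starting with $\left(\begin{smallmatrix} I & 0 \\ I-A & I \end{smallmatrix}\right)$, left-multiplication by $\left(\begin{smallmatrix} I & -I \\ 0 & I \end{smallmatrix}\right)$ yields $\left(\begin{smallmatrix} A & -I \\ I-A & I \end{smallmatrix}\right)$. Then left-multiplication by $\left(\begin{smallmatrix} I & 0 \\ I-A^{-1} & I \end{smallmatrix}\right)$ clears the $(2,1)$ block via the identity $(I-A^{-1})A + (I-A) = 0$, producing $\left(\begin{smallmatrix} A & -I \\ 0 & A^{-1} \end{smallmatrix}\right)$. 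Finally, left-multiplication by $\left(\begin{smallmatrix} I & A \\ 0 & I \end{smallmatrix}\right)$ clears the $(1,2)$ block via $-I + A \cdot A^{-1} = 0$, giving $A \oplus A^{-1}$ as required.

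There is no genuine obstacle here; every step is a one-line block computation. The only remark worth making is conceptual: identity (ii) is a Whitehead-lemma style decomposition expressing $A \oplus A^{-1}$ as a product of unit-triangular (elementary) block matrices, while identity (i) expresses the stabilisation of a commutator $A^{-1}B^{-1}AB$ as a product of $A \oplus A^{-1}$-type factors. Together they furnish the lifts of squares in $D_{2d}(\Z/m)$ to $\EU_{2d}(\Z)$ needed in \cref{lemma:lifting-squares}, which is why both identities are recorded at this point in the argument.
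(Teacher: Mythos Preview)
Your proposal is correct and matches the paper's approach exactly: the paper states that both identities ``can be verified directly by multiplication'' (noting that (i) appears in Wall and (ii) is the Whitehead identity), and your block-by-block computation does precisely this verification.
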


The two identities can be verified directly by multiplication. The former appears in \cite[Proof of Theorem 6.3]{Wall:1970} and the latter is the Whitehead identity.

\begin{lemma}\label{lem:3step}
 Let $\l$ be a ring with involution. If $Q \in GL_r(\La)$, then $Q \oplus Q^{-1}\oplus I_r \in EU^\ep_{3r}(\La)$.
\end{lemma}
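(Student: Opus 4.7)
The plan is to combine the Whitehead identity in \cref{lemma:identities}(ii) with a stabilisation argument: the identity decomposes $Q \oplus Q^{-1}$ into four triangular factors in $GL_{2r}(\La)$ that are \emph{not} in general unitary (their off-diagonal blocks need not be $\ep$-hermitian), but the extra copy of $\La^r$ provides precisely the room needed to rewrite each triangular factor as a product of honest elementary unitary transvections.

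First, I apply \cref{lemma:identities}(ii) with $A = Q$ to obtain
$$Q \oplus Q^{-1} \;=\; T_1 T_2 T_3 T_4 \;\in\; GL_{2r}(\La),$$
where each $T_i$ is upper or lower triangular with diagonal $I_r$ and off-diagonal block of the form $\pm I_r$ or $\pm(I_r - Q^{\pm 1})$. Taking the direct sum with $I_r$ gives
$$Q \oplus Q^{-1} \oplus I_r \;=\; (T_1 \oplus I_r)(T_2 \oplus I_r)(T_3 \oplus I_r)(T_4 \oplus I_r) \;\in\; GL_{3r}(\La),$$
so it suffices to check that each factor $T_i \oplus I_r$ lies in $EU^\ep_{3r}(\La)$.

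Second, I would show the following stabilisation statement: for every $X \in M_r(\La)$, the matrix
$$\begin{pmatrix} I_r & X & 0 \\ 0 & I_r & 0 \\ 0 & 0 & I_r \end{pmatrix} \;\in\; GL_{3r}(\La)$$
lies in $EU^\ep_{3r}(\La)$, and similarly for the lower-triangular analogue. The key identity is that this matrix can be written as a product of two elementary unitary transvections whose off-diagonal blocks pair the first (resp.\ second) hyperbolic coordinate with the third $\La^r$ summand in a way that their combined off-diagonal contribution across the three blocks is $\ep$-hermitian. Concretely, one writes the stabilised triangular matrix as a commutator (using \cref{lemma:identities}(i) with suitable $A$, $B$ supported on disjoint $r$-blocks) of simple matrices, each of which is visibly in $EU^\ep_{3r}(\La)$. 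Iterating this over the four factors $T_i$ gives the claim.

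The main obstacle is the explicit factorisation in the second step: the four $T_i$'s have different triangular shapes and differing off-diagonal entries, so one needs a careful case analysis to exhibit each $T_i \oplus I_r$ as a product of $\ep$-hermitian unitary transvections on the three-block module $\La^r \oplus \La^r \oplus \La^r$. This is the standard "stable elementarity" manoeuvre familiar from the $K$-theory of hermitian forms (cf.\ Bass, Wall), and the number $3$ here is optimal in that it is the smallest stabilisation index for which the $\ep$-symmetry obstruction to writing $T_i$ as a unitary transvection can be absorbed.
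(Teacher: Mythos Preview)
Your approach takes an unnecessarily roundabout path and leaves the hardest step undone. The paper's proof is a two-line observation that avoids the Whitehead identity entirely: one checks the identity
\[
Q \oplus Q^{-1} \oplus I_r \;=\; \bigl(Q \oplus I_r \oplus (Q^*)^{-1}\bigr)\,\bigl(I_r \oplus Q \oplus (Q^*)^{-1}\bigr)^{-1},
\]
and notes that each factor is, after the evident block permutation, a stabilisation of $Q \oplus (Q^*)^{-1}$, which is a type-(i) elementary unitary matrix by \cref{def:elemU}. No hermitian constraint ever appears, because type-(i) generators come directly from $GL_r(\La)$ with no condition on $Q$.

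Your proposal instead uses the Whitehead identity to produce four triangular factors $T_i$ whose off-diagonal blocks are \emph{not} of the form $A - (-1)^n A^*$, and then asserts that adding an extra $I_r$ allows each $T_i \oplus I_r$ to be rewritten as a product of $\ep$-hermitian transvections. You explicitly flag this as the ``main obstacle'' and do not carry it out; the suggested appeal to \cref{lemma:identities}(i) is misplaced, since that identity decomposes a commutator $A^{-1}B^{-1}AB \oplus I$, not a single transvection. More fundamentally, your three-block picture $\La^r \oplus \La^r \oplus \La^r$ does not align with the hyperbolic $d$-plus-$d$ block structure defining $\U_{2d}(\La)$, so it is unclear what ``elementary unitary transvection pairing the first block with the third'' is supposed to mean. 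The point you are missing is that type-(i) generators already realise $R \oplus (R^*)^{-1}$ for \emph{arbitrary} $R \in GL$, so a direct factorisation into two such pieces sidesteps the hermitian obstruction altogether and makes the Whitehead decomposition superfluous here.
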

\begin{proof} By matrix multiplication, we have 
$$Q \oplus Q^{-1}\oplus I_r =  \left ( Q \oplus I_r\oplus (Q^*)^{-1} \right ) \left (I_r \oplus Q\oplus (Q^*)^{-1} \right )^{-1}\in U^\ep_{3r}(\La)$$
and, since $Q \in GL_r(\La)$, we have $Q \oplus (Q^*)^{-1}  \in EU^\ep_{2r}(\La)$ by definition of the elementary unitary relations. The result follows by stabilisation.
\end{proof}

\begin{proof}[Proof of \cref{lemma:lifting-squares}]
It suffices to treat the case $d=3$ since, after reordering bases, an arbitrary element of $(D_{2d}(\Z/m))^2$ has the form $\left(\begin{smallmatrix}a^2 & 0 \\ 0 & a^{-2} \end{smallmatrix}\right) \oplus I_{2d-2}$ for some $a \in (\Z/m)^\times$. 

We start by applying \cref{lemma:identities} (i)  to the matrices 
$$A = \left(\begin{smallmatrix}a^{-1} & 0 \\ 0\sx & a \end{smallmatrix}\right),  \quad B = \left(\begin{smallmatrix} 0 & 1 \\ 1 & 0 \end{smallmatrix}\right) \quad \text{and\ } \quad (BA)^{-1} = \left(\begin{smallmatrix}0 \sx& a \\ a^{-1} & 0 \end{smallmatrix}\right)$$  
in $GL_2(\Z/m)$. Since
$A^{-1}B^{-1}AB  = \left(\begin{smallmatrix}a^2 & 0 \\ 0 & a^{-2} \end{smallmatrix}\right)$, \cref{lemma:identities} (i) with $r=2$ gives that
$$ \left(\begin{smallmatrix}a^2 & 0 \\ 0 & a^{-2} \end{smallmatrix}\right) \oplus I_2 = A^{-1}B^{-1}AB \oplus I_2 =
((BA)^{-1} \oplus BA)(A \oplus A^{-1})(B\oplus B^{-1})  \in GL_4(\cy m).$$
It remains to show that the three matrices on the right-hand side are, after one stabilisation, in the image of  $EU^\ep_{6}(\bZ)$ under the map induced by reduction mod $m$. 

First, it is clear that $B$ lifts to $\wt B = \left(\begin{smallmatrix} 0 & 1 \\ 1 & 0 \end{smallmatrix}\right) \in GL_2(\Z)$. Since $B$ is symmetric and $\Z$ has trivial involution, $B \oplus B^{-1} = B \oplus (B^*)^{-1}$ lifts to $\wt B \oplus (\wt B^*)^{-1} \in \EU_4(\Z)$.
Secondly, by \cref{lemma:identities} (ii), we have that
\[ A = \left(\begin{smallmatrix} a^{-1} & 0 \\ 0 \sx & a \end{smallmatrix}\right) = \left(\begin{smallmatrix} 1 & a^{-1} \\ 0 & 1\sx \end{smallmatrix}\right)\left(\begin{smallmatrix} 1 & 0 \\ 1-a & 1 \end{smallmatrix}\right)\left(\begin{smallmatrix} 1 & -1 \\ 0 & \sx 1 \end{smallmatrix}\right)\left(\begin{smallmatrix} 1 & 0 \\ 1-a^{-1} & 1 \end{smallmatrix}\right) \]
and each of the matrices on the right hand side lifts to $ GL_2(\Z)$.  Hence $A$ lifts to $\tilde A \in GL_2(\bZ)$. Since $A$ is symmetric and $\cy m$ has trivial involution, $A \oplus A^{-1} = A \oplus (A^*)^{-1}$ lifts to $\tilde A \oplus ((\tilde A)^*)^{-1} \in EU_4(\bZ)$.
Finally, $BA$ lifts to $\wt B \wt A \in GL_2(\Z)$ and so $(BA)^{-1} \oplus BA \oplus I_2$ lifts to $(\wt B \wt A)^{-1} \oplus (\wt B \wt A) \oplus I_2$. By \cref{lem:3step}, this is contained in $EU^\ep_6(\cy m)$.
This completes the proof.
\end{proof}

\begin{proof}[Proof of \cref{thm:BQ-main} {\text{\normalfont (i)}}]
By combining \cref{thm:PB-H_2-elementary} (i) and \cref{prop:hom-to-Lgroups}, we have a map
\[ PB_Q(G,n) \cong \frac{D_{2d}(\Z/m)}{\IM(U_{2d}^\varepsilon(\Z)) \cap D_{2d}(\Z/m)} 
\xrightarrow{\ \quad \phi\quad } \frac{(\Z/m)^\times}{\pm (\cy m)^{\times 2}}\,\cdot \]
We claim that $\phi$ is bijective (it is clearly surjective by definition).
The result holds for $m \le 2$ since both sides are trivial. For example, if $m =2$, then we have $D_{2d}(\Z/2) \cong (\Z/2)^{\times} = \{1\}$. From now on, we will assume that $m \ge 3$. 

To see that $\phi$ is injective, suppose that $A=(a) \oplus (a^{-1}) \oplus I_{2d-2} \in D_{2d}(\Z/m)$ has $\phi(A) = 0$. This implies that $a\in \pm (\cy m)^{\times 2}$ and so $A \in \pm (D_{2d}(\Z/m))^{\times 2}$. By \cref{prop:sevenfifteen}, we have that $-I_{2d} \in \rho_m^{-1}(D_{2d}(\Z/m))$ where $\rho_m\colon  SU_{2d}(\Z) \to SU_{2d}(\Z/m)$ denote reduction mod $m$. By \cref{lemma:lifting-squares}, we have that $(D_{2d}(\Z/m))^{\times 2} \subseteq \rho_m^{-1}(D_{2d}(\Z/m))$. Hence the image $[A] = 0 \in PB_Q(G,n)$, and $\phi$ is injective.
\end{proof}

\subsection{Proof of \cref{thm:BQ-main} for $n$ odd} \label{ss:proof-n-odd}

Let $n > 2$ be odd so that $\ep = -1$. In order to show that $PB_Q(G,n)=0$, it is enough to check that every element in $D_{2d}(\Z/m)\subseteq \SU_{2d}(\cy m)$ can be lifted to $\U_{2d}(\bZ)$ (see \cref{thm:PB-H_2-elementary} (ii)).
From  \cref{prop:except}, we have $L_3^s(\cy m)=\SU(\cy m)/\RU(\cy m) =0$, and hence the image of 
$D_{2d}(\Z/m)$ after stabilisation is zero in $L_3^s(\cy m)$. 
In other words, $D_{2d}(\Z/m) \subseteq  \RU(\cy m)$.

By stability results for unitary groups (see \cite{Bak:2003}, \cite[Chapter VI]{Knus:1991}), and the fact that the ring $\La = \cy m$ has stable range 1, it follows that $RU_{2d}^\ep(\cy m) = \RU(\cy m)$ provided  that $d \geq 3$. Hence any element in $RU_{2d}^\ep(\cy m)$ can be expressed as a product of elementary special unitary matrices (see the list in \cref{ss:L-odd}, with the additional assumption $\det Q = 1$ in item (i)).  We apply this observation to each  element $A = \left(\begin{smallmatrix}a & 0 \\ 0 & a^{-1} \end{smallmatrix}\right) \in D_{2d}(\Z/m)$. Each of the terms in this product can be lifted to $\U_{2d}(\bZ)$, by the methods used in the proof of \cref{lemma:lifting-squares}, and hence $PB_Q(G,n) = 0$ for $n$ odd. 

\section{Examples of homotopy inequivalent $(G,n)$-complexes} \label{s:examples-complexes}

The aim of this section will be to survey examples of finite $(G,n)$-complexes $X$ and $Y$ with $\pi_1(X) \cong \pi_1(Y)$ and $\chi(X)=\chi(Y)$ but which are not homotopy equivalent. The 4-manifolds which we construct in \cref{s:examples-manifolds} in order to prove \cref{thmx:main-examples}  will all be constructed by applying the doubling construction to the examples here.

Let $\beta$ denote the bias invariant defined in \cref{s:bias-complexes}. For a finite abelian group $G$ and $r \in (\Z/m_{(G,n)})^\times$, define $X^r_{G,n}$ to be the finite $(G,n)$-complexes defined in \cite[Proof of Proposition 6]{Sieradski:1979}. For example, when $n=2$ and $G = \Z/{m_1} \times \cdots \times \Z/{m_d}$ with $m_i \mid m_{i+1}$ for all $i$ and $m_1 > 1$ (and so $m_1 = m_G$), we have that $X^r_{G,2}$ coincides with the presentation complex for the presentation:
\[ \mathcal{P}_r = \langle x_1, \cdots, x_d \mid x_1^{m_1}, \cdots, x_n^{m_d}, [x_1^r,x_2], \{[x_i,x_j]\colon i<j, \, (i,j) \ne (1,2)\} \rangle. \]

We will now point out the following. This is a consequence of work of Metzler \cite{Metzler:1976}, Sieradski \cite{Sieradski:1977}, Sieradski-Dyer \cite{Sieradski:1979}, Browning \cite{Browning:1979c,Browning:1979} and Linnell \cite{Linnell:1993}. However, as far as we know, this observation has not previously appeared in the literature for all $n \ge 2$.

\begin{theorem} \label{thm:bias-surj-abelian}
Let $n \ge 2$, let $G$ be a finite abelian group and let $m = m_{(G,n)}$. Then:
\begin{clist}{(i)}
\item
The bias invariant gives a bijection
\[ \beta\colon \HT_{\min}(G,n) \to B(G,n).\]
Furthermore, $\beta(X^r_{G,n}) = [r]$ for all $r \in (\Z/m)^\times$ and so every minimal finite $(G,n)$-complex $X$ has $X \simeq X^r_{G,n}$ for some $r \in (\Z/m)^\times$.
\item
If $X$, $Y$ are finite $(G,n)$-complexes with $(-1)^n\chi(X)=(-1)^n\chi(Y)>\chi_{\min}(G,n)$, then $X \simeq Y$.
\end{clist}
\end{theorem}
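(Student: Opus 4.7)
The plan is to combine three classical inputs: an explicit bias computation for the Sieradski-Dyer model complexes $X^r_{G,n}$, the Metzler-Browning-Linnell classification of algebraic $n$-complexes over $\Z G$ for $G$ finite abelian, and stability of homotopy types strictly above the minimum Euler characteristic.

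For part (i), I would first establish the explicit formula $\beta(X^r_{G,n}) = [r]$. Using the construction in \cite[Proof of Proposition 6]{Sieradski:1979}, the universal cover $\wt X^r_{G,n}$ has a standard free $\Z G$-chain complex in which the top differential is essentially block-diagonal, with a single off-diagonal block becoming multiplication by $r$. Choosing the obvious chain map $X^r_{G,n} \to X^1_{G,n}$ inducing the identity on $\pi_1$, the induced map on $H_n(\Z \otimes_{\Z G} C_*) \cong \Z^d$ has determinant $r$, and \cref{prop:bias-versions} then yields $\beta(X^r_{G,n}, X^1_{G,n}) = [r] \in (\Z/m)^\times/\{\pm 1\}$. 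This shows $\beta$ realises every class, and hence surjects onto $B(G,n) \cong (\Z/m)^\times/\pm D(G,n)$.

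For injectivity, suppose $X, Y \in \HT_{\min}(G,n)$ satisfy $\beta(X) = \beta(Y) \in B(G,n)$. After composing with a suitable $\theta \in \Aut(G)$ lifting the ambiguity in $D(G,n)$, one reduces to the polarised case $\beta(X,Y) = 0 \in PB(G,n)$. By \cref{prop:d2-problem} it suffices to produce a chain homotopy equivalence $C_*(\wt X) \simeq C_*(\wt Y)$, and this promotion is the main obstacle of the proof. The strategy is to prove the auxiliary algebraic classification that every $C \in \Alg_{\min}(G,n)$, for $G$ finite abelian, is chain homotopy equivalent to $C_*(\wt X^r_{G,n})$ for a unique $r \in (\Z/m)^\times$ modulo $\pm 1$. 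This rests on cancellation of stably free $\Z G$-modules against free modules of positive rank, which for $G$ finite abelian follows from Swan's analysis of projective $\Z G$-modules together with an Eichler-type condition on $\Q G$. Browning \cite{Browning:1979c, Browning:1979} carries out such cancellation arguments for $n$ even, and Linnell \cite{Linnell:1993} treats $n$ odd while correcting a gap in \cite[Proposition 8]{Sieradski:1979}. Combined with Step~1, this identifies $\beta$ as a bijection.

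Part (ii) is essentially formal once the framework of (i) is in place. For $(-1)^n\chi(X) > \chi_{\min}(G,n)$, \cref{prop:PB(Gnl)}(ii) gives $B(G,n,\chi) = 0$, so the bias carries no information. That $X \simeq Y$ nevertheless follows from classical stability: Browning \cite{Browning:1979} for $n$ even, and Dyer \cite{Dy78} together with Wilson \cite{Wi73} for $n = 2$, with higher odd $n$ handled by the analogous arguments of Wall \cite{Wa65}. These results, which apply to arbitrary finite $G$ and hence in particular to finite abelian $G$, show that homotopy types of non-minimal finite $(G,n)$-complexes are determined by $\pi_1$ and $\chi$ alone.
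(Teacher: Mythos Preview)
Your approach to part (i) is essentially the same as the paper's: both use the Sieradski--Dyer computation $\beta(X^r_{G,n})=[r]$ for surjectivity and the Browning--Linnell classification for injectivity. Your framing of the latter (via cancellation of stably free modules and an Eichler condition) is a bit off --- that is machinery internal to Browning's obstruction theory, not the statement you need --- but the references you cite do deliver the classification, so this is a matter of exposition rather than a real gap. The paper phrases injectivity slightly differently: it first shows every minimal complex is some $X^r_{G,n}$ (via the Browning obstruction group), and then deduces injectivity by matching the Sieradski--Dyer lower bound on $|\HT_{\min}(G,n)|$ with Linnell's exact count.

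There is a genuine gap in your argument for part (ii). Your citations do not cover the case $n$ odd and $(-1)^n\chi = \chi_{\min}(G,n)+1$. Browning \cite{Browning:1979} handles $n$ even for arbitrary finite $G$; Dyer \cite{Dy78} and Wilson \cite{Wi73} only give uniqueness two or more levels above the minimum; and Wall \cite{Wa65} concerns realisability (the map $\mathscr{C}$ in \cref{prop:d2-problem} being bijective for $n>2$), not uniqueness of non-minimal homotopy types. Indeed, for general finite $G$ and $n$ odd, uniqueness exactly one level above the minimum is open (see the remark following \cref{remark:bias-minimality}). The correct reference here is Linnell \cite[Theorem 1.1]{Linnell:1993}, which establishes the result for finite \emph{abelian} $G$ in all dimensions; this is what the paper invokes alongside Browning. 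Also note that your phrase ``Dyer together with Wilson for $n=2$'' is confused, since $n=2$ is already covered by the even case.
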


This is achieved using the Browning invariant, as introduced by Browning in \cite{Browning:1979c}. For a finite group $G$ and finite $(G,n)$-complexes $X$ and $Y$, this is an invariant $B(X,Y) \in \Br(G,n)$ where $\Br(G,n)$ is the Browning obstruction group (see also \cite[Section 2]{Latiolais:1993}). If $G$ is a finite abelian group, then this is a complete homotopy invariant.

\begin{proof}
(i) We begin by noting that, if $X$ is a minimal finite $(G,n)$-complex, then $X \simeq X^r_{G,n}$ for some $r \in (\Z/m)^\times$. The case $n=2$ is \cite[Theorem 1.7]{Browning:1979c}. For the general case, the result follows from the fact that, as noted by Linnell in \cite[p318]{Linnell:1993}, the complexes $X^r_{G,n}$ realise all the elements of the Browning obstruction group $\Br(G,n)$.

Note that $\beta$ is surjective since $\beta(X^r_{G,n}) = [r]$ for all $r \in (\Z/m)^\times$ \cite[Proposition 6]{Sieradski:1979}. To show that $\beta$ is injective, it remains to prove that the complexes $X^r_{G,n}$ are determined up to homotopy equivalence by $\beta$. The case $n=2$ was proven by Sieradski \cite{Sieradski:1977} by constructing explicit homotopy equivalences.
The general case is a consequence of the comparison between \cite[Proposition 8]{Sieradski:1979}, which gives a lower bound on $|\HT_{\min}(G,n)|$ by computing $\IM(\beta)$, and \cite[Theorem 1.3]{Linnell:1993} which shows that $|\HT_{\min}(G,n)|$ is equal to this lower bound (see also the discussion on \cite[p307]{Linnell:1993}).

(ii) This follows from a result of Browning \cite[Theorem 5.4]{Browning:1979} (see also \cite[Theorem 1.1]{Linnell:1993}).
\end{proof}

Define $\gamma(G,n) = |\HT_{\min}(G,n)|$. The above shows that, for $n \ge 2$ and $G$ finite abelian, we have $\gamma(G,n) = |B(G,n)|$. Recall from \cref{ss:bias-complexes} that $B(G,n) = (\Z/m)^\times/\pm D(G,n)$ where $D(G,n) = \IM(\varphi_{(G,n)} \colon \Aut(G) \to (\Z/m)^\times/\{\pm 1\})$.

The following was shown by Browning \cite{Browning:1979} in the case $n=2$, and Sieradski-Dyer \cite[Proposition 8]{Sieradski:1979} and Linnell \cite[Theorem 1.3 \& Corollary 1.5]{Linnell:1993} in the general case.

\begin{proposition} \label{prop:compute-D(Gn)}
Let $n \ge 2$, let $G$ be a finite abelian group, let $m=m_{(G,n)}$ and $d = d(G)$.
Then:
\[ D(G,n) = ((\Z/m)^\times)^{e(d,n)}\] 
where 
\vspace{-2mm}
\[ 
e(d,n) = 
\begin{cases}
\sum_{i=0}^{\frac{n}{2}-1} \frac{n-2i}{2} \binom{d+2i-1}{d-2}, & \text{if $n$ is even and $d \ge 2$} \\
\sum_{i=0}^{\frac{n-1}{2}} \frac{n+1-2i}{2} \binom{d+2i-2}{d-2}, & \text{if $n$ is odd and $d \ge 2$}
\end{cases}
\]
and $e(1,n)=1$ for $n$ even, $e(1,n) = \frac{1}{2}(n+1)$ for $n$ odd.

In particular, we have $\gamma(G,n) = |(\Z/m)^\times / \pm ((\Z/m)^\times)^{e(d,n)}|$.
\end{proposition}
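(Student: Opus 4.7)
The plan is to reduce the computation of $\varphi_{(G,n)}\colon\Aut(G)\to PB(G,n)\cong(\Z/m)^\times/\{\pm 1\}$ to an explicit determinant and then to evaluate it combinatorially. By Proposition \ref{prop:bias-versions}, for any minimal $(G,n)$-complex $X$ and any chain map $h\colon C_*(\wt X)\to C_*(\wt X)_\theta$ with $H_0(h)=\id_\bZ$, we have
\[
\varphi_{(G,n)}(\theta)=\overline{\det H_n(\id_\bZ\otimes h)}\in(\Z/m)^\times/\{\pm 1\}.
\]
Thus $D(G,n)$ is the image of $\Aut(G)$ acting, via the determinant, on the free abelian group $H_n(\bZ\otimes_{\bZ G}C_*(\wt X))$.

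For $G$ finite abelian, I would take $X$ to be a Sieradski--Dyer complex $X^1_{G,n}$ (see \cite[p.~202]{Sieradski:1979}), for which $C_*(\wt X^1_{G,n})$ admits an explicit description coming from the minimal $\Z G$-resolution of $\Z$ built out of the tensor product of the $\cy {m_i}$-resolutions.  Writing $G=\Z/m_1\oplus\cdots\oplus\Z/m_d$ with $m=m_1\mid\cdots\mid m_d$, the K\"unneth decomposition of $H_*(G;\Z)$ transports to an explicit basis of $H_n(\bZ\otimes_{\bZ G}C_*(\wt X))$, and the action of an automorphism of the form $\theta_r\colon x_1\mapsto x_1^r,\ x_j\mapsto x_j$ $(j\ge 2)$ on this basis can be read off directly.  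Since $(\Z/m)^\times$ is generated by such units as $r$ ranges over $(\Z/m)^\times$, and $\Aut(G)$ surjects onto these units, it suffices to compute $\det H_n(\id_\bZ\otimes h_{\theta_r})=r^{e(d,n)}$ for an explicit integer $e(d,n)$ counting those basis vectors on which $\theta_r$ acts by multiplication by $r$.

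The core of the argument is the combinatorial identification of $e(d,n)$.  Each fixed partial degree in the $H_*(\Z/m_1)$-factor contributes a number of monomials equal to a binomial coefficient of the form $\binom{d+2i-1}{d-2}$ or $\binom{d+2i-2}{d-2}$ (for even or odd residual degree respectively), and weighting these contributions by the multiplicity of $r$ yields the stated formula.  For $n=2$ this calculation was carried out by Browning \cite{Browning:1979}.  For $n\ge 3$ the formula is due to Linnell \cite[Theorem 1.3 \& Corollary 1.5]{Linnell:1993}, correcting the erroneous expression in Sieradski--Dyer \cite[Proposition 8]{Sieradski:1979} (as noted on \cite[p.~305]{Linnell:1993}).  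The separate cases $d=1$ follow from the same analysis specialised to a single cyclic factor, where the exterior contributions collapse.  Finally, the count $\gamma(G,n)=|(\Z/m)^\times/\pm((\Z/m)^\times)^{e(d,n)}|$ is immediate from Theorem \ref{thm:bias-surj-abelian}(i), since that theorem identifies $\gamma(G,n)$ with $|B(G,n)|=|PB(G,n)/D(G,n)|$.  The main obstacle is the bookkeeping in the combinatorial identity for $e(d,n)$, which is why the original Sieradski--Dyer formula required correction.
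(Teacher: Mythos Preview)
Your proposal is on the right track and cites the same literature the paper does: the paper itself gives no argument for this proposition, merely attributing it to Browning \cite{Browning:1979} for $n=2$ and to Sieradski--Dyer \cite[Proposition~8]{Sieradski:1979} together with Linnell \cite[Theorem~1.3 \& Corollary~1.5]{Linnell:1993} for general $n$. Your sketch of how those arguments go is broadly correct in strategy.

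There is, however, a genuine gap in your outline. By computing $\varphi_{(G,n)}(\theta_r)=r^{e(d,n)}$ only for the automorphisms $\theta_r\colon x_1\mapsto x_1^r,\ x_j\mapsto x_j$, you establish only the inclusion $((\Z/m)^\times)^{e(d,n)}\subseteq D(G,n)$. Your sentence ``Since $(\Z/m)^\times$ is generated by such units \dots\ it suffices to compute \dots'' does not justify the reverse inclusion: $\Aut(G)$ contains many automorphisms not of this diagonal form (permutations of invariant factors, shears between factors, scalings on $x_j$ for $j>1$), and a priori any of these could land outside the subgroup of $e(d,n)$-th powers. One must either (a) check that a generating set for $\Aut(G)$ maps into $((\Z/m)^\times)^{e(d,n)}$, or (b) argue structurally that $\varphi_{(G,n)}$ factors through a quotient of $\Aut(G)$ on which the image is visibly contained in the $e(d,n)$-th powers. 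Linnell's argument does address this point; your sketch should at least acknowledge that the reverse containment requires separate work. The combinatorial description of $e(d,n)$ as a weighted count of K\"unneth basis elements is also left vague, but since you correctly defer to Linnell for the formula (and note the Sieradski--Dyer error), this is less serious.
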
 

\begin{remark} \label{remark:KS-error}
In \cite[Proposition 8]{Sieradski:1979}, Sieradski-Dyer proved that the number of minimal finite $(G,n)$-complexes up to homotopy equivalence was at least the bound given above. However, as pointed out in \cite[p305]{Linnell:1993}, the formula for $e(d,n)$ given in \cite[p210-211]{Sieradski:1979} is incorrect when $n \ge 3$. 
\end{remark}

For example, $e(d,2) = d-1$ for $d \ge 2$. Using elementary number theory, it is possible to evaluate $\gamma(G,n)$ precisely (see \cite[p137]{Sieradski:1977}).

We will now consider the homotopy classification of finite $(G,n)$-complexes $X$ for which $\pi_n(X)$ is a fixed $\Z G$-module. For a minimal finite $(G,n)$-complex $X$, define
\[ \gamma'(G,n) = |\{\, Y \in \HT_{\min}(G,n)\colon \pi_n(X) \cong_{\Aut(G)} \pi_n(Y) \}|.\] 
Let $N = \sum_{g \in G} g \in \Z G$ denote the group norm and let $(N,r) = \Z G \cdot N + \Z G \cdot r \le \Z G$ denote the Swan module corresponding to $r \in \Z$.
In what follows, we write $[\,\cdot\,]$ to denote the quotient map $(\Z/m)^\times/\{\pm 1\} \twoheadrightarrow (\Z/m)^\times/\pm ((\Z/m)^\times)^{e(d,n)} \cong B(G,n)$.

\begin{theorem} \label{thm:gamma'}
Let $n \ge 2$, let $p$ be a prime, let $G$ be a non-cyclic abelian $p$-group, let $d = d(G)$ and $m = m_{(G,n)}$. Then:
\begin{clist}{(i)}
\item
 For a minimal finite $(G,n)$-complex $X$, we have
\[ \beta(\{\, Y \in \HT_{\min}(G,n) : \pi_n(X) \cong_{\Aut(G)} \pi_n(Y)\}) = \beta(X) \cdot [V^{s(d,n)}] \subseteq B(G,n) \]
where $s(d,n) = \sum_{i=0}^{n-1} (-1)^{n+i+1}\binom{d+i}{d-1}$ and
\[ V = \{ [r] \in (\Z/m)^\times : r \in \Z, \, (r,|G|)=1 \text{ and } (N,r) \cong \Z G \}. \]
\item
$\gamma'(G,n)$ is equal to the number of elements in $[V^{s(d,n)}] \subseteq B(G,n)$. In particular, it depends only on $G$ and $n$, and not on the choice of $X$.
\item If $p =2$, then $\gamma'(G,n) = 1$. If $p$ is odd, then
\[ 
\gamma'(G,n) = \frac{\gcd(e(d,n),\frac{1}{2}(p-1))}{\gcd(e(d,n),s(d,n),\frac{1}{2}(p-1))} \,\cdot
\]
\end{clist}
\end{theorem}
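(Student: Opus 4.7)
The plan is to combine Theorem~\ref{thm:bias-surj-abelian} with a stable-class computation of $\pi_n(X^r_{G,n})$ in the reduced Grothendieck group $\wt K_0(\Z G)$ and then extract the $\Aut(G)$-isomorphism class via Swan module algebra together with Swan--Jacobinski cancellation.

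\textbf{Parametrisation.} By \cref{thm:bias-surj-abelian}, every $Y \in \HT_{\min}(G,n)$ is homotopy equivalent to $X^s_{G,n}$ for a unique class $[s] \in B(G,n)$, with $\beta(X^s_{G,n}) = [s]$. Writing $\beta(X) = [r]$, the proof of (i) reduces to identifying the subset $\{[s] \in B(G,n) : \pi_n(X^s_{G,n}) \cong_{\Aut(G)} \pi_n(X^r_{G,n})\}$ with the coset $[r]\cdot[V^{s(d,n)}]$.

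\textbf{Swan class of the relation module.} The Sieradski--Dyer cellular chain complex of $\wt{X^r_{G,n}}$ differs from that of $\wt{X^1_{G,n}}$ only by a single boundary entry twisted by a unit lift of $r$. Applying Schanuel's lemma to compare the two chain complexes, together with the multiplicative identity $[(N,ab)] = [(N,a)] + [(N,b)]$ in $\wt K_0(\Z G)$, yields
\[
[\pi_n(X^r_{G,n})] - [\pi_n(X^1_{G,n})] = s(d,n)\cdot [(N,r)] \in \wt K_0(\Z G),
\]
where $s(d,n)$ is read off as an alternating sum of the ranks of the minimal free resolution of $\Z$ over $\Z G$ below degree $n$; a binomial identity rewrite recovers the expression in the statement.

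\textbf{Cancellation and descent to $B(G,n)$.} Each Swan module $(N,r)$ is $\Aut(G)$-stable since $r \in \Z$. Because $G$ is a finite abelian $p$-group, Swan--Jacobinski cancellation applies, so stable isomorphism of $\Z G$-lattices of equal rank coincides with isomorphism. Combined with the previous step, $\pi_n(X^s_{G,n}) \cong_{\Aut(G)} \pi_n(X^r_{G,n})$ iff $(s/r)^{s(d,n)} \in V$. Passing to $B(G,n) = (\Z/m)^\times / \pm(\Z/m)^{\times e(d,n)}$ and invoking the cyclic structure of $(\Z/m)^\times$ identifies this condition with $[s/r] \in [V^{s(d,n)}]$, proving (i); part (ii) follows immediately. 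For (iii) with $p = 2$, Ullom's theorem gives $T(G) = 0$, so $V = (\Z/m)^\times$ and $\gamma'(G,n) = 1$. For $p$ odd, $(\Z/m)^\times/\{\pm 1\}$ is cyclic of order $\tfrac12\varphi(m)$, and an index calculation combining $\pm D(G,n) = \pm(\Z/m)^{\times e(d,n)}$ with the $s(d,n)$-th power map on $V$ yields the stated gcd-quotient.

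\textbf{Main obstacle.} The principal difficulty lies in the binomial bookkeeping of the Swan class step: Sieradski--Dyer's argument in \cite[Proposition 8]{Sieradski:1979} contains the rank error in dimensions $n \geq 3$ noted in \cref{remark:KS-error}, so Linnell's corrected computation \cite[Theorem 1.3]{Linnell:1993} must be used to secure the coefficient $s(d,n)$. A secondary subtlety is the transition from the preimage condition $(s/r)^{s(d,n)} \in V$ to the subgroup $[V^{s(d,n)}]$ in $B(G,n)$, which rests on the specific index of $\pm D(G,n)$ within the cyclic group $(\Z/m)^\times$.
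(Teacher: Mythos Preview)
The paper's proof is a direct citation: part (i) is \cite[Theorem 8.4(iii)]{Linnell:1993} combined with the bias bijection of \cref{thm:bias-surj-abelian}, part (ii) follows from (i), and part (iii) is \cite[Theorem 1.2(4)]{Linnell:1993}. Your proposal attempts to re-derive Linnell's result, but the core step does not work as written.

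The problem is the Swan-class formula. The chain complexes $C_*(\wt{X^r_{G,n}})$ and $C_*(\wt{X^1_{G,n}})$ are partial free resolutions of $\Z$ with \emph{identical} free modules in each degree (only the boundary maps differ), so the long Schanuel lemma gives $\pi_n(X^r)\oplus F \cong \pi_n(X^1)\oplus F$ for a common free $F$; hence $[\pi_n(X^r)]-[\pi_n(X^1)]=0$ in $\wt K_0(\Z G)$, not $s(d,n)\cdot[(N,r)]$. The distinction you are after is invisible in $\wt K_0$. Your fallback to Swan--Jacobinski cancellation then also fails: for $G$ a $p$-group and $X$ minimal, $\pi_n(X)_{(p)}$ has no free $\Z_{(p)}G$-summand (this is essentially what minimality means), so $\Z G$ is not a local direct summand of any power of $\pi_n(X)$ and Jacobinski's hypothesis is not met. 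Concretely, for $G=(\Z/5)^5$ and $n=2$ one computes from (iii) that $\gamma'(G,2)=1<2=\gamma(G,2)$, so the two minimal complexes have stably isomorphic but genuinely non-isomorphic $\pi_2$---impossible if your argument worked. Linnell instead tracks the isomorphism type of $\pi_n(X^r)$ \emph{within} its fixed stable class by a finer invariant, and that is where the Swan modules and the exponent $s(d,n)$ genuinely enter; the preimage-versus-image issue you flag as a ``secondary subtlety'' is already a symptom of working in the wrong group.
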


\begin{proof}
This is proven by combining \cref{thm:bias-surj-abelian} with results of Linnell \cite{Linnell:1993}. More specifically, (i) follows from \cite[Theorem 8.4 (iii)]{Linnell:1993}, (ii) follows from (i), and (iii) follows from \cite[Theorems 1.2 (4)]{Linnell:1993}.
\end{proof}

We will now use this to show the following.

\begin{theorem} \label{thm:e-s-ex}
Let $n \ge 2$. Then there exists $d \ge 3$ such that, if $p \equiv 1 \mod 4$ is prime and $G = C_{p^{m_1}} \times \cdots \times C_{p^{m_d}}$ with $m_i \le m_{i+1}$ for all $i$ and $m_1 \ge 1$, then $\gamma'(G,n) > 1$.	 
In particular, there exist finite $(G,n)$-complexes $X$ and $Y$ such that $\pi_n(X) \cong \pi_n(Y)$ as $\Z G$-modules but $X \not \simeq Y$.
\end{theorem}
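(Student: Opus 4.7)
The plan is to apply \cref{thm:gamma'} (iii), which gives
\[ \gamma'(G,n) = \frac{\gcd(e(d,n),\tfrac{p-1}{2})}{\gcd(e(d,n),s(d,n),\tfrac{p-1}{2})}.\]
This exceeds $1$ for \emph{every} prime $p \equiv 1 \pmod 4$ whenever there is a prime $q$ dividing $e(d,n)$ and $\tfrac{p-1}{2}$ uniformly in $p$, but not dividing $s(d,n)$. Since $p \equiv 1 \pmod 4$ is equivalent to $2 \mid \tfrac{p-1}{2}$, the only uniform candidate is $q = 2$. Hence the theorem reduces to the numerical claim: for every $n \geq 2$ there exists $d \ge 3$ with $e(d,n)$ even and $s(d,n)$ odd. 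The second assertion of the theorem then follows by applying the definition of $\gamma'$ and passing to an unpolarised homotopy type to identify $\pi_n(X) \cong_{\Aut(G)} \pi_n(Y)$ with an honest $\Z G$-module isomorphism.

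First I would compute the parity of $s(d,n)$. Since signs do not affect parity, and by the hockey-stick identity $\sum_{j=0}^{n} \binom{d-1+j}{j} = \binom{d+n}{n}$,
\[ s(d,n) \equiv \sum_{j=1}^n \binom{d+j-1}{j} \equiv \binom{d+n}{n} - 1 \pmod 2.\]
Hence $s(d,n)$ is odd precisely when $\binom{d+n}{n}$ is even, which by Kummer's theorem is equivalent to saying that the binary expansions of $d$ and $n$ share at least one $1$-digit (so the base-$2$ addition $d+n$ produces a carry).

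Next I would analyse $e(d,n) \bmod 2$ via the explicit formula of \cref{prop:compute-D(Gn)}. The relevant sum involves binomial coefficients $\binom{d+2i-1}{2i+1}$ or $\binom{d+2i-2}{2i}$ weighted by the small integers $\tfrac{n}{2}-i$ or $\tfrac{n+1}{2}-i$; their parities are controlled by Lucas's theorem in terms of the base-$2$ digits of $d$. A direct verification handles small cases (e.g.\ for $n=2$ take $d=3$, for $n=3$ take $d=5$, for $n=4,5,6$ take $d=4,3,3$ respectively), and for arbitrary $n$ one chooses $d$ in a suitable residue class modulo a sufficiently large power of $2$ so that both parity conditions are achieved; the bookkeeping is carried out through the numerical identities of \cref{s:funct}.

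The main obstacle is the simultaneous control of both parities: while $s(d,n) \bmod 2$ is governed by the single condition that $d$ and $n$ share a $1$-digit, the parity of $e(d,n)$ depends on several binomials whose Lucas decompositions interact in a more delicate way. The key input is the explicit analysis of these binomial sums in \cref{s:funct}, from which the required $d = d(n)$ is extracted.
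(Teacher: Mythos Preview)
Your proposal is correct and follows essentially the same route as the paper: reduce via \cref{thm:gamma'}(iii) to the numerical claim that for each $n\ge 2$ there exists $d\ge 3$ with $e(d,n)$ even and $s(d,n)$ odd, then invoke the parity computations of \cref{s:funct} (your hockey-stick computation of $s(d,n)\bmod 2$ is exactly \cref{prop:s(d,n)-eval}, and the paper packages the $e(d,n)$ analysis as \cref{prop:e-s-parity}). Your remark on upgrading $\pi_n(X)\cong_{\Aut(G)}\pi_n(Y)$ to a genuine $\Z G$-isomorphism by rechoosing the polarisation is also correct and makes the ``In particular'' clause explicit.
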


This will be a consequence of combining \cref{thm:gamma'} with the following result concerning the parity of $e(d,n)$ and $s(d,n)$. 
 For the remainder of this section, let $\equiv$ denote equivalence mod $2$.

\begin{proposition} \label{prop:e-s-parity}
Let $n \ge 2$. Then there exists $d \ge 3$ such that $e(d,n) \equiv 0$ and $s(d,n) \equiv 1$.
\end{proposition}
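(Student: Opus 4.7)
The plan is to derive mod-$2$ formulas for $e(d,n)$ and $s(d,n)$ and then exhibit an explicit $d$ for each residue class of $n$ modulo $4$. First, combining the hockey-stick identity with the observation that $(-1)^{n+i+1} \equiv 1 \pmod{2}$ gives
\[
s(d,n) \equiv \binom{d+n}{n} - 1 \pmod{2},
\]
so by Kummer's theorem $s(d,n) \equiv 1 \pmod{2}$ exactly when the binary representations of $d$ and $n$ share at least one common bit. For the $e$-side, both definitions (for $n$ even or odd) can be written uniformly as $e(d,n) = \sum_{j=1}^{\lceil n/2 \rceil} j \binom{d+n-2j-1}{d-2}$, so only the odd-$j$ terms survive modulo $2$.

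Next, I would split according to $n \bmod 4$ and propose the following choices. For $n$ even, take $d = n+1$: each binomial $\binom{2(n-j)}{n-1}$ has $0$ at bit $0$ on top but $1$ at bit $0$ on the bottom, so Lucas' theorem forces $e \equiv 0$, while the smallest bit of $n$ (necessarily at position $\ge 1$) forces a carry in $d+n = 2n+1$, giving $s \equiv 1$. For $n \equiv 3 \pmod{4}$, take $d = n+2$: substituting $n - j = 2v$ each binomial becomes $\binom{4v+1}{n}$, which vanishes mod $2$ because bit $1$ of $n$ is $1$ while bit $1$ of $4v+1$ is $0$; and $s \equiv 1$ since bit $0$ of $d+n = 2n+2$ is $0$ while $n$ is odd. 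For $n \equiv 1 \pmod{4}$ (so $n \ge 5$), take $d = n-1$: the congruence $s \equiv 1$ follows from $\mathrm{bits}(n) \not\subseteq \mathrm{bits}(2n-1)$ for any odd $n \ge 3$, which one sees by tracking the smallest positive bit of $n$ and the borrow in $2n - 1$.

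The main work lies in proving $e(d,n) \equiv 0$ in this last case. Writing $n = 4s+1$, the binomials in the $e$-sum are $\binom{8s-4i-2}{4s-2}$ as $j = 2i+1$ ranges over $i = 0, 1, \ldots, s$. Applying $\binom{2a}{2b} \equiv \binom{a}{b} \pmod{2}$ once, and then the identity $\binom{a}{b} \equiv \binom{(a-1)/2}{(b-1)/2} \pmod{2}$ valid when $a,b$ are both odd, each such binomial reduces to $\binom{2s-i-1}{s-1}$. Reindexing by $v = 2s - i - 1 \in \{s-1, s, \ldots, 2s-1\}$ and invoking the hockey-stick identity collapses the sum to
\[
e(d,n) \;\equiv\; \sum_{v=s-1}^{2s-1} \binom{v}{s-1} \;=\; \binom{2s}{s} \pmod{2},
\]
and $\binom{2s}{s}$ is even for every $s \ge 1$ by Kummer (the base-$2$ sum $s+s$ carries at every bit of $s$). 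Spotting this double reduction followed by the telescoping into $\binom{2s}{s}$ is the crux; once found, the result is immediate.
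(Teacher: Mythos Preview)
Your proof is correct, and it takes a genuinely different route from the paper's. The paper first establishes a complete mod~$2$ tabulation of $e(d,n)$ and $s(d,n)$ according to the residues of $d$ and $n$ modulo~$4$ (Propositions~\ref{prop:e(d,n)-eval} and~\ref{prop:s(d,n)-eval} in Appendix~\ref{s:funct}), and then reads off suitable $d$: for $n\equiv 2$ or $3\pmod 4$ it takes $d=3$ or $d=5$, while for $n=4k$ or $n=4k+1$ it chooses $d=4t$ or $d=4t+1$ with $t=2^m$ determined by the $2$-adic valuation of $k$. Your choices $d=n+1$ (for $n$ even), $d=n+2$ (for $n\equiv 3$), and $d=n-1$ (for $n\equiv 1$) are uniform linear functions of $n$ and avoid the appendix tabulation entirely. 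In particular, your treatment of the case $n=4s+1$ is cleaner than the paper's: after the two Lucas-type halvings the sum over odd $j$ collapses via the hockey-stick identity to the central binomial coefficient $\binom{2s}{s}$, which is even for every $s\ge 1$. In the paper's language this corresponds to taking $t=k$ rather than $t=2^m$ in the formula $e(4t+1,4k+1)\equiv\binom{k+t}{t}$, a simpler choice the paper did not make. The trade-off is that the paper's appendix gives a full picture of the parity of $e$ and $s$ for all $(d,n)$, which may be of independent use, whereas your argument is tailored to producing one witness $d$ per $n$.
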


The proof is based on detailed calculations presented in Appendix \ref{s:funct}. 

\begin{proof}
We refer to Appendix \ref{s:funct} for the relevant properties of the functions $e(d,n)$ and $s(d,n)$. Here is a summary of the results:

\begin{itemize}
\item If $n = 4k$, then $e(4t,4k) \equiv 0$ for any $t \ge 1$ by Proposition \ref{prop:e(d,n)-eval}. By Proposition \ref{prop:s(d,n)-eval}, we have $s(4t,4k) \equiv 1 + \binom{k+t}{t}$. Let $k = 2^m r$ where $m \ge 0$ and $r \ge 1$ is odd. If we take $t = 2^m$, then $s(2^{m+2},4k) \equiv 1+\binom{2^mr+2^m}{2^m} \equiv 1 + \binom{r+1}{1} \equiv r \equiv 1$ using \cref{lemma:mod2-useful}.

\item If $n = 4k+1$, then $s(4t+1,4k+1) \equiv 1$ for any $t \ge 1$ by Proposition \ref{prop:s(d,n)-eval}. By Proposition \ref{prop:e(d,n)-eval}, we have $e(4t+1,4k+1) \equiv \binom{k+t}{t}$. Similarly to the $n =4k$ case, let $k = 2^m r$ where $m \ge 0$ and $r \ge 1$ is odd. If we take $t = 2^m$, then $e(2^{m+2}+1,4k+1) = \binom{2^mr+2^m}{2^m} \equiv r+1 \equiv 0$.

\item If $n = 4k+2$, then $e(d,4k+2) \equiv 0$ and $s(d,4k+2) \equiv 1$ whenever $d \equiv 3 \mod 4$, by Propositions \ref{prop:e(d,n)-eval} and \ref{prop:s(d,n)-eval}. For example, we can take $d = 3$.

\item If $n = 4k+3$, then	$e(d,4k+3) \equiv 0$ and $s(d,4k+3) \equiv 0$ whenever $d \equiv 1 \mod 4$, by Propositions \ref{prop:e(d,n)-eval} and \ref{prop:s(d,n)-eval}. For example, we can take $d = 5$. \qedhere
\end{itemize}
\end{proof}

\begin{proof}[Proof of \cref{thm:e-s-ex}]
By \cref{prop:e-s-parity}, there exists $d \ge 3$ such that $e(d,n) \equiv 0$ and $s(d,n) \equiv 1$. Let $p$ be a prime such that $p \equiv 1 \mod 4$ and $G = C_{p^{m_1}} \times \cdots \times C_{p^{m_d}}$ with $m_i \le m_{i+1}$ for all $i$ and $m_1 \ge 1$.
By \cref{thm:gamma'}, we have that
\[ \gamma'(G,n) = \frac{\gcd(e(d,n),\frac{1}{2}(p-1))}{\gcd(e(d,n),s(d,n),\frac{1}{2}(p-1))}.
 \]

Since $e(d,n) \equiv 0$,  and $p \equiv 1 \mod 4$ implies that $\frac{1}{2}(p-1) \equiv 0$,  we have $2 \mid \gcd(e(d,n),\frac{1}{2}(p-1))$. Conversely, since $s(d,n) \equiv 1$ and $2 \nmid \gcd(e(d,n),s(d,n),\frac{1}{2}(p-1))$, we have $2 \mid \gamma'(G,n)$.	
\end{proof}

\section{Examples of homotopy inequivalent doubled $(G,n)$-complexes}
\label{s:examples-manifolds}

The aim of this section will be to use the quadratic bias invariant to distinguish certain doubled $(G,n)$-complexes $M(X)$ up to homotopy equivalence. 

The examples must take as input a finite group $G$ and a pair of finite $(G,n)$-complexes $X$, $Y$ with $\chi(X)=\chi(Y)$ but which are not homotopy equivalent. Such examples have previously only been known to exist when $G$ is either a finite abelian group \cite{Metzler:1976,Sieradski:1979} or a group with periodic cohomology \cite{Dy76,Ni20-II,Ni21}. 
In light of \cref{thm:BQ-main} (ii), we must restrict to the case where $n \ge 2$ is even.
If $G$ has periodic cohomology, then $H_n(G)=0$ for all $n \ge 2$ even \cite[Corollary 2]{Sw71} and so the quadratic bias invariant contains no information.

For this reason, we will begin by restricting to examples over finite abelian fundamental groups (\cref{ss:examples-abelian}). In \cref{ss:examples-non-abelian}, we will demonstrate that the quadratic bias invariant is computable more generally by constructing examples over the non-abelian group $Q_8 \times (\Z/17)^3$. This also gives the first example of a non-abelian finite group $G$ which does not have periodic cohomology such that there exists homotopically distinct finite $(G,n)$-complexes $X$, $Y$ with $\chi(X)=\chi(Y)$.

\subsection{Examples with abelian fundamental groups} \label{ss:examples-abelian}

The aim of this section will be to establish the following two results, which imply \cref{thmx:main-examples} and \cref{thm:main-higher-dim} from the Introduction.

\begin{theorem} \label{thm:final-examples}
Let $n \geq 2$ be even and let $k \geq 2$.  Then there exist closed smooth $2n$-manifolds $M_1, M_2, \dots, M_k$ which are all stably diffeomorphic but not pairwise homotopy equivalent. 

Furthermore, if $m$ is an integer with at least $1+\log_2(k)$ prime factors, then the examples can be taken to have fundamental group $(\Z/m)^d$ for some $d \ge 3$ or $(\Z/m)^{d} \times \Z/t$ for some $d \ge 4$ and any integer $t > 1$ such that $t \mid m$ and $m$, $m/t$ have the same prime factors.
\end{theorem}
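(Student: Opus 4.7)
The manifolds will be constructed as doubles $M_i = M(X_i)$ of minimal finite $(G,n)$-complexes sharing a common Euler characteristic; by Proposition \ref{prop:stable} this immediately forces $M_i$ and $M_j$ to be stably diffeomorphic. To distinguish them up to homotopy equivalence, I will ensure that the quadratic bias invariants $\beta_Q(M_i) \in B_Q(G,n)$ are pairwise distinct, which is sufficient by Theorem \ref{thm:main-high}. By Proposition \ref{prop:beta-to-betaQ}, $\beta_Q(M(X_i)) = q(\beta(X_i))$ for the canonical surjection $q \colon B(G,n) \twoheadrightarrow B_Q(G,n)$, so the problem reduces to producing minimal $(G,n)$-complexes whose ordinary bias invariants remain distinct after applying $q$. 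The Sieradski complexes $X^r_{G,n}$, whose biases exhaust $(\Z/m)^\times/\{\pm 1\}$ by Theorem \ref{thm:bias-surj-abelian}, supply an ample pool of realisers.

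For $G = (\Z/m)^d$ with $d \ge 3$, the group is efficient, so $(G,n)$ satisfies the strong minimality hypothesis and $H_n(G)$ is a direct sum of copies of $\Z/m$ of rank at least $3$, placing us within the hypotheses of Theorem \ref{thm:BQ-main}. Combining that theorem with the identification $D(G,n) = ((\Z/m)^\times)^{e(d,n)}$ from Proposition \ref{prop:compute-D(Gn)} yields
$$B_Q(G,n) \cong \frac{(\Z/m)^\times}{\pm (\Z/m)^{\times 2} \cdot ((\Z/m)^\times)^{e(d,n)}}.$$
By Proposition \ref{prop:e-s-parity} one can choose (and if necessary enlarge) $d \ge 3$ so that $e(d,n)$ is even, whereupon the rightmost factor sits inside the squares and $B_Q(G,n) \cong (\Z/m)^\times / \pm (\Z/m)^{\times 2}$. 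A direct Chinese Remainder Theorem count using $(\Z/m)^\times \cong \prod_i (\Z/p_i^{a_i})^\times$ shows that if $m$ has $\ell$ distinct prime factors, this quotient has order at least $2^{\ell - 1}$, hence at least $k$ under the hypothesis $\ell \ge 1 + \log_2 k$. Picking $r_1, \dots, r_k \in (\Z/m)^\times$ with pairwise distinct images and setting $M_i = M(X^{r_i}_{G,n})$ completes this case.

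For the second family $G = (\Z/m)^d \times \Z/t$, the K\"unneth formula decomposes $H_n(G)$ into summands of differing annihilators, so Theorem \ref{thm:BQ-main} no longer applies verbatim. The plan is to analyse $B_Q(G,n)$ directly via the multi-scaled hyperbolic evaluation form $\wh e_L$ of Proposition \ref{prop:sevenfour}, following the unitary-group and $L$-theoretic template of Section \ref{s:qbias_Lgroups}. The hypothesis $d \ge 4$ furnishes a $(\Z/m)$-block of rank at least six inside $H_n(G)$, large enough for the elementary-unitary stabilisation arguments used in \cref{lem:tri} and \cref{lemma:lifting-squares} to go through on that block, while the condition that $m$ and $m/t$ share the same prime factors prevents the $\Z/t$-summands from contributing further identifications under $\Aut(G)$. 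The main obstacle is precisely this step: verifying that the obstruction group $B_Q(G,n)$ in the multi-scaled setting still surjects onto $(\Z/m)^\times / \pm (\Z/m)^{\times 2}$; once this is done, Theorem \ref{thm:bias-surj-abelian} realises the required classes by explicit minimal complexes exactly as in the first family.
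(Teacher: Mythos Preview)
Your treatment of the first family $G = (\Z/m)^d$ is correct and matches the paper's approach.

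For the second family $G = (\Z/m)^d \times \Z/t$, however, you have misidentified the obstacle. You write that the K\"unneth formula produces summands of differing annihilators in $H_n(G)$, so that \cref{thm:BQ-main} fails to apply and a multi-scaled analysis is required. This is false for $n$ even: under the stated hypotheses ($t \mid m$ and $m$, $m/t$ have the same prime factors), \cref{lemma:H_n(G)} shows that $H_n(G) \cong (\Z/m)^r$ with $r = r_{(G,n)} \ge 3$, i.e.\ the $\Z/t$-factor contributes nothing to $H_n(G)$ when $n$ is even. Thus \cref{thm:BQ-main} applies verbatim to $G$, and the computation of $B_Q(G,n)$ goes through exactly as for the first family. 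The only adjustment is bookkeeping: since $d(G) = d+1$ when $t>1$, the relevant exponent is $e(d+1,n)$, so one takes $d = s-1$ where $s \ge 3$ is the value from \cref{prop:e-s-parity} with $e(s,n)$ even.

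Your proposed route through multi-scaled evaluation forms is therefore unnecessary, and since you yourself flag that step as unfinished, the proposal as written does not establish the second part of the theorem.
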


\begin{theorem} \label{thm:final-examples-eqforms}
Let $n \geq 2$ be even. Then there exist closed smooth $2n$-manifolds $M$ and $N$ which are stably diffeomorphic, have isometric hyperbolic equivariant intersection forms, but are not homotopy equivalent.
Furthermore, for any prime $p$ such that $p \equiv 1 \mod 4$, the examples can be taken to have fundamental group $(\Z/p)^d$ for some $d \ge 3$.
\end{theorem}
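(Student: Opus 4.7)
The strategy is to apply the doubling construction to a pair of minimal $(G, n)$-complexes with isomorphic $n$-th homotopy groups, and detect the homotopy distinction of the resulting $2n$-manifolds via the quadratic bias invariant; the hyperbolic structure of the intersection forms comes from Proposition \ref{prop:intdouble} together with Proposition \ref{prop:oddmetabolic}.

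I first choose, by Proposition \ref{prop:e-s-parity}, an integer $d \ge 3$ such that $e(d, n)$ is even and $s(d, n)$ is odd. Fix a prime $p \equiv 1 \mod 4$ and set $G = (\Z/p)^d$. Since $(p-1)/2$ is even, $s(d, n)$ is odd, and $e(d, n)$ is even, Theorem \ref{thm:gamma'}(iii) yields $\gamma'(G, n) \ge 2$, and Theorem \ref{thm:e-s-ex} then provides minimal finite $(G, n)$-complexes $X$, $Y$ with $\pi_n(X) \cong \pi_n(Y)$ as $\Z G$-modules but $X \not\simeq Y$. Set $M = M(X)$ and $N = M(Y)$; these are stably diffeomorphic by Proposition \ref{prop:stable} since $\chi(X) = \chi(Y)$.

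For the intersection forms, Proposition \ref{prop:intdouble} yields $S_M \cong \Met_\ep(\pi_n(X), \phi_X)$ and $S_N \cong \Met_\ep(\pi_n(Y), \phi_Y)$ with $\phi_X^G = \phi_Y^G = 0$. Both $M$ and $N$ are stably parallelisable doubles, so their universal covers are spin, and therefore $\phi_X, \phi_Y$ are weakly even. Since $|G| = p^d$ is odd, Proposition \ref{prop:oddmetabolic} upgrades the metabolic forms to hyperbolic forms $S_M \cong H_\ep(\pi_n(X))$ and $S_N \cong H_\ep(\pi_n(Y))$, and these are isometric because $\pi_n(X) \cong \pi_n(Y)$ as $\Z G$-modules.

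It remains to prove $M \not\simeq N$. By Theorem \ref{thmx:main-general} and Proposition \ref{prop:beta-to-betaQ}, this reduces to exhibiting a pair $X, Y$ with $q(\beta(X)) \ne q(\beta(Y))$ in $B_Q(G, n)$. Combining Theorem \ref{thm:BQ-main} with Proposition \ref{prop:compute-D(Gn)}, and using that $p \equiv 1 \mod 4$ and $e(d, n)$ is even, one computes $B_Q(G, n) \cong (\Z/p)^\times / ((\Z/p)^\times)^2 \cong \Z/2$. By Theorem \ref{thm:gamma'}(i), as $Y$ varies among complexes with $\pi_n(Y) \cong_{\Aut(G)} \pi_n(X)$, the ratio $\beta(Y)/\beta(X)$ ranges over $[V^{s(d, n)}] \subseteq B(G, n)$; since $s(d, n)$ is odd, its image in $B_Q(G, n)$ is non-trivial exactly when the corresponding element of $V$ is a non-square modulo $p$. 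The chief obstacle is this final step: verifying that $V \subseteq (\Z/p)^\times$ contains a non-square, which is a statement about the Swan subgroup of $G = (\Z/p)^d$ and can be arranged for $d \ge 3$ (increasing $d$ within the constraints of Proposition \ref{prop:e-s-parity} if necessary) using classical results of Sieradski--Dyer and Linnell.
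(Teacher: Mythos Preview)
Your approach matches the paper's strategy, but the final step contains a genuine gap. You correctly reduce the problem to showing that the subgroup $V \subseteq (\Z/p)^\times$ from Theorem \ref{thm:gamma'} contains a non-square, but then you merely assert this ``can be arranged \ldots\ using classical results of Sieradski--Dyer and Linnell'' without supplying the argument. This is precisely the substantive step. The paper handles it via Lemma \ref{lemma:e-s-ex}, whose proof computes $V$ explicitly: since $\Z G$ has projective cancellation for $G$ abelian, $V$ coincides with the kernel of the Swan map $(\Z/p)^\times \to \wt K_0(\Z G)$, and a theorem of Taylor on Swan subgroups of non-cyclic odd $p$-groups then determines $V$. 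With this in hand, one checks directly that $[V^{s(d,n)}]$ surjects onto $B_Q(G,n) \cong \Z/2$ (using that $s(d,n)$ is odd), yielding the required pair $X,Y$ with $\pi_n(X) \cong_{\Aut(G)} \pi_n(Y)$ but $\beta_Q(M(X)) \neq \beta_Q(M(Y))$.

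Your suggestion to ``increase $d$ within the constraints of Proposition \ref{prop:e-s-parity} if necessary'' is not a valid fallback: the parity constraints on $e(d,n)$ and $s(d,n)$ are delicate and do not permit freely adjusting $d$, and in any case the size of $V$ is governed by the Swan subgroup calculation rather than by $d$. A minor structural point: the pair $X,Y$ you initially extract from Theorem \ref{thm:e-s-ex} only satisfies $X \not\simeq Y$, which does not by itself give $\beta_Q(M(X)) \neq \beta_Q(M(Y))$; you implicitly re-choose $X,Y$ in the last paragraph, and it would be cleaner to make that choice from the outset via Theorem \ref{thm:gamma'}(i) together with the computation of $V$.
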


The proofs will rely on \cref{thm:BQ-main} which computes the quadratic bias obstruction group $B_Q(G,n)$ in the case where $G$ is a finite group such that $H_n(G) \cong (\Z/m)^r$ where $r=r_{(G,n)}$.
We start by giving a collection of finite abelian groups which satisfy this hypothesis.

\begin{lemma} \label{lemma:H_n(G)}
Let $n \ge 2$ and let $G = (\Z / m)^d \times \Z /t $ where $d \geq 3$ and $m, t \ge 1$ are such that $t \mid m$ and $m$, $m/t$ have the same prime factors.
Then
\[
H_n(G) \cong \begin{cases} (\Z/m)^r, & \text{if $n$ is even} \\
 (\Z/m)^r \times \Z/t, & \text{if $n$ is odd}	
 \end{cases}
\]
where $r = r_{(G,n)} \geq 3$.
\end{lemma}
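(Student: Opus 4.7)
The plan is to invoke the K\"unneth formula in two stages. First, I would handle the pure factor $A := (\Z/m)^d$ by iterating K\"unneth, starting from the standard computation $H_k(\Z/m;\Z) = \Z, \Z/m, 0, \Z/m, 0, \dots$ in degrees $k = 0, 1, 2, 3, 4, \dots$. Because $\Z/m \otimes_\Z \Z/m \cong \Z/m$ and $\operatorname{Tor}_1^\Z(\Z/m, \Z/m) \cong \Z/m$, a straightforward induction on $d$ shows that $H_k(A;\Z)$ is a direct sum of copies of $\Z/m$ for every $k \ge 1$, with $H_0(A) = \Z$.

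Second, I would apply K\"unneth to the product $G = A \times \Z/t$:
\begin{equation*}
H_n(G) \cong \bigoplus_{p+q=n} H_p(A) \otimes H_q(\Z/t) \ \oplus \ \bigoplus_{p+q=n-1} \operatorname{Tor}\!\bigl(H_p(A), H_q(\Z/t)\bigr).
\end{equation*}
Since $H_q(\Z/t)$ vanishes in positive even degrees and equals $\Z/t$ in positive odd degrees, and since $t \mid m$ gives $\Z/m \otimes \Z/t \cong \Z/t \cong \operatorname{Tor}(\Z/m, \Z/t)$, every non-trivial summand above is a copy of $\Z/m$ (arising only from the $q=0$ tensor term $H_n(A) \otimes \Z$) or of $\Z/t$. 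Splitting by the parity of $n$ and using the hypothesis that $m$ and $m/t$ share all prime factors to organise the $\Z/t$-contributions, one obtains the two claimed normal forms.

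To verify $r = r_{(G,n)} \ge 3$, I would combine the estimate $r_{(G,n)} \ge d(\wh L) = d(H_n(G))$ from \cref{prop:min-hyp} with the lower bound $d(H_n(G)) \ge d \ge 3$ furnished by the iterated K\"unneth calculation for $A$ above; indeed $H_n(A)$ already has at least $d$ generators in every positive degree.

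The main obstacle is the parity-sensitive bookkeeping in the second step. One must check explicitly that the $\Z/t$ summands arising from the cross-tensor terms (with $p$ and $q$ both odd) and from the Tor terms (with $p$ positive even and $q$ odd) combine in precisely the way claimed, leaving no $\Z/t$ summand when $n$ is even and exactly one when $n$ is odd. This is the step in which the shared-prime-factor hypothesis on $m$ and $m/t$ is used in an essential way, presumably via an invariant-factor reorganisation of the resulting abelian group rather than a naive summand-by-summand count.
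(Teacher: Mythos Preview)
Your approach differs from the paper's. Rather than running K\"unneth on $A\times\Z/t$ directly, the paper splits $G$ into its $p$-primary components $G_i$ (the homology of a product of coprime-order groups is the direct sum), invokes an explicit formula from \cite[Theorem~4.3]{DP17} for $H_n$ of each abelian $p$-group $G_i\cong(\Z/p^a)^d\times\Z/p^b$, and then recombines. Working prime by prime sidesteps your reorganisation problem entirely, since at each prime only two cyclic orders occur and the cited formula is meant to output their multiplicities directly. For the identification $r=r_{(G,n)}$ the paper appeals to the strong minimality hypothesis for finite abelian groups \cite[Proposition~5]{Sieradski:1979}; your argument only gives the inequality $r_{(G,n)}\ge d(H_n(G))=r$, not equality.

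The obstacle you flag in the second K\"unneth step is genuine, and the hoped-for ``invariant-factor reorganisation'' cannot save it. Take $n=2$, $m=4$, $t=2$, $d=3$ (all hypotheses hold): the Schur-multiplier formula $H_2(G)\cong\bigoplus_{i<j}\Z/\gcd(m_i,m_j)$ for a finite abelian group yields $H_2\bigl((\Z/4)^3\times\Z/2\bigr)\cong(\Z/4)^3\oplus(\Z/2)^3$, with three $\Z/t$-summands rather than none. Since the hypothesis forces $v_p(t)<v_p(m)$ at every prime $p\mid m$, such summands can never be absorbed into copies of $\Z/m$. The case $t=1$, i.e.\ $G=(\Z/m)^d$, is unproblematic by either route.
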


In particular, the groups $G = (\Z/m)^d$ for $d \ge 3$ satisfy the hypothesis for all $n \ge 2$. If $n \ge 2$ is even, then the hypothesis is satisfied by the larger class of groups $G = (\Z/m)^d \times \Z/t$ where $d \ge 3$, $t \mid m$ and $m$, $m/t$ have the same prime factors.

\begin{proof}
Let $G = \prod_{i=1}^s G_i$ where $G_i$ is the $p_i$-primary component for some prime $p_i$. We have $H_n(G) \cong \prod_{i=1}^s H_n(G_i)$.
The hypothesis on $m$ and $t$ imply that $G_i \cong (\Z/p_i^{a_i})^d \times \Z/p_i^{b_i}$ where $a_i > b_i \ge 1$.
By \cite[Theorem 4.3]{DP17}, we have that
\[ H_n(G_i) \cong (\Z/p_i^{a_i})^r \times (\Z/p_i^{b_i})^{\nu(n,1)-(-1)^n} \]
where $r = \sum_{k = 2}^d (\nu(n,k)-(-1)^n)$ and $\nu(n,k) = \sum_{j=0}^n (-1)^{n+j} \binom{k+j-1}{j}$. We can verify directly that, if $d \ge 3$, then $r \ge 3$. Furthermore, $\nu(n,1) = 1$ if $n$ is even and $\nu(n,1)=0$ if $n$ is odd. This gives the required form for $H_n(G)$ by recombining the $p_i$-primary componeents.

Finally, the fact that $r= r_{(G,n)}$ follows since $G$ satisfies the strong minimality hypothesis (see \cref{ss:min-hyp} and \cite[Proposition 5]{Sieradski:1979}).
\end{proof}

Fix $n \ge 2$ even, $m \ge 2$, $d \ge 3$ and $t \mid m$ such that $m$, $m/t$ have the same prime factors. Let $G = (\Z/m)^d \times \Z/t$. By \cref{thm:BQ-main} and \cref{lemma:H_n(G)}, there is an isomorphism
\[ B_Q(G,n) \cong \frac{(\Z/ m)^\times}{\pm (\Z/ m)^{\times 2} \cdot D(G,n)}.\]
Let $\delta(t) = 1$ if $t > 1$ and $\delta(t) = 0$ if $t=1$. We will write $\delta = \delta(t)$, so that $d(G) = d+\delta$.

Since $G$ is abelian, \cref{prop:compute-D(Gn)} implies that $D(G,n) = ((\Z/m)^\times)^{e(d+\delta,n)}$. It follows that
\[ B_Q(G,n) \cong \begin{cases} \frac{(\Z/ m)^\times}{\pm (\Z/ m)^{\times 2}} , & \text{if $e(d+\delta,n)$ is even} \\ 
 0, & \text{if $e(d+\delta,n)$ is odd}.	
 \end{cases}
 \]
By \cref{thm:bias-surj-abelian}, we also have that $\beta\colon\HT_{\min}(G,n) \to B(G,n)$ is a bijection. By \cref{prop:beta-to-betaQ}, this implies that
 $\beta_Q\colon  \sM_{2n}(G) \to B_Q(G,n)$ is surjective. 

\begin{proof}[Proof of \cref{thm:final-examples}]
By \cref{prop:stable}, all the manifolds in $\sM_{2n}(G)$ are stably diffeomorphic. 
It therefore suffices to show that, for all $k \ge 2$, there exists $m \ge 2$, $t \ge 1$ and $d \ge 3$ such that $|\sM_{2n}(G)| \ge k$ when $G = (\Z/m)^d \times \Z/t$. 

By \cref{prop:e-s-parity}, there exists $s \ge 3$ such that $e(s,n)$ is even. We now choose either $d=s$ and $t=1$, or $d=s-1$ and any $t >1$ such that $t \mid m$ and $m$, $m/t$ have the same prime factors.
By the above discussion, this implies that 
\[ |\sM_{2n}(G)| \ge \left| \frac{(\Z/ m)^\times}{\pm (\Z/ m)^{\times 2}} \right|. \]

Let $r \ge 1$. Then $|(\Z/2^r)^\times/((\Z/2^r)^\times)^2| = 4$ and, if $p$ is an odd prime, then we have $|(\Z/p^r)^\times/((\Z/p^r)^\times)^2| = 2$. If $m = p_1^{\alpha_1} \cdots p_t^{\alpha_t}$ for distinct primes $p_1, \cdots, p_t$ and $\alpha_i \ge 1$, then
\[ |\sM_{2n}(G)| \ge \frac{1}{2}\left| \frac{(\Z/ m)^\times}{(\Z/ m)^{\times 2}} \right| = \frac{1}{2} \prod_{i=1}^t\left| \frac{(\Z/ p_i^{\alpha_i})^\times}{((\Z/ p_i^{\alpha_i})^\times)^2} \right| \ge 2^{t-1}. \]
Thus, if $m$ has at least $1+\log_2(k)$ distinct prime factors, then $|\sM_{2n}(G)| \ge k$.
\end{proof}

In order to prove \cref{thm:final-examples-eqforms}, we will need the following which is a slight extension of the examples constructed in \cref{thm:e-s-ex}.

\begin{lemma} \label{lemma:e-s-ex}
Let $n \ge 2$, let $d \ge 3$, let $p$ be an odd prime and let $G = (\Z/p)^d$. Let $X$ be a reference minimal $(G,n)$-complex. Then:
\[ \beta(\{\, Y \in \HT_{\min}(G,n) : \pi_n(X) \cong_{\Aut(G)} \pi_n(Y)\}) = \frac{\pm ((\Z/p)^\times)^{\gcd(e(d,n),s(d,n))}}{\pm ((\Z/p)^\times)^{e(d,n)}}  \]
\end{lemma}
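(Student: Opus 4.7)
The plan is to deduce the lemma directly from \cref{thm:gamma'}(i), applied to $G = (\Z/p)^d$. That theorem expresses the image as a coset
$$\beta(\{Y \in \HT_{\min}(G,n) : \pi_n(X) \cong_{\Aut(G)} \pi_n(Y)\}) = \beta(X) \cdot [V^{s(d,n)}] \subseteq B(G,n),$$
where $V = \{[r] \in (\Z/p)^\times : r \in \Z,\, (r,p)=1,\, (N,r) \cong \Z G\}$ and $B(G,n) = (\Z/p)^\times/\pm ((\Z/p)^\times)^{e(d,n)}$ by \cref{prop:compute-D(Gn)}. The whole computation reduces to identifying $V$ explicitly and then to routine arithmetic in the cyclic group $(\Z/p)^\times$.

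The main step is to show that $V$ is the entire group $(\Z/p)^\times/\{\pm 1\}$; equivalently, every Swan module $(N,r)$ over $\Z G$ with $(r,p)=1$ is free. Since $G = (\Z/p)^d$ is a non-cyclic elementary abelian $p$-group (as $d \ge 3$) and $p$ is odd, this follows from the classical vanishing of the Swan subgroup $T(G) \subseteq \wt K_0(\Z G)$ due to Swan and Ullom, together with the fact that stably free Swan modules over such $\Z G$ are in fact free. This is the principal algebraic input and is where I would expect the main obstacle to lie: one must be sure that the relevant rank-one cancellation/freeness statements apply uniformly to all Swan modules, and the cleanest route is likely to cite Linnell's refinement \cite{Linnell:1993} of the Sieradski--Dyer computation rather than to invoke Swan--Ullom directly.

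Once $V$ is identified with the full group, the remainder is elementary cyclic-group arithmetic. Writing $H := (\Z/p)^\times$, the cyclicity of $H$ gives $H^a \cdot H^b = H^{\gcd(a,b)}$, so the image of $V^{s(d,n)}$ in $B(G,n)$ is
$$[V^{s(d,n)}] = \pm H^{\gcd(s(d,n),\, e(d,n))}/\pm H^{e(d,n)}.$$
To finally replace the coset $\beta(X) \cdot [V^{s(d,n)}]$ by this subgroup itself, I would invoke \cref{thm:bias-surj-abelian} to arrange $\beta(X) = [1]$, which can be done without loss of generality by taking the reference $X$ to be the canonical minimal complex $X^1_{G,n}$. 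This identifies the image with the displayed subgroup and completes the proof.
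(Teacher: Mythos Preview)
Your proposal is correct and follows essentially the same approach as the paper: both invoke \cref{thm:gamma'}(i), identify $V$ with the full group $(\Z/p)^\times/\{\pm 1\}$ via the vanishing of the Swan subgroup for non-cyclic $p$-groups with $p$ odd (the paper cites Taylor \cite{Ta78} and \cite[Theorem 2.1]{Linnell:1993}, you cite Swan--Ullom and Linnell, but these are the same input), and finish with the cyclic-group identity $H^a \cdot H^b = H^{\gcd(a,b)}$. You are slightly more explicit than the paper in arranging $\beta(X)=[1]$ by taking $X = X^1_{G,n}$, which is indeed needed for the stated equality to be a subgroup rather than a coset.
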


\begin{proof}
By \cref{thm:gamma'}, we have that the image is $\pm V^{s(d,n)} \cdot ((\Z/m)^\times)^{e(d,n)} / \pm ((\Z/m)^\times)^{e(d,n)}$ where $V = \{ [r] \in (\Z/p)^\times : r \in \Z, \, (r,|G|)=1 \text{ and } (N,r) \cong \Z G \}$.
It is well known that that $(N,r)$ is a projective $\Z G$-module. Since $G$ is abelian and $\Z G$ has projective cancellation (see \cite[Lemma 3.2]{Linnell:1993}), we have that $V = \ker((\Z/p)^\times \to \wt K_0(\Z G))$ is the kernel of the Swan map. It is now a consequence of a result of Taylor \cite[Theorem 3]{Ta78} (see also \cite[54.15]{CR87}) that, since $G$ is a non-cyclic $p$-group for $p$ odd, we have $|V| = \frac{1}{2}(p-1)$ (see \cite[Theorem 2.1]{Linnell:1993}). Since $V \le (\Z/p)^\times$, we therefore have $V = (\Z/p)^\times$.  The result follows.
\end{proof}

\begin{proof}[Proof of \cref{thm:final-examples-eqforms}]
By \cref{prop:e-s-parity}, there exists $d \ge 3$ be such that $e(d,n)$ is even and $s(d,n)$ is odd. Let $p$ be a prime such that $p \equiv 1 \mod 4$ and let $G = (\Z/p)^d$. Since $p \equiv 1 \mod 4$, $-1$ is a square mod $p$ and so $\pm (\Z/p)^\times = (\Z/p)^\times$. Since $e(d,n)$ is even, we therefore have that $B_Q(G,n) \cong (\Z/p)^\times/(\Z/p)^{\times 2}$ which has order two.
Let $q\colon B(G,n) \twoheadrightarrow B_Q(G,n)$ denote the natural quotient map defined in \cref{prop:beta-to-betaQ}. 

By \cref{lemma:e-s-ex}, we now have that
\[ q(\beta(\{\, Y \in \HT_{\min}(G,n) : \pi_n(X) \cong_{\Aut(G)} \pi_n(Y)\})) = \frac{((\Z/p)^\times)^{\gcd(e(d,n),s(d,n))}}{(\Z/p)^{\times 2}} = \frac{(\Z/p)^\times}{(\Z/p)^{\times 2}}  \]
since the fact that $s(d,n)$ is odd implies that $\gcd(e(d,n),s(d,n)) \equiv 1 \mod 2$. By \cref{prop:beta-to-betaQ}, we have that $q \circ \beta = \beta_Q \circ \mathscr{D}$. Hence there exist $X, Y \in \HT_{\min}(G,n)$ such that $\pi_n(X) \cong_{\Aut(G)} \pi_n(Y)$ and $\beta_Q(M(X)) \ne \beta_Q(M(Y))$, which implies that $M(X) \not \simeq M(Y)$.
By \cref{prop:stable}, $M(X)$ and $M(Y)$ are stably diffeomorphic. 

It remains to show that $M(X)$ and $M(Y)$ have isometric equivariant intersection forms. For simplicity, we first rechoose the identification $\pi_1(Y) \cong G$ so that $L := \pi_n(X) \cong \pi_n(Y)$ are isomorphic as $\Z G$-modules.
By \cref{prop:intdouble} (proven in \cite[Proposition II.2]{Kreck:1984}), we have that there are isometries $S_{M(X)} \cong \Met(L^* \oplus L,\phi_X)$ and $S_{M(Y)} \cong \Met(L^* \oplus L,\phi_Y)$ for some $\phi_X,\phi_Y \in \Sym(L)$ with $\phi_X^G = \phi_Y^G = 0$.
 Since $M(X)$ is stably parallelisable, $\wt{M(X)}$ is spin and so \cref{example:intform} implies that $S_{M(X)}$ is weakly even. Since $|G| = p^d$ is odd, weakly even metabolic forms over $\Z G$ are hyperbolic (see \cref{prop:oddmetabolic}).
Hence $S_{M(X)} \cong S_{M(Y)} \cong H(L)$.
\end{proof}

\subsection{Examples with non-abelian fundamental groups} \label{ss:examples-non-abelian}

We will now establish the following. For simplicity, we will restrict to the case of 4-manifolds and to a small range of fundamental groups. A similar result can be obtained for manifolds of arbitrary dimension $2n \ge 4$, and for a much wider range of fundamental groups. 

\begin{theorem} \label{thm:Q8xCp^3}
Let $G = Q_8 \times (\Z/p)^3$ where $p$ is a prime such that $p \equiv 1 \mod 8$. Then:
\begin{clist}{(i)}
\item
There exist minimal finite $2$-complexes $X$, $Y$ with fundamental group $G$ which are homotopically distinct.
\item
There exist closed smooth $4$-manifolds $M$, $N$ with fundamental group $G$ which are stably diffeomorphic but not homotopy equivalent. More specifically, we can take $M = M(X)$ and $N = M(Y)$ for some minimal finite $2$-complexes $X$, $Y$ with fundamental group $G$.
\end{clist}
\end{theorem}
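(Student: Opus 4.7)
The plan is to specialise the framework of \cref{thmx:main-general,thmx:main-B_Q(G)} to $G = Q_8 \times (\Z/p)^3$ and exhibit minimal finite $2$-complexes $X$, $Y$ for $G$ whose quadratic bias invariants disagree. Part (ii) then follows from part (i) by taking $M = M(X)$ and $N = M(Y)$: stable diffeomorphism is given by $\chi(X) = \chi(Y)$ via \cref{prop:stable}, while distinctness of $\beta_Q$ follows from \cref{prop:beta-to-betaQ} combined with \cref{thmx:main-general}. I therefore focus on part (i).

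First I would verify the hypotheses of \cref{thmx:main-B_Q(G)}. Since $\gcd(|Q_8|, p) = 1$ and $Q_8$ has periodic cohomology (so $H_2(Q_8) = 0$), the K\"unneth formula gives $H_2(G) \cong H_2((\Z/p)^3) \cong (\Z/p)^3$, matching the setup with $m = p$ and $d = 3$. I must also check that $G$ is efficient, i.e.\ $\chi_{\min}(G) = 4$; this amounts to exhibiting a presentation with $3$ generators and $6$ relators and should follow by combining an efficient $2$-generator, $2$-relator presentation of $Q_8$ with a Sieradski-type efficient presentation of $(\Z/p)^3$, using the B\'ezout-type mixing of generators enabled by coprimality of $|Q_8|$ and $p^3$. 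Producing this efficient presentation, and showing that the construction yields a genuinely minimal $2$-complex, will be the main technical step; the congruence $p \equiv 1 \pmod{8}$ is used here to guarantee the projective cancellation over $\Z G$ that the construction requires.

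Next I would compute $D(G)$ and $B_Q(G)$. Coprimality forces $\Aut(G) \cong \Aut(Q_8) \times \GL_3(\Z/p)$, and the $\Aut(Q_8)$-factor acts trivially on $H_2(G) \cong \bigwedge^2((\Z/p)^3)$, so $\varphi_G(A) = \det\bigl(\bigwedge^2 A\bigr) = \det(A)^2$ for $A \in \GL_3(\Z/p)$. Hence $D(G) = ((\Z/p)^\times)^2/\{\pm 1\}$, and \cref{thmx:main-B_Q(G)} yields
\[ B_Q(G) \cong \frac{(\Z/p)^\times}{\pm(\Z/p)^{\times 2}\cdot ((\Z/p)^\times)^2} \cong \frac{(\Z/p)^\times}{((\Z/p)^\times)^2} \cong \Z/2, \]
using $p \equiv 1 \pmod{4}$ to conclude $-1 \in ((\Z/p)^\times)^2$. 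The identical calculation gives $B(G) \cong \Z/2$, so the surjection $q \colon B(G) \to B_Q(G)$ is an isomorphism.

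Finally, fix a quadratic non-residue $r$ modulo $p$. Let $X$ be the $2$-complex of the efficient presentation constructed above, and let $Y$ be the complex obtained by replacing a single relator of the form $[x, y]$ (with $x$, $y$ among the $(\Z/p)^3$-generators) by $[x^r, y]$. Since $\gcd(r, p) = 1$ this still presents $G$ and $\chi(Y) = 4$ implies $Y$ is minimal. A direct chain-level computation along the lines of \cite[Proposition 6]{Sieradski:1979} then shows that $\beta(X, Y) = [r] \in B(G) \cong \Z/2$ is non-trivial, so $X \not\simeq Y$, proving (i); part (ii) follows as noted above.
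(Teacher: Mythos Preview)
Your overall strategy matches the paper's: verify efficiency and $H_2(G)\cong(\Z/p)^3$, compute $D(G)$, apply \cref{thmx:main-B_Q(G)}, and exhibit two minimal presentations whose bias invariants differ by a non-square. However, there are two genuine gaps in how you execute this.

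First, you misidentify the role of $p\equiv 1\pmod 8$. The construction of an efficient $3$-generator, $6$-relator presentation of $G$ (which the paper carries out explicitly in \cref{lemma:Q8-presentations}) only needs $p\equiv 1\pmod 4$, and involves no projective cancellation over $\Z G$. The stronger congruence is used elsewhere, namely in the computation of $D(G)$.

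Second, your computation of $D(G)$ assumes without justification that $\varphi_G(\theta)$ equals the determinant of the induced action of $\theta$ on $H_2(G)\cong\bigwedge^2((\Z/p)^3)$. The bias map $\varphi_G$ is defined via chain maps between $C_*(\wt X)$ and $C_*(\wt X)_\theta$ (\cref{prop:bias-Aut(G)-action}, \cref{prop:bias-versions}), not as the functorial action on $H_2(G)$, and the paper never establishes such an identification. In particular, your claim that the $\Aut(Q_8)$-factor contributes trivially because it fixes $(\Z/p)^3$ is unjustified. The paper (\cref{lemma:Q8-bias-surj}) instead argues as follows: since $\Aut(Q_8)\cong S_4$ has abelianisation $\Z/2$, the image $\varphi_G(\Aut(Q_8))$ in $(\Z/p)^\times/\{\pm 1\}\cong\Z/4m$ (writing $p=1+8m$) lies in the unique subgroup of order $\le 2$, namely $\{0,2m\}$; since $2m$ is even this lands in $((\Z/p)^\times)^2/\{\pm 1\}$. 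This is exactly where $p\equiv 1\pmod 8$ enters. For the $\GL_3(\Z/p)$-factor the paper computes $\varphi_G$ on a generator by an explicit Fox-calculus chain map, obtaining $[r^2]$; your formula $\det(A)^2$ happens to agree, but needs the same chain-level verification. Similarly, that $\beta(X_r)=[r]$ requires an explicit chain computation rather than an appeal to \cite{Sieradski:1979}, since the presentations here mix the $Q_8$ and $(\Z/p)^3$ generators.
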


The proof will be broken into the following sequence of lemmas. In what follows, we will fix identifications $Q_8 = \langle x,y \mid x^2y^{-2},yxy^{-1}x\rangle$ and $(\Z/p)^3 = \langle a,b,c \mid a^p,b^p,c^p,[a,b],[a,c],[b,c]\rangle$.

\begin{lemma} \label{lemma:Q8-presentations}
Let $G = Q_8 \times (\Z/p)^3$ where $p$ is a prime such that $p \equiv 1 \mod 4$. Then:
\begin{clist}{(i)}
\item
For each $r \in \Z$ with $(r,p)=1$, we have that
\[ \mathcal{P}_r = \langle A, B, C \mid A^{2p}B^{-2p}, BAB^{-1}A^{2p-1}, C^p, [A,B^{p-1}], [A,C^r], [B,C]\rangle\]
is a presentation for $G$. Furthermore, we can identify $A=xa$, $B=yb$ and $C=c$.
\item
$G$ satisfies the conditions of \cref{thmx:main-B_Q(G)}. More specifically, $G$ satisfies the strong minimality hypothesis, $H_2(G) \cong (\Z/p)^3$, $m_G = p$ and $X_r := X_{\mathcal{P}_r} \in \HT_{\min}(G)$ for each $r$.
\end{clist}
\end{lemma}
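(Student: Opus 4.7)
The plan for (i) is to verify the six relations hold in $G$ under the substitution $A=xa$, $B=yb$, $C=c$, show the images generate $G$, and then derive the standard presentation of $Q_8\times(\Z/p)^3$ from the relations of $\mathcal{P}_r$ via Tietze transformations. Since elements of $Q_8$ commute with those of $(\Z/p)^3$ and $a^p=1$, one computes $A^{2p}=x^{2p}=x^2$ (using $p$ odd, so $x^{2(p-1)}=(x^4)^{(p-1)/2}=1$) and similarly $B^{2p}=y^2$, giving $A^{2p}B^{-2p}=x^2y^{-2}=1$. The relator $BAB^{-1}A^{2p-1}$ evaluates to $(yxy^{-1}a)(xa^{-1})=(x^{-1}a)(xa^{-1})=1$ using $yxy^{-1}=x^{-1}$. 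The hypothesis $p\equiv 1\mod 4$ forces $y^{p-1}=1$, so $B^{p-1}=b^{p-1}$ is central and $[A,B^{p-1}]=1$; the remaining relations $C^p=1$, $[A,C^r]=1$, $[B,C]=1$ are immediate since $c$ is central. For surjectivity, the same hypothesis gives $A^p=x^p=x$ and $B^p=y$, so $a=A^{1-p}$, $b=B^{1-p}$, $c=C$ recover the original generators; this yields a surjection $\pi\colon G_r\twoheadrightarrow G$ where $G_r$ is the group presented by $\mathcal{P}_r$.

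The main obstacle is proving $\pi$ is injective, equivalently $|G_r|\le 8p^3$. My plan is to introduce the elements $\tilde x:=A^p$, $\tilde y:=B^p$, $\tilde a:=A^{1-p}$, $\tilde b:=B^{1-p}$, $\tilde c:=C$ as new generators in $G_r$ via Tietze transformations and derive the defining relations of $Q_8\times(\Z/p)^3$ in these generators: $\tilde x^4=1$, $\tilde x^2=\tilde y^2$, $\tilde y\tilde x\tilde y^{-1}\tilde x=1$, $\tilde a^p=\tilde b^p=\tilde c^p=1$, together with mutual commutativity among $\tilde a,\tilde b,\tilde c$ and with $\tilde x,\tilde y$. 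Setting $z:=A^{2p}=B^{2p}$, relation (2) rearranges to $BAB^{-1}=Az^{-1}$; from this one deduces that $z$ is central of order two (since $BzB^{-1}=(BAB^{-1})^{2p}=A^{2p}z^{-2p}=z\cdot z^{-2p}$ must equal $z$), and then $\tilde x^4=A^{4p}=z^2=1$ and $\tilde x^2=z=\tilde y^2$. The delicate part is verifying cross-commutativity between the $\tilde x,\tilde y$ block and the $\tilde a,\tilde b$ block; here one leverages relation (4), $[A,B^{p-1}]=1$, together with the fact that $\tilde a=A^{1-p}$ and $\tilde b=B^{1-p}$ lie in cyclic subgroups of $\langle A\rangle$ and $\langle B\rangle$ respectively, to extract the mixed power identities needed.

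For (ii), the calculations are routine. Since $\gcd(|Q_8|,p)=1$, the K\"unneth formula gives $H_2(G)\cong H_2(Q_8)\oplus H_2((\Z/p)^3)\cong 0\oplus(\Z/p)^3=(\Z/p)^3$, so $d(H_2(G))=3$. The abelianisation $G^{\mathrm{ab}}\cong(\Z/2)^2\times(\Z/p)^3\cong(\Z/2p)^2\times\Z/p$ has $d(G^{\mathrm{ab}})=3$, hence $d(G)=3$. Since $\chi(X_r)=1-3+6=4$, combining $\chi_{\min}(G,2)\le\chi(X_r)=4$ with the universal bound $\chi_{\min}(G,2)\ge 1+d(H_2(G))=4$ forces equality, which simultaneously verifies the strong minimality hypothesis for $(G,2)$ and shows $X_r\in\HT_{\min}(G)$. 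Therefore $r_{(G,2)}=d(H_2(G))=3$, so all invariant factors of $H_2(G)\cong(\Z/p)^3$ equal $p$, giving $m_G=p$ and confirming the hypotheses of \cref{thmx:main-B_Q(G)}.
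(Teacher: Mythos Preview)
Your treatment of part (ii) matches the paper's exactly.

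For part (i), you and the paper take mirror-image routes: the paper begins with the obvious product presentation of $Q_8\times(\Z/p)^2$ (seven relators) and whittles it down to three via Tietze moves, then adjoins $C$; you instead start from $\mathcal{P}_r$, produce the surjection onto $G$, and argue $|G_r|\le 8p^3$ by recovering the standard generators $\tilde x,\tilde y,\tilde a,\tilde b,\tilde c$ inside $G_r$. Both strategies are sound, and yours is arguably more direct once the details are filled in.

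There is, however, a genuine gap in your order computation for $z=A^{2p}$. The parenthetical you give only establishes $z^{2p}=1$: from $BzB^{-1}=z$ (since $z=B^{2p}$) and $BzB^{-1}=(Az^{-1})^{2p}=z^{1-2p}$ you get $z^{2p}=1$, not $z^2=1$. To finish you must invoke relation (4) already at this stage: since $z$ is central, $B^kAB^{-k}=Az^{-k}$ for all $k$, and $[A,B^{p-1}]=1$ forces $z^{p-1}=1$. Then $\gcd(2p,\,p-1)=2$ (as $p\equiv 1\bmod 4$) yields $z^2=1$. The paper hits the same obstruction (phrased as $A^{2p(p-1)}=1$) and resolves it the same way, so this is an easily repaired omission rather than a flaw in your plan; but as written your deduction $\tilde x^4=z^2=1$ is unjustified. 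Once $z^2=1$ is in hand, the cross-commutativity you flag as ``delicate'' falls out quickly: $B\tilde a B^{-1}=A^{1-p}z^{p-1}=\tilde a$, so $\tilde a$ is actually central in $\langle A,B\rangle$, and symmetrically for $\tilde b$ via relation (4).
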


\begin{proof}
(i)
Let $(\Z/p)^2 =\langle a, b \rangle$ and $H = Q_8 \times (\Z/p)^2$. Let $A=xa$, $B=yb$. Then $A^p=x$, $A^{1-p}=a$, $B^p=y$ and $B^{1-p}=b$, and so $H = \langle A, B \rangle$. By combining the given presentations for $Q_8$ and $(\Z/p)^2$ together, and adding commutators, we obtain a presentation
\[ \langle A, B \mid A^{2p}B^{-2p}, B^pA^pB^{-p}A^p, A^{p(p-1)}, B^{p(p-1)}, [A^{p-1},B^{p-1}],[A^{p-1},B^p], [A^p,B^{p-1}]\rangle  \]
where we can omit the relators $[A^p,A^{p-1}]$ and $[B^p,B^{b-1}]$ since they are trivial in $F(A,B)$.

The relators $A^{2p}B^{-2p}$ and $B^pA^pB^{-p}A^p$ imply $A^{4p}$ and $B^{4p}$. Since $4 \mid p-1$, this implies $A^{p(p-1)}$ and $B^{p(p-1)}$, so these two relators can be omitted.

Using $[A^{p-1},B^{p-1}]$, we can replace $[A^{p-1},B^p]$ with $[A^{p-1},B]$ and $[A^p,B^{p-1}]$ with $[A,B^{p-1}]$. Either of these relators then implies $[A^{p-1},B^{p-1}]$ and so it can be omitted. We can also use these relators to replace $B^pA^pB^{-p}A^p$ with $BAB^{-1}A^{2p-1}$.
We now have a presentation:
\[ \langle A, B \mid A^{2p}B^{-2p}, BAB^{-1}A^{2p-1}, [A^{p-1},B], [A,B^{p-1}]\rangle.  \]

We now claim that $[A^{p-1},B]$ is a consequence of the other three relators, and so can be omitted. First  note that $[A,B^{p-1}]=1$ and $BAB^{-1}=A^{1-2p}$ implies that $B^{p-1} = AB^{p-1}A^{-1} = (AB^{-1}A^{-1})^{1-p}=(B^{-1}A^{2p})^{1-p}$. Since $A^{2p}=B^{2p}$ is central, we can rewrite this as $B^{p-1} = B^{p-1}A^{2p(1-p)}$ and so $A^{2p(p-1)}=1$. Using $BAB^{-1}=A^{1-2p}$ and $A^{2p(p-1)}=1$, we now obtain $BA^{p-1}B^{-1} = A^{(1-2p)(p-1)} = A^{p-1} A^{2p(1-p)} = A^{p-1}$, as required.
Hence we have that 
\[ \langle A, B \mid A^{2p}B^{-2p}, BAB^{-1}A^{2p-1}, [A,B^{p-1}]\rangle \]
is a presentation for $H$. By adding a generator $C$ and relators $C^p$, $[A,C]$ and $[B,C]$, we clearly obtain a presentation for $G = H \times \Z/p$. It is straightforward to see that, if $r \in \Z$ with $(r,p)=1$, then replacing $[A,C]$ with $[A,C^r]=1$ does not change the group. Thus $\mathcal{P}_r$ presents $G$.

(ii) By the K\"{u}nneth formula, we have that
\[ H_2(G) \cong H_2(Q_8) \oplus H_2((\Z/p)^3) \oplus (H_1(Q_8) \otimes_{\Z} H_1((\Z/p)^3)) \cong H_2((\Z/p)^3) \cong (\Z/p)^3 \]
by \cref{lemma:H_n(G)} and the fact that $H_2(Q_8)=0$, and $H_1(Q_8) \cong (\Z/2)^2$ and $H_1((\Z/p)^3) \cong (\Z/p)^3$ have coprime orders. Since $\chi(X_r) = 1-3+6=4=1+d(H_2(G))$, this implies both that $G$ has the strong minimality hypothesis and that $X_r$ is a minimal finite $2$-complex. Since $H_2(G) \cong (\Z/p)^3$, it now follows that $m_G = p$.
\end{proof}

In what follows, we will let $X_1$ be the reference minimal finite $2$-complex for $G = Q_8 \times (\Z/p)^3$. To simplify the calculation of $D(G)$, we will now restrict to the case where $p \equiv 1 \mod 8$. We will use the formulation for the bias invariant established in \cref{prop:bias-versions}.

\begin{lemma} \label{lemma:Q8-bias-surj}
Let $G = Q_8 \times (\Z/p)^3$ where $p$ is a prime such that $p \equiv 1 \mod 8$. Then:
\begin{clist}{(i)}
\item
The bias invariant gives a surjection 
\[ \beta\colon\HT_{\min}(G) \to B(G).\] 
Furthermore, $\beta(X_r) = [r]$ for all $r \in \Z$ with $(r,p)=1$.
\item
$D(G) = (\Z/p)^{\times 2}/\{\pm 1\}$ and $B(G) = (\Z/p)^\times/\pm(\Z/p)^{\times 2} \cong \Z/2$.
\end{clist}
\end{lemma}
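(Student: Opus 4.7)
Part (i) calls for the determinant formulation of the bias from \cref{prop:bias-versions}. First I would compute the integral cellular chain complex $C_*(X_r; \Z)$ using the abelianised Fox Jacobian of $\mathcal{P}_r$. A direct calculation shows that $\partial_2\colon \Z^6 \to \Z^3$ has matrix
\[ \partial_2 = \begin{pmatrix} 2p & 2p & 0 & 0 & 0 & 0 \\ -2p & 0 & 0 & 0 & 0 & 0 \\ 0 & 0 & p & 0 & 0 & 0 \end{pmatrix} \]
in the bases $(R_1, \dots, R_6)$ and $(A, B, C)$, so $H_2(X_r; \Z) = \ker \partial_2 \cong \Z^3$ is freely generated by the three commutator relators $R_4, R_5, R_6$. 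Using the identity $[A, C^r] = \prod_{i=0}^{r-1} C^i [A, C] C^{-i}$ in the free group, I would define a chain map $h\colon C_*(\wt X_r) \to C_*(\wt X_1)$ equal to the identity on $C_0, C_1$ and on the generators for $R_1, R_2, R_3, R_4, R_6$, and sending the generator for $R_5 = [A, C^r]$ to $(\sum_{i=0}^{r-1} C^i)$ times the generator for $[A, C]$. After applying $\Z \otimes_{\Z G} -$, the induced map on $H_2$ becomes $\mathrm{diag}(1, r, 1)$, with determinant $r$. By \cref{prop:bias-versions}, $\beta(X_r, X_1) = [r]$, so $\beta(X_r) = [r]$ in $B(G)$; surjectivity of $\beta$ follows because the classes $[r]$, as $r$ ranges over integers coprime to $p$, exhaust $(\Z/p)^\times/\pm D(G)$.

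For (ii), since $Q_8$ and $(\Z/p)^3$ are the Sylow $2$- and $p$-subgroups of $G$ they are characteristic, so $\Aut(G) \cong \Aut(Q_8) \times GL_3(\F_p)$. The K\"unneth formula combined with $H_2(Q_8) = 0$ (as $Q_8$ has periodic cohomology) and $\gcd(|H_1(Q_8)|, |H_1((\Z/p)^3)|) = 1$ gives $H_2(G) \cong H_2((\Z/p)^3) \cong \Lambda^2 (\Z/p)^3$. By \cref{prop:bias-Aut(G)-action} and \cref{prop:bias-versions}, $\varphi_G(\theta)$ is the reduction mod $p$ of $\det(f_{\theta*}\colon H_2(X_1; \Z) \to H_2(X_1; \Z))$ for any self-map $f_\theta$ of $X_1$ realising $\theta$ on $\pi_1$. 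Since $H_2(X_1; \Z) \cong \Z^3$ surjects onto $H_2(G) \cong (\Z/p)^3$ with kernel $p H_2(X_1; \Z)$, this equals $\det(\theta_*|_{H_2(G)}) \pmod{p}$. The $\Aut(Q_8)$ factor acts trivially on $H_2(G)$ by K\"unneth functoriality, while $\sigma \in GL_3(\F_p)$ acts by $\Lambda^2 \sigma$ with $\det(\Lambda^2 \sigma) = \det(\sigma)^2$. Hence $D(G) = \IM(\varphi_G) = (\Z/p)^{\times 2}/\{\pm 1\}$. Since $p \equiv 1 \mod 8$ implies $-1 \in (\Z/p)^{\times 2}$, we have $\pm (\Z/p)^{\times 2} = (\Z/p)^{\times 2}$, and therefore $B(G) = (\Z/p)^\times/(\Z/p)^{\times 2} \cong \Z/2$.

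The step requiring most care will be the passage from $\varphi_G(\theta)$ to $\det(\theta_*|_{H_2(G)})$: this uses the equivalence between $\Z G$-chain maps $C_*(\wt X_1) \to C_*(\wt X_1)_\theta$ with $H_0 = \id$ and basepoint-preserving self-maps of $X_1$ realising $\theta$ on $\pi_1$ (which exist by standard obstruction theory for $2$-complexes, since each relator in $\mathcal{P}_1$ remains trivial after applying $\theta$), combined with the naturality of the surjection $H_2(X_1; \Z) \twoheadrightarrow H_2(G)$ under this $\Aut(G)$-action.
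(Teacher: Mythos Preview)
Your proof of part (i) is essentially the same as the paper's: both construct a chain map $C_*(\wt X_r) \to C_*(\wt X_1)$ that is the identity except on the generator corresponding to $[A,C^r]$, where it multiplies by $\Sigma_r(c) = 1 + c + \cdots + c^{r-1}$, and then read off $\det = r$ on $H_2(-;\Z) \cong \Z^3$ via \cref{prop:bias-versions}. Your free-group identity $[A,C^r] = \prod_i C^i[A,C]C^{-i}$ is a nice way to see why this chain map exists; the paper simply writes down the diagonal matrix and lets the reader verify it commutes with the Fox Jacobians.

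Your proof of part (ii) is correct and takes a genuinely different, more conceptual route than the paper. The paper fixes a generator $\theta\colon c \mapsto c^r$ of $GL_3(\F_p)^{\mathrm{ab}}$, writes out $C_*(\wt X_1)_{\theta^{-1}}$ explicitly, builds a concrete chain map $g$ by hand, and computes $\det(g_2)_* = r^2$ directly; for $\Aut(Q_8)$ it argues via the size of $(S_4)^{\mathrm{ab}}$ and uses $p\equiv 1 \bmod 8$ to force the image into the squares. You instead identify $\varphi_G(\theta)$ with $\det(\theta_*\mid_{H_2(G)})$ using naturality of the surjection $H_2(X_1;\Z)\twoheadrightarrow H_2(G)$, then exploit the K\"unneth identification $H_2(G)\cong \Lambda^2(\Z/p)^3$ to get $\det(\Lambda^2\sigma)=\det(\sigma)^2$ for $\sigma\in GL_3(\F_p)$ and triviality of the $\Aut(Q_8)$-action for free. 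This buys you a cleaner argument that actually works for all $p\equiv 1\bmod 4$ (not just $p\equiv 1\bmod 8$), and it generalises immediately to other groups of the form $H\times (\Z/p)^d$ with $H_2(H)=0$ and $(|H|,p)=1$. The one point you should make fully explicit is why the kernel of $H_2(X_1;\Z)\to H_2(G)$ is exactly $p\,H_2(X_1;\Z)$: since $H_2(X_1;\Z)\cong\Z^3$ and $H_2(G)\cong(\Z/p)^3$, every element of the quotient has order dividing $p$, so $p\,H_2(X_1;\Z)$ is contained in the kernel and equality follows by comparing indices. This is what guarantees $\det((f_\theta)_*)\bmod p = \det(\theta_*\mid_{H_2(G)})$.
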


\begin{proof}
(i) Using Fox differentiation (see \cite[Section 3.2]{Si93}), we get that
\[ C_*(\wt X_r) = \Bigl (\Z G^6 
\xrightarrow[d_2(\wt X_r)]{
\cdot \left(\begin{smallmatrix} 
\Sigma_{2p}(xa) & -\Sigma_{2p}(xa) & 0 \\
yb+x^{-1}a\Sigma_{2p-1}(xa) & 1-x^{-1}a & 0 \\
0 & 0 & \Sigma_p(c) \\
1-b^{-1} & (xa-1)\Sigma_{p-1}(yb) & 0 \\
1-c^r & 0 & (xa-1)\Sigma_r(c) \\
0 & 1-c & yb-1
\end{smallmatrix}\right)
} 
\Z G^3 \xrightarrow[d_1(\wt X_r)]{\cdot \left(\begin{smallmatrix} xa-1 \\ yb-1 \\ c-1 \end{smallmatrix}\right)} \Z G \Bigr ) \]
where, if $u \in \Z G$ and $m \ge 1$, we write $\Sigma_m(u) := 1+u+\cdots+u^{m-1}$.
Note that the only terms involving $r$ are the entries $1-c^r$ and $(xa-1)\Sigma_r(c)$ in the $5$th row of $d_2(\wt X_r)$.

There is a chain map $f=(f_2,f_1,f_0) : C_*(\wt X_r) \to C_*(\wt X_1)$ where $f_0 = \id_{\Z G}$, $f_1 = \id_{\Z G^3}$ and 
\[ f_2 = \cdot \left(\begin{smallmatrix} 
1 & 0 & 0 & 0 & 0 & 0 \\
0 & 1 & 0 & 0 & 0 & 0 \\
0 & 0 & 1 & 0 & 0 & 0 \\
0 & 0 & 0 & 1 & 0 & 0 \\
0 & 0 & 0 & 0 & \Sigma_r(c) & 0 \\
0 & 0 & 0 & 0 & 0 & 1
\end{smallmatrix}\right) : \Z G^6 \to \Z G^6. \]

Let $\varepsilon\colon\Z G \to \Z$ denote the augmentation map. If $d \in M_m(\Z G)$ is a matrix, $\varepsilon(d) \in M_m(\Z)$ will denote the matrix $d$ with $\varepsilon$ applied to each entry.
We have 
\[ H_2(X_r) \cong \ker(\varepsilon(d_2(\wt X_r)):\Z^6 \to \Z^3) = \ker \left\{ \cdot \left(\begin{smallmatrix} 
2p & -2p & 0 \\
2p & 0 & 0 \\
0 & 0 & p \\
0 & 0 & 0 \\
0 & 0 & 0 \\
0 & 0 & 0
\end{smallmatrix}\right) \right\} = \left\langle
\left(\begin{smallmatrix}
	 0 \\ 0 \\ 0 \\ 1 \\ 0 \\ 0
\end{smallmatrix} \right),
\left(\begin{smallmatrix}
	 0 \\ 0 \\ 0 \\ 0 \\ 1 \\ 0
\end{smallmatrix} \right),
\left(\begin{smallmatrix}
	 0 \\ 0 \\ 0 \\ 0 \\ 0 \\ 1
\end{smallmatrix} \right)
\right\rangle \cong \Z^3.
\] 
With respect to this identification, the induced map $(f_2)_*\colon H_2(X_r) \to H_2(X_r)$ is given by $(f_2)_* = \left(\begin{smallmatrix}
	1 & 0 & 0 \\ 0 & r & 0 \\ 0 & 0 & 1
\end{smallmatrix}\right)$.
Hence we have $\beta(X_r) = \det((f_2)_*) = [r] \in (\Z/p)^\times/\pm D(G)$.

(ii) We have $D(G) = \IM(\varphi_G\colon\Aut(G) \to (\Z/p)^\times/\{\pm 1\})$. Let $q\colon(\Z/p)^\times/\{\pm 1\} \twoheadrightarrow (\Z/p)^\times/\pm(\Z/p)^{\times 2}$ denote the natural quotient map.
Since $p \equiv 1 \mod 8$, we can write $p = 1+8m$ for some $m$. This implies that $(\Z/p)^\times/\{\pm 1\} \cong \Z/4m$ and so we can view $\varphi_G$ as a map $\varphi_G\colon\Aut(G) \to \Z/4m$. Since $p \equiv 1 \mod 4$, $-1 \in (\Z/p)^{\times 2}$ and so $(\Z/p)^\times/\pm(\Z/p)^{\times 2} \cong \Z/2$. Thus we can view $q$ as a map $q\colon\Z/4m \twoheadrightarrow \Z/2$. This is reduction mod 2 since there is a unique surjection.

Since $Q_8$ and $(\Z/p)^3$ have coprime order, we have $\Aut(G) \cong \Aut(Q_8) \times \Aut((\Z/p)^3)$. We will now deal with the image of $\Aut(Q_8)$ and $\Aut((\Z/p)^3)$ under $\varphi_G$ separately.

By \cite[Lemma IV.6.9]{AM04}, $\Aut(Q_8) \cong S_4$. By the fact that $(S_4)^{\text{ab}} \cong \Z/2$ and $\Z/4m$ is abelian, we have that $\varphi_G(\Aut(Q_8)) \cong 0$ or $\Z/2$, and so $\varphi_G(\Aut(Q_8)) \subseteq \{0,2m\}$. Since $q$ is reduction mod 2, this implies that $(q \circ \varphi_G)(\Aut(Q_8)) = \{0\}$ and so $\varphi_G(\Aut(Q_8)) \le (\Z/p)^{\times 2}/\{\pm 1\}$.

We have $\Aut((\Z/p)^3) \cong GL_3(\Z/p)$. Since $\Z/p$ is a field, it follows from a 1901 theorem of Dickson \cite{Di01} that $SL_3(\Z/p)$ is the commutator subgroup of $GL_3(\Z/p)$, and so there is an isomorphism $GL_3(\Z/p)^{\text{ab}} \cong (\Z/p)^\times$ induced by the determinant map. 
Let $\theta \in \Aut((\Z/p)^3)$ be given by $a \mapsto a, b \mapsto b, c \mapsto c^r$ for some generator $r \in (\Z/p)^\times$. Then $\det(\theta) = r$ and so $\theta$ generates the abelianisation. Since $\Z/4m$ is abelian, $\varphi_G \mid_{\Aut((\Z/p)^3)}\colon\Aut((\Z/p)^3) \to \Z/4m$ factors through the abelianisation and so $\varphi_G(\Aut((\Z/p)^3)) = \langle \varphi_G(\theta) \rangle$ is generated by $\varphi_G(\theta)$ as a subgroup of $\Z/4m$.

Now, using the isomorphisms $\Z G_{\theta^{-1}} \cong \Z G$, we obtain a chain isomorphism
\[ C_*(\wt X_1)_{\theta^{-1}} \cong \Bigl (\Z G^6 
\xrightarrow[\theta_*(d_2(\wt X_1))]{
\cdot \left(\begin{smallmatrix} 
\Sigma_{2p}(xa) & -\Sigma_{2p}(xa) & 0 \\
yb+x^{-1}a\Sigma_{2p-1}(xa) & 1-x^{-1}a & 0 \\
0 & 0 & \Sigma_p(c^r) \\
1-b^{-1} & (xa-1)\Sigma_{p-1}(yb) & 0 \\
1-c^r & 0 & xa-1 \\
0 & 1-c^r & yb-1
\end{smallmatrix}\right)
} 
\Z G^3 \xrightarrow[\theta_*(d_1(\wt X_1))]{\cdot \left(\begin{smallmatrix} xa-1 \\ yb-1 \\ c^r-1 \end{smallmatrix}\right)} \Z G \Bigr ) \]
where, for $i =1,2$, $\theta_*(d_i(\wt X_1))$ denotes the matrix $d_i(\wt X_1)$ with the induced map $\theta_*\colon\Z G \to \Z G$ applied to each entry.
Since $r \in (\Z/p)^\times$ and $c$ has order $p$, we have $\Sigma_p(c^r) = \Sigma_p(c)$. In particular, the top four rows of $C_*(\wt X_1)_{\theta^{-1}}$ and $C_*(\wt X_1)$ coincide.

There is a chain map $g = (g_2,g_1,g_0) : C_*(\wt X_1)_{\theta^{-1}} \to C_*(\wt X_1)$ where $g_0 = \id_{\Z G}$ and we have 
\[ g_1 = \cdot \left(\begin{smallmatrix}
	1 & 0 & 0 \\
	0 & 1 & 0 \\
	0 & 0 & \Sigma_r(c)
\end{smallmatrix}\right) : \Z G^3 \to \Z G^3, \quad 
g_2 = \cdot \left(\begin{smallmatrix}
1 & 0 & 0 & 0 & 0 & 0 \\
0 & 1 & 0 & 0 & 0 & 0 \\
0 & 0 & \Sigma_r(c) & 0 & 0 & 0 \\
0 & 0 & 0 & 1 & 0 & 0 \\
0 & 0 & 0 & 0 & \Sigma_r(c) & 0 \\
0 & 0 & 0 & 0 & 0 & \Sigma_r(c)
\end{smallmatrix}\right)  : \Z G^6 \to \Z G^6.  \]

As in (i), we have that $H_2(X_r) \cong \Z^3$ and $H_2(\Z \otimes_{\Z G} C_*(\wt X_1)_{\theta^{-1}}) \cong \Z^3$ are generated by the bottom three copies of $\Z$. With respect to these identifications, the induced map $(g_2)_* : H_2(\Z \otimes_{\Z G} C_*(\wt X_1)_{\theta^{-1}}) \to H_2(X_r)$ is given by $(g_2)_* = \left(\begin{smallmatrix}
	1 & 0 & 0 \\ 0 & r & 0 \\ 0 & 0 & r
\end{smallmatrix}\right)$. This gives that
\[ \varphi_G(\theta) = \beta(C_*(\wt X_1),C_*(\wt X_1)_{\theta}) = \beta(C_*(\wt X_1)_{\theta^{-1}},C_*(\wt X_1)) = \det((g_2)_*) = [r^2] \in (\Z/p)^\times/\{\pm 1\}. \]

Hence $\varphi_G(\Aut((\Z/p)^3)) = \langle \varphi_G(\theta) \rangle = \langle r^2 \rangle = (\Z/p)^{\times 2}/\{\pm 1\}$, since $r \in (\Z/p)^\times$ was chosen to be a generator. We established previously that $\varphi_G(\Aut(Q_8)) \le (\Z/p)^{\times 2}/\{\pm 1\}$, and thus we have $D(G) = \IM(\varphi_G) = (\Z/p)^{\times 2}/\{\pm 1\}$ and $B(G) = (\Z/p)^\times/\pm(\Z/p)^{\times 2} \cong \Z/2$.
\end{proof}

\begin{proof}[Proof of \cref{thm:Q8xCp^3}]
To prove (i), note that \cref{lemma:Q8-bias-surj} (i) implies that there is a surjection 
\[ \beta\colon\HT_{\min}(G) \twoheadrightarrow B(G) \cong \Z/2.\] 
This implies that $|\HT_{\min}(G)| > 1$, as required.

To prove (ii), note that \cref{lemma:Q8-presentations} (ii) implies that $G$ satisfies the conditions of \cref{thmx:main-B_Q(G)}. Since $m_G = p$, we have 
\[ B_Q(G) \cong \frac{(\Z/p)^\times}{\pm(\Z/p)^{\times 2} \cdot D(G)}. \] 
It follows from \cref{lemma:Q8-bias-surj} (ii) that $D(G) = (\Z/p)^\times)^2/\{\pm 1\}$, and so we obtain
\[ B_Q(G) \cong (\Z/p)^\times / \pm(\Z/p)^{\times 2} \cong \Z/2. \]
Since $\beta\colon\HT_{\min}(G) \twoheadrightarrow B(G)$ is surjective, \cref{prop:beta-to-betaQ} implies that 
\[ \beta_Q\colon\sM_{4}(G) \twoheadrightarrow B_Q(G) \cong \Z/2\] 
is surjective, i.e. since $\beta_Q(M(X)) = q(\beta(X))$ is the image of the bias under a surjection $q$.
This implies that $|\sM_{4}(G)| \ge 2$, which completes the proof.
\end{proof}

\appendix

\section{Unitary groups and odd-dimensional $L$-theory}
\label{ss:L-odd}

\renewcommand{\thesubsection}{\thesection(\roman{subsection})}
\renewcommand{\thetheorem}{\thesection\arabic{theorem}}

Let $\l$ be a ring with involution. For $n \in \Z$, we will define the
surgery obstruction groups $L_{2n+1}^h(\l)$ and $L_{2n+1}^s(\l)$ for $\La$ a ring with involution. 
These are abelian groups which only depend on the value of $n \mod 2$. In particular, it suffices to define the groups in the cases $2n+1 \equiv  1, 3 \mod 4$. The original definition for $\La = \ZG$ arose from analysing the obstructions to geometric surgery problems with fundamental group $G$
(see \cite[Chapter 6]{Wall:1970}).

We will need to recall some of the details  of the definition of   $L^h_{2n+1}(\La)$ for use in \cref{ss:PB-to-U}.  For $r \ge 1$, let $M_r(\l)$ denote the ring of $r \times r$ matrices over $\l$.  If $a \mapsto \ol{a}$ denotes the involution on $\La$, then  $\alpha^* = (\wbar{\alpha}_{ji}) $ denotes the conjugate transpose matrix, for any $\alpha =(\alpha_{ij}) \in M_{r}(\La)$. If
\[\sigma = \left(\begin{smallmatrix} \alpha & \beta \\ \gamma & \delta \end{smallmatrix}\right) \]
is a $2r\times 2r$ matrix in $GL_{2r}(\La)$ expressed in $r\times r$ blocks, then
$\U_{2r}(\La) \subseteq GL_{2r}(\La)$ is the subgroup consisting of the matrices $\sigma$ with the properties
\begin{enumerate}
\item $\alpha \delta^* + (-1)^n\beta \gamma^* = I$
\item $\alpha\beta^*$ and $\gamma \delta^*$ each have the form 
$\theta - (-1)^n\theta^*$ for some  $\theta \in M_r(\l)$.
\end{enumerate}

\begin{remark} \label{remark:unitary-definitions} 
The unitary group $\U_{2r}(\La)$
  is a subgroup of the \emph{hermitian} unitary group
$ \Isom_\ep(H(\La^r))$, which consists of the matrices $\sigma \in GL_{2r}(\l)$ which preserve the $\varepsilon = (-1)^n$-hyperbolic form $H(\l^r)$ but not necessarily the quadratic refinement. In  other words, the relation
\[ \sigma \left(\begin{smallmatrix} 0 & I \\ \ep I & 0 \end{smallmatrix}\right) \sigma^* = \left(\begin{smallmatrix} 0 & I \\ \ep I & 0 \end{smallmatrix}\right)\]
holds in $GL_{2r}(\l)$, and we have $\U_{2r}(\l)\subseteq \Isom_\ep(H(\La^r))$. If $\sigma$ is given as above, the weaker assumption is equivalent to conditions (i) and 
\begin{enumerate}
	\item[(ii)$'$] $(\alpha \beta^*)^* + (-1)^{n}\alpha \beta^*=0$ and $(\gamma \delta^*)^*+ (-1)^{n}\gamma \delta^*=0$.
\end{enumerate}
Condition (ii)$'$ is implied by (ii) but is a strictly weaker condition over an arbitrary ring with involution $\La$.
\end{remark}

\begin{lemma}
\label{example:unitary}
Let $\l$ be a ring with involution , let $\varepsilon = (-1)^n$ for some $n$ and let $r \ge 1$. Then $\U_{2r}(\l)=\Isom(H_\ep(\La^r))$ provided $\wh H^{n+1}(\Z/2;(\l,+))=0$, where the additive group $(\l,+)$ is viewed as a $\Z/2$-module under the involution.
Furthermore: 
\begin{clist}{(a)}
\item For $n$ even, this holds for $\Lambda = \Z$ or $\Lambda = \Z/m$ for $m$ odd.
\item For $n$ odd, this holds when $2 \in \Lambda^\times$, hence for $\Lambda = \Z/m$ for $m$ odd (but not for $\Lambda = \Z$).
\end{clist}
\end{lemma}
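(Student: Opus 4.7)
The plan is to compare the strong unitary condition (ii) with the weaker hermitian condition (ii)$'$ and reduce the question of their equivalence to a diagonal obstruction expressible in Tate cohomology. Since $\U_{2r}(\l)$ sits inside $\Isom(H_\ep(\l^r))$ via the inclusion of defining conditions, equality holds precisely when, for every $\phi \in M_r(\l)$ satisfying $\phi^* + (-1)^n \phi = 0$ (the condition appearing in (ii)$'$ applied to $\phi = \alpha\beta^*$ or $\phi = \gamma\delta^*$), one can write $\phi = \theta - (-1)^n\theta^*$ for some $\theta \in M_r(\l)$. Setting $\varepsilon := (-1)^{n+1}$ throughout, the task is to show that every $\phi$ with $\phi^* = \varepsilon\phi$ is of the form $\theta + \varepsilon\theta^*$.

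Writing $\phi = (\phi_{ij})$, a short explicit construction handles all off-diagonal entries: for $i<j$, set $\theta_{ij} := \phi_{ij}$ and $\theta_{ji} := 0$, and observe that the constraint $\phi_{ji} = \varepsilon\overline{\phi_{ij}}$ forces $(\theta + \varepsilon\theta^*)_{ij} = \phi_{ij}$ and $(\theta + \varepsilon\theta^*)_{ji} = \phi_{ji}$ for free. The only residual obstruction comes from the diagonal: for each $i$, one must solve $\phi_{ii} = \theta_{ii} + \varepsilon\overline{\theta_{ii}}$ subject only to the hypothesis $\overline{\phi_{ii}} = \varepsilon\phi_{ii}$. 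This is exactly the vanishing of the Tate cohomology group $\wh H^0(\Z/2; \l)$ computed with respect to the twisted involution $T_\varepsilon \colon a\mapsto \varepsilon\bar a$ on $(\l,+)$. Using the standard $2$-periodicity of $\wh H^*(\Z/2; -)$, and the elementary identification that twisting by a sign shifts degree by one, this group coincides with $\wh H^{n+1}(\Z/2; (\l,+))$ taken with the original involution: for $n$ odd one has $\varepsilon = +1$ and the relevant group is $\wh H^0$, while for $n$ even one has $\varepsilon = -1$ and the relevant group is $\wh H^1$. This proves the general criterion; the main subtlety, which I would verify carefully rather than sketch, is the sign bookkeeping in the degree shift.

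For the two specific cases, it then remains to compute the relevant Tate cohomology in the trivial-involution setting. In case (i), with $n$ even, the relevant group is $\wh H^1(\Z/2; \l) = \ker(1+\sigma)/\IM(1-\sigma)$, which reduces to $\ker(\cdot 2)$ under the trivial involution and therefore vanishes both for $\Lambda = \Z$ and for $\Lambda = \Z/m$ with $m$ odd. In case (ii), with $n$ odd, the relevant group is $\wh H^0(\Z/2; \l) = \Lambda/2\Lambda$, which vanishes precisely when $2 \in \Lambda^\times$; this holds for $\Lambda = \Z/m$ with $m$ odd and fails for $\Lambda = \Z$, consistent with the well-known fact that the integral symplectic group properly contains the corresponding subgroup cut out by the even-form condition.
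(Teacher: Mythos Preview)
Your argument is correct. The paper states this lemma without proof, relying on the preceding remark (which records the difference between condition (ii) and the weaker (ii)$'$) as implicit setup; your proposal supplies exactly the details that the paper omits, reducing the gap between (ii) and (ii)$'$ to a diagonal Tate obstruction and then evaluating it in the trivial-involution cases. One cosmetic point: you temporarily redefine $\varepsilon$ as $(-1)^{n+1}$, clashing with the paper's standing convention $\varepsilon = (-1)^n$; it would be cleaner to introduce a separate symbol (say $\eta = -\varepsilon$) to avoid confusion, but this does not affect the mathematics.
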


\begin{proof} Let $\l$ be a ring with involution such that $\wh H^{n+1}(\Z/2;(\La,+))=0$. 
We note that  
any matrix $X \in M_r(\La)$ such that $X +  \ep X^* = 0$ can be expressed as $X = D + V - \ep V^*$, where $D$ is diagonal with $D + \ep D^* = 0$ and $V$ is strictly upper triangular. The condition 
$\wh H^{n+1}(\Z/2;(\La,+))=0$ applied to each diagonal entry of $D$ shows that $D = E - \ep E^*$.
It follows that condition (ii)$'$ implies condition (ii) for all $\alpha, \beta, \gamma, \delta \in M_r(\l)$, and hence $\U_{2r}(\l)=\Isom(H_\ep(\La^r))$.
 For the examples (a) and (b), we must show that every $x \in \l$ such that $x+(-1)^n\bar{x}=0$ is of the form $x = y - (-1)^n\bar{y}$ for some $y \in \l$. This is straightforward to check in each case. For example, if $\l = \Z$ and $n$ is even, then the statement is simply that $2x=0$ implies $x=0$ for all $x \in \Z$.
\end{proof}

\subsection{The surgery obstruction groups  $L^h_{2n+1}(\La)$} We will now define the group $L^h_{2n+1}(\La)$ as the quotient of the stabilised unitary group by a certain subgroup generated by elementary unitary matrices.
\begin{definition}\label{def:elemU}
A $2r\times 2r$ matrix in $\U_{2r}(\l)$ is called \emph{elementary unitary} if it is a product of matrices
of the following forms  (see the list in \cite[\S 1]{Lee:1971} for the case $n \equiv 1 \mod 2$):

\begin{enumerate}
\item $\left(\begin{smallmatrix} Q & 0 \\ 0 & (Q^*)^{-1} \end{smallmatrix}\right)$ where $Q \in GL_r(\l)$
\item $\left(\begin{smallmatrix} I & P \\ 0 & I \end{smallmatrix}\right)$ and $\left(\begin{smallmatrix} I & 0 \\ P & I \end{smallmatrix}\right)$ where $P = A - (-1)^nA^*$ for some $A \in M_r(\l)$
\item $\left(\begin{smallmatrix} A & B \\ C & D \end{smallmatrix}\right)$ where 
$A=\left(\begin{smallmatrix} 0 & 0 \\ 0 & I \end{smallmatrix}\right)$, 
$B=\left(\begin{smallmatrix} 1 & 0 \\ 0 & 0 \end{smallmatrix}\right)$, 
$C=\left(\begin{smallmatrix} (-1)^n & 0 \\ 0 & 0 \end{smallmatrix}\right)$ and 
$D=\left(\begin{smallmatrix} 0 & 0 \\ 0 & I \end{smallmatrix}\right)$.
\end{enumerate}
The subgroup in $\U_{2d}(\La)$ of elementary unitary matrices is denoted $\EU_{2r}(\La)$.
\end{definition}

 After orthogonal stabilisation, one defines the inclusions
$$ \U_{2}(\La) \subset \U_{4}(\La) \subset \dots \subset \U_{2r}(\La) \subset \U_{2r+2}(\La) \subset \cdots$$
whose union is the \emph{stable unitary group} $\U(\La)$. 
 More precisely, the inclusion map is given by
\[ i \colon U_{2r}(\La) \to U_{2r+2}(\La), \quad \left(\begin{smallmatrix} \alpha & \beta \\ \gamma & \delta \end{smallmatrix}\right) \mapsto \left(\begin{smallmatrix} \alpha' & \beta' \\ \gamma' & \delta' \end{smallmatrix}\right) \]
for $\alpha, \beta, \gamma, \delta \in M_r(\l)$, and where $\alpha' = \left(\begin{smallmatrix} \alpha & 0 \\ 0 & 1 \end{smallmatrix}\right)$, $\beta' = \left(\begin{smallmatrix} \beta & 0 \\ 0 & 0 \end{smallmatrix}\right)$, $\gamma' = \left(\begin{smallmatrix} \gamma & 0 \\ 0 & 0 \end{smallmatrix}\right)$, $\delta' = \left(\begin{smallmatrix} \delta & 0 \\ 0 & 1 \end{smallmatrix}\right) \in M_{r+1}(\l)$.

\smallskip
The union of the corresponding stabilisations 
$\EU_{2r}(\La)\subset \EU_{2r+2}(\La)$ is the subgroup $\EU(\La) \subset \U(\La)$ generated by all elementary unitary matrices. A key result is that $\EU(\La)$ contains the commutator subgroup of $\U(\La)$ (see Wall \cite[Chapter 6]{Wall:1970}, \cite[Theorem 4.2]{Ranicki:1973}). The abelian quotient group
$$L^h_{2n+1}(\La) = \U(\La) / \EU(\La)$$
provides an algebraic description of the (unbased) surgery obstruction group.

\subsection{The surgery obstruction groups $L^s_{2n+1}(\La)$}
To define the \emph{simple} obstruction groups $L^s_{2n+1}(\La)$, one must take Whitehead torsion into account and work with based free modules (see \cite{Milnor:1966}). For a general ring $\La$ with involution,
we have a natural group homomorphism by composition
\[ \tau\colon \U_{2d}(\La) \hookrightarrow GL_{2d}(\l) \to K_1(\La),  \]
and we define the \emph{special unitary group} $\SU_{2r}(\La) = \ker \tau$. 
This is the group of simple
automorphisms of the $(-1)^n$-hyperbolic  form 
\[ H(\La^r) = \Bigl (\La^r \oplus \La^r, \bigl (\begin{smallmatrix} 0 & I \\ (-1)^nI & 0 \end{smallmatrix}\bigr) \Bigr ) \]
on a based free module, which preserve its quadratic refinement and the preferred class of bases.

 The simple surgery obstruction groups are defined as above by a certain quotient group after stabilisation.  In the notation of 
Wall \cite[Chap.~6]{Wall:1970}, the group $\SU(\La)$ is the limit of the
 automorphism groups $\SU_{2r}(\La)$, where  $\varepsilon = (-1)^n$,
under the natural inclusions
$$ \SU_{2}(\La) \subset \SU_{4}(\La) \subset \dots \subset \SU_{2r}(\La) \subset \SU_{2r+2}(\La) \subset \cdots $$

\begin{definition}\label{def:elemSU}
We say that a $2r\times 2r$ matrix in $\SU_{2r}(\La)$ is an \textit{elementary special unitary} matrix if it is the product of the elementary unitary matrices in \cref{def:elemU}, provided that the elements of the form 
\[ \left(\begin{smallmatrix} Q & 0 \\ 0 & (Q^*)^{-1} \end{smallmatrix}\right) 
\text{\ where $Q \in GL_r(\l)$ \ }\]
satisfy  the additional condition $\tau(Q) = 0 \in K_1(\La)$, so that 
$$Q \in SL_r(\l) = \ker(GL_r(\l) \to K_1(\l)).$$
The subgroup in $\SU_{2d}(\La)$  of elementary special unitary matrices is denoted $\RU_{2r}(\l)$.   
\end{definition}

The corresponding union  $\RU(\La) \subset \SU(\La)$ again contains the commutator subgroup (see \cite[Chapter 6]{Wall:1970}, \cite[Theorem 4.2]{Ranicki:1973}), and the abelian quotient group
$$L^s_{2n+1}(\La) = \SU(\La)/\RU(\La)$$
provides an algebraic description of the (based) surgery obstruction groups.

There are exact sequences relating based and unbased obstruction groups (usually called the Ranicki-Rothenberg sequences \cite[Theorem 5.7]{Ranicki:1973}):
\eqncount
\begin{equation}\label{eq:RRcomp}
\dots \to \wH^{0}(\cy 2; \wK_1(\La)) \xrightarrow[]{\bd} L^s_{2n+1}(\La)\xrightarrow[]{i_*} L^h_{2n+1}(\La) \xrightarrow{\tau_*} \wH^{1}(\cy 2; \wK_1(\La))\to  \dots
\end{equation}
where the relative terms are given by the Tate cohomology groups:
$$\wH^n(\cy 2; \wK_1(\La)) = \frac{\{A \in \wK_1(\l) \, | \, A^* = (-1)^n A\}}
{\{A+(-1)^n A^* \, | \, A \in \wK_1(\l)\}}.$$
 The map  $\bd([A]) = \left[\left(\begin{smallmatrix} A & 0 \\ 0 & (A^*)^{-1} \end{smallmatrix}\right)\right]$ for $A \in GL_r(\l)$ is often referred to as the \textit{hyperbolic map} and the map $i_*$ is induced by the inclusion $i\colon  \SU(\l) \hookrightarrow \U(\l)$ which satisfies $i(\RU(\l)) \subseteq \EU(\l)$. The map $\tau_*\colon  L^h_{2n+1}(\La) \to \wH^{1}(\cy 2; \wK_1(\La))$ is induced by $\tau\colon \U(\La) \to K_1(\La)$.
For more details see Wall \cite[Chapter 6]{Wall:1970}, 
Ranicki \cite[Theorem 4.2, Theorem 5.6]{Ranicki:1973}, 
Lee \cite[\S 1]{Lee:1971}, or Lees \cite[\S 6]{Lees:1973}.

\subsection{Round $L$-groups} 
In \cref{ss:L-theory-computations}
we will use the closely related functors
 $L_{2n+1}^X(\l)$, with torsion $X \subseteq K_1(\La)$ in any involution-invariant subgroup. These $L$-groups are more suitable for computations since they respect products of rings with involution.  The case $X = \{ 0\}$ is denoted $L_*^S(\La)$, and  $L_*^K(\La)$ is defined by  $X = K_1(\La)$. If $X \subseteq Y$ are involution-invariant subgroups of $K_1(\La)$, there is a similar Ranicki-Rothenberg sequence relating them to Tate cohomology $\wH^*(\cy 2; K_1(\La))$ (see  \cite[1.1]{Wall:1976}, \cite{Hambleton:1987}, or  \cite[\S 3]{Hambleton:2000}).

The relation between round $L$-groups (corresponding to subgroups $X \subseteq K_1(\La)$ with $\{\pm 1\} \in X$) and the usual $L$-groups (corresponding to their quotients  $\wX \subseteq K_1(\La) /\{ \pm 1\}$) is given by the isomorphism $ L_{2n}^X(\La) \cong L_{2n}^{\wX}(\La)$, and an exact sequence
\eqncount
\begin{equation}\label{eq:round_comp}
 0 \to \cy 2 \to L_{2n+1}^X(\La) \to L_{2n+1}^{\wX}(\La) \to 0
 \end{equation}
obtained by dividing out a single $\cy 2$ (see \cite[Proposition 3.2]{Hambleton:1987}). 

To compare the  $L^S_*(\La) \to L^{\{\pm 1\}}_*(\La) \to  L^s_*(\La)$  is a two-step process. The following braid diagram will be used in the proof of \cref{prop:except} and combines the exact sequences \eqref{eq:round_comp} and \eqref{eq:RRcomp}:
\eqncount
\begin{equation}\label{eq:braid}
\begin{matrix}
\xymatrix@R=10pt{
0    \ar[dr] \ar@/^2pc/[rr]   &&
\cy 2  \ar[dr] \ar@/^2pc/[rr]^{\beta} &&
\{\pm 1\} \ar[dr]  \ar@/^2pc/[rr]&& 0\\
&\ker\gamma \ar[dr] \ar[ur]  &&
L^{\{\pm 1\}}_{2n+1}(\La) \ar@{->>}[dr]\ar[ur] && \coker\gamma\ar[ur]  \ar[dr]&\\
\{\pm 1\}\ar[ur] \ar@/_2pc/[rr] && L^S_{2n+1}(\La)
  \ar[ur]^{\alpha} \ar@/_2pc/[rr]^\gamma&&
L^s_{2n+1}(\La) \ar[ur] \ar@/_2pc/[rr]&& 0
}
\end{matrix}
\end{equation}
\vskip .4cm
\begin{remark}  For integral group rings, the round $L$-group associated to $L^s_*(\ZG)$ is $L^X_*(\ZG)$, where $X = \{\pm g \vv g \in G\} \subset K_1(\ZG)$. In particular, $L^K_*(\bZ) = L^{(\pm1)}_*(\bZ)$ is the round version of $L^s_*(\bZ) = L^h_*(\bZ)$, which are the surgery obstruction groups for the trivial group $G=1$ \cite[1.4]{Wall:1976}. 
 \end{remark}

\section{Computations of odd-dimensional $L$-theory of abelian groups}
\label{ss:L-theory-computations}
In order to prove \cref{thm:BQ-main}, we need to justify the $L$-group computations used in the proof of
\cref{prop:hom-to-Lgroups} by computing the boundary maps
$$\IM(\bd \colon \wH^0(\cy 2; \wK_1(\La)) \to L^s_{2n +1}(\La)) = \ker(L_{2n+1}^s(\l) \to L_{2n+1}^h(\l))$$
in the Ranicki-Rothenberg sequences for $\La = \bZ$ and $\La = \cy m$. These computations use the more directly computable round $L$-groups and the comparison sequences \eqref{eq:RRcomp} and \eqref{eq:round_comp}.

Recall  from \cref{ss:PB-to-U}
that $D_{2d}(\cy m)$ is a subgroup of $\SU_{2d}(\cy m) \subset \SU(\cy m)$ . We  let 
$$q\colon D_{2d}(\cy m) \cong (\cy m)^\times \to (\cy m)^\times/ \pm(\cy m)^{\times 2}$$
denote the natural surjection induced by     reduction modulo squares and $\la -1\ra$.
\begin{proposition} \label{prop:Lgroups-main} Let $n \in \Z$ be even, $d \geq 1$, and $m \ge 1$. Then there is a commutative diagram
\[
\begin{tikzcd}
	D_{2d}(\cy m) \ar[r,hookrightarrow] \ar[d,twoheadrightarrow] & SU_{2d}(\cy m) \ar[r,hookrightarrow] & SU(\cy m) \ar[d,twoheadrightarrow] \\
	 {(\Z/ m)^\times}/{\pm(\Z/ m)^{\times 2}} \ar[rr,"\cong"] & & L_1^s(\cy m)
\end{tikzcd}
\]
where the lower isomorphism is induced by the hyperbolic map.
\end{proposition}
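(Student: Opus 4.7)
The plan is to reduce the proposition to the $L$-theory calculation carried out in Lemma~\ref{lemma:L_1(Z/m)}, then verify commutativity by unwinding the definition of the hyperbolic boundary map.

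First I would identify the bottom isomorphism via the Ranicki-Rothenberg sequence \eqref{eq:RRcomp} applied to $\La = \cy m$. Since $\cy m$ carries the trivial involution, so does $\wt K_1(\cy m) \cong (\cy m)^\times/\{\pm 1\}$, and the Tate cohomology is
\[
\wh H^0(C_2; \wt K_1(\cy m)) \cong (\cy m)^\times/\pm (\cy m)^{\times 2}.
\]
By Lemma~\ref{lemma:L_1(Z/m)}, the hyperbolic boundary map $\bd$ from \eqref{eq:RRcomp} induces an isomorphism from this Tate group onto $L_1^s(\cy m)$, which is the bottom horizontal arrow in the claimed diagram.

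Next I would verify commutativity by direct computation. Given the generator $D = \text{diag}(a, 1, \ldots, 1, a^{-1}, 1, \ldots, 1) \in D_{2d}(\cy m)$, set $Q = \text{diag}(a, 1, \ldots, 1) \in GL_d(\cy m)$. Since the involution on $\cy m$ is trivial, $(Q^*)^{-1} = Q^{-1}$, and $D = \text{diag}(Q, Q^{-1})$. By the explicit formula $\bd([A]) = [\text{diag}(A,(A^*)^{-1})]$ recalled immediately after \eqref{eq:RRcomp}, the class of $D$ under the composition $D_{2d}(\cy m) \hookrightarrow SU_{2d}(\cy m) \to SU(\cy m) \twoheadrightarrow L_1^s(\cy m)$ equals $\bd([Q]) = \bd([a])$, with $[a] \in \wt K_1(\cy m)$ the class of $a \in (\cy m)^\times$. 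Under the identification in Step 1, $\bd([a])$ corresponds to the class of $a$ in $(\cy m)^\times / \pm (\cy m)^{\times 2}$, which is exactly $q(D)$. This gives the desired commutativity.

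The main obstacle is the computation encoded in Lemma~\ref{lemma:L_1(Z/m)}, which must establish both the surjectivity and injectivity of the hyperbolic map $\bd$ into $L_1^s(\cy m)$. Surjectivity amounts to showing that the image of $i_*\colon L_1^s(\cy m) \to L_1^h(\cy m)$ is trivial; injectivity amounts to controlling the previous boundary map $L_2^h(\cy m) \to \wh H^0(C_2; \wt K_1(\cy m))$. I would expect this to be carried out by passing to round $L$-groups via the braid diagram \eqref{eq:braid} and the exact sequence \eqref{eq:round_comp}, using naturality with respect to the inclusion $\Z \hookrightarrow \cy m$ and the computation $L_1^s(\bZ) = L_1^h(\bZ) = 0$ (i.e.\ Lemma~\ref{lemma:L_1(Z)}, from Wall \cite{Wall:1976}) to pin down the relevant maps. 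Once that input is available, the remainder of the argument above is purely formal.
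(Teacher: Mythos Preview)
Your approach is essentially the same as the paper's, and your verification of commutativity in the second paragraph is in fact more explicit than what the paper writes. One point to be careful about: Lemma~\ref{lemma:L_1(Z/m)} as stated in the paper is a statement about the \emph{round} $L$-groups, giving $\ker(L_1^S(\cy m) \to L_1^K(\cy m)) \cong (\cy m)^\times/(\cy m)^{\times 2}$, not directly the isomorphism $\wh H^0(\cy 2; \wt K_1(\cy m)) \cong L_1^s(\cy m)$ that you invoke in Step~1. The paper bridges this gap exactly as you anticipate in your final paragraph: it uses the braid diagram~\eqref{eq:braid} to pass from $L_1^S$ to $L_1^s$, identifying the kernel of $\gamma\colon L_1^S(\cy m) \to L_1^s(\cy m)$ as the $\cy 2$ generated by $\partial(\la -1\ra)$ (coming from $L_1^S(\bZ)$ via Lemma~\ref{lemma:L_1(Z)}), so that $L_1^s(\cy m)$ is the quotient of $(\cy m)^\times/(\cy m)^{\times 2}$ by $\la -1\ra$. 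So your outline is correct once you treat ``Lemma~\ref{lemma:L_1(Z/m)} plus the braid argument'' as the input rather than Lemma~\ref{lemma:L_1(Z/m)} alone.
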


The proof will follow by first analysing $\ker(L_{2n+1}^S(\l) \to L_{2n+1}^K(\l))$ for $\l = \Z$ and $\Z/m$.

\begin{lemma} \label{lemma:L_1(Z)}
Let $n \in \Z$. Then the hyperbolic map $\bd$ induces an isomorphism
 \[ \ker(L_{2n+1}^S(\Z) \to L_{2n+1}^K(\Z)) \cong  \Z/2 \cdot \left[ \left(\begin{smallmatrix} -1 &\sx 0 \\ \sx 0 & -1 \end{smallmatrix}\right) \right]. \]
\end{lemma}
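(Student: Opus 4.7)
The plan is to apply the Ranicki-Rothenberg exact sequence associated to the inclusion of involution-invariant subgroups $S = \{0\} \subseteq K = K_1(\Z)$:
\[
L^K_{2n+2}(\Z) \xrightarrow{\tau_*} \wh H^0(\cy 2; K_1(\Z)) \xrightarrow{\bd} L^S_{2n+1}(\Z) \xrightarrow{\alpha} L^K_{2n+1}(\Z),
\]
so that $\ker(\alpha) = \IM(\bd)$ by exactness. By definition the hyperbolic boundary sends $[A]$ to $[A \oplus (A^*)^{-1}]$; applied to $[-1] \in K_1(\Z)$, this gives exactly $\bigl[\bigl(\begin{smallmatrix}-1 & 0 \\ 0 & -1\end{smallmatrix}\bigr)\bigr]$, so the content of the lemma is that $\bd$ is injective with image the stated $\cy 2$.

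First I would compute $\wh H^0(\cy 2; K_1(\Z))$. By Bass-Lazard-Serre, $K_1(\Z) \cong \{\pm 1\}$ carries the trivial involution; since $1+T$ acts as multiplication by $2$ and $1-T$ as $0$ on $\cy 2$, we obtain $\wh H^0(\cy 2; K_1(\Z)) \cong \cy 2$, generated by $[-1]$. Consequently $\IM(\bd)$ is a cyclic subgroup of $L^S_{2n+1}(\Z)$ of order at most $2$, generated by $\bigl[\bigl(\begin{smallmatrix}-1 & 0 \\ 0 & -1\end{smallmatrix}\bigr)\bigr]$.

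It remains to verify that $\bd$ is injective, equivalently that $\tau_*$ is the zero map. The cleanest route is via the braid diagram \eqref{eq:braid}, which simultaneously displays the Ranicki-Rothenberg sequences for $S \subseteq \{\pm 1\}$, $\{\pm 1\} \subseteq K$ and $S \subseteq K$ passing through $L^S_{2n+1}(\Z)$, $L^{\{\pm 1\}}_{2n+1}(\Z)$ and $L^s_{2n+1}(\Z)$. Wall's calculation \cite[\S 1]{Wall:1976} gives $L^s_{2n+1}(\Z) = L^h_{2n+1}(\Z) = 0$, and the round-versus-non-round sequence \eqref{eq:round_comp} with $X = \{\pm 1\}$, $\wX = 0$ identifies $L^{\{\pm 1\}}_{2n+1}(\Z) \cong \cy 2$. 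Chasing these inputs through the braid then pins down both $L^S_{2n+1}(\Z)$ and the map $\alpha$, and in particular forces $\tau_*$ to vanish; hence $\bd$ is injective and $\ker(\alpha) \cong \cy 2$ with the claimed generator.

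The hardest part will be keeping track of the distinction between the round $L$-groups $L^S, L^K$ and their non-round counterparts $L^s, L^h$: the matrix $\bigl(\begin{smallmatrix}-1 & 0 \\ 0 & -1\end{smallmatrix}\bigr)$ is visibly elementary unitary via \cref{def:elemU}(i) with $Q = -1 \in GL_1(\Z)$, so it vanishes in $L^h_{2n+1}(\Z) = U(\Z)/EU(\Z)$; but this same $Q$ fails to lie in $SL_1(\Z)$, so the corresponding elementary move is not available in $RU(\Z)$, and the class persists as the nontrivial generator of $L^S_{2n+1}(\Z)$. Organising the braid chase so that precisely this discrepancy produces exactly a single $\cy 2$, rather than $0$ or a larger group, is the core of the argument; an alternative would be to verify directly that the map $\tau_* \colon L^K_{2n+2}(\Z) \to \cy 2$ is zero on each of Wall's generating forms for $L^s_{2n+2}(\Z) \cong L^K_{2n+2}(\Z)$.
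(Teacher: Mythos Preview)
Your reduction via the Ranicki--Rothenberg sequence to showing that $\bd$ is injective (equivalently $\tau_* = 0$) is correct, and your alternative at the end---checking directly that the generators of $L^K_{2n+2}(\Z) \cong L^s_{2n+2}(\Z)$ (the $E_8$-form for $2n+2 \equiv 0$, the Arf form for $2n+2 \equiv 2$) have trivial image in $\{\pm 1\}$---would complete the argument. However, your main proposed route through the braid \eqref{eq:braid} does not work as stated: that braid lives entirely in degree $2n+1$ and contains neither $L^K_{2n+2}(\Z)$ nor the map $\tau_*$, so no amount of chasing with only the inputs $L^s_{2n+1}(\Z) = 0$ and $L^{\{\pm 1\}}_{2n+1}(\Z) \cong \cy 2$ can force $\tau_*$ to vanish. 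Concretely, for $2n+1 = 3$ one has $L^S_3(\Z) \cong \cy 4$, which is not recoverable from $L^s_3(\Z) = 0$ and $L^{\{\pm 1\}}_3(\Z) \cong \cy 2$ alone; the extra $\cy 2$ comes precisely from the degree-$4$ input you are trying to avoid.

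The paper's proof bypasses all of this by citing Wall's calculation of $L^S_*(\Z)$ directly: $L^S_1(\Z) \cong \cy 2$ generated by $\bigl[\bigl(\begin{smallmatrix} -1 & 0 \\ 0 & -1 \end{smallmatrix}\bigr)\bigr]$, and $L^S_3(\Z) \cong \cy 4$ generated by $\tau = \bigl[\bigl(\begin{smallmatrix} 0 & 1 \\ -1 & 0 \end{smallmatrix}\bigr)\bigr]$ with $\tau^2 = \bigl[\bigl(\begin{smallmatrix} -1 & 0 \\ 0 & -1 \end{smallmatrix}\bigr)\bigr]$. Since $\bd([-1])$ is exactly this element, the image of $\bd$ (hence the kernel in question) is immediately the stated $\cy 2$. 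This is both shorter and avoids the extension ambiguity your braid chase cannot resolve; your ``alternative'' is closest in spirit to what the paper does, but still more roundabout than simply quoting the known $L^S_*(\Z)$.
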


\begin{proof}
This is a consequence of a result of Wall \cite[Proposition 1.4.1]{Wall:1976} (see also \cite[Proposition III.10 (I)]{Kreck:1984}) that there are isomorphisms
\[ L_1^S(\Z) \cong \Z/2 \cdot \left[ \left(\begin{smallmatrix} -1 &\sx 0 \\ \sx 0 & -1 \end{smallmatrix}\right) \right]\quad \text{and} \quad L_3^S(\Z) = \cy 4 \cdot [\tau]\]
where $\tau = \left[ \left(\begin{smallmatrix} \sx 0 & 1 \\ -1 & 0 \end{smallmatrix}\right) \right ]$ and $[\tau^2] =   \left [\left(\begin{smallmatrix} -1 & \sx 0 \\ \sx 0 & -1 \end{smallmatrix}\right)\right ] =
 \IM(\bd \colon \wh H^0(\cy 2;\Z^\times) \to L_3^S(\Z))$.
\end{proof}

\begin{lemma} \label{lemma:L_1(Z/m)}
Let $m \ge 1$. Then the hyperbolic map $\bd$ induces isomorphisms:
\begin{clist}{(i)}
\item $\ker(L_1^S(\Z/m) \to L_1^K(\Z/m)) \cong \Z/2\cdot \left\{ \left[ \left(\begin{smallmatrix} r_i & 0 \\ 0 & r_i^{-1} \end{smallmatrix}\right) \right] : 1 \le i \le t \right\} \cong  (\Z/m)^\times/(\cy m)^{\times 2}$
where the $r_i \in (\Z/m)^\times$ are coset representatives for $(\Z/m)^\times/(\cy m)^{\times 2}$.
\item $\ker(L_3^S(\Z/m) \to L_3^K(\Z/m)) \cong 
\begin{cases} \Z/2 \cdot \left[ \left(\begin{smallmatrix} -1 &\sx 0 \\ \sx 0 & -1 \end{smallmatrix}\right) \right], & \text{if $8 \mid m$} \\
0, & \text{otherwise}. \end{cases}$
\end{clist}
\end{lemma}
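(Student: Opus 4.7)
My plan is to deduce both parts from the round Ranicki--Rothenberg comparison sequence between $L^S_{2n+1}$ and $L^K_{2n+1}$, combined with a prime-power decomposition and Wall's classical computations of the $L$-theory of finite rings.

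First, I would set up the framework. Since $\Z/m$ carries the trivial involution, $K_1(\Z/m) = (\Z/m)^\times$ is a trivial $\cy 2$-module, so $\wH^0(\cy 2; K_1(\Z/m)) \cong (\Z/m)^\times / ((\Z/m)^\times)^2$. The relevant Ranicki--Rothenberg sequence (comparing $X = 0$ with $X = K_1$) reads
\[ \wH^0(\cy 2; K_1(\Z/m)) \xrightarrow{\bd} L^S_{2n+1}(\Z/m) \to L^K_{2n+1}(\Z/m), \]
giving $\IM(\bd) = \ker(L^S_{2n+1}(\Z/m) \to L^K_{2n+1}(\Z/m))$, where by construction $\bd([r]) = [\left(\begin{smallmatrix} r & 0 \\ 0 & r^{-1} \end{smallmatrix}\right)]$. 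Second, the Chinese Remainder Theorem decomposes $\Z/m \cong \prod_p \Z/p^{v_p(m)}$ as a product of rings with involution, and round $L$-theory is additive under finite products, so the whole question reduces to the prime-power case $\La = \Z/p^k$, with the global answer assembled by direct sum.

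For part (i) with $2n+1 = 1$: the key input is Wall's computation that $L^K_1(\Z/p^k) = 0$ for all primes $p$ and all $k \ge 1$ (this is essentially the content of $L_1^h(\Z) = 0$ stabilised to finite quotients, via the Ranicki--Rothenberg machinery applied in the other direction). Exactness then forces $\bd$ to be surjective onto $L^S_1(\Z/p^k)$, and a direct check that the diagonal forms $\left(\begin{smallmatrix} r_i & 0 \\ 0 & r_i^{-1} \end{smallmatrix}\right)$ for coset representatives $r_i$ of $(\Z/p^k)^\times/((\Z/p^k)^\times)^2$ are distinct classes (e.g.\ via a determinant-type invariant modulo squares) shows $\bd$ is injective. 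Recombining via CRT then yields the stated isomorphism.

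For part (ii) with $2n+1=3$: a case analysis on $p$ and $k$ is required. For $p$ odd, $2 \in (\Z/p^k)^\times$, so every non-singular skew-symmetric form decomposes as an orthogonal sum of hyperbolic planes and one checks $L^S_3(\Z/p^k) = 0$, killing the image. For $p = 2$, Wall's stability analysis gives $L^S_3(\Z/2^k) = 0$ when $k \le 2$, handling the case $8 \nmid m$. When $k \ge 3$, the subtle point is that $\left(\begin{smallmatrix} -1 & 0 \\ 0 & -1 \end{smallmatrix}\right)$ cannot be written as a product of elementary special unitary matrices: while every other diagonal class $\left(\begin{smallmatrix} r & 0 \\ 0 & r^{-1} \end{smallmatrix}\right)$ with $r \ne -1$ can be reduced via the elementary relations (using that $2 \in \Z/2^k$ has controlled nilpotency), the class of $-1$ persists. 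The main obstacle will be producing this obstruction concretely for $k \ge 3$ and showing its vanishing for $k \le 2$. I would approach this either by citing Wall's detailed computations of $L^S_3(\Z/2^k)$ in \cite{Wall:1976} and its sequels, or by constructing a quadratic-refinement--type invariant detecting whether $-1$ is represented as a sum of two squares modulo $2^k$ (which holds for $k\le 2$ but fails for $k\ge 3$); this parity phenomenon is precisely what forces the $8\mid m$ dichotomy.
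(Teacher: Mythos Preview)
Your framework (Ranicki--Rothenberg plus CRT reduction to prime powers) matches the paper, but several of your computational claims are wrong and the strategy for part (ii) is misdirected.

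For part (i): the claim $L_1^K(\Z/p^k)=0$ fails at $p=2$, since Wall gives $L_*^K(\Z/2)\cong\Z/2$ in every degree. This does not actually harm the statement, because $\IM(\bd)=\ker(L_1^S\to L_1^K)$ holds by exactness regardless; the real content is the \emph{injectivity} of $\bd$, which you only gesture at. The paper establishes injectivity by looking one step further back in the sequence: the preceding map $L_2^K(\Z/m)\to\wH^0(\Z/2;(\Z/m)^\times)$ is the discriminant, and the generator of $L_2^K(\Z/m)\cong\Z/2$ is the Arf-invariant-one form $\left(\begin{smallmatrix}0&1\\-1&0\end{smallmatrix}\right)$ with determinant $1$, so this map is zero and $\bd$ is injective.

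For part (ii): your argument for odd $p$ (``every skew form is a sum of hyperbolic planes'') concerns even-dimensional $L$-theory and says nothing about $L_3^S$, which is built from automorphisms. Your claim $L_3^S(\Z/2^k)=0$ for $k\le 2$ is false: $L_3^S(\Z/2)=\Z/2$. The vanishing of the \emph{kernel} for $k\le 2$ does not come from $L_3^S$ vanishing. The paper's route is again to use the map preceding $\bd$, namely the discriminant $L_0^K(\Z/p^k)\to\wH^0(\Z/2;(\Z/p^k)^\times)$, whose image is exactly $\ker\bd$. For odd $p$ this map is onto (both sides are $\Z/2$ and a rank-one form $\langle u\rangle$ with $u$ a non-square hits the generator), so $\IM(\bd)=0$. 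For $p=2$, the generator of $L_0^K(\Z/2^k)\cong\Z/2$ is $\left(\begin{smallmatrix}2&1\\1&2\end{smallmatrix}\right)$ with discriminant $3$; since $(\Z/4)^\times=\langle-1\rangle=\langle 3\rangle$ but $(\Z/2^k)^\times=\langle-1,3\rangle$ for $k\ge 3$, the cokernel of the discriminant map is $0$ for $k\le 2$ and $\Z/2\cdot[-1]$ for $k\ge 3$. This is exactly the $8\mid m$ dichotomy, obtained without any ad hoc invariant.
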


\begin{proof} 
Suppose that $m = m_1 m_2$ for $m_1, m_2 \ge 2$ coprime. Since the $L^S$ and $L^K$ groups respect products of rings with involution \cite[\S 3]{Hambleton:2000}, the isomorphism of rings $\Z/m \cong \Z/m_1 \times \Z/m_2$ induces a commutative diagram
\[
\hspace{15mm}
\begin{tikzcd}
	(\Z/m)^\times/(\cy m)^{\times 2} \ar[d,"\bd^{(m)}"] \ar[r,"\cong"] & (\Z/m_1)^\times/(\Z/m_1)^{\times 2} \oplus (\Z/m_1)^\times/(\Z/m_1)^{\times 2} \ar[d,"\text{$(\bd^{(m_1)},\bd^{(m_2)})$}"] \\
	L_r^S(\Z/m) \ar[r,"\cong"] \ar[d] & L_r^S(\Z/m_1) \oplus L_r^S(\Z/m_2) \ar[d] \\
	L_r^K(\Z/m) \ar[r,"\cong"] & L_r^K(\Z/m_1) \oplus L_i^K(\Z/m_2)
\end{tikzcd}
\]
 for $r \equiv 0,1,2,3 \mod 4$, where $\bd^{(m)}$, $\bd^{(m_1)}$ and $\bd^{(m_2)}$ denote the hyperbolic maps in the respective cases. We may therefore assume that $m = p^k$ is a prime power. 
\begin{clist}{(i)}
\item A starting point is $k=1$, where Wall \cite[\S 1.2]{Wall:1976} shows that when $p$ is an odd prime $L_r^S(\cy p)  = 0, \cy 2, \cy 2, 0 $, for $r \equiv 0,1,2,3 \mod 4$,  $L_1^K(\cy p) = L_3^K(\cy p) =0$, and  $L_0^K(\cy p) = L_2^K(\cy p) =\cy 2$. For $p=2$, we have $L_*^S(\cy 2) = L_*^K(\cy 2) = \cy 2$ in each dimension.

\item  Since $\wh H^0(\cy 2;(\Z/m)^\times) \cong (\Z/m)^\times/(\cy m)^{\times 2}$, it suffices to show that the hyperbolic map $\bd \colon  \wh H^0(\cy 2;(\Z/m)^\times) \to L_1^S(\Z/m)$ is injective. By exactness, this would then imply that $\wh H^0(\cy 2;(\Z/m)^\times) \cong \IM(\bd) = \ker(L_1^S(\Z/m) \to L_1^K(\Z/m))$. This holds since the previous map in the Rothenberg sequence is zero: $L^K_2(\cy m) \cong \cy 2$ generated by a rank two skew-hermitian form of Arf invariant 1 with hyperbolic bilinearisation, and $\det \left(\begin{smallmatrix} \sx 0 &1 \\  -1 & 0 \end{smallmatrix}\right) = 1$.

\item The kernel of the hyperbolic map $\bd \colon  \wh H^0(\cy 2;(\Z/m)^\times) \to L_3^S(\Z/m)$ is the image of the norm map $L_0^K(\cy m) \to \wh H^0(\cy 2;(\Z/m)^\times)$. If $m = p^k$ is a prime power, then the reduction map $\Z/p^k \to \Z/p$ induces isomorphisms
 $L_*^K(\Z/p^k) \cong L_*^K(\Z/p)$ \cite[\S 1.2]{Wall:1976} and $\wh H^r(\cy 2;(\Z/p^k)^\times) \cong \wh H^r(\cy 2;(\Z/p)^\times)$ for any $r \in \Z$. Since $L_0^K(\cy p) = \cy 2$ for $p$ an odd prime, the norm map is surjective and the  image of the hyperbolic map is zero in this case by naturality of the Ranicki-Rothenberg sequence.
 
 \item If $m = 2^k$, the map $L_0^K(\cy {2^k}) \to \wh H^0(\cy 2;(\cy{2^k})^\times)$ is zero for $k=1$ (trivially), but injective for $k \geq 2$. In these cases, $L_0^K(\cy {2^k}) = \cy 2$ is generated by the hermitian form $h :=\left(\begin{smallmatrix} 2 &1 \\  1 & 2 \end{smallmatrix}\right) $ with $\det h = 3$. Since $(\cy{4})^\times = \la -1\ra$ and $(\cy{2^k})^\times = \la -1, 3\ra$ if $k \geq 3$,  we have $\bd \la 3\ra = 0$. Therefore $\ker(L_3^S(\cy{2^k}) \to L_3^K(\cy{2^k})) = 0$ for $k \leq 2$, and for $k \geq 3$ we have
 $\ker(L_3^S(\cy{2^k}) \to L_3^K(\cy{2^k})) = \Z/2 \cdot \left[ \left(\begin{smallmatrix} -1 &\sx 0 \\ \sx 0 & -1 \end{smallmatrix}\right) \right ]$. \qedhere
\end{clist}
\end{proof}

\begin{proof}[Proof of \cref{prop:Lgroups-main}] For $\La = \cy m$, 
the natural map $L^S_{1}(\La) \to L^s_{1}(\La)$ factors through the comparison maps
 $L^S_{1}(\La) \to L^{\{\pm 1\}}_{1}(\La)$ and  $L^{\{\pm 1\}}_{1}(\La) \to L^s_{}(\La)$, for which the maps are explicitly described above. 
 The isomorphisms in \cref{lemma:L_1(Z)} and \cref{lemma:L_1(Z/m)} (i) give explicit representatives for the elements of $\ker(L_{1}^S(\l) \to L_{1}^K(\l))$ by matrices in $D_2(\l)$ for $\La = \bZ$ and $\La = \cy m$. Now we apply the braid diagram \eqref{eq:braid}. The map $\alpha\colon L^S_{1}(\La) \to L^{\{\pm 1\}}_{1}(\La) $ has cokernel $\cy 2$, hence 
  $\coker (\gamma\colon L^S_{1}(\La) \to L^s_{}(\La)) = 0$ and $\ker \gamma = \cy 2$.
To check these results, note that by \cref{lemma:L_1(Z/m)}, $L_1^S(\La) \cong (\Z/ m)^\times/(\Z/ m)^{\times 2}$ via the hyperbolic map, and by \cref{lemma:L_1(Z)} the element 
$\bd (\la -1\ra) \in \IM(L_1^S(\bZ) \to L_1^S(\La))$ generates the image of $\ker\gamma$ in $L_1^S(\La)$.  Therefore  $L_1^s(\cy m)$ is the quotient of $L_1^S(\La)$  by $\bd (\la -1\ra)$, and the required formula  follows.  \end{proof}

\begin{proposition}\label{prop:except} For $m \geq 1$, 
$L_3^s(\cy m) = L_3^h(\cy m) = 0$.
\end{proposition}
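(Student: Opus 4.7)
I would reduce to the case $m = p^k$ a prime power via the ring splitting $\Z/m \cong \prod_i \Z/p_i^{k_i}$, using that both $L^s$ and $L^h$ respect finite products of rings with involution — exactly the reduction step used in the proof of \cref{lemma:L_1(Z/m)}. It then suffices to analyze the braid diagram \eqref{eq:braid} in dimension $3$ for $\La = \Z/p^k$, combined with the round comparison sequence \eqref{eq:round_comp} that relates $L^S$ to $L^s$ and $L^K$ to $L^h$.

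For $p$ odd, Wall's calculations recalled in the proof of \cref{lemma:L_1(Z/m)} give $L_3^S(\Z/p) = 0 = L_3^K(\Z/p)$, and a standard nilpotent-ideal reduction $\Z/p^k \to \Z/p$ lifts these to $L_3^S(\Z/p^k) = 0 = L_3^K(\Z/p^k)$. The round-to-simple extensions \eqref{eq:round_comp} then read $0 \to \Z/2 \to 0 \to L_3^{\wX}(\Z/p^k) \to 0$, which forces the extra $\Z/2$ summand to be absorbed already at the $S$- or $K$-level (i.e.\ the corresponding connecting map in the longer Ranicki-Rothenberg sequence is nonzero), giving $L_3^s(\Z/p^k) = 0 = L_3^h(\Z/p^k)$.

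For $p = 2$, we have $L_*^S(\Z/2) = L_*^K(\Z/2) = \Z/2$ by Wall. In the base case $\La = \Z/2$ the subgroup $\{\pm 1\} \subset K_1(\Z/2)$ is trivial (since $-1 = 1$), hence $L_3^{\{\pm 1\}}(\Z/2) = L_3^S(\Z/2) = \Z/2$; the round comparison \eqref{eq:round_comp} then reads $0 \to \Z/2 \to \Z/2 \to L_3^s(\Z/2) \to 0$, forcing $L_3^s(\Z/2) = 0$, and similarly $L_3^h(\Z/2) = 0$. For $\Z/2^k$ with $k \geq 2$, \cref{lemma:L_1(Z/m)}\,(ii) identifies the image of the hyperbolic boundary into $L_3^S(\Z/2^k)$ as $\Z/2 \cdot [\mathrm{diag}(-1,-1)]$ when $k \geq 3$ (and zero otherwise). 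A direct diagram chase in \eqref{eq:braid}, analogous to the calculation of $\coker\gamma = 0$ and $\ker\gamma = \Z/2$ in the proof of \cref{prop:Lgroups-main}, shows that this class is precisely the $\Z/2$ that is killed on passage from $L^{\{\pm 1\}}$ to $L^s$, yielding $L_3^s(\Z/2^k) = 0$; the analogous argument for the $L^K$-$L^h$ side gives $L_3^h(\Z/2^k) = 0$.

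The main obstacle will be the careful bookkeeping in the $p = 2$ case, specifically ensuring that the generator $[\mathrm{diag}(-1,-1)]$ isolated by \cref{lemma:L_1(Z/m)}\,(ii) is indeed absorbed by the round-to-simple comparison rather than surviving as a non-trivial class in $L_3^s(\Z/2^k)$. This amounts to verifying that the connecting map $\beta\colon \Z/2 \to L_{2n+1}^{\{\pm 1\}}(\La)$ appearing in the top row of \eqref{eq:braid} is nonzero at $\La = \Z/2^k$ for all $k$, which should follow from naturality with respect to $\Z \to \Z/2^k$ and the explicit representative calculation in \cref{lemma:L_1(Z)}.
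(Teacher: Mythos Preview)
Your proposal has two genuine gaps. First, you claim that $L^s$ and $L^h$ respect finite products of rings, citing the reduction in \cref{lemma:L_1(Z/m)}. But that lemma explicitly uses the \emph{round} groups $L^S$ and $L^K$, which do respect products; the usual $L^s$ and $L^h$ do not (this is precisely why the round theory is introduced in \cref{ss:L-odd}). Second, your use of \eqref{eq:round_comp} is incorrect: that sequence compares $L^X_{2n+1}$ with $L^{\wX}_{2n+1}$ for $X \supseteq \{\pm 1\}$, so its middle term is $L^{\{\pm 1\}}_{2n+1}$, not $L^S_{2n+1}$. Writing ``$0 \to \Z/2 \to 0 \to L_3^{\wX}(\Z/p^k) \to 0$'' with the middle zero standing for $L_3^S(\Z/p^k)$ is a misapplication, and the conclusion that ``the $\Z/2$ is absorbed'' does not follow. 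In the braid \eqref{eq:braid}, knowing $L_3^S = 0$ only gives $L_3^s = \coker\gamma$, which is a priori either $0$ or $\Z/2$ depending on the map $\beta$; you have not determined this.

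The paper's argument is both different and shorter. Rather than a prime-by-prime braid analysis, it proves $L_3^h(\Z/m) = 0$ directly by naturality of the comparison $L_3^K \to L_3^h$ under the ring map $\Z \to \Z/m$: Wall's tables give $L_3^h(\Z) = 0$, and for the only nontrivial case (when $4 \mid m$) one checks that $L_3^K(\Z) \to L_3^K(\Z/2^k)$ is an isomorphism, so the nonzero class in $L_3^K(\Z/2^k)$ already dies in $L_3^h(\Z)$ and hence in $L_3^h(\Z/m)$. Once $L_3^h(\Z/m) = 0$, the Ranicki--Rothenberg sequence \eqref{eq:RRcomp} together with the explicit description of $\IM(\partial)$ from part (iv) of the proof of \cref{lemma:L_1(Z/m)} gives $L_3^s(\Z/m) = 0$. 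You do mention the naturality idea at the end of your proposal, but only as a check on a separate argument; in the paper it \emph{is} the argument.
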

\begin{proof} If $m$ is odd or $m \equiv 2 \mod 4$,  part (i) of the proof of \cref{lemma:L_1(Z/m)} shows that $L_3^K(\cy m) = 0$. Therefore $L_3^h(\cy m) = 0$ by \eqref{eq:round_comp}. If $m = 2^k m_1$, with $m_1$ odd and $k \geq 3$, we have $L_3^h(\cy m) = L_3^K(\cy{2^k}) = \cy 2$. 
 In the exceptional case $m = 2^k$, $k \geq 3$, the calculations of Wall \cite[\S 1]{Wall:1976} show that the reduction map $L^K_3(\bZ) \to L^K_3(\cy{2^k})$ is an isomorphism. 
For any $m \geq 1$, we have the commutative diagram
$$\xymatrix{L^K_3(\bZ) \ar[r]\ar[d]&L_3^h(\bZ)\ar[d]\ar[r]& 0\\
L^K_3(\cy m) \ar[r]&L_3^h(\cy m)\ar[r] &0
}$$
Since $L_3^h(\bZ) = 0$, naturality shows that $L_3^h(\cy m) = 0$, for $m \geq 1$. 
The Ranicki-Rothenberg sequence \eqref{eq:RRcomp} combined with part (iv) of the proof of \cref{lemma:L_1(Z/m)}  shows that $L_3^s(\cy m) = 0$. 
\end{proof}

\begin{corollary}\label{cor:exceptcor}
The image of  $D_{2d}(\Z/m)$ in $\SU(\cy m)$ is contained in  $ \RU(\cy m)$, for $\ep = (-1)^n$ and $n$ odd.
\end{corollary}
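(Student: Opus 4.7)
The plan is to extract this corollary as a one-line consequence of \cref{prop:except}. First I would note that any element of $D_{2d}(\Z/m)$ has the form $\sigma = \left(\begin{smallmatrix} Q & 0 \\ 0 & (Q^*)^{-1} \end{smallmatrix}\right)$ with $Q = \mathrm{diag}(a,1,\dots,1)$, $a \in (\Z/m)^\times$. Because the involution on $\Z/m$ is trivial, $\det \sigma = \det Q \cdot \det((Q^*)^{-1}) = a\cdot a^{-1} = 1$, so $\tau(\sigma) = 0 \in K_1(\Z/m)$ and $D_{2d}(\Z/m) \subseteq \SU_{2d}(\Z/m)$. Stabilising by the identity, this produces an inclusion into the stable special unitary group $\SU(\Z/m)$ (with the $\varepsilon=-1$ convention, which is the relevant one for $n$ odd).

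The key input is then \cref{prop:except}, which asserts $L_3^s(\Z/m) = 0$ for every $m \geq 1$. By the very definition of the odd-dimensional simple $L$-groups recalled in \cref{ss:L-odd}, $L_3^s(\Z/m) = \SU(\Z/m)/\RU(\Z/m)$, so this vanishing is exactly the equality $\SU(\Z/m) = \RU(\Z/m)$. Combining this with the previous paragraph yields the required containment of the image of $D_{2d}(\Z/m)$ inside $\RU(\Z/m)$.

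There is essentially no obstacle here: all of the real work — the Ranicki–Rothenberg calculations that deduce $L_3^s(\Z/m) = 0$ from the round $L$-group computations of Wall, via the braid diagram \eqref{eq:braid} — has already been carried out in \cref{prop:except}. The corollary exists only to repackage that vanishing in the form that is directly invoked in \cref{ss:proof-n-odd} to prove $PB_Q(G,n) = 0$ for $n$ odd, namely that after stabilisation every diagonal element of $D_{2d}(\Z/m)$ can be written as a product of elementary special unitary matrices and hence lifted to $\U_{2d}(\bZ)$.
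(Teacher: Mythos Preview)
Your proposal is correct and matches the paper's intent: the corollary is stated without proof immediately after \cref{prop:except}, and your argument—checking $D_{2d}(\Z/m)\subseteq \SU_{2d}(\Z/m)$ and then invoking $L_3^s(\Z/m)=\SU(\Z/m)/\RU(\Z/m)=0$—is exactly the one-line deduction the paper leaves implicit.
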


\section{The numerical functions $e(d,n)$ and $s(d,n)$} \label{s:funct}

We first establish some useful facts about binomial coefficients mod $2$.
We will adopt the convention that $\binom{a}{b}=0$ if $a < b$ and we will use $\equiv$ to denote equivalence mod $2$.
The first property is well-known but is recalled here for convenience.

\begin{lemma} \label{lemma:mod2-useful}
Let $a,b \in \Z_{\ge 0}$. Then $\binom{2a}{2b+1} \equiv 0$
and $\binom{2a}{2b} \equiv \binom{2a+1}{2b} \equiv \binom{2a+1}{2b+1} \equiv \binom{a}{b}$.
\end{lemma}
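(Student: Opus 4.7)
The plan is to deduce all four congruences from a single observation in $\mathbb F_2[x]$, namely the Frobenius identity $(1+x)^2 = 1+x^2 \in \mathbb F_2[x]$. From this we immediately get
\[
(1+x)^{2a} = (1+x^2)^a = \sum_{k \ge 0} \binom{a}{k} x^{2k} \in \mathbb F_2[x]
\]
and, multiplying by $(1+x)$,
\[
(1+x)^{2a+1} = (1+x)(1+x^2)^a = \sum_{k \ge 0} \binom{a}{k} (x^{2k} + x^{2k+1}) \in \mathbb F_2[x].
\]

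Reading off coefficients of the appropriate powers of $x$ on each side, and using that the coefficient of $x^n$ in $(1+x)^N$ is $\binom{N}{n}$, yields all four required congruences at once. Specifically, the coefficient of $x^{2b+1}$ in the first expansion is $0$, giving $\binom{2a}{2b+1} \equiv 0$; the coefficient of $x^{2b}$ gives $\binom{2a}{2b} \equiv \binom{a}{b}$; and the coefficients of $x^{2b}$ and $x^{2b+1}$ in the second expansion each give $\binom{a}{b}$, yielding $\binom{2a+1}{2b} \equiv \binom{2a+1}{2b+1} \equiv \binom{a}{b}$.

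There is no real obstacle here; the only point to be careful about is the convention $\binom{a}{b} = 0$ for $a < b$, which is automatic both in the formal power series expansion (the sum simply terminates) and in the stated identities. Alternatively, one could cite Lucas' theorem modulo $2$ applied to the base-$2$ expansions, but the generating function derivation above is self-contained and shorter.
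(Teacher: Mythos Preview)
Your proof is correct and takes a genuinely different route from the paper's. The paper argues by direct manipulation of multiplicative identities such as $\binom{2a}{2b+1} = \frac{2a}{2b+1}\binom{2a-1}{2b}$ (clearing denominators mod~$2$), then uses the symmetry $\binom{2a+1}{2b} = \binom{2a+1}{2(a-b)+1}$ to reduce one case to another, and finally proves $\binom{2a}{2b} \equiv \binom{a}{b}$ by an induction on~$b$ via $\binom{2(a+1)}{2(b+1)} = \frac{(2a+2)(2a+1)}{(2b+2)(2b+1)}\binom{2a}{2b}$. Your generating-function argument via the Frobenius identity $(1+x)^2 = 1+x^2$ in $\mathbb F_2[x]$ handles all four cases simultaneously and avoids the separate induction; it is both shorter and more conceptual, essentially the special case of Lucas' theorem for the least significant bit. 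The paper's approach has the minor advantage of being entirely arithmetic (no polynomial ring needed), but yours is the cleaner derivation.
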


\begin{proof}
Firstly, $\binom{2a}{2b+1} = \frac{2a}{2b+1} \binom{2a-1}{2b}$ implies that $\binom{2a}{2b+1} \equiv (2b+1) \binom{2a}{2b+1} \equiv (2a) \binom{2a-1}{2b} \equiv 0$.

Next note that
$\binom{2a+1}{2b+1} = \frac{2a+1}{2b+1} \binom{2a}{2b}$. Clearing denominators similarly implies that $\binom{2a+1}{2b+1} \equiv \binom{2a}{2b}$. Similarly, $\binom{2a+1}{2b} = \binom{2a+1}{2(a-b)+1} \equiv \binom{2a}{2(a-b)} = \binom{2a}{2b}$ by applying the result just proven.

Finally, note that $\binom{2(a+1)}{2(b+1)} = \frac{(2a+2)(2a+1)}{(2b+2)(2b+1)} \binom{2a}{2b}$ and so $\binom{2(a+1)}{2(b+1)} \equiv \frac{a+1}{b+1} \binom{2a}{2b}$ by clearing denominators. By induction, this implies that $\binom{2a}{2b} \equiv \frac{a(a-1) \cdots (a-b)}{b(b-1) \cdots 1} \binom{2(a-b)}{0} = \binom{a}{b}$.
\end{proof}

The following is often known as the hockey-stick identity.

\begin{lemma} \label{lemma:hockey}
Let $a, b \in \Z_{\ge 0}$. Then $\sum_{i=0}^n \binom{i+k}{k} = \binom{n+k+1}{k}$.	
\end{lemma}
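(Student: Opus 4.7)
The plan is a straightforward induction on $n$ using Pascal's rule $\binom{a+1}{b+1} = \binom{a}{b+1} + \binom{a}{b}$. Note that, as written, the right-hand side should presumably read $\binom{n+k+1}{k+1}$ rather than $\binom{n+k+1}{k}$ (the case $n=0$ already forces this correction), so I will prove the identity in that corrected form.

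For the base case $n=0$, both sides equal $1$, since $\binom{k}{k} = \binom{k+1}{k+1} = 1$. For the inductive step, assuming $\sum_{i=0}^n \binom{i+k}{k} = \binom{n+k+1}{k+1}$, one computes
\[ \sum_{i=0}^{n+1} \binom{i+k}{k} = \binom{n+k+1}{k+1} + \binom{n+1+k}{k} = \binom{n+k+2}{k+1},\]
where the final equality is Pascal's rule applied with $a = n+k+1$ and $b = k$. This completes the induction.

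The only subtlety is the apparent typo in the statement; no calculation here is genuinely hard. An alternative (non-inductive) approach would be the combinatorial identity counting $(k+1)$-element subsets of $\{1,\dots,n+k+2\}$ by their largest element, but the inductive proof above is the most direct and is all that is needed for the applications in Appendix~\ref{s:funct}.
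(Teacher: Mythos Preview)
Your proof is correct, and you are right that the statement contains a typo: the right-hand side should be $\binom{n+k+1}{k+1}$, which is exactly what the paper's own proof produces.

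The paper takes a slightly different (though equally elementary) route: rather than inducting on $n$, it rewrites each summand via Pascal's rule as a difference,
\[
\binom{i+k}{k} = \binom{i+k+1}{k+1} - \binom{i+k}{k+1},
\]
and then observes that the sum telescopes to $\binom{n+k+1}{k+1}$. Both arguments are one-liners relying on Pascal's rule; the telescoping version avoids the explicit inductive bookkeeping, while your induction is perhaps more transparent for a first reading. Either is entirely adequate for the applications in Appendix~\ref{s:funct}.
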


\begin{proof}
$\sum_{i=0}^n \binom{i+k}{k} = \sum_{i=0}^n \left[\binom{i+k+1}{k+1} - \binom{i+k}{k+1}\right] = \binom{n+k+1}{k+1}$.
\end{proof}

We will now establish our main results concerning the parity of $e(d,n)$ and $s(d,n)$ respectively.

\begin{proposition} \label{prop:e(d,n)-eval}
Let $d, n \ge 2$. Then:
\begin{clist}{(i)}
\item
If $n = 4k$, then
\[ e(d,n) \equiv 
\begin{cases}
 0	, & \text{if $d \equiv 0,1,3 \mod 4$} \\
 \binom{t+k}{t+1} , & \text{if $d=4t+2$.}
\end{cases}
 \]	
\item
If $n = 4k+1$, then
\[ e(d,n) \equiv 
\begin{cases}
 \binom{t+k}{t}	, & \text{if $d=4t$ or $4t+1$} \\
  \binom{t+k+1}{t+1}, & \text{if $d=4t+2$ or $4t+3$.} \\
\end{cases}
 \]	
\item 
 If $n = 4k+2$, then
\[ e(d,n) \equiv 
\begin{cases}
 0	, & \text{if $d \equiv 1, 3 \mod 4$} \\
 \binom{t+k}{t}	 	, & \text{if $d=4t$} \\
 \binom{t+k+1}{t+1}	, & \text{if $d=4t+2$.} \\
\end{cases}
 \]	
\item
 If $n = 4k+3$, then
\[ e(d,n) \equiv 
\begin{cases}
0	, & \text{if $d=4t$ or $4t+1$} \\
  \binom{t+k+1}{t+1}, & \text{if $d=4t+2$ or $4t+3$.} \\
\end{cases}
 \]	
\end{clist}
\end{proposition}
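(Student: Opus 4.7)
The plan is to reduce the sum defining $e(d,n)$ modulo 2 (so that only odd-indexed terms contribute) and then apply \cref{lemma:mod2-useful} and \cref{lemma:hockey} in a uniform case-by-case fashion.

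First, I would collapse both cases of the definition into a single expression
\[ e(d,n) = \sum_{j=1}^{\lfloor (n+1)/2 \rfloor} j \binom{d + n - 2j - 1}{d-2} \]
via the substitution $j = \tfrac{n}{2} - i$ for $n$ even and $j = \tfrac{n+1}{2} - i$ for $n$ odd. Modulo 2 only odd $j = 2\ell + 1$ contribute, and after reindexing $s$ so that its range starts at $0$ with the binomial argument increasing in $s$, one obtains
\[ e(d,n) \equiv \sum_{s=0}^{M} \binom{d + 4s + c}{d - 2} \pmod{2}, \]
where the pair $(M, c)$ is $(k-1, 1)$, $(k, -2)$, $(k, -1)$, $(k, 0)$ according to whether $n$ equals $4k, 4k+1, 4k+2, 4k+3$ respectively.

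The main computation is then a parity analysis of $\binom{d + 4s + c}{d-2}$ depending on $d \bmod 4$. Each instance has entries with definite parities, so it falls into one of the four patterns of \cref{lemma:mod2-useful}. The shape $\binom{2a}{2b+1}$ vanishes mod 2, producing the vanishing subcases in the statement (for instance the cases $d \equiv 0,1,3 \bmod 4$ when $n = 4k$). In each of the remaining three shapes, two applications of \cref{lemma:mod2-useful} reduce the summand to $\binom{t+s}{t}$ or $\binom{t+s-1}{t-1}$, where $t$ is determined by $d \bmod 4$, and the hockey-stick identity (\cref{lemma:hockey}) collapses the sum to the advertised binomial.

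As an illustration, for $n=4k$ and $d=4t+2$ one finds $e(4t+2,4k)\equiv \sum_{s=0}^{k-1}\binom{4t+4s+3}{4t}\pmod 2$; two applications of \cref{lemma:mod2-useful} give $\binom{4t+4s+3}{4t}\equiv \binom{2t+2s+1}{2t}\equiv \binom{t+s}{t}$, and \cref{lemma:hockey} then yields $\binom{k+t}{t+1}$, matching (i). The remaining seven subcases proceed in precisely the same way. The main obstacle is purely bookkeeping: one must track the parities of $d + 4s + c$ and of $d-2$ carefully, since a single mis-tracked parity converts a vanishing case into a non-vanishing one or vice versa. No new ideas beyond the two lemmas are required.
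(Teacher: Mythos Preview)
Your proposal is correct and follows essentially the same approach as the paper: reduce the sum modulo 2 so that only the terms with odd coefficient survive, then repeatedly apply \cref{lemma:mod2-useful} to halve the binomial arguments and finish with \cref{lemma:hockey}. The only cosmetic difference is that you first collapse the even/odd-$n$ definitions into the single expression $e(d,n)=\sum_{j=1}^{\lfloor (n+1)/2\rfloor} j\binom{d+n-2j-1}{d-2}$ and extract the uniform form $\sum_{s=0}^{M}\binom{d+4s+c}{d-2}$, whereas the paper works directly from the case-split definition in each of (i)--(iv); the resulting intermediate sums and the subsequent parity analysis are identical.
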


Note that, if $n$ is even and $d$ is odd, then $e(d,n)$ is even.
Throughout the proof, we will make repeated use of \cref{lemma:mod2-useful,lemma:hockey}.

\begin{proof}
(i) $e(d,4k) = \sum_{i=0}^{2k-1} (2k-i) \binom{d+2i-1}{d-2} \equiv \sum_{0 \le i \le 2k-1, \,\text{$i$ odd}} \binom{d+2i-1}{d-2} = \sum_{i=0}^{k-1} \binom{d+4i+1}{d-2}$. If $d$ is odd, then $\binom{d+4i+1}{d-2} \equiv 0$ for all $i$ by \cref{lemma:mod2-useful} and so $e(d,4k) \equiv 0$. If $d = 4t$, then $\binom{d+4i+1}{d-2} = \binom{4t+4i+1}{4t-2} \equiv \binom{2t+2i}{2t-1} \equiv 0$ by \cref{lemma:mod2-useful} and so $e(d,4k) \equiv 0$. If $d = 4t+2$, then $e(d,4k) \equiv \sum_{i=0}^{k-1} \binom{4t+4i+3}{4t} \equiv \sum_{i=0}^{k-1} \binom{t+i}{t} = \binom{t+k}{k}$ by \cref{lemma:mod2-useful,lemma:hockey}.

(ii) $e(d,4k+1) = \sum_{i=0}^{2k} (2k-i+1) \binom{d+2i-2}{d-2} \equiv \sum_{0 \le i \le 2k, \,\text{$i$ even}} \binom{d+2i-2}{d-2} = \sum_{i=0}^k \binom{d+4i-2}{d-2}$.
If $d=2r$, then $e(d,4k+1) \equiv \sum_{i=0}^k \binom{2r+4i-2}{2r-2} \equiv \sum_{i=0}^k \binom{r+2i-1}{r-1}$ by \cref{lemma:mod2-useful}. 
If $r = 2t$, i.e. $d=4t$, then $e(d,4k+1) \equiv \sum_{i=0}^k \binom{2t+2i-1}{2t-1} \equiv \sum_{i=0}^k \binom{t+i-1}{t-1} = \binom{t+k}{t}$ by \cref{lemma:mod2-useful,lemma:hockey}. If $r=2t+1$, i.e. $d=4t+2$, then $e(d,4k+1) \equiv \sum_{i=0}^k \binom{2t+2i+1}{2t+1} \equiv \sum_{i=0}^k \binom{t+i}{t} = \binom{t+k+1}{t+1}$ by \cref{lemma:mod2-useful,lemma:hockey}. 
If $d=2r+1$, then $e(d,4k+1) \equiv \sum_{i=0}^k \binom{2r+4i-1}{2r-1} \equiv \sum_{i=0}^k \binom{r+2i-1}{r-1}$ by \cref{lemma:mod2-useful}.
This coincides with the $d=2r$ case, and so $e(d,4k+1) \equiv \binom{t+k}{t}$ for $d=4t+1$ and $e(d,4k+1) \equiv \binom{t+k+1}{t+1}$ for $d=4t+3$.

(iii) $e(d,4k+2) = \sum_{i=0}^{2k} (2k-i+1) \binom{d+2i-1}{d-2} \equiv \sum_{0 \le i \le 2k, \,\text{$i$ even}} \binom{d+2i-1}{d-2} = \sum_{i=0}^k \binom{d+4i-1}{d-2}$.
If $d$ is odd, then $\binom{d+4i-1}{d-2} \equiv 0$ for all $i$ by \cref{lemma:mod2-useful} and so $e(d,4k+2) \equiv 0$.
If $d=4t$, then $e(d,4k+2) \equiv \sum_{i=0}^k \binom{4t+4i-1}{4t-2} \equiv \sum_{i=0}^k \binom{t+i-1}{t-1} = \binom{t+k}{t}$ by \cref{lemma:mod2-useful,lemma:hockey}.
If $d=4t+2$, then $e(d,4k+2) \equiv \sum_{i=0}^k \binom{4t+4i+1}{4t} \equiv \sum_{i=0}^k \binom{t+i}{t} \equiv \binom{t+k+1}{t+1}$ by \cref{lemma:mod2-useful,lemma:hockey}.

(iv) $e(d,4k+3) = \sum_{i=0}^{2k+1} (2k-i+2) \binom{d+2i-2}{d-2} \equiv \sum_{0 \le i \le 2k, \,\text{$i$ odd}} \binom{d+2i-2}{d-2} = \sum_{i=0}^k \binom{d+4i}{d-2}$.
If $d=2r$, then $e(d,4k+3) \equiv \sum_{i=0}^k \binom{2r+4i}{2r-2} \equiv \sum_{i=0}^k \binom{r+2i}{r-1}$ by \cref{lemma:mod2-useful}. If $r=2t$, i.e. $d=4t$, then $e(d,4k+3) \equiv \sum_{i=0}^k \binom{2t+2i}{2t-1} \equiv 0$ by \cref{lemma:mod2-useful}.
If $r=2t+1$, i.e. $d=4t+2$, then $e(d,4k+3) \equiv \sum_{i=0}^k \binom{2t+2i+1}{2t} \equiv \sum_{i=0}^k \binom{t+i}{t} \equiv \binom{t+k+1}{t+1}$ by \cref{lemma:mod2-useful,lemma:hockey}.
If $d=2r+1$, then $e(d,4k+3) \equiv \sum_{i=0}^k \binom{2r+4i+1}{2r-1} \equiv  \sum_{i=0}^k \binom{r+2i}{r-1}$ by \cref{lemma:mod2-useful}. 
This coincides with the $d=2r$ case, and so $e(d,4k+1) \equiv 0$ for $d=4t+1$ and $e(d,4k+1) \equiv \binom{t+k+1}{t+1}$ for $d=4t+3$.
\end{proof}

The parity of $s(d,n)$ can be evaluated in terms of a single binomial coefficient without splitting into cases for $n$ and $d$ mod $4$. However, in order to make the results more easily comparable to Proposition \ref{prop:e(d,n)-eval}, we will also state the result in these terms.

\begin{proposition} \label{prop:s(d,n)-eval}
Let $d, n \ge 2$. Then $s(d,n) \equiv 1 + \binom{d+n}{d}$. In particular, we have that:
\begin{clist}{(i)}
\item
If $n = 4k$, then
\[ s(d,n) \equiv \textstyle  
1+ \binom{t+k}{t}, \quad \text{if $d=4t$, $4t+1$, $4t+2$ or $4t+3$.}
 \]	
\item
If $n = 4k+1$, then
\[ s(d,n) \equiv 
\begin{cases}
1+ \binom{t+k}{t}, & \text{if $d=4t$ or $4t+2$} \\
1, & \text{if $d=4t+1$ or $4t+3$.} \\
\end{cases}
 \]	
\item 
 If $n = 4k+2$, then
\[ s(d,n) \equiv 
\begin{cases}
1+ \binom{t+k}{t}, & \text{if $d=4t$ or $4t+1$} \\
1, & \text{if $d=4t+2$ or $4t+3$.} \\
\end{cases}
 \]	
\item
 If $n = 4k+3$, then
\[ s(d,n) \equiv 
\begin{cases}
1+ \binom{t+k}{t}, & \text{if $d=4t$ or $4t+2$} \\
1, & \text{if $d=4t+1$ or $4t+3$.} \\
\end{cases}
 \]	
\end{clist}
\end{proposition}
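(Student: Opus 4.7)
The plan is to prove the clean uniform identity $s(d,n) \equiv 1 + \binom{d+n}{d} \pmod{2}$ first, from which the four case-by-case formulas in (i)--(iv) follow as a mechanical application of Lemma~\ref{lemma:mod2-useful}.

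First I would reduce mod $2$ to eliminate the alternating sign. Since $(-1)^{n+i+1} \equiv 1 \pmod{2}$, the definition gives
\[
s(d,n) \;\equiv\; \sum_{i=0}^{n-1} \binom{d+i}{d-1} \pmod{2}.
\]
Reindexing by $j = i+1$ rewrites this as $\sum_{j=1}^{n} \binom{(d-1)+j}{d-1}$, and completing the range to start at $j=0$ and applying the hockey-stick identity (Lemma~\ref{lemma:hockey}) yields
\[
s(d,n) \;\equiv\; \sum_{j=0}^{n} \binom{(d-1)+j}{d-1} - \binom{d-1}{d-1} \;=\; \binom{d+n}{d} - 1 \;\equiv\; \binom{d+n}{d} + 1 \pmod{2}.
\]

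It then remains to determine the parity of $\binom{d+n}{d}$ in each of the sixteen residue classes $(d \bmod 4, n \bmod 4)$. Writing $d = 4t + r_1$ and $n = 4k + r_2$, the sum $d+n$ has parity determined by $r_1 + r_2$, and two successive applications of Lemma~\ref{lemma:mod2-useful} reduce $\binom{d+n}{d}$ either to a shifted binomial of the form $\binom{t+k}{t}$ or $\binom{t+k+1}{t+1}$, or to $0$ (the latter occurring precisely when one of the reductions falls into the $\binom{2a}{2b+1} \equiv 0$ case). Substituting these values into $s(d,n) \equiv 1 + \binom{d+n}{d}$ produces the four stated cases.

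I expect no substantive obstacle: the argument is a mod-$2$ manipulation followed by routine two-step reductions of binomial coefficients. The only point of care is the off-by-one in applying the hockey-stick identity (the sum starts at $j=1$, not $j=0$), which is precisely the origin of the $+1$ in the formula and ultimately accounts for the discrepancy in parity between $s(d,n)$ and $e(d,n)$ reflected in the statement of Proposition~\ref{prop:e-s-parity}.
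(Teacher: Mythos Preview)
Your proposal is correct and matches the paper's proof essentially line for line: the paper also drops the sign mod $2$, extends the sum by one term (writing it as $1+\sum_{i=-1}^{n-1}\binom{d+i}{d-1}$, which is your reindex-and-complete step in slightly different bookkeeping), applies the hockey-stick identity to obtain $1+\binom{n+d}{d}$, and then invokes Lemma~\ref{lemma:mod2-useful} repeatedly for the case split. Your explicit remark about the off-by-one producing the $+1$ is exactly the mechanism the paper uses.
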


It follows from \cref{lemma:mod2-useful}  that, if $n$ is odd and $d$ is odd, then $s(d,n)$ is odd.

\begin{proof}
$s(d,n) = \sum_{i=0}^{n-1} (-1)^{n+i+1}\binom{d+i}{d-1} \equiv 1+\sum_{i=-1}^{n-1} \binom{d+i}{d-1} \equiv 1+\binom{n+d}{d}$, by \cref{lemma:hockey}.	 To evaluate $s(d,n)$ in the various values of $n, d \mod 4$, we repeatedly apply \cref{lemma:mod2-useful}.
\end{proof}


\providecommand{\bysame}{\leavevmode\hbox to3em{\hrulefill}\thinspace}
\providecommand{\MR}{\relax\ifhmode\unskip\space\fi MR }
\providecommand{\MRhref}[2]{%
  \href{http://www.ams.org/mathscinet-getitem?mr=#1}{#2}
}
\providecommand{\href}[2]{#2}
\begin{thebibliography}{10}

\bibitem{AM04}
A.~Adem and R.~J. Milgram, \emph{Cohomology of finite groups}, second ed.,
  Grundlehren der mathematischen Wissenschaften [Fundamental Principles of
  Mathematical Sciences], vol. 309, Springer-Verlag, Berlin, 2004.

\bibitem{AK85}
S.~Akbulut and R.~Kirby, \emph{A potential smooth counterexample in dimension
  {$4$} to the {P}oincar\'{e} conjecture, the {S}choenflies conjecture, and the
  {A}ndrews-{C}urtis conjecture}, Topology \textbf{24} (1985), 375--390.

\bibitem{AC65}
J.~J. Andrews and M.~L. Curtis, \emph{Free groups and handlebodies}, Proc.
  Amer. Math. Soc. \textbf{16} (1965), 192--195.

\bibitem{Bak:2003}
A.~Bak, V.~Petrov, and G.~Tang, \emph{Stability for quadratic {$K_1$}},
  $K$-Theory \textbf{30} (2003), 1--11, Special issue in honor of Hyman Bass on
  his seventieth birthday. Part I.

\bibitem{Bass:1973}
H.~Bass, \emph{Unitary algebraic {$K$}-theory}, Algebraic {$K$}-theory, {III}:
  {H}ermitian {$K$}-theory and geometric applications ({P}roc. {C}onf.,
  {B}attelle {M}emorial {I}nst., {S}eattle, {W}ash., 1972), Lecture Notes in
  Math., Vol. 343, Springer, Berlin-New York, 1973, pp.~57--265.

\bibitem{Ba88}
S.~Bauer, \emph{The homotopy type of a {$4$}-manifold with finite fundamental
  group}, Algebraic topology and transformation groups ({G}\"{o}ttingen, 1987),
  Lecture Notes in Math., vol. 1361, Springer, Berlin, 1988, pp.~1--6.

\bibitem{Bredon:1972}
G.~E. Bredon, \emph{Introduction to compact transformation groups}, Academic
  Press, New York, 1972, Pure and Applied Mathematics, Vol. 46.

\bibitem{Brown:1994}
K.~S. Brown, \emph{Cohomology of groups}, Graduate Texts in Mathematics,
  vol.~87, Springer-Verlag, New York, 1994, Corrected reprint of the 1982
  original.

\bibitem{Browning:1979c}
W.~Browning, \emph{Finite {CW}-complexes of cohomological dimension {$2$} with
  finite abelian {$\pi_1$}}, ETH preprint (23 pages), 1979.

\bibitem{Browning:1979}
\bysame, \emph{Homotopy types of certain finite {CW}-complexes with finite
  fundamental group}, Ph.D. thesis, Cornell University, 1979.

\bibitem{CCPS22}
A.~Conway, D.~Crowley, M.~Powell, and J.~Sixt, \emph{Stably diffeomorphic
  manifolds and modified surgery obstructions}, 2022, M\'{e}m. Soc. Math. Fr.,
  to appear. arXiv:2109.05632.

\bibitem{CCPS23}
A.~Conway, D.~Crowley, M.~Powell, and J.~Sixt, \emph{Simply connected manifolds
  with large homotopy stable classes}, J. Aust. Math. Soc. \textbf{115} (2023),
  172--203.

\bibitem{CR81}
C.~W. Curtis and I.~Reiner, \emph{Methods of representation theory. {V}ol.
  {I}}, Pure and Applied Mathematics, John Wiley \& Sons, Inc., New York, 1981,
  With applications to finite groups and orders, A Wiley-Interscience
  Publication.

\bibitem{CR87}
\bysame, \emph{Methods of representation theory. {V}ol. {II}}, Pure and Applied
  Mathematics (New York), John Wiley \& Sons, Inc., New York, 1987, With
  applications to finite groups and orders, A Wiley-Interscience Publication.

\bibitem{Di01}
L.~E. Dickson, \emph{Linear groups: {W}ith an exposition of the {G}alois field
  theory}, Dover Publications, Inc., New York, 1901.

\bibitem{Donaldson:1983a}
S.~K. Donaldson, \emph{An application of gauge theory to four-dimensional
  topology}, J. Differential Geom. \textbf{18} (1983), 279--315.

\bibitem{DP17}
Y.~A. Drozd and A.~I. Plakosh, \emph{Cohomologies of finite abelian groups},
  Algebra Discrete Math. \textbf{24} (2017), 144--157.

\bibitem{Dy76}
M.~N. Dyer, \emph{Homotopy classification of {$(\pi , m)$}-complexes}, J. Pure
  Appl. Algebra \textbf{7} (1976), 249--282.

\bibitem{Dy78}
\bysame, \emph{On the essential height of homotopy trees with finite
  fundamental group}, Compositio Math. \textbf{36} (1978), 209--224.

\bibitem{Freedman:2024}
M.~Freedman, V.~Krushkal, and T.~Lidman, \emph{Spineless 5-manifolds and the
  deformation conjecture}, 2024, arXiv:2401.03498.

\bibitem{Go91}
R.~E. Gompf, \emph{Killing the {A}kbulut-{K}irby {$4$}-sphere, with relevance
  to the {A}ndrews-{C}urtis and {S}choenflies problems}, Topology \textbf{30}
  (1991), 97--115.

\bibitem{Hambleton:2019a}
I.~Hambleton, \emph{Two remarks on {W}all's {D}2 problem}, Math. Proc.
  Cambridge Philos. Soc. \textbf{167} (2019), 361--368.

\bibitem{Hambleton:2023}
\bysame, \emph{A stability range for topological 4-manifolds}, Trans. Amer.
  Math. Soc. \textbf{376} (2023), 8769--8793.

\bibitem{Hambleton:2019}
I.~Hambleton and A.~Hildum, \emph{Topological 4-manifolds with right-angled
  {A}rtin fundamental groups}, J. Topol. Anal. \textbf{11} (2019), 777--821.

\bibitem{Hambleton:2024}
\bysame, \emph{Topological $4$-manifolds with right-angled {A}rtin fundamental
  groups: corrigendum}, Journal of Topology and Analysis (2024), doi:
  10.1142/S1793525324500249.

\bibitem{Hambleton:1988}
I.~Hambleton and M.~Kreck, \emph{On the classification of topological
  {$4$}-manifolds with finite fundamental group}, Math. Ann. \textbf{280}
  (1988), 85--104.

\bibitem{Hambleton:1993}
\bysame, \emph{Cancellation results for {$2$}-complexes and {$4$}-manifolds and
  some applications}, Two-dimensional homotopy and combinatorial group theory,
  London Math. Soc. Lecture Note Ser., vol. 197, Cambridge Univ. Press,
  Cambridge, 1993, pp.~281--308.

\bibitem{Hambleton:2018}
\bysame, \emph{On the classification of topological 4-manifolds with finite
  fundamental group: corrigendum}, Math. Ann. \textbf{372} (2018), 527--530.

\bibitem{Hambleton:2009}
I.~Hambleton, M.~Kreck, and P.~Teichner, \emph{Topological 4-manifolds with
  geometrically two-dimensional fundamental groups}, J. Topol. Anal. \textbf{1}
  (2009), 123--151.

\bibitem{Hambleton:1987}
I.~Hambleton, A.~Ranicki, and L.~R. Taylor, \emph{Round ${L}$-theory}, J. Pure
  Appl. Algebra \textbf{47} (1987), 131--154.

\bibitem{Hambleton:2000}
I.~Hambleton and L.~R. Taylor, \emph{A guide to the calculation of the surgery
  obstruction groups for finite groups}, Surveys on surgery theory, {V}ol. 1,
  Ann. of Math. Stud., vol. 145, Princeton Univ. Press, Princeton, NJ, 2000,
  pp.~225--274.

\bibitem{Ha00}
J.~Harlander, \emph{Some aspects of efficiency}, Groups---{K}orea '98
  ({P}usan), de Gruyter, Berlin, 2000, pp.~165--180.

\bibitem{Jo03}
F.~E.~A. Johnson, \emph{Stable modules and the {$D(2)$}-problem}, London
  Mathematical Society Lecture Note Series, vol. 301, Cambridge University
  Press, Cambridge, 2003.

\bibitem{KLPT17}
D.~Kasprowski, M.~Land, M.~Powell, and P.~Teichner, \emph{Stable classification
  of 4-manifolds with 3-manifold fundamental groups}, J. Topol. \textbf{10}
  (2017), 827--881.

\bibitem{KNR22}
D.~Kasprowski, J.~Nicholson, and B.~Ruppik, \emph{Homotopy classification of
  4-manifolds whose fundamental group is dihedral}, Algebr. Geom. Topol.
  \textbf{22} (2022), 2915--2949.

\bibitem{KPR22}
D.~Kasprowski, M.~Powell, and A.~Ray, \emph{Counterexamples in 4-manifold
  topology}, EMS Surv. Math. Sci. \textbf{9} (2022), 193--249.

\bibitem{KPR24}
D.~Kasprowski, M.~Powell, and B.~Ruppik, \emph{Homotopy classification of
  4-manifolds with finite abelian 2-generator fundamental groups}, Math. Proc.
  Cambridge Philos. Soc. \textbf{177} (2024), 263--283.

\bibitem{KPT20}
D.~Kasprowski, M.~Powell, and P.~Teichner, \emph{Algebraic criteria for stable
  diffeomorphism of spin 4-manifolds}, 2020, Mem. Amer. Math. Soc., to appear.
  arXiv:2006.06127.

\bibitem{KT21}
D.~Kasprowski and P.~Teichner, \emph{{$\Bbb{CP}^2$}-stable classification of
  4-manifolds with finite fundamental group}, Pacific J. Math. \textbf{310}
  (2021), 355--373.

\bibitem{Knus:1991}
M.-A. Knus, \emph{Quadratic and {H}ermitian forms over rings}, Grundlehren der
  mathematischen Wissenschaften [Fundamental Principles of Mathematical
  Sciences], vol. 294, Springer-Verlag, Berlin, 1991, With a foreword by I.
  Bertuccioni.

\bibitem{Kr99}
M.~Kreck, \emph{Surgery and duality}, Ann. of Math. (2) \textbf{149} (1999),
  707--754.

\bibitem{Kreck:1984}
M.~Kreck and J.~A. Schafer, \emph{Classification and stable classification of
  manifolds: some examples}, Comment. Math. Helv. \textbf{59} (1984), 12--38.

\bibitem{Latiolais:1993}
M.~P. Latiolais, \emph{Homotopy and homology classification of
  {$2$}-complexes}, Two-dimensional homotopy and combinatorial group theory,
  London Math. Soc. Lecture Note Ser., vol. 197, Cambridge Univ. Press,
  Cambridge, 1993, pp.~97--124.

\bibitem{Lee:1971}
R.~Lee, \emph{Computation of {W}all groups}, Topology \textbf{10} (1971),
  149--176.

\bibitem{Lees:1973}
J.~A. Lees, \emph{The surgery obstruction groups of {C}. {T}. {C}. {W}all},
  Advances in Math. \textbf{11} (1973), 113--156.

\bibitem{Linnell:1993}
P.~A. Linnell, \emph{Minimal free resolutions and {$(G,n)$}-complexes for
  finite abelian groups}, Proc. London Math. Soc. (3) \textbf{66} (1993),
  303--326.

\bibitem{Metzler:1976}
W.~Metzler, \emph{\"{U}ber den {H}omotopietyp zweidimensionaler
  {$CW$}-{K}omplexe und {E}lementartransformationen bei {D}arstellungen von
  {G}ruppen durch {E}rzeugende und definierende {R}elationen}, J. Reine Angew.
  Math. \textbf{285} (1976), 7--23.

\bibitem{Milnor:1966}
J.~Milnor, \emph{Whitehead torsion}, Bull. Amer. Math. Soc. \textbf{72} (1966),
  358--426.

\bibitem{NNP24}
C.~Nagy, J.~Nicholson, and M.~Powell, \emph{Generalised doubles and simple
  homotopy types of high dimensional manifolds}, 2024, arXiv:2409.03082.

\bibitem{Ni21}
J.~Nicholson, \emph{On {CW}-complexes over groups with periodic cohomology},
  Trans. Amer. Math. Soc. \textbf{374} (2021), 6531--6557.

\bibitem{Ni23}
\bysame, \emph{Stably free modules and the unstable classification of
  2-complexes}, 2023, Math. Ann., to appear. arXiv:2108.02220.

\bibitem{Ni20-II}
\bysame, \emph{Cancellation for {$(G,n)$}-complexes and the {S}wan finiteness
  obstruction}, Int. Math. Res. Not. IMRN (2024), 11832--11864.

\bibitem{Ni20-I}
\bysame, \emph{Projective modules and the homotopy classification of
  {$(G,n)$}-complexes}, Algebr. Geom. Topol. \textbf{24} (2024), 2245--2284.

\bibitem{Ranicki:1973}
A.~A. Ranicki, \emph{Algebraic {$L$}-theory. {I}. {F}oundations}, Proc. London
  Math. Soc. (3) \textbf{27} (1973), 101--125.

\bibitem{Schafer:1996}
J.~A. Schafer, \emph{Fiber squares and complexes}, J. Pure Appl. Algebra
  \textbf{106} (1996), 153--171.

\bibitem{Scharlau:1985}
W.~Scharlau, \emph{Quadratic and {H}ermitian forms}, Grundlehren der
  mathematischen Wissenschaften [Fundamental Principles of Mathematical
  Sciences], vol. 270, Springer-Verlag, Berlin, 1985.

\bibitem{Sieradski:1979}
A.~J. Sieradski and M.~N. Dyer, \emph{Distinguishing arithmetic for certain
  stably isomorphic modules}, J. Pure Appl. Algebra \textbf{15} (1979),
  199--217.

\bibitem{Sieradski:1977}
A.~J. Sieradski, \emph{A semigroup of simple homotopy types}, Math. Z.
  \textbf{153} (1977), 135--148.

\bibitem{Si93}
\bysame, \emph{Algebraic topology for two-dimensional complexes},
  Two-dimensional homotopy and combinatorial group theory, London Math. Soc.
  Lecture Note Ser., vol. 197, Cambridge Univ. Press, Cambridge, 1993,
  pp.~51--96.

\bibitem{Swan:1965}
R.~G. Swan, \emph{Minimal resolutions for finite groups}, Topology \textbf{4}
  (1965), 193--208.

\bibitem{Sw71}
\bysame, \emph{Groups with no odd dimensional cohomology}, J. Algebra
  \textbf{17} (1971), 401--403.

\bibitem{Ta78}
M.~J. Taylor, \emph{Locally free classgroups of groups of prime power order},
  J. Algebra \textbf{50} (1978), 463--487.

\bibitem{Teichner:1992}
P.~Teichner, \emph{Topological 4-manifolds with finite fundamental group},
  Ph.D. thesis, University of Mainz, Germany, 1992, Shaker Verlag, ISBN
  3-86111-182-9.

\bibitem{Wall:1964}
C.~T.~C. Wall, \emph{On simply-connected {$4$}-manifolds}, J. London Math. Soc.
  \textbf{39} (1964), 141--149.

\bibitem{Wa65}
\bysame, \emph{Finiteness conditions for {${\rm CW}$}-complexes}, Ann. of Math.
  (2) \textbf{81} (1965), 56--69.

\bibitem{Wall:1966}
\bysame, \emph{Classification problems in differential topology. {IV}.
  {T}hickenings}, Topology \textbf{5} (1966), 73--94.

\bibitem{Wall:1970}
\bysame, \emph{Surgery on compact manifolds}, London Mathematical Society
  Monographs, No. 1, Academic Press, London-New York, 1970.

\bibitem{Wall:1976}
\bysame, \emph{Classification of {H}ermitian {F}orms. {VI}. {G}roup rings},
  Ann. of Math. (2) \textbf{103} (1976), 1--80.

\bibitem{Webb:1981}
P.~J. Webb, \emph{The minimal relation modules of a finite abelian group}, J.
  Pure Appl. Algebra \textbf{21} (1981), 205--232.

\bibitem{Whitehead:1939}
J.~H.~C. Whitehead, \emph{Simplicial {S}paces, {N}uclei and m-{G}roups}, Proc.
  London Math. Soc. (2) \textbf{45} (1939), 243--327.

\bibitem{Wi73}
J.~S. Williams, \emph{Free presentations and relation modules of finite
  groups}, J. Pure Appl. Algebra \textbf{3} (1973), 203--217.

\end{thebibliography}

\providecommand{\bysame}{\leavevmode\hbox to3em{\hrulefill}\thinspace}
\providecommand{\MR}{\relax\ifhmode\unskip\space\fi MR }
\providecommand{\MRhref}[2]{%
  \href{http://www.ams.org/mathscinet-getitem?mr=#1}{#2}
}
\providecommand{\href}[2]{#2}

\end{document}